\newcommand{\MRe}{{\rm Re}}
\newcommand{\Ker}{{\rm Ker}}
\newcommand{\MIm}{{\rm Im}}
\newcommand{\dd}{\;{\rm d}}
\newcommand{\Mi}{{\rm i}}
\newcommand{\tg}{{\rm tg}}
\newtheorem{thr}{\rm\bf Theorem}[section]
\newtheorem{lem}[thr]{\rm\bf Lemma}
\newtheorem{ut}[thr]{\rm\bf Proposition}
\newtheorem{col}[thr]{\rm\bf Corollary}
\newtheorem{rem}[thr]{\rm\bf Remark}
\newtheorem{df}[thr]{\rm\bf Definition}
\newtheorem{notat}{\rm\bf Notation}[section]
\date{}
\title{Stability and stabilizability of mixed retarded-neutral type systems}
\author{
R.~Rabah\thanks{IRCCyN/\'Ecole des Mines de Nantes, 4 rue Alfred Kastler, BP 20722, 44307, Nantes, France
({\tt rabah@emn.fr, rabah@irccyn.ec-nantes.fr}).},
G.~M.~Sklyar\thanks{Institute of Mathematics, University of Szczecin,
Wielkopolska 15, 70-451, Szczecin, Poland ({\tt sklar@univ.szczecin.pl, sklyar@univer.kharkov.ua}).}
and P.~Yu.~Barkhayev\thanks{IRCCyN/\'Ecole  Centrale  de Nantes. Permanent address: Department of Differential Equations and Control,
Kharkov National University, 4 Svobody sqr., 61077, Kharkov, Ukraine ({\tt pbarhaev@inbox.ru}).}}
\begin{document}

\voffset-2.9in
\thispagestyle{empty}
\begin{center}

\begin{tabular}{lr}
\includegraphics[scale=0.17]{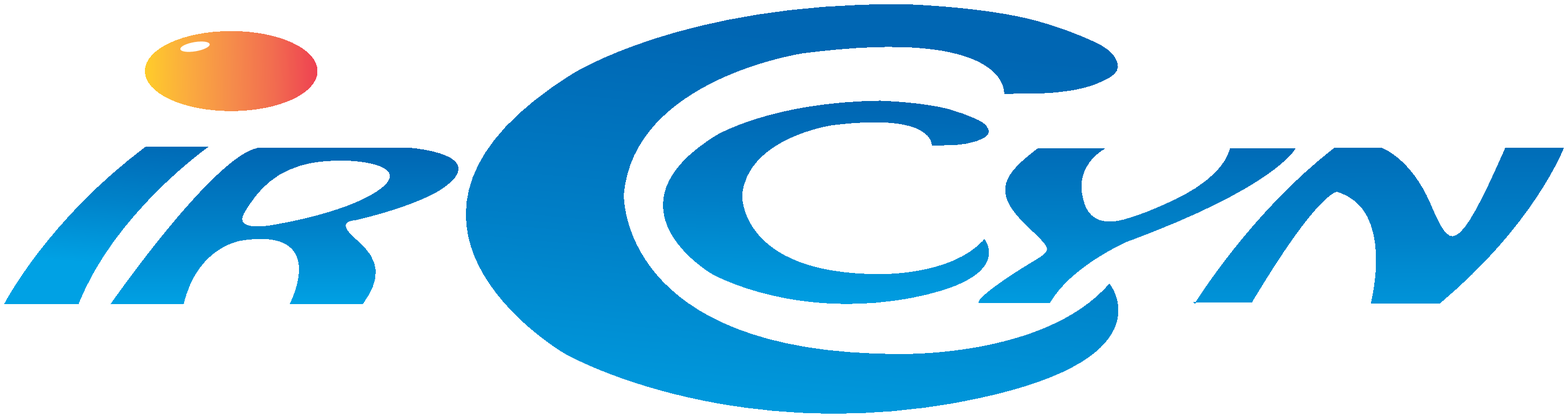} & $\qquad\qquad\qquad$
\begin{small}
\begin{tabular}{c}
Institut de Recherche en Communications\\
 et en Cybern\'{e}tique de Nantes\\
 UMR CNRS 6597
\end{tabular}
\end{small}
\end{tabular}

\vskip 20ex {\huge \bf Stability and stabilizability\\
of mixed retarded-neutral type systems}
\end{center}
\vskip 15ex
\begin{center}
{\Large \bf R. Rabah \hskip 5 mm  G.~M.~Sklyar \hskip 5 mm P.~Yu.~Barkhayev}
\vskip 5 cm
{\bf \large IRCCyN, Internal Report}
\vskip 10ex
\today
\end{center}
\vskip 15ex

\begin{tiny}
\begin{tabular}{cc}
$\qquad$ &
\begin{tabular}{c}
{\bf \'Etablissement de rattachement administratif et adresse:}\\
{\bf IRCCyN $\bullet$ \'{E}cole Centrale de Nantes}\\
 1, rue de la No\"{e} $\bullet$ BP 92101 $\bullet$ 44321 Nantes Cedex 3 $\bullet$ France\\
T\'{e}l. +33 (0)2 40 37 16 00 $\bullet$ Fax. +33 (0)2 40 37 69 30 \\
\\
\hline
\\
{\bf Unit\'{e} Mixte: \'{E}cole Centrale de Nantes, Universit\'{e} de Nantes, \'{E}cole des Mines de Nantes, CNRS.}
\end{tabular}
\end{tabular}
\end{tiny}

\setlength{\textheight}{25cm}
\voffset-1.4in
\newpage
\maketitle
\begin{abstract}{
We analyze the stability and stabilizability properties of mixed
retarded-neutral type systems when the neutral term is allowed to be
singular. Considering an operator model of the system in a Hilbert
space we are interesting in the critical case when there exists a
sequence of eigenvalues with real parts approaching to zero. In this
case the exponential stability is not possible and we are studying
the strong asymptotic stability property. The behavior of spectra of
mixed retarded-neutral type systems does not allow to apply directly
neither methods of retarded system nor the approach of neutral type
systems for analysis of stability. In this paper two technics are
combined to get the conditions of asymptotic non-exponential
stability: the existence of a Riesz basis of invariant
finite-dimensional subspaces and the boundedness of the resolvent in
some subspaces of a special decomposition of the state space. For
unstable systems the technics introduced allow to analyze the
concept of regular strong stabilizability for mixed retarded-neutral
type systems. The present paper extends the results on stability
obtained in [R.~Rabah, G.M.~Sklyar,  A. V.~Rezounenko, Stability
analysis of neutral type systems in Hilbert space. J. of
Differential Equations, 214(2005), No.~2, 391--428] and the results
on stabilizability from [R.~Rabah, G.M.~Sklyar,  A. V.~Rezounenko,
On strong regular stabilizability for linear neutral type systems.
J. of Differential Equations, 245(2008), No.~3, 569--593]. Comparing
with the mentioned papers, here we avoid a restrictive assumption of
non-singularity of the main neutral term.

\noindent
{\bf Keywords.} Retarded-neutral type systems, asymptotic non-exponential stability,
stabilizability, infinite dimensional systems.

\noindent
{\bf Mathematical subject classification.} 93C23, 34K06, 34K20, 34K40, 49K25.
}
\end{abstract}
\section{Introduction}

The interest in considering delay differential equations and
corresponding infinite-dimen\-sional dynamical systems is caused by a
huge amount of applied problem which can be described by these
equations. The stability theory of such type of systems was studied
intensively (see e.g. \cite{Bellman_Cooke_1963, Hale_Verduyn_1993, Kolmanovskii_Nosov_1986}). Number of
results was obtained for retarded systems, however an analysis of
neutral type systems is much  more complicated and these systems are
still studied not so deeply.
We consider neutral type systems given by the following functional differential
equation:
\begin{equation}\label{syst_delay_gen_kt}
\frac{\rm d}{{\rm d}t}[z(t)-Kz_t]=Lz_t + Bu(t),\qquad t \ge 0,
\end{equation}
where $z_t: [-1,0]\rightarrow \mathbb{C}^n$ is the history of $z$ defined by $z_t(\theta)=z(t+\theta)$.
We assume the delay operator $L: H^1([-1,0], \mathbb{C}^n)\rightarrow  \mathbb{C}^n$ to be linear and bounded, thus, it has the following form:
\begin{equation}\label{syst_delay_gen_operators_l}
Lf=\int_{-1}^0 A_2(\theta)f'(\theta)\;{\rm d}\theta +\int_{-1}^0 A_3(\theta)f(\theta) \;{\rm d}\theta,\qquad f\in H^1([-1,0], \mathbb{C}^n),
\end{equation}
where $A_2$, $A_3$ are $n\times n$-matrices whose elements belong to $L_2([-1,0], \mathbb{C})$.
We take the difference operator $K$ in the form
\begin{equation}\label{syst_delay_gen_operators_k}
Kf=A_{-1}f(-1),
\end{equation}
where $A_{-1}$ is a constant $n\times n$-matrix.
The form~(\ref{syst_delay_gen_operators_k}) may be considered as a particular case of the operator
$K: C([-1,0],\mathbb{C}^n)\rightarrow \mathbb{C}^n$ given by $K f=\int^0_{-1}{\rm d}\mu(\theta)f(\theta)$,
where $\mu(\cdot):[-1,0]\to \mathbb{C}^{n\times n}$ is of bounded variation and continuous at zero.
However, the considered model~(\ref{syst_delay_gen_operators_k}) is sufficiently general. Its analysis is difficult enough
and the results obtained are derived, in part, from the properties of the matrix $A_{-1}$.

The well-known approach, when studying systems of the form~(\ref{syst_delay_gen_kt}),
is to consider a corresponding infinite-dimensional model $\dot{x}={\mathcal A}x$, where ${\mathcal A}$ is
the infinitesimal generator of a $C_0$-semigroup.
For systems~(\ref{syst_delay_gen_kt})--(\ref{syst_delay_gen_operators_k}) the resolvent of
the operator ${\mathcal A}$ allows an explicit representation (see \cite{Rabah_Sklyar_Rezounenko_2003,Rabah_Sklyar_Rezounenko_2005}).
Such a representation is an effective tool for analyzing the exponential stability property since the last is equivalent to
the uniform boundedness of the resolvent on the complex right half-plane.
The resolvent boundedness approach is exhaustive when one considers stability of pure retarded type systems ($A_{-1}=0$)
since such systems may be exponentially stable or unstable only.
This fact is due to that exponential growth of the semigroup $\{{\rm e}^{t{\mathcal A}}\}_{t\ge 0}$ is determinated by spectrum's location
and there are only a finite number of eigenvalues of ${\mathcal A}$ in any half-plane $\{\lambda:\; {\rm Re} \lambda \ge C\}$.

For neutral-type systems ($A_{-1}\not=0$) in addition to the notion of exponential stability,
which is characterized by the condition that the spectrum is bounded away from the imaginary axis
(see \cite[Theorem~6.1]{Henry_1974}, \cite{Hale_Verduyn_1993}),
one meets the notion of {\it strong} asymptotic non-exponential stability.
This type of stability may happen in some critical case when the exponential stability is not possible (see e.g.~\cite{Brumley_1970}).
Thus, strong stability cannot be described in terms of the resolvent boundedness.
In \cite{Rabah_Sklyar_Rezounenko_2003,Rabah_Sklyar_Rezounenko_2005} for neutral type systems with a nonsingular
neutral term ($\det A_{-1}\not=0$)
this type of stability was precisely investigated for systems of the form~(\ref{syst_delay_gen_kt})--(\ref{syst_delay_gen_operators_k})
and some necessary and sufficient conditions of strong stability and instability had been proved.
The proofs are based on such a powerful tool as existence of a Riesz basis of $\mathcal A$-invariant finite-dimensional subspaces of the state space
and on further application of the results on strong stability in Banach spaces that had been originated in \cite{Sklyar_Shirman_1982}
and later developed in \cite{Arendt_Batty_1988,Lyubich_Phong_1988,Sklyar_Rezounenko_JMAA_2001,Sklyar_Rezounenko_ASP_2001} and many others
(see e.g. \cite{Neerven_1996} for a review).

In the case of neutral type systems with a singular neutral term ($\det A_{-1}=0$ and $A_{-1}\not=0$),
which we call {\it mixed retarded-neutral} type systems, the strong stability may also happen.
However, the approach given in \cite{Rabah_Sklyar_Rezounenko_2003,Rabah_Sklyar_Rezounenko_2005} cannot be directly applied
to such systems, since the existence of a Riesz basis of $\mathcal A$-invariant finite-dimensional subspaces of the whole state space
cannot be guarantied.
Moreover, mixed retarded-neutral type systems, in general, cannot be decomposed onto systems of pure neutral and pure retarded types.
Therefore, the analysis of strong stability for mixed retarded-neutral type systems
poses a hard problem which requires bringing in essential ideas in addition.

The method presented in this paper is based on a decomposition of the initial infinite-dimensional model $\dot{x}={\mathcal A}x$
onto two systems $\dot{x}_0={\mathcal A}_0x_0$ and $\dot{x}_1={\mathcal A}_1x_1$ in such a way that
the spectra of ${\mathcal A}_0$, ${\mathcal A}_1$ satisfy: ${\rm Re}\:\sigma({\mathcal A}_0)\le -\varepsilon$ and
$ -\varepsilon<{\rm Re}\:\sigma({\mathcal A}_1)<0$ for some $\varepsilon>0$.
Generally speaking, the operators ${\mathcal A}_0$ and ${\mathcal A}_1$ are not the models of delay systems (retarded or neutral),
what, in particular, implies that the relation between their exponential growth and spectrum's location is unknown a priori.
We prove the exponential stability of the operator ${\mathcal A}_0$ analyzing the boundedness of its resolvent.
This direct analysis requires subtle estimates and the proof is technically complicated.
For the analysis of the subsystem $\dot{x}_1={\mathcal A}_1x_1$ we apply methods of strong stability
introduced in \cite{Rabah_Sklyar_Rezounenko_2003,Rabah_Sklyar_Rezounenko_2005}.
Finally, the introduced approach allows us to prove for mixed retarded-neutral type systems
the results on strong stability formulated in \cite{Rabah_Sklyar_Rezounenko_2005}.

Besides, for control systems the proposed approach allows to analyze
the notion of {\it regular} asymptotic stabilizability \cite{Rabah_Sklyar_Rezounenko_2008}
which is closely related to the strong stability notion.
The technic of the regular asymptotic stabilizability were introduced in \cite{Rabah_Sklyar_Rezounenko_2008}
and the sufficient condition for the system~(\ref{syst_delay_gen_kt})--(\ref{syst_delay_gen_operators_k}) to be stabilizable had been proved
in the case $\det A_{-1}\not=0$. In the present paper, using the same framework as for stability,
we show that these results hold for mixed retarded-neutral type systems also.

The general framework which we use is the theory of $C_0$-semigroups of linear bounded operators (see e.g. \cite{Neerven_1996}).
In order to precise the main contribution of our paper let us first give the operator model of the
system~(\ref{syst_delay_gen_kt})--(\ref{syst_delay_gen_operators_k}). We use the model introduced by Burns et al.
\cite{Burns_Herdman_Stech_1983} in a Hilbert state space. The state operator is given by
\begin{equation}\label{syst_operator}
{\cal A}x(t)={\cal A} \left(
\begin{array}{c}
y(t)\\
z_t(\cdot)
\end{array}
\right) = \left(
\begin{array}{c}
\int_{-1}^0 A_2(\theta)\dot{z}_t(\theta) \dd\theta +\int_{-1}^0 A_3(\theta)z_t(\theta) \dd\theta \\
\dd z_t(\theta) / \dd \theta
\end{array}
\right),
\end{equation}
with the domain
\begin{equation}\label{syst_domain}
{\cal D}({\cal A})=\{(y, z(\cdot))^T: \; z\in H^1(-1, 0; \mathbb{C}^n), y=z(0)-A_{-1}z(-1)\}\subset M_2,
\end{equation}
where $M_2 {\stackrel{\rm def}{=}}\mathbb{C}^n\times L_2(-1, 0; \mathbb{C}^n)$ is the state space.
The operator ${\cal A}$ is the infinitesimal generator of a $C_0$-semigroup.
Studying stability problem, we consider the model
\begin{equation}\label{syst_delay_opmodel}
\dot{x}={\cal A}x, \qquad x(t)=\left(
\begin{array}{c}
y(t)\\
z_t(\cdot)
\end{array}
\right),
\end{equation}
corresponding to the equation~(\ref{syst_delay_gen_kt})--(\ref{syst_delay_gen_operators_k}) with the control $u\equiv 0$, i.e.
to the equation
\begin{equation}\label{syst_delay_gen}
\dot{z}(t)=A_{-1}\dot{z}(t-1)+\int_{-1}^0
A_2(\theta)\dot{z}(t+\theta)\;{\rm d}\theta +\int_{-1}^0
A_3(\theta){z}(t+\theta) \;{\rm d}\theta, \qquad t \ge 0.
\end{equation}
The solutions of (\ref{syst_delay_gen}) and (\ref{syst_delay_opmodel}) are related as $z_t(\theta)=z(t+\theta)$, $\theta\in [-1,0]$.

For the control the equation~(\ref{syst_delay_gen_kt})--(\ref{syst_delay_gen_operators_k}) which we rewrite as
\begin{equation}\label{syst_delay_gen_cont}
\dot{z}(t)=A_{-1}\dot{z}(t-1)+\int_{-1}^0
A_2(\theta)\dot{z}(t+\theta)\;{\rm d}\theta +\int_{-1}^0
A_3(\theta){z}(t+\theta) \;{\rm d}\theta+Bu,
\end{equation}
we consider the model
\begin{equation}\label{syst_delay_opmodel_stab}
\dot{x}={\mathcal A}x + {\mathcal B}u,
\end{equation}
where the operator ${\mathcal B}: \mathbb{C}^p\rightarrow M_2$ is defined by $n\times p$-matrix $B$ as follows:
${\mathcal B}u{\stackrel{\rm def}{=}}\left(Bu,\; 0\right)^T$.

The operator $\mathcal A$ given by~(\ref{syst_operator})
possesses only discrete spectrum $\sigma({\mathcal A})$, and,
moreover, the growth of the semigroup $\{{\rm e}^{t{\mathcal
A}}\}_{t\ge 0}$ is determinated by spectrum's location. Namely,
denoting by $\omega_s=\sup\{{\rm Re} \lambda: \; \lambda\in
\sigma({\mathcal A})\}$ and by $\omega_0=\inf\{\omega:\; \|{\rm
e}^{{\mathcal A}t}x\|\le M {\rm e}^{\omega t} \|x\|\}$, we have the
relation $\omega_0=\omega_s$ (see e.g. \cite{Hale_Verduyn_1993}).

For stability problem, the last fact implies that the semigroup $\{{\rm e}^{t{\mathcal
A}}\}_{t\ge 0}$ is exponentially stable if and only if the spectrum
of $\mathcal A$ satisfies $\omega_s<0$. However,  this type of
stability is not the only possible one  for systems of the
form~(\ref{syst_delay_gen}) (e.g. the same
situation can also happen for some hyperbolic partial differential
equation). Namely, if $\omega_s=0$ (and $A_{-1}\not=0$), then there
exists a sequence of eigenvalues with real parts approaching to zero
and imaginary part tending to infinity. In this critical case the
exponential stability is not possible: $\|{\rm e}^{t{\mathcal
A}}\|\not\rightarrow 0$ when $t\rightarrow \infty$, but asymptotic
non-exponential stability may occur: $\lim\limits_{t\rightarrow
+\infty} {\rm e}^{t{\mathcal A}}x=0$ for all $x\in M_2$.
For systems~(\ref{syst_delay_opmodel}), satisfying the assumption $\det A_{-1}\not=0$,
the problem of strong stability was analyzed in \cite{Rabah_Sklyar_Rezounenko_2003,Rabah_Sklyar_Rezounenko_2005}.
The main result on stability obtained there may be formulated as follows.
\begin{thr}[{\cite[R.~Rabah,~G.M.~Sklyar,~A.V.~Rezounenko]{Rabah_Sklyar_Rezounenko_2005}}]\label{thr_stability_intro}
Consider the system~(\ref{syst_delay_opmodel}) such that $\det A_{-1}\not=0$.
Let us put $\sigma_1=\sigma(A_{-1})\cap \{\mu: |\mu|= 1 \}$.
Assume that $\sigma({\cal A})\subset \{\lambda:\; \MRe \lambda<0\}$ (necessary condition).
The following three mutually exclusive possibilities hold true:

{\em (i)} $\sigma_1$ consists of simple eigenvalues only, i.e. an one-dimensional eigenspace corresponds to each eigenvalue
and there are no root vectors. Then system~(\ref{syst_delay_opmodel}) is asymptotically stable.

{\em (ii)} The matrix $A_{-1}$ has a Jordan block, corresponding to an eigenvalue $\mu\in \sigma_1$.
Then (\ref{syst_delay_opmodel}) is unstable.

{\em (ii)} There are no Jordan blocks, corresponding to eigenvalues from $\sigma_1$, but there exists
an eigenvalue $\mu\in \sigma_1$ whose eigenspace is at least two
dimensional. In this case system~(\ref{syst_delay_opmodel}) can be
either stable or unstable. Moreover, there exist two systems with
the same spectrum, such that one of them is stable while the other
one is unstable.
\end{thr}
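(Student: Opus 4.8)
The plan is to analyze the asymptotic behavior of the semigroup $\{{\rm e}^{t{\mathcal A}}\}_{t\ge 0}$ by reducing the stability question to the spectral structure of $A_{-1}$ on the unit circle, using the Riesz basis of $\mathcal A$-invariant finite-dimensional subspaces that exists in the nonsingular case $\det A_{-1}\ne 0$. First I would recall the precise spectral picture: under the standing assumption $\sigma({\mathcal A})\subset\{\MRe\lambda<0\}$, the eigenvalues of $\mathcal A$ split into a finite part bounded away from the imaginary axis and a countable family of clusters accumulating on the imaginary axis, and these clusters are asymptotically located near the points $\ln\mu + 2\pi\Mi k$ for $\mu\in\sigma_1$, $k\in\mathbb{Z}$. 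Since $|\mu|=1$ exactly for $\mu\in\sigma_1$, it is precisely these eigenvalues whose real parts tend to zero that govern whether strong stability holds, so the whole problem localizes to the behavior of the semigroup restricted to the $\mathcal A$-invariant subspace spanned by the corresponding chains of root subspaces.

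Next I would invoke the abstract strong-stability criteria in Banach spaces (the Arendt--Batty--Lyubich--Ph\'ong type results and the earlier Sklyar--Shirman framework cited in the excerpt): for a bounded $C_0$-semigroup whose generator has no spectrum on the imaginary axis and whose spectrum does not accumulate badly, strong stability is equivalent to $\lim_{t\to\infty}{\rm e}^{t{\mathcal A}}x=0$ for all $x$. The key is therefore to control the norms of the finite-dimensional spectral projections and the growth of ${\rm e}^{t{\mathcal A}}$ on each invariant block. In case (i), when $\sigma_1$ consists of simple eigenvalues with one-dimensional eigenspaces and no root vectors, the blocks are (asymptotically) scalar and the projection norms stay uniformly bounded along the Riesz basis; combined with $\MRe\lambda<0$ on each eigenvalue this yields uniform boundedness of the semigroup together with pointwise decay, hence asymptotic stability. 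In case (ii), the presence of a Jordan block attached to some $\mu\in\sigma_1$ forces the associated clusters to carry a nontrivial nilpotent part whose contribution grows polynomially in $t$ while the exponential damping degrades to zero along the cluster; a quantitative estimate shows the projection norms (or equivalently the semigroup norms on those blocks) are unbounded, so strong stability fails and the system is unstable.

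The delicate case is (iii), the geometrically-multiple but semisimple situation: here there are no Jordan blocks, yet the eigenspace of some $\mu\in\sigma_1$ has dimension $\ge 2$. The plan is to show that the decisive quantity is no longer the Jordan structure of $A_{-1}$ alone but the fine asymptotics of the eigenvectors of $\mathcal A$ within each multidimensional eigenspace as $k\to\infty$; the angles between the one-dimensional root subspaces in a given cluster may or may not degenerate, and this determines whether the spectral projections stay bounded. I would construct two explicit systems sharing the same spectrum but differing in these off-diagonal asymptotic couplings — one in which the projection norms remain bounded (stable) and one in which they blow up (unstable) — thereby proving that spectrum alone does not decide stability. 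The main obstacle I expect is exactly this construction and the accompanying sharp estimate: one must track how the normalized eigenvectors of $\mathcal A$ behave across an entire cluster and relate the boundedness of the Riesz projections to a computable geometric condition on the higher-dimensional eigenspaces of $A_{-1}$, and then realize both alternatives by a suitable choice of the matrices $A_2,A_3$ while holding $\sigma({\mathcal A})$ fixed.
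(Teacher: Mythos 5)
Your plan for item (i) follows exactly the route the paper attributes to \cite{Rabah_Sklyar_Rezounenko_2005}: under $\det A_{-1}\ne 0$ the spectrum lies in a vertical strip and clusters near the points $\ln|\mu_m|+\Mi(\arg\mu_m+2\pi k)$, one uses the Riesz basis of ${\mathcal A}$-invariant finite-dimensional subspaces of $M_2$ to get uniform boundedness of the semigroup, and then the Sklyar--Shirman/Arendt--Batty strong-stability theory applies (here $\sigma({\mathcal A})\cap \Mi\mathbb{R}=\emptyset$, since the spectrum is the discrete zero set of $\det\triangle(\lambda)$); that part is sound, and it is also the point where the present paper departs from you, replacing the whole-space Riesz basis by the decomposition $M_2=M_2^0\oplus M_2^1$ of Theorem~\ref{thr_decomposition} plus the resolvent bound of Theorem~\ref{thr_boundedness} in order to drop the assumption $\det A_{-1}\ne 0$. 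The genuine gap is in your item (ii). You claim that a Jordan block of $A_{-1}$ at some $\mu\in\sigma_1$ \emph{forces} the associated spectral clusters of $\mathcal A$ to carry a nontrivial nilpotent part, and you derive instability from the resulting polynomial growth. That premise is false in general: the Jordan chains belong to $\widetilde{\mathcal A}$ (the case $A_2\equiv A_3\equiv 0$), and under the perturbation by $A_2$, $A_3$ only the \emph{total} multiplicity inside each circle $L_m^k(r^{(k)})$ is preserved (cf.\ Proposition~\ref{ut_ut1}); the multiple eigenvalue generically splits into distinct simple eigenvalues, so $\mathcal A$ may have no root vectors whatsoever. A complete proof of (ii) must also treat the split case, where the eigenvectors $v_1^k$, $v_2^k$ of the two nearby simple eigenvalues become asymptotically parallel and one bounds $\|{\rm e}^{t{\mathcal A}}|_{V_k}\|$ from below by a quantity of the type $\bigl|{\rm e}^{\lambda_1^k t}-{\rm e}^{\lambda_2^k t}\bigr|/\|v_1^k-v_2^k\|$, comparing the splitting distance with $|\MRe\lambda_i^k|$. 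Your parenthetical ``projection norms blow up'' names the right object, but nothing in the sketch shows they must blow up once the nilpotent part disappears; this quantitative dichotomy is precisely the content of the cited proof of (ii), which the paper notes is independent of the Riesz basis technique.

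For item (iii) you correctly identify the crux --- two systems with identical spectrum and different asymptotic eigenvector geometry --- but you leave the construction, which \emph{is} the proof of (iii), as an acknowledged obstacle. The paper settles it with the explicit system~(\ref{syst_delay_ex}): $A_{-1}=-I$ and $A_0=\left(\begin{smallmatrix}-b & s\\ 0 & -b\end{smallmatrix}\right)$, realized inside the class~(\ref{syst_delay_gen}) by Remark~\ref{rm_1}. Its characteristic determinant is a perfect square, so the spectrum --- the double roots of $\lambda{\rm e}^{\lambda}+\lambda+b{\rm e}^{\lambda}=0$ --- does not depend on $s$, and it is placed in the open left half-plane not by inspection but via Pontryagin's criteria on transcendental equations (Theorems~\ref{pontr_th7}, \ref{pontr_th3}, Proposition~\ref{ut_u14}). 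For $s=0$ each eigenvalue of $\mathcal A$ has a two-dimensional eigenspace and the eigenvectors form a Riesz basis, giving strong stability; for $s\ne 0$ each eigenvalue carries a root vector, and instability follows from $\|{\rm e}^{t{\mathcal A}}w^k\|\ge {\rm e}^{t\MRe\lambda_k}(t-1)$ together with Banach--Steinhaus (Propositions~\ref{ut_u15}, \ref{ut_u16}). Note that the mechanism actually used for instability here is an honest Jordan chain of $\mathcal A$ with exactly coinciding eigenvalues, not the degeneration of angles between distinct eigenvectors that you propose; your mechanism is plausible but would require exactly the sharp cluster estimates you flag as the main obstacle, whereas the paper's choice makes the spectral coincidence and the stable/unstable dichotomy verifiable by elementary computation.
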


Let us discuss the importance of the assumption $\det A_{-1}\not=0$.
The proof of Theorem~\ref{thr_stability_intro} given in \cite{Rabah_Sklyar_Rezounenko_2005} is
based on the following facts.
Firstly, if $\det A_{-1}\not=0$, then the spectrum of $\mathcal A$ is located in a vertical strip $d_1\le{\rm Re}\; \sigma(\mathcal A)\le d_2$.
Namely, in \cite{Rabah_Sklyar_Rezounenko_2003,Rabah_Sklyar_Rezounenko_2005} it was shown that
$\sigma({\mathcal A})=\{\ln |\mu_m|+ {\rm i}(\arg \mu_m +2\pi k)+\overline{\rm o}(1/k):\; \mu_m\in\sigma(A_{-1}), k\in\mathbb{Z}\}$.
From the last it also follows the necessary condition for the system to be asymptotically stable: $\sigma(A_{-1})\subset\{\mu:\; |\mu|\le 1\}$.

Secondly, such location of the spectrum had allowed to prove the existence of a Riesz basis of generalized eigenvectors for the operator
${\mathcal A}=\widetilde{{\mathcal A}}$ corresponding to the case $A_2(\theta)\equiv A_3(\theta)\equiv 0$.
For a general operator ${\mathcal A}$ the generalized eigenvectors may not constitute a basis of the state space
(see an example in \cite{Rabah_Sklyar_Rezounenko_2003} and some general conditions in \cite{Verduyn_Yakubovich_1997}).
However, in \cite{Rabah_Sklyar_Rezounenko_2003,Rabah_Sklyar_Rezounenko_2005}
it was proved the existence of a Riesz basis of $\mathcal A$-invariant finite-dimensional subspaces of the space $M_2$ (see also \cite{Vlasov_2003}).
Such a basis is a powerful tool that had been applied for the analysis of strong stability.

If we allow the matrix $A_{-1}$ to be singular, then the described above location of the spectrum does not hold anymore.
Generally speaking, in this case for any $\alpha\in\mathbb{R}$ there exists an infinite number of eigenvalues which are situated
on the left of the vertical line ${\rm Re}\lambda=\alpha$.
Thus, the existence of a Riesz basis of ${\mathcal A}$-invariant finite-dimensional subspaces for the whole space $M_2$ cannot be guarantied.
As a consequence, the proof of the item~(i) given in \cite{Rabah_Sklyar_Rezounenko_2005},
which is essentially based on the Riesz basis technic, is no longer satisfactory and one needs another way of the analysis of stability.

However, it can be asserted that nonzero $\mu_m\in\sigma(A_{-1})$ define the spectral set
$\{\ln |\mu_m|+ {\rm i}(\arg \mu_m +2\pi k)+\overline{\rm o}(1/k):\; \mu_m\in\sigma(A_{-1}), \mu_m\not=0, k\in\mathbb{Z}\}\subset \sigma({\mathcal A})$
which belongs to a vertical strip. In particular, this can be asserted for $\mu_m\in\sigma_1$.
The fact that Theorem~\ref{thr_stability_intro} is formulated in terms of $\sigma_1$ and the last remark
give us the idea to decompose the initial system~(\ref{syst_delay_opmodel}) into two systems
\begin{equation}\label{decomp_intro}
\dot{x}={\mathcal A}x \Leftrightarrow
\left\{
\begin{array}{l}
\dot{x_0}={\mathcal A}_0 x_0\\
\dot{x_1}={\mathcal A}_1 x_1
\end{array}
\right.
\end{equation}
in such way that $\sigma({\mathcal A}_0)=\sigma({\mathcal A})\cap \{\lambda:\;-\infty<{\rm Re}\lambda\le -\varepsilon\}$ and
$\sigma({\mathcal A}_1)=\sigma({\mathcal A})\cap \{\lambda:\;-\varepsilon<{\rm Re}\lambda\le \omega_s=0\}$, for some $\varepsilon>0$.

To obtain the representation~(\ref{decomp_intro}) we construct a special spectral decomposition of the state space:
$M_2= M_2^0 \oplus M_2^1$, where $M_2^0$, $M_2^1$ are ${\mathcal A}$-invariant subspaces.
We define ${\mathcal A}_0={\mathcal A}|_{M_2^0}$ and ${\mathcal A}_1={\mathcal A}|_{M_2^1}$.

The spectrum of the system $\dot{x_1}={\mathcal A}_1 x_1$ is such that
the corresponding eigenvectors form a Riesz basis of the subspace $M_2^1$.
The strong stability of the semigroup $\{{\rm e}^{t {\mathcal A}}|_{M_2^1}\}_{t\ge 0}$ is being proved
using the methods of \cite{Rabah_Sklyar_Rezounenko_2005}.

The semigroup $\{{\rm e}^{t {\mathcal A}}|_{M_2^0}\}_{t\ge 0}$ is exponentially stable. We prove this
fact using the equivalent condition consisting in the uniform boundedness of the resolvent $R(\lambda, {\mathcal A})|_{M_2^0}$
on the set $\{\lambda: {\rm Re} \lambda \ge 0\}$.
Thus, we prove that the initial system $\dot{x}={\mathcal A}x$ is asymptotically stable.
The mentioned scheme requires complicated technics.

To complete the stability analysis we revisit the example showing the item~(iii) with a simpler formulation
than in \cite{Rabah_Sklyar_Rezounenko_2005}, where it was given using the Riesz basis technic.
The analysis of the spectrum being carried out in our example is essentially based on the deep results on
transcendental equations obtained by L.~Pontryagin \cite{Pontryagin_function_zeros}.
We notice also that the proof of the item~(ii) given in \cite{Rabah_Sklyar_Rezounenko_2005}
does not involve the Riesz basis technic and, thus, it remains the same for the case $\det A_{-1}=0$ .

The technics of the direct spectral decompositions and the resolvent boundedness presented above
allow us to extend the results on the stabilizability problem given in \cite{Rabah_Sklyar_Rezounenko_2008} for the case of
singular matrix $A_{-1}$.

The general problem of stabilizability of control system is to find a feedback $u={\mathcal F}x$
such that the closed-loop system
$$
\dot{x}=({\mathcal A}+{\mathcal B}{\mathcal F})x
$$
is asymptotically stable in some sense. For the system~(\ref{syst_delay_gen_cont})
the result of exponential stabilizability may be derived
from those obtained for some particular cases (see e.g. \cite{Hale_Verduyn_2002,OConnor_Tarn_1983,Pandolfi_1976}).
The needed feedback for our system is of the form
\begin{equation}\label{eq_control2_intro}
F(z(t+\cdot))=F_{-1} \dot{z}(t-1)
+ \int_{-1}^0 F_2(\theta)\dot{z}(t+\theta) \dd\theta +\int_{-1}^0 F_3(\theta)z(t+\theta) \dd\theta.
\end{equation}
Our purpose is to obtain, as in \cite{Rabah_Sklyar_Rezounenko_2008}, the condition of asymptotic non-exponential
stabilizability of the system~(\ref{syst_delay_gen_cont}) with the regular feedback
\begin{equation}\label{eq_control_intro}
F(z(t+\cdot))=\int_{-1}^0 F_2(\theta)\dot{z}(t+\theta) \dd\theta +\int_{-1}^0 F_3(\theta)z(t+\theta) \dd\theta,
\end{equation}
where $F_2(\cdot), F_3(\cdot)\in L_2(-1, 0; \mathbb{C}^{n\times p})$.
The motivation is that this kind of feedback is relatively bounded with respect to the state operator $\mathcal A$
and does not change the domain of $\mathcal A$: ${\mathcal D}({\mathcal A})={\mathcal D}({\mathcal A}+{\mathcal B}{\mathcal F})$.
The natural necessary condition regular stabilizability is $\sigma(A_{-1})\subset\{\mu:\; |\mu|\le 1\}$
because $A_{-1}$ is not modified by the feedback.
Under the same restrictive condition $\det A_{-1}\not=0$ in \cite{Rabah_Sklyar_Rezounenko_2008}
was obtained the following result on stabilizability.
\begin{thr}[{\cite[R.~Rabah,~G.M.~Sklyar,~A.V.~Rezounenko]{Rabah_Sklyar_Rezounenko_2008}}]\label{thr_stabilizability_intro}
Let the system~(\ref{syst_delay_gen_cont}) verifies the following assumptions:

{\em (1)}  All the eigenvalues of the matrix $A_{-1}$ satisfy $|\mu|\le 1$.

{\em (2)} All the eigenvalues $\mu\in \sigma_1$ are simple.

\noindent Then the system is regularly asymptotically stabilizable if

{\em (3)} ${\rm rank}(\triangle(\lambda),\; B)=n$ for all $\lambda: {\rm Re} \lambda \ge 0$.

{\em (4)} ${\rm rank}(\mu I - A_{-1},\; B)=n$ for all $\mu\in\sigma_1$.
\end{thr}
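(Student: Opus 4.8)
The plan is to exploit the fact that a regular feedback of the form~(\ref{eq_control_intro}) keeps the system inside the same class of operators. Substituting $u=\mathcal F x$ into~(\ref{syst_delay_gen_cont}) replaces $A_2(\theta)$ by $A_2(\theta)+BF_2(\theta)$ and $A_3(\theta)$ by $A_3(\theta)+BF_3(\theta)$, while leaving the neutral matrix $A_{-1}$ untouched (this is exactly why the feedback class was chosen to exclude the $\dot z(t-1)$ term). Hence the closed-loop generator $\mathcal A_{\mathcal F}:=\mathcal A+\mathcal B\mathcal F$ is again a mixed retarded-neutral operator of the type~(\ref{syst_operator}) with the same $A_{-1}$, the same set $\sigma_1$ (still simple by assumption~(2)), and the same asymptotic location of spectral clusters along the lines ${\rm Re}\,\lambda=\ln|\mu_m|$, $\mu_m\in\sigma(A_{-1})$. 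The whole stability machinery of the present paper therefore applies to $\mathcal A_{\mathcal F}$, and the problem reduces to choosing $F_2,F_3\in L_2(-1,0;\mathbb C^{n\times p})$ so that $\sigma(\mathcal A_{\mathcal F})\subset\{\lambda:{\rm Re}\,\lambda<0\}$: once this is arranged, the strong-stability trichotomy of Theorem~\ref{thr_stability_intro}, in the singular-neutral form established in this paper, places the closed loop in case~(i) and yields strong asymptotic stability of $\{{\rm e}^{t\mathcal A_{\mathcal F}}\}_{t\ge0}$.

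To realize this spectral placement I would split $\{\lambda:{\rm Re}\,\lambda\ge0\}$ into its asymptotic and its finite parts and match each to one controllability hypothesis. Under assumption~(1) every cluster lies in the closed left half-plane, and only the clusters generated by $\mu_m\in\sigma_1$ accumulate on the imaginary axis; all clusters coming from $|\mu_m|<1$ are already bounded away from it on the left. Thus, apart from finitely many exceptional eigenvalues, the spectrum in $\{{\rm Re}\,\lambda\ge0\}$ is carried by the $\sigma_1$-clusters. The decisive point is that a regular feedback cannot move the leading term $\ln|\mu_m|=0$ of such a cluster, but it does modify the correction $\overline{\rm o}(1/k)$ whose real part decides on which side of the imaginary axis the cluster eventually lies. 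Condition~(4), ${\rm rank}(\mu I-A_{-1},\,B)=n$ for $\mu\in\sigma_1$, is precisely the controllability requirement guaranteeing that this correction can be shifted so that each entire (infinite) $\sigma_1$-cluster is driven strictly into $\{{\rm Re}\,\lambda<0\}$. The finitely many exceptional eigenvalues are then relocated into the open left half-plane by a finite-dimensional pole-assignment argument, for which condition~(3), ${\rm rank}(\triangle(\lambda),\,B)=n$ on $\{{\rm Re}\,\lambda\ge0\}$, is the exact Hautus-type solvability criterion.

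Concretely, I would carry out the construction through the spectral decomposition $M_2=M_2^0\oplus M_2^1$ introduced earlier. The critical subspace $M_2^1$ carries a Riesz basis of eigenvectors, so on $M_2^1$ the feedback can be designed exactly as in the nonsingular case~\cite{Rabah_Sklyar_Rezounenko_2008}, using conditions~(3)--(4) to place both the $\sigma_1$-clusters and the exceptional eigenvalues as above; the simplicity required in case~(i) is inherited automatically from assumption~(2), since $A_{-1}$ is untouched. On the complementary part $M_2^0$, where ${\rm Re}\,\sigma(\mathcal A_0)\le-\varepsilon$, the open-loop semigroup is already exponentially stable, and because the regular feedback is $\mathcal A$-bounded and does not enlarge the domain, one shows that the resolvent $R(\lambda,\mathcal A_{\mathcal F})$ stays uniformly bounded on $\{{\rm Re}\,\lambda\ge0\}$ over this component, preserving exponential stability there. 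Assembling the strongly stable $M_2^1$-part and the exponentially stable $M_2^0$-part then gives strong asymptotic stability of the full closed-loop semigroup, which is the assertion.

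The main obstacle, specific to the singular case, is the coupling between these two halves of the argument. Unlike the nonsingular situation, where the spectrum sits in a vertical strip and a single global Riesz basis is available, here $M_2^0$ is genuinely infinite-dimensional, so the feedback designed on $M_2^1$ must be shown not to destabilize it; verifying the uniform resolvent bound for $\mathcal A_{\mathcal F}$ restricted to $M_2^0$ requires the same delicate estimates used to prove exponential stability of $\mathcal A_0$ and is the technically heaviest step. A second, subtler difficulty is to produce one regular feedback that simultaneously effects the finite pole-assignment of condition~(3) and the asymptotic cluster-shift of condition~(4) without the two interfering, and without reintroducing a dependence on $\dot z(t-1)$, i.e. keeping $F_{-1}=0$ so that $A_{-1}$, and hence $\sigma_1$, is genuinely preserved.
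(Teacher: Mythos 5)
Your outline follows essentially the same route as the paper: a regular feedback leaves $A_{-1}$ (hence $\sigma_1$) untouched and keeps the closed loop inside the same neutral class, so the task reduces to placing the closed-loop spectrum in the open left half-plane and invoking the stability result (Theorem~\ref{thr_stability}); the placement is done on a spectral decomposition, with the infinite $\sigma_1$-clusters shifted by the infinite pole assignment theorem of \cite{Rabah_Sklyar_Rezounenko_2008} and the finitely many remaining unstable eigenvalues moved by finite-dimensional pole assignment. The two difficulties you flag at the end but do not resolve are, however, precisely where the paper's organization does the work. First, interference between the finite and infinite parts of the design is avoided by making the construction \emph{sequential}, using the three-part decomposition $M_2=M_2^0\oplus M_2^1\oplus M_2^2$ of Theorem~\ref{thr_decomposition_stab} rather than the two-part one: the first feedback ${\mathcal F}_2$ is chosen to vanish on $M_2^0\oplus M_2^1$ and act only on the finite-dimensional invariant subspace $M_2^2$, so it relocates $\Lambda_2$ without moving $\Lambda_0\cup\Lambda_1$; one then re-runs the decomposition for the new generator $\widehat{\mathcal A}={\mathcal A}+{\mathcal B}{\mathcal F}_2$ (now with $\Lambda_2(\widehat{\mathcal A})=\emptyset$) and only afterwards designs ${\mathcal F}_1$ on the critical part, after reducing to a single input $b=Bc$ via Lemma~\ref{stz_multi-single} --- note that this reduction needs \emph{both} conditions (3) and (4), not only (4) as your matching of hypotheses to tasks suggests. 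Second, no separate argument that ${\mathcal F}_1$ ``does not destabilize'' the stable component is needed, and none of the delicate resolvent estimates has to be redone for the closed loop: since both feedbacks are regular, the final generator is again a neutral-type generator with the same $A_{-1}$, simple $\sigma_1$, and spectrum in the open left half-plane, so Theorem~\ref{thr_stability} applies to it verbatim; that theorem constructs its \emph{own} decomposition adapted to the closed-loop operator and invokes the resolvent-boundedness result (Theorem~\ref{thr_boundedness}, cf.\ Remark~\ref{thr_boundedness_stab}) as a black box. In other words, the ``technically heaviest step'' you anticipate --- a fresh uniform resolvent bound for ${\mathcal A}+{\mathcal B}{\mathcal F}$ on the open-loop $M_2^0$ --- never arises, because stability of the closed loop is not assembled from the open-loop subspaces at all.
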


The proof of this theorem given in \cite{Rabah_Sklyar_Rezounenko_2008} uses the existence of the
Riesz basis of the whole state space $M_2$ and, thus, it requires the assumption $\det A_{-1}\not=0$.
To avoid this assumption, we construct and prove another spectral decomposition
which takes into account the unstable part of the system.
By means of this decomposition we separate a subsystem which is generated
by the part of the spectrum corresponding to the zero eigenvalues, i.e. the singularities of the matrix $A_{-1}$.
Proving the resolvent boundedness, we show the exponential stability of this subsystem.
The main ``critical'' part of the system is in $\mathcal A$-invariant subspaces, where we apply the same methods that
were given in \cite{Rabah_Sklyar_Rezounenko_2008}, namely, the theorem on
infinite pole assignment, introduced there, and a classical pole assignment result in finite dimensional spaces.

The paper is organized as follows. In Sections~2 we recall the results on the spectrum, eigenvectors and the resolvent of the operator $\mathcal A$
obtained in \cite{Rabah_Sklyar_Rezounenko_2005,Rabah_Sklyar_Rezounenko_2008}. Besides we prove some properties
of eigenvectors.
In Section~3 we construct and prove two direct spectral decomposition of the state space.
One of them is used to prove the main result on stability and another one for the proof the result on stabilizability.
Section~4 is devoted to the proof of the uniform boundedness of the restriction of the resolvent on some invariant subspaces.
Finally, in Section~5 and Section~6 we give the formulation and the proof of our main results on
stability and stabilizability.
Besides, in Section~5 we give an explicit example of two systems having the same spectrum in the open left half-plane
but one of these systems is asymptotically stable while the other one is unstable.
\section{Preliminaries}
In this section we recall several results on the location of the spectrum of the operator $\mathcal A$,
on the explicit form of its resolvent and on the form of eigenvectors of $\mathcal A$ and ${\mathcal A}^*$.
We prove some properties of eigenvectors of $\mathcal A$ and ${\mathcal A}^*$.

\subsection{The resolvent and the spectrum}

The results given in this subsection have been presented and proved in
\cite{Rabah_Sklyar_Rezounenko_2003,Rabah_Sklyar_Rezounenko_2005,Rabah_Sklyar_Rezounenko_2008}.
Some formulations of the propositions are adapted for the  case $\det A_{-1}=0$.

\begin{ut}[{\cite[Proposition~1]{Rabah_Sklyar_Rezounenko_2005}}]\label{ut_ut001}
The resolvent of the operator ${\cal A}$ has the following form:
\begin{equation}\label{resolvent}
R(\lambda, {\mathcal A}) \left(
\begin{array}{c}
z\\
\xi(\cdot)
\end{array}
\right) \equiv \left(
\begin{array}{c}
{\rm e}^{-\lambda}A_{-1}\int\nolimits_{-1}^0 {\rm e}^{-\lambda s}
\xi(s){\rm d} s + (I-{\rm e}^{-\lambda}A_{-1})
\triangle^{-1}(\lambda)
D(z, \xi, \lambda)\\[2ex]
\int\nolimits_0^\theta {\rm e}^{\lambda(\theta- s)} \xi(s) {\rm d} s
+ {\rm e}^{\lambda\theta}\triangle^{-1}(\lambda) D(z, \xi, \lambda)
\end{array}
\right),
\end{equation}
where $z\in\mathbb{C}^n, \xi(\cdot)\in L_2(-1,0; \mathbb{C}^n)$;
$\triangle(\lambda)$ is the matrix function defined by
\begin{equation}\label{eq_delta}
\triangle(\lambda)=\triangle_{\mathcal A}(\lambda)=-\lambda
I+\lambda {\rm e}^{-\lambda} A_{-1} + \lambda\int\nolimits_{-1}^0
{\rm e}^{\lambda s} A_2(s) {\rm d} s + \int\nolimits_{-1}^0 {\rm
e}^{\lambda s} A_3(s) {\rm d} s,
\end{equation}
and $D(z, \xi, \lambda)$ is the following vector-function acting to $\mathbb{C}^n$:
\begin{equation}\label{eq_D}
D(z, \xi, \lambda)=z+ {\lambda}
{\rm e}^{-\lambda}A_{-1}\int_{-1}^0 {\rm e}^{-{\lambda} \theta} \xi(\theta) \dd
\theta -\int_{-1}^0 A_2(\theta) \xi(\theta) \dd \theta$$
$$- \int_{-1}^0 {\rm e}^{{\lambda} \theta} [\lambda A_2(\theta)+A_3(\theta)]
\left[\int_{0}^\theta {\rm e}^{-{\lambda} s}\xi(s)\dd s \right] \dd
\theta.
\end{equation}
\end{ut}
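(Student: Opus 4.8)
The plan is to compute $R(\lambda,\mathcal{A})=(\lambda I-\mathcal{A})^{-1}$ directly from the definition: given $z\in\mathbb{C}^n$ and $\xi\in L_2(-1,0;\mathbb{C}^n)$, I would solve the equation $(\lambda I-\mathcal{A})x=\binom{z}{\xi}$ for the unknown $x=\binom{y}{\varphi(\cdot)}\in\mathcal{D}(\mathcal{A})$. Reading off the two components of $\mathcal{A}$ from (\ref{syst_operator}) and the domain description from (\ref{syst_domain}), this single operator equation is equivalent to the system made of the algebraic relation $\lambda y-\int_{-1}^0 A_2(\theta)\varphi'(\theta)\dd\theta-\int_{-1}^0 A_3(\theta)\varphi(\theta)\dd\theta=z$ (first row), a first--order linear differential equation relating $\lambda\varphi$, $\varphi'$ and $\xi$ (second row), and the boundary constraint $y=\varphi(0)-A_{-1}\varphi(-1)$ coming from the domain.

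First I would integrate the differential equation given by the second component. Introducing $c:=\varphi(0)\in\mathbb{C}^n$ as a parameter that is not yet known and integrating the scalar--coefficient linear equation, one obtains $\varphi$ as the sum of an explicit $\xi$--dependent particular solution and the homogeneous term $\Me^{\lambda\theta}c$; this is exactly the second component of (\ref{resolvent}) once $c$ is identified. Because $\xi\in L_2$, the resulting $\varphi$ is absolutely continuous with derivative in $L_2$, hence $\varphi\in H^1(-1,0;\mathbb{C}^n)$. Evaluating $\varphi$ at $\theta=-1$ and substituting into the domain relation $y=\varphi(0)-A_{-1}\varphi(-1)$ yields an expression of the form $y=(I-\Me^{-\lambda}A_{-1})c+(\text{a term in }\xi)$, which already reproduces the first component of (\ref{resolvent}) and simultaneously confirms that the constructed $x$ lies in $\mathcal{D}(\mathcal{A})$. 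It then remains only to pin down $c$.

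To determine $c$ I would substitute the expressions for $y$, $\varphi$ and $\varphi'$ into the algebraic first--row equation and gather terms. The coefficient multiplying $c$ recombines, using the integrals $\int_{-1}^0\Me^{\lambda s}A_2(s)\dd s$ and $\int_{-1}^0\Me^{\lambda s}A_3(s)\dd s$ together with the boundary contribution $\lambda(I-\Me^{-\lambda}A_{-1})$ coming from $\lambda y$, into precisely the characteristic matrix $\triangle(\lambda)$ of (\ref{eq_delta}); the terms depending only on $z$ and $\xi$ collect into the vector $D(z,\xi,\lambda)$ of (\ref{eq_D}). The first row thus reduces to $\triangle(\lambda)c=D(z,\xi,\lambda)$, so that for every $\lambda$ with $\det\triangle(\lambda)\neq0$ one gets $c=\triangle^{-1}(\lambda)D(z,\xi,\lambda)$. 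Inserting this back into the formulas for $y$ and for $\varphi$ reproduces (\ref{resolvent}) exactly, and shows that $(\lambda I-\mathcal{A})$ is boundedly invertible precisely when $\triangle(\lambda)$ is invertible.

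I expect the main obstacle to be the bookkeeping in this last step, namely verifying that the $z$-- and $\xi$--dependent remainder is exactly $D(z,\xi,\lambda)$. The delicate piece is the double--integral term $\int_{-1}^0\Me^{\lambda\theta}[\lambda A_2(\theta)+A_3(\theta)]\bigl(\int_0^\theta\Me^{-\lambda s}\xi(s)\dd s\bigr)\dd\theta$, which arises when the $\xi$--part of $\varphi$ (and, through $\varphi'$, its derivative, which produces an extra $\xi$ plus a $\lambda$ multiple of the same integral) is fed into the integrals against $A_2$ and $A_3$. Keeping track of the matrix ordering, the sign conventions, and reconciling the contribution of $\lambda y$ with the $\lambda\Me^{-\lambda}A_{-1}\int_{-1}^0\Me^{-\lambda\theta}\xi(\theta)\dd\theta$ term of $D$ is where the computation is heaviest. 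A concluding remark would record that the same derivation proves the converse correspondence $\det\triangle(\lambda)\neq0\iff\lambda\in\rho(\mathcal{A})$, so that (\ref{resolvent}) genuinely represents the resolvent on its natural domain.
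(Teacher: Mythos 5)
Your overall strategy is the right one, and it is essentially the standard derivation of this formula (the paper itself gives no proof, recalling the statement from [Proposition~1] of Rabah--Sklyar--Rezounenko 2005, where it is proved in exactly this way): integrate the second-row ODE with the unknown value $c=\varphi(0)$ as a parameter, impose the domain constraint $y=\varphi(0)-A_{-1}\varphi(-1)$, and reduce the first row to a finite-dimensional linear system for $c$ whose matrix is $\triangle(\lambda)$ and whose right-hand side is $D(z,\xi,\lambda)$.

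There is, however, one concrete inconsistency that would derail the computation exactly as you have set it up: you define $R(\lambda,\mathcal{A})=(\lambda I-\mathcal{A})^{-1}$, but formula (\ref{resolvent}) is the inverse of $\mathcal{A}-\lambda I$, not of $\lambda I-\mathcal{A}$. With your convention the second row reads $\lambda\varphi-\varphi'=\xi$, whose solution is
\begin{equation*}
\varphi(\theta)=\Me^{\lambda\theta}c-\int_0^\theta \Me^{\lambda(\theta-s)}\xi(s)\dd s,
\end{equation*}
so the $\xi$-integral enters with a minus sign, contrary to your claim that this ``is exactly the second component of (\ref{resolvent})''; correspondingly, the first row reduces to $-\triangle(\lambda)c=D(z,\xi,\lambda)$, not to $\triangle(\lambda)c=D(z,\xi,\lambda)$, so $c=-\triangle^{-1}(\lambda)D(z,\xi,\lambda)$ and your final expression comes out as the \emph{negative} of (\ref{resolvent}). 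A quick sanity check makes the mismatch visible: take $A_{-1}=0$ and $A_2\equiv A_3\equiv 0$; then $\triangle(\lambda)=-\lambda I$ and $D=z$, so (\ref{resolvent}) has first component $-z/\lambda$, whereas $(\lambda I-\mathcal{A})^{-1}$ clearly gives $+z/\lambda$. The repair is purely notational but must be made: solve $(\mathcal{A}-\lambda I)(y,\varphi)^T=(z,\xi)^T$, i.e.\ the convention the paper is implicitly using. Then $\varphi'-\lambda\varphi=\xi$ integrates to $\varphi(\theta)=\Me^{\lambda\theta}c+\int_0^\theta\Me^{\lambda(\theta-s)}\xi(s)\dd s$, the domain constraint yields $y=(I-\Me^{-\lambda}A_{-1})c+\Me^{-\lambda}A_{-1}\int_{-1}^0\Me^{-\lambda s}\xi(s)\dd s$, and the first row becomes exactly $\triangle(\lambda)c=D(z,\xi,\lambda)$. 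With that correction the rest of your argument --- membership of the constructed element in $\mathcal{D}(\mathcal{A})$, the bookkeeping identifying $\triangle$ and $D$, and the concluding equivalence $\det\triangle(\lambda)\neq 0\iff\lambda\in\rho(\mathcal{A})$ --- goes through as you describe.
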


From (\ref{resolvent}) one may see that the resolvent does not exist in the points of singularity of
the matrix $\triangle(\lambda)$, i.e. the equation $\det \triangle(\lambda)=0$ defines the eigenvalues of the operator $\mathcal A$.
Now let us characterize the spectrum of ${\mathcal A}$ more precisely.

We denote by $\mu_1,\ldots, \mu_\ell$ the set of distinct
eigenvalues of the matrix $A_{-1}$ and by $p_1,\ldots, p_\ell$ their
multiplicities.
We recall the notation $\sigma_1=\sigma(A_{-1})\cap \{\mu: |\mu|= 1
\}$ and assume that $ \sigma_1= \{\mu_1,\ldots, \mu_{\ell_1}\}$,
$\ell_1\le \ell$. We notice that one of the eigenvalues $\mu_{\ell_1+1},\ldots, \mu_\ell$ may be zero.

Further, studying stability and stabilizability problems, we consider mainly the situations when
the eigenvalues from $\sigma_1$ are simple. This gives us a motivation to assume below (if the opposite is not mentioned) that
$p_1=\ldots=p_{\ell_1}=1$.
Besides, without loss of generality, we assume that the matrix $A_{-1}$ is in the following Jordan form:
\begin{equation}\label{eq_eq34}
A_{-1}= \left(
\begin{array}{cccccc}
\mu_1 & \ldots & 0 & 0 & \ldots & 0\\
\vdots & \ddots & \vdots & \vdots & \ddots  & \vdots \\
0 & \ldots & \mu_{\ell_1} & 0 & \ldots & 0\\
0 & \ldots & 0 & J_{\ell_1+1} & \ldots & 0\\
\vdots & \ddots & \vdots & \vdots & \ddots  & \vdots \\
0 & \ldots & 0 & 0 & \ldots & J_{\ell}\\
\end{array}
\right),
\end{equation}
where $J_{\ell_1+1},\ldots,J_{\ell}$ are Jordan blocks corresponding to the eigenvalues
$\mu_{\ell_1+1},\ldots,\mu_{\ell}$.

Let us denote by $\widetilde{{\mathcal A}}$ the state operator in the case when
$A_2(\theta)\equiv A_3(\theta)\equiv 0$. It is not difficult to see that the
spectrum of $\widetilde{{\mathcal A}}$ has the following structure
$$\sigma(\widetilde{\mathcal A})=\{\widetilde{\lambda}_m^{k}=\ln |\mu_m|+ {\rm i}(\arg \mu_m +2\pi k):\;
m=1,\ldots,\ell, \mu_m\not=0, k\in\mathbb{Z}\}\cup \{0\}.$$
We denote by $L_m^{k}(r^{(k)})$ circles centered at $\widetilde{\lambda}_m^{k}$ with radii $r^{(k)}$.

\begin{ut}\label{ut_ut1}
Let $\sigma_1=\{\mu_1,\ldots,\mu_{\ell_1}\}$ consists of simple eigenvalues only.
There exists $N_1\in\mathbb{N}$ such that the total multiplicity of the roots of the equation $\det \triangle(\lambda)=0$, contained in
the circles $L_m^{k}(r^{(k)})$, equals $p_m=1$ for all $m=1,\ldots,\ell_1$ and $k: |k|\ge N_1$,
and the radii $r^{(k)}$ satisfy the relation
$\sum\limits_{k\in\mathbb{Z}}(r^{(k)})^2 < \infty$.
\end{ut}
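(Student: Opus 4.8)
The plan is to view the equation $\det\triangle(\lambda)=0$ as a small analytic perturbation of $\det\widetilde\triangle(\lambda)=0$ and to count roots disc by disc with Rouch\'e's theorem. Introduce $Q_2(\lambda)=\int_{-1}^0{\rm e}^{\lambda s}A_2(s)\,{\rm d}s$ and $Q_3(\lambda)=\int_{-1}^0{\rm e}^{\lambda s}A_3(s)\,{\rm d}s$, so that by~(\ref{eq_delta}) one has $\triangle(\lambda)=-\lambda(I-{\rm e}^{-\lambda}A_{-1})+\lambda Q_2(\lambda)+Q_3(\lambda)$. Since the centers $\widetilde\lambda_m^k=\Mi(\arg\mu_m+2\pi k)$ (recall $|\mu_m|=1$ for $m\le\ell_1$) tend to infinity, for $|k|$ large $\lambda\neq0$ on $L_m^k(r^{(k)})$; factoring out $-\lambda$ reduces the count to comparing the scalar entire functions
$$F(\lambda):=\det\left[(I-{\rm e}^{-\lambda}A_{-1})-P(\lambda)\right],\qquad\widetilde F(\lambda):=\det(I-{\rm e}^{-\lambda}A_{-1}),\qquad P(\lambda):=Q_2(\lambda)+\lambda^{-1}Q_3(\lambda),$$
whose zeros coincide, on these discs, with those of $\det\triangle$ and $\det\widetilde\triangle$ respectively.

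First I would study the unperturbed function $\widetilde F$. It depends on $\lambda$ only through ${\rm e}^{-\lambda}$, hence is $2\pi\Mi$-periodic, so its local behaviour near each $\widetilde\lambda_m^k$ repeats that near $\widetilde\lambda_m^0$. With $A_{-1}$ in the Jordan form~(\ref{eq_eq34}) we have $\widetilde F(\lambda)=\prod_j(1-{\rm e}^{-\lambda}\mu_j)^{q_j}$, $q_j$ being the algebraic multiplicity of $\mu_j$. For a simple $\mu_m\in\sigma_1$ one has $q_m=1$ and ${\rm e}^{-\widetilde\lambda_m^0}\mu_m=1$, while $1-{\rm e}^{-\widetilde\lambda_m^0}\mu_j=1-\mu_j/\mu_m\neq0$ for $j\neq m$ by distinctness; thus $\widetilde\lambda_m^k$ is a \emph{simple} zero of $\widetilde F$. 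By periodicity there are $k$-independent constants $c,r_0>0$ with $|\widetilde F(\lambda)|\ge c\,r$ on $\{|\lambda-\widetilde\lambda_m^k|=r\}$ for every $r\le r_0$. Finally the centers are separated by a fixed distance $\delta_0>0$, so keeping $r^{(k)}<\delta_0/3$ makes the discs disjoint and puts exactly one center in each.

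The quantitative heart is the smallness of $P$. On the discs $\MRe\lambda=O(r^{(k)})$ is bounded, $I-{\rm e}^{-\lambda}A_{-1}$ is uniformly bounded, and expanding the determinant gives $|F(\lambda)-\widetilde F(\lambda)|\le C\|P(\lambda)\|$ with $C$ independent of $k$ once $\|P\|\le1$. At the center ($\MRe\lambda=0$), $Q_2(\widetilde\lambda_m^k)=\int_{-1}^0{\rm e}^{2\pi\Mi ks}\,g_m(s)\,{\rm d}s$ with $g_m(s)={\rm e}^{\Mi(\arg\mu_m)s}A_2(s)\in L_2(-1,0)$: these are the Fourier coefficients of $g_m$ in the orthonormal system $\{{\rm e}^{2\pi\Mi ks}\}_{k\in\mathbb{Z}}$ of $L_2(-1,0)$, so $\sum_k\|Q_2(\widetilde\lambda_m^k)\|^2<\infty$; the same holds for $Q_3$, hence $p_k:=\|P(\widetilde\lambda_m^k)\|\in\ell_2$. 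Since $sA_2(s),sA_3(s)\in L_2$, the Riemann--Lebesgue lemma gives $L_k:=\sup_{|\lambda-\widetilde\lambda_m^k|\le r_0}\|P'(\lambda)\|\to0$, whence $\sup_{|\lambda-\widetilde\lambda_m^k|\le r}\|P(\lambda)\|\le p_k+L_k r$.

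It remains to choose the radii. Fix $\gamma=3C/c$ and set $r^{(k)}=\max(\gamma p_k,\,k^{-2})$ for $|k|\ge N_1$ (and any value below the caps for the finitely many smaller $k$); then $\sum_k(r^{(k)})^2\le\gamma^2\sum_k p_k^2+\sum_k k^{-4}<\infty$. Enlarging $N_1$ so that $L_k<c/(2C)$ and $r^{(k)}<\min(r_0,\delta_0/3)$ for $|k|\ge N_1$ and all $m\le\ell_1$, the estimates above yield $C\bigl(p_k+L_kr^{(k)}\bigr)<c\,r^{(k)}\le|\widetilde F|$ on each circle $L_m^k(r^{(k)})$; Rouch\'e's theorem then gives that $F$ and $\widetilde F$ have the same number of zeros inside, namely one, so the total multiplicity of the roots of $\det\triangle(\lambda)=0$ in $L_m^k(r^{(k)})$ equals $p_m=1$. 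I expect the genuine obstacle to be exactly this last balancing: the radii must be small enough to be square-summable yet large enough for the \emph{strict} Rouch\'e inequality to hold on the whole circle, which requires controlling not only the pointwise size of $P$ at the centers (the $\ell_2$ Fourier input) but also its oscillation across the disc (the decay $L_k\to0$); converting the vertical-line $L_2$-decay of $Q_2,Q_3$ into these pointwise bounds is the delicate technical point.
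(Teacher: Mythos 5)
Your proof is correct, and it is essentially the same argument the paper relies on: the paper does not prove Proposition~\ref{ut_ut1} itself but cites \cite[Theorem~4]{Rabah_Sklyar_Rezounenko_2008}, whose proof is precisely this Rouch\'e comparison of $\det\triangle$ with $\det\widetilde\triangle$ on circles centered at the $\widetilde\lambda_m^k$, with square-summable radii coming from the $\ell_2$ (Bessel) decay of the Fourier-type integrals of $A_2$, $A_3$. The one technical point you flag --- uniformity of the Riemann--Lebesgue decay over the discs --- is exactly what the paper establishes as Proposition~\ref{ut_u13} (by a compactness argument), and, as your writeup confirms, nothing in the argument uses $\det A_{-1}\neq 0$, which is the paper's justification for carrying the result over to the singular case.
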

This proposition is a particular case of \cite[Theorem~4]{Rabah_Sklyar_Rezounenko_2008} which have been
formulated and proved there under the assumption $\det A_{-1}\not=0$. The proof for the case $\det A_{-1}=0$ remains the same.

\begin{notat}\label{notat_notat1}
We denote the eigenvalues of ${\mathcal A}$ mentioned in Proposition~\ref{ut_ut1} by $\lambda_m^k$, $m=1,\ldots,\ell_1$, $|k|\ge N_1$.
\end{notat}

\begin{rem}
Proposition~\ref{ut_ut1} is formulated for $m=1,\ldots,\ell_1$, however, it also holds for all those indices $m=1,\ldots,\ell$
which correspond to nonzero eigenvalues $\mu_m\in\sigma(A_{-1})$.
\end{rem}

\begin{rem}
In the case $\det A_{-1}\not=0$ the spectrum of $\mathcal A$ belongs to a vertical strip which is bounded from the left and from the
right.
However, in the case $\det A_{-1}=0$, in addition to the eigenvalues mentioned in Proposition~\ref{ut_ut1},
the operator $\mathcal A$ may also possess an infinite sequence of eigenvalues with real parts tending to $-\infty$.
\end{rem}

Similar results hold for the operator ${\mathcal A}^*$.
The spectra of ${\mathcal A}$ and ${\mathcal A}^*$ are related as $\sigma({\mathcal A}^*)=\overline{\sigma({\mathcal A})}$.
Eigenvalues of ${\mathcal A}^*$ are the roots
of the equation $\det \triangle^*(\lambda)=0$, where
\begin{equation}\label{eq_delta_conj}
\triangle^*(\lambda)=\triangle_{\cal A^*}(\lambda)=-\lambda
I+\lambda {\rm e}^{-\lambda} A_{-1}^* + \lambda\int_{-1}^0 {\rm e}^{\lambda s}
A_2^*(s) \dd s + \int_{-1}^0 {\rm e}^{\lambda s} A_3^*(s) \dd s,
\end{equation}
and the relation $(\triangle(\lambda))^*=\triangle^*(\overline{\lambda})$ holds.
The eigenvalues $\overline{\lambda_m^k}$, $m=1,\ldots,\ell_1$, $|k|\ge N_1$ may be described as in Proposition~\ref{ut_ut1}.

\subsection{Eigenvectors of $\mathcal A$ and ${\mathcal A}^*$}

First we give the explicit form of eigenvectors which has been proved in \cite{Rabah_Sklyar_Rezounenko_2005,Rabah_Sklyar_Rezounenko_2008}.

\begin{ut}[{\cite[Theorem~2]{Rabah_Sklyar_Rezounenko_2005}, \cite[Theorem~7]{Rabah_Sklyar_Rezounenko_2008}}]\label{ut_ut2}
Eigenvectors $\varphi$: $({\mathcal A}-\lambda I)\varphi=0$ and
$\psi$: $({\mathcal A}^*-\overline{\lambda} I)\psi=0$, $\lambda\in\sigma(\mathcal A)$ are of the form:
\begin{equation}\label{eq_eq27}
\varphi=\varphi(\lambda)=\left(
\begin{array}{c}
(I-{\rm e}^{-{\lambda}}A_{-1})x\\
{\rm e}^{{\lambda} \theta}x
\end{array}
\right),
\end{equation}
\begin{equation}\label{eq_eq28}
\psi=\psi(\overline{\lambda})=\left(
\begin{array}{c}
y\\
\left[\overline{\lambda} {\rm e}^{-\overline{\lambda} \theta}-A_2^*(\theta)+
{\rm e}^{-{\overline{\lambda}} \theta}\int\limits_0^\theta {\rm e}^{{\overline{\lambda}} s} (A_3^*(s)
+ \overline{\lambda} A_2^*(s)){\rm d} s
\right] y
\end{array}
\right),
\end{equation}
where $x=x(\lambda)\in \Ker\triangle({\lambda})$, $y=y(\overline{\lambda})\in \Ker\triangle^*({\overline{\lambda}})$.
\end{ut}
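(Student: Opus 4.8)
The plan is to verify both assertions directly from the definition of $\mathcal A$ in (\ref{syst_operator})--(\ref{syst_domain}), obtaining the adjoint eigenvector from the defining identity of $\mathcal A^*$ rather than by computing $\mathcal A^*$ in closed form. For $\mathcal A$ this is essentially a separation-of-variables computation. Writing $\varphi=(w,v(\cdot))^T\in\mathcal D(\mathcal A)$, the equation $\mathcal A\varphi=\lambda\varphi$ splits, by (\ref{syst_operator}), into the $\mathbb{C}^n$-relation $\int_{-1}^0 A_2 v' + \int_{-1}^0 A_3 v = \lambda w$ together with the transport equation $v'(\theta)=\lambda v(\theta)$ on $[-1,0]$. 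The latter forces $v(\theta)=\Me^{\lambda\theta}x$ with $x=v(0)$, and the domain constraint $w=v(0)-A_{-1}v(-1)$ then yields $w=(I-\Me^{-\lambda}A_{-1})x$, which is the first component of (\ref{eq_eq27}). Substituting $v(\theta)=\Me^{\lambda\theta}x$ into the first relation and collecting terms, I would recognize the coefficient of $x$ as exactly the matrix $\triangle(\lambda)$ of (\ref{eq_delta}); hence the relation holds iff $x\in\Ker\triangle(\lambda)$, proving (\ref{eq_eq27}).

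For $\mathcal A^*$ I would show that $\psi=(y,\zeta(\cdot))^T$ defined by (\ref{eq_eq28}), with $y\in\Ker\triangle^*(\overline{\lambda})$ and $\zeta$ the bracketed expression there, satisfies $\langle\mathcal A\varphi,\psi\rangle=\lambda\langle\varphi,\psi\rangle$ for every $\varphi=(w,v(\cdot))^T\in\mathcal D(\mathcal A)$; by the definition of the adjoint this simultaneously places $\psi$ in $\mathcal D(\mathcal A^*)$ and gives $\mathcal A^*\psi=\overline{\lambda}\psi$. The crux is that $\zeta$ contains the merely $L_2$ term $-A_2^*(\theta)y$, so a naive integration by parts in $\int_{-1}^0\langle v',\zeta\rangle$ is illegitimate. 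The decisive observation is that this term is placed precisely so that, after expanding $\langle\mathcal A\varphi,\psi\rangle$ and using $\langle A_2 v',y\rangle=\langle v',A_2^* y\rangle$ on the first component, the two non-smooth contributions $\int_{-1}^0\langle v',A_2^* y\rangle$ and $\int_{-1}^0\langle v',-A_2^* y\rangle$ cancel identically. What remains is paired only against the smooth part $\eta(\theta):=\zeta(\theta)+A_2^*(\theta)y=\Me^{-\overline{\lambda}\theta}[\overline{\lambda} I+\int_0^\theta \Me^{\overline{\lambda} s}(A_3^*(s)+\overline{\lambda} A_2^*(s))\dd s]y$, which is absolutely continuous, so integration by parts becomes legitimate.

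I would then integrate $\int_{-1}^0\langle v',\eta\rangle$ by parts and use that $\eta$ solves $\eta'=-\overline{\lambda}\eta+(A_3^*+\overline{\lambda} A_2^*)y$. The $A_3^*$-integrals cancel, and the surviving bulk terms reorganize as $\lambda\int_{-1}^0\langle v,\eta-A_2^* y\rangle=\lambda\int_{-1}^0\langle v,\zeta\rangle$ (since $\eta-A_2^* y=\zeta$), matching the bulk part of $\lambda\langle\varphi,\psi\rangle$, while two boundary terms $\langle v(0),\eta(0)\rangle-\langle v(-1),\eta(-1)\rangle$ remain. As $v(0)$ and $v(-1)$ are free for $v\in H^1$, matching these against $\lambda\langle w,y\rangle=\lambda\langle v(0),y\rangle-\lambda\langle v(-1),A_{-1}^* y\rangle$ (using $w=v(0)-A_{-1}v(-1)$) reduces the whole identity to the two vector equalities $\eta(0)=\overline{\lambda} y$ and $\eta(-1)=\overline{\lambda} A_{-1}^* y$. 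The first is immediate from the definition of $\eta$; the second is exactly where the spectral hypothesis enters—substituting $\int_{-1}^0 \Me^{\overline{\lambda} s}(A_3^*+\overline{\lambda} A_2^*)\dd s\,y$ extracted from $\triangle^*(\overline{\lambda})y=0$ collapses $\eta(-1)$ to $\overline{\lambda} A_{-1}^* y$.

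The main obstacle throughout is the low regularity of $A_2$ (only $L_2$), which forbids differentiating $\zeta$. The entire computation is arranged so that every integration by parts touches only the smooth remainder $\eta$, and it is precisely the engineered $-A_2^*(\theta)y$ term in (\ref{eq_eq28}) that absorbs the non-smooth contribution; the condition $y\in\Ker\triangle^*(\overline{\lambda})$ then emerges solely from the boundary term at $\theta=-1$.
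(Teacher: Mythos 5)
The paper itself does not prove Proposition~\ref{ut_ut2}: it imports it from the cited references, where the argument proceeds by computing the adjoint operator ${\mathcal A}^*$ explicitly (its action and its domain) and then solving the eigenvalue equation for ${\mathcal A}^*$ directly. Your route is genuinely different and, as far as it goes, correct. For ${\mathcal A}$ the separation-of-variables computation is exactly right: $v'=\lambda v$ forces $v(\theta)={\rm e}^{\lambda\theta}x$, the domain condition gives the first component $(I-{\rm e}^{-\lambda}A_{-1})x$, and the remaining relation is precisely $\triangle(\lambda)x=0$ with $\triangle$ as in (\ref{eq_delta}); this gives both inclusions at once. For ${\mathcal A}^*$ you avoid computing the adjoint by verifying $\langle{\mathcal A}\varphi,\psi\rangle=\lambda\langle\varphi,\psi\rangle=\langle\varphi,\overline{\lambda}\psi\rangle$ on all of ${\mathcal D}({\mathcal A})$, which indeed certifies simultaneously $\psi\in{\mathcal D}({\mathcal A}^*)$ and ${\mathcal A}^*\psi=\overline{\lambda}\psi$. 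The details check out: the two non-smooth terms $\pm\langle v',A_2^*y\rangle$ cancel, the smooth remainder $\eta=\zeta+A_2^*y$ satisfies $\eta'=-\overline{\lambda}\eta+(A_3^*+\overline{\lambda}A_2^*)y$, the $A_3^*$-integrals drop out, and the boundary identities $\eta(0)=\overline{\lambda}y$ and $\eta(-1)=\overline{\lambda}A_{-1}^*y$ hold, the latter being exactly equivalent to $\triangle^*(\overline{\lambda})y=0$. This is a cleaner way to handle the $L_2$-regularity of $A_2$ than differentiating through the adjoint formula.

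There is, however, one genuine incompleteness. The proposition asserts that \emph{every} eigenvector of ${\mathcal A}^*$ at $\overline{\lambda}$ has the form (\ref{eq_eq28}); your argument proves only the converse inclusion, namely that every vector of that form is an eigenvector. For ${\mathcal A}$ your computation gives necessity automatically, but for ${\mathcal A}^*$ nothing in your verification excludes eigenvectors not of the form (\ref{eq_eq28}). Necessity is actually used later in the paper: the subspaces $\widehat{M}_2^1$ in (\ref{eq_eq31}) and (\ref{eq_eq32-2}) are spans of \emph{all} eigenvectors of ${\mathcal A}^*$, and Lemma~\ref{lm_lm1_plelim} starts from an arbitrary eigenvector of ${\mathcal A}^*$ and assumes it has the form (\ref{eq_eq28}). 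The gap is closed by a multiplicity count: the resolvent (\ref{resolvent}) maps $M_2$ boundedly into $\mathbb{C}^n\times H^1$, hence is compact, so by the Fredholm theory for compact operators $\dim\Ker({\mathcal A}^*-\overline{\lambda}I)=\dim\Ker({\mathcal A}-\lambda I)=\dim\Ker\triangle(\lambda)=\dim\Ker\triangle^*(\overline{\lambda})$; since $y\mapsto\psi(\overline{\lambda})$ is linear and injective (its first component is $y$) from $\Ker\triangle^*(\overline{\lambda})$ into the eigenspace, equality of dimensions makes it onto, and necessity follows. Alternatively one computes ${\mathcal A}^*$ explicitly and solves, as in the cited papers. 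You should add one of these arguments to make the proof complete.
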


Below we give several properties of the sets of eigenvectors and we begin with the calculation of the
scalar product between eigenvectors of ${\mathcal A}$ and ${\mathcal A}^*$.

\begin{lem}\label{lm_lm4}
Let $\lambda_0, \lambda_1\in\sigma({\mathcal A})$ and $\varphi=\varphi(\lambda_0)$,
$\psi=\psi(\overline{\lambda_1})$ are corresponding eigenvectors:
$({\cal A}-{\lambda_0} I)\varphi=0$, $({\cal A}^*-\overline{\lambda_1} I)\psi=0$.
The scalar product $\langle \varphi, \psi \rangle_{M_2}$
equals to the following value:
\begin{equation}\label{eq_eq26}
\langle \varphi, \psi\rangle_{M_2}=\left\{
\begin{array}{cl}
0, & \lambda_0\not=\lambda_1\\
-\langle \triangle'({\lambda_0})x, y\rangle_{\mathbb{C}^n}, & \lambda_0=\lambda_1
\end{array}
\right.,
\end{equation}
where  $\triangle'({\lambda})=\frac{{\rm d}}{{\rm d}\lambda}\triangle(\lambda)$ and
$x=x(\lambda_0)$, $y=y(\overline{\lambda_1})$ are defined by (\ref{eq_eq27}) and
(\ref{eq_eq28}).
\end{lem}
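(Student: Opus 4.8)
The plan is to split the statement into the two cases recorded in~(\ref{eq_eq26}) and to treat the generic case $\lambda_0\neq\lambda_1$ by an abstract duality argument and the coincident case $\lambda_0=\lambda_1$ by a direct computation from the explicit eigenvector formulas.

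For $\lambda_0\neq\lambda_1$ I would argue as follows. Since $\varphi=\varphi(\lambda_0)$ and $\psi=\psi(\overline{\lambda_1})$ are genuine eigenvectors (Proposition~\ref{ut_ut2}), we have $\varphi\in\mathcal{D}(\mathcal{A})$, $\psi\in\mathcal{D}(\mathcal{A}^*)$, and $\mathcal{A}\varphi=\lambda_0\varphi$, $\mathcal{A}^*\psi=\overline{\lambda_1}\psi$. The defining property of the adjoint then gives
$$\lambda_0\langle\varphi,\psi\rangle_{M_2}=\langle\mathcal{A}\varphi,\psi\rangle_{M_2}=\langle\varphi,\mathcal{A}^*\psi\rangle_{M_2}=\lambda_1\langle\varphi,\psi\rangle_{M_2},$$
so that $(\lambda_0-\lambda_1)\langle\varphi,\psi\rangle_{M_2}=0$ and the product vanishes. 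This is the familiar biorthogonality of eigenvectors of an operator and its adjoint and requires nothing beyond Proposition~\ref{ut_ut2}.

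For $\lambda_0=\lambda_1=\lambda$ I would compute $\langle\varphi,\psi\rangle_{M_2}$ head-on from~(\ref{eq_eq27})--(\ref{eq_eq28}), splitting the $M_2$-product into its $\mathbb{C}^n$-part and its $L_2$-part. The first component contributes $\langle(I-{\rm e}^{-\lambda}A_{-1})x,y\rangle_{\mathbb{C}^n}$. Writing the second component of $\psi$ as $M(\theta)y$, where $M(\theta)$ is the bracketed matrix in~(\ref{eq_eq28}), the $L_2$-part becomes $\langle[\int_{-1}^0 {\rm e}^{\lambda\theta}M(\theta)^*\dd\theta]\,x,y\rangle_{\mathbb{C}^n}$ after moving $M(\theta)$ into the second slot and pulling the constant vector $x$ out of the integral; here $M(\theta)^*$ is obtained by conjugating the scalar factors ($\overline\lambda\mapsto\lambda$, ${\rm e}^{-\overline\lambda\theta}\mapsto{\rm e}^{-\lambda\theta}$) and the matrices ($A_j^*\mapsto A_j$). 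Evaluating $\int_{-1}^0 {\rm e}^{\lambda\theta}M(\theta)^*\dd\theta$ term by term is routine except for the iterated integral coming from the $\int_0^\theta$ term; I would handle it by Fubini, rewriting $\int_0^\theta=-\int_\theta^0$ and exchanging the order of integration over the triangle $-1\le\theta\le s\le0$, which produces the weight $(s+1)$. Collecting the first-component matrix $I-{\rm e}^{-\lambda}A_{-1}$ together with this integral assembles a single matrix $P$ with $\langle\varphi,\psi\rangle_{M_2}=\langle Px,y\rangle_{\mathbb{C}^n}$.

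The final step is to match $P$ against the derivative $\triangle'(\lambda)$ of~(\ref{eq_delta}). Straightforward bookkeeping of the $I$-, $A_{-1}$-, $A_2$- and $A_3$-contributions will show that $P=-\triangle'(\lambda)-\triangle(\lambda)$; the point I expect to be delicate is precisely this cancellation, i.e. recognizing that the discrepancy between $P$ and $-\triangle'(\lambda)$ is exactly $-\triangle(\lambda)$ rather than some unrelated remainder. Once this is seen, the eigenvalue relation $x\in\Ker\triangle(\lambda)$ kills the extra term, since $\langle\triangle(\lambda)x,y\rangle_{\mathbb{C}^n}=0$, and we are left with $\langle\varphi,\psi\rangle_{M_2}=-\langle\triangle'(\lambda)x,y\rangle_{\mathbb{C}^n}$, as claimed. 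The main technical obstacle is thus the clean evaluation of the double integral and the bookkeeping in this last matching; everything else is formal.
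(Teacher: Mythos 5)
Your proposal is correct, and it splits from the paper's proof in an interesting way. For the case $\lambda_0\neq\lambda_1$ the paper does \emph{not} use the abstract adjoint identity; it computes $\langle\varphi,\psi\rangle_{M_2}$ directly from (\ref{eq_eq27})--(\ref{eq_eq28}), organizes the result as $\langle\Gamma(\lambda_0,\lambda_1)x,y\rangle_{\mathbb{C}^n}$, and then proves the matrix identity
$\Gamma(\lambda_0,\lambda_1)=\frac{1}{\lambda_0-\lambda_1}\left[-\triangle(\lambda_0)+{\rm e}^{\lambda_1-\lambda_0}\triangle(\lambda_1)\right]$,
so that vanishing follows from $x\in\Ker\triangle(\lambda_0)$, $y\in\Ker\triangle^*(\overline{\lambda_1})$ and $(\triangle(\lambda_1))^*=\triangle^*(\overline{\lambda_1})$. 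Your replacement of this by
$\lambda_0\langle\varphi,\psi\rangle=\langle\mathcal{A}\varphi,\psi\rangle=\langle\varphi,\mathcal{A}^*\psi\rangle=\lambda_1\langle\varphi,\psi\rangle$
is legitimate (both memberships $\varphi\in\mathcal{D}(\mathcal{A})$, $\psi\in\mathcal{D}(\mathcal{A}^*)$ are part of the lemma's hypotheses) and is shorter and fully general -- it works for any closed densely defined operator, with no computation. What the paper's longer route buys is a single computational framework: the same $\Gamma$-algebra, re-run at $\lambda_0=\lambda_1$, yields $\Gamma(\lambda_0)=-\triangle'(\lambda_0)-\triangle(\lambda_0)$, so the two cases are handled by one bookkeeping scheme, and the identity for $\Gamma(\lambda_0,\lambda_1)$ serves as a consistency check on the explicit eigenvector formulas. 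Your treatment of the coincident case is essentially identical to the paper's: the Fubini exchange over the triangle $-1\le\theta\le s\le 0$ producing the weight $(s+1)$, the assembly of the matrix $P=-\triangle'(\lambda)-\triangle(\lambda)$ (the paper's $\Gamma(\lambda_0)$), and the final cancellation $\langle\triangle(\lambda)x,y\rangle=0$ via $x\in\Ker\triangle(\lambda)$ all match the paper's steps, so the ``delicate cancellation'' you flag is exactly the one the paper carries out.
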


\begin{proof} First let $\lambda_0\not=\lambda_1$ and we compute directly the
scalar product $\langle \varphi, \psi \rangle_{M_2}$ using
the representations (\ref{eq_eq27}) and (\ref{eq_eq28}):
\begin{equation}\label{eq_eq36}
\begin{array}{rcl}
\langle \varphi, \psi\rangle_{M_2} & = & \langle
(I-{\rm e}^{-{\lambda_0}}A_{-1})x, y \rangle_{\mathbb{C}^n} + \int\limits_{-1}^0 \langle {\rm e}^{{\lambda_0} \theta}x,
{\overline{\lambda_1}} {\rm e}^{-{\overline{\lambda_1}} \theta}y \rangle_{\mathbb{C}^n} \dd \theta - \int\limits_{-1}^0 \langle
{\rm e}^{{\lambda_0} \theta}x, A_2^*(\theta)y \rangle_{\mathbb{C}^n} \dd \theta \\
& &  + \int\limits_{-1}^0 \left\langle {\rm e}^{{\lambda_0} \theta}x,
{\rm e}^{-{\overline{\lambda_1}} \theta}\int\limits_0^\theta
{\rm e}^{{\overline{\lambda_1}} s} (A_3^*(s)+ \overline{\lambda_1} A_2^*(s)) \dd s\cdot y \right\rangle_{\mathbb{C}^n} \dd \theta\\
& = & \left\langle (I-{\rm e}^{-{\lambda_0}}A_{-1})x, y \right\rangle
+ \left\langle \int\limits_{-1}^0 \lambda_1 {\rm e}^{({\lambda_0}-\lambda_1) \theta} \dd \theta\cdot x, y \right\rangle
- \left\langle \int\limits_{-1}^0 {\rm e}^{{\lambda_0} \theta}A_2(\theta) \dd \theta \cdot x, y \right\rangle \\
& &  + \left\langle \int\limits_{-1}^0 {\rm e}^{({\lambda_0}-\lambda_1) \theta}
\int\limits_0^\theta {\rm e}^{\lambda_1 s} (A_3(s) + \lambda_1 A_2(s)) \dd s \dd \theta \cdot x,\; y\right\rangle \\
& = & \left\langle \Gamma({\lambda_0}, \lambda_1) x,\; y \right\rangle,
\end{array}
\end{equation}
where
\begin{equation}\label{eq_eq37}
\begin{array}{rcl}
\Gamma({\lambda_0}, \lambda_1) & = & I-{\rm e}^{-{\lambda_0}}A_{-1}+
\lambda_1 \int\limits_{-1}^0 {\rm e}^{({\lambda_0}-\lambda_1) \theta} \dd
\theta - \int\limits_{-1}^0 {\rm e}^{{\lambda_0} \theta}A_2(\theta) \dd \theta \\
& & + \int\limits_{-1}^0 {\rm e}^{({\lambda_0}-\lambda_1) \theta} \int\limits_0^\theta
{\rm e}^{\lambda_1 s} (A_3(s) + \lambda_1 A_2(s)) \dd s \dd \theta.
\end{array}
\end{equation}
The last term of $\Gamma({\lambda_0}, {\lambda_1})$, which is the integral over the domain $-1\le\theta\le s\le 0$,
we rewrite using the identity $\int_{-1}^0 \int_0^\theta G(s,\theta) \dd s  \dd \theta = -\int_{-1}^0 \int_{-1}^s G(s,\theta) \dd \theta \dd s$
which holds for any function $G(s,\theta)$.
Taking into account the relation
$\int_{-1}^0 {\rm e}^{({\lambda_0}-\lambda_1) \theta} \dd \theta
=\frac{1}{{\lambda_0}-\lambda_1}(1-{\rm e}^{\lambda_1-{\lambda_0}})$,
we obtain
$$
\begin{array}{c}
\int\limits_{-1}^0 {\rm e}^{({\lambda_0}-\lambda_1) \theta} \int\limits_0^\theta
{\rm e}^{\lambda_1 s} (A_3(s) + \lambda_1 A_2(s)) \dd s \dd \theta= -\int\limits_{-1}^0
{\rm e}^{\lambda_1 s} (A_3(s) + \lambda_1 A_2(s)) \int\limits_{-1}^s {\rm e}^{({\lambda_0}-\lambda_1)
\theta} \dd \theta \dd s \\
=\frac{1}{{\lambda_0}-\lambda_1}\left[{\rm e}^{\lambda_1-{\lambda_0}}
\int\limits_{-1}^0 {\rm e}^{\lambda_1 s} (A_3(s) + \lambda_1 A_2(s)) \dd s - \int\limits_{-1}^0
{\rm e}^{{\lambda_0} s} (A_3(s) + \lambda_1 A_2(s)) \dd s\right].
\end{array}
$$
Finally, we have
$$
\begin{array}{rcl}
\Gamma({\lambda_0}, \lambda_1) & = & \frac{1}{{\lambda_0}-\lambda_1}\left[({\lambda_0}-\lambda_1)I-({\lambda_0}-\lambda_1)
{\rm e}^{-{\lambda_0}}A_{-1} +\lambda_1(1-{\rm e}^{\lambda_1-{\lambda_0}})I\right. \\
& & - ({\lambda_0}-\lambda_1)\int\limits_{-1}^0 {\rm e}^{{\lambda_0} \theta}A_2(\theta) \dd \theta-
\int\limits_{-1}^0 {\rm e}^{{\lambda_0} s} (A_3(s) + \lambda_1 A_2(s)) \dd s \\
& & \left. + {\rm e}^{\lambda_1-{\lambda_0}} \int\limits_{-1}^0 {\rm e}^{\lambda_1 s} (A_3(s) + \lambda_1 A_2(s)) \dd s \right]\\
& = & \frac{1}{{\lambda_0}-\lambda_1}\left[{\lambda_0} I - {\lambda_0} {\rm e}^{-{\lambda_0}}A_{-1}
-\int\limits_{-1}^0 {\rm e}^{{\lambda_0} s} (A_3(s) + {\lambda_0} A_2(s)) \dd s  \right. \\
& & \left. - \lambda_1 {\rm e}^{\lambda_1-{\lambda_0}}I + \lambda_1
{\rm e}^{-{\lambda_0}}A_{-1} +{\rm e}^{\lambda_1-{\lambda_0}}
\int\limits_{-1}^0 {\rm e}^{\lambda_1 s} (A_3(s) + \lambda_1 A_2(s)) \dd s \right] \\
& = & \frac{1}{{\lambda_0}-\lambda_1}\left[-\triangle({\lambda_0})
+ {\rm e}^{\lambda_1-{\lambda_0}}\triangle(\lambda_1)\right].
\end{array}
$$

Taking into account that $x\in \Ker\triangle({\lambda_0})$,
$y\in \Ker\triangle^*({\overline{\lambda_1}})$ and
$(\triangle(\lambda_1))^*=\triangle^*({\overline{\lambda_1}})$,
we conclude that
\begin{equation}\label{eq_eq29}
\langle \varphi, \psi \rangle_{M_2}= \left\langle
\frac{1}{{\lambda_0}-\lambda_1} \left[-\triangle({\lambda_0})
+ {\rm e}^{\lambda_1-{\lambda_0}}\triangle(\lambda_1)\right]x,
y\right\rangle_{\mathbb{C}^n}=
\frac{{\rm e}^{\lambda_1-{\lambda_0}}}{{\lambda_0}-\lambda_1}
\langle x, \triangle^*({\overline{\lambda_1}})
y\rangle_{\mathbb{C}^n}=0.
\end{equation}

\vskip2ex

Let us now consider the case $\lambda_0=\lambda_1$.
From (\ref{eq_eq36}), (\ref{eq_eq37}) we have:
$$
\langle \varphi, \psi\rangle_{M_2} =\langle \Gamma({\lambda_0})
x, y\rangle_{\mathbb{C}^n},
$$
where
\begin{equation}\label{eq_eq38}
\Gamma({\lambda_0})=I-{\rm e}^{-{\lambda_0}}A_{-1}+{\lambda_0} I -
\int_{-1}^0 {\rm e}^{{\lambda_0} \theta}A_2(\theta) \dd \theta
+ \int_{-1}^0 \int_0^\theta {\rm e}^{{{\lambda_0}} s} (A_3(s)+{\lambda_0}A_2(s)) \dd s \dd \theta.
\end{equation}

The last term of $\Gamma({\lambda_0})$, which is the integral over the
domain $-1\le\theta\le s\le 0$,
we rewrite using the identity $\int_{-1}^0 \int_0^\theta G(s,\theta) \dd s  \dd \theta = -\int_{-1}^0 \int_{-1}^s G(s,\theta) \dd \theta \dd s$.
Thus, we obtain:
$$
\begin{array}{rcl}
\Gamma({\lambda_0}) & = & I-{\rm e}^{-{\lambda_0}}A_{-1}+{\lambda_0} I -
\int\limits_{-1}^0 {\rm e}^{{\lambda_0} \theta}A_2(\theta) \dd \theta
- \int\limits_{-1}^0 {\rm e}^{{{\lambda_0}} s} (A_3(s)+{\lambda_0} A_2(s)) \int\limits_{-1}^s \dd \theta \dd s\\
& = & \left(I-{\rm e}^{-{\lambda_0}}A_{-1} - \int\limits_{-1}^0 {\rm e}^{{{\lambda_0}} s} (s A_3(s)+ s \lambda_0 A_2(s) + A_2(s))\dd s\right)\\
& & +\left({\lambda_0} I
- \int\limits_{-1}^0 {\rm e}^{{{\lambda_0}} s} (A_3(s)+\lambda_0 A_2(s)) \dd s \right)\\
& = & -\triangle'({\lambda_0})-\triangle({\lambda_0}).
\end{array}
$$
Taking into account the relation $x\in \Ker\triangle({\lambda_0})$, we conclude that
\begin{equation}\label{eq_eq25}
\langle \varphi, \psi\rangle_{M_2}=-\langle
\triangle'({\lambda_0})x, y\rangle_{\mathbb{C}^n}.
\end{equation}
The last completes the proof of the lemma.
\end{proof}

For $\varphi(\lambda_m^k)$ and $\psi(\overline{\lambda_m^k})$ we will use the notation $\varphi_m^k$ and $\psi_m^k$ respectively.
Besides, we use $x_m^k$ and $y_m^k$ instead of $x(\lambda_m^k)$ and $y(\overline{\lambda_m^k})$.

\begin{lem}\label{lm_lm333}
Let $\sigma_1=\{\mu_1,\ldots,\mu_{\ell_1}\}$ consists of simple eigenvalues only.
The eigenvectors $\varphi_m^k$, $m=1,\ldots,\ell_1$, $k: |k|\ge N_1$
constitute a Riesz basis of the closure of their linear span.
The same holds for eigenvectors $\psi_m^k$, $m=1,\ldots,\ell_1$, $k: |k|\ge N_1$.
\end{lem}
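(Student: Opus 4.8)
The plan is to exhibit an explicit orthonormal system to which $\{\varphi_m^k\}$ is quadratically close, to check that $\{\varphi_m^k\}$ is minimal, and then to invoke the classical Bari perturbation theorem for quadratically close sequences. I will carry out the argument in detail for $\{\varphi_m^k\}$ and then obtain $\{\psi_m^k\}$ by applying the same reasoning to ${\mathcal A}^*$.

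First I would pin down the asymptotics of the data defining $\varphi_m^k$ in Proposition~\ref{ut_ut2}. By Proposition~\ref{ut_ut1} we may write $\lambda_m^k=\widetilde{\lambda}_m^k+\delta_m^k$, where $\widetilde{\lambda}_m^k=\Mi(\arg\mu_m+2\pi k)$ (since $|\mu_m|=1$ forces $\ln|\mu_m|=0$) and $|\delta_m^k|\le r^{(k)}$ with $\sum_k (r^{(k)})^2<\infty$. Dividing $\triangle(\lambda_m^k)x_m^k=0$ by $\lambda_m^k$, the $A_2,A_3$-integrals tend to $0$ (Riemann--Lebesgue) and ${\rm e}^{-\lambda_m^k}\to\mu_m^{-1}$, so the normalized eigenvectors (take $\|x_m^k\|=1$) converge to the unit eigenvector of $A_{-1}$ for $\mu_m$, which in the Jordan form~(\ref{eq_eq34}) is the coordinate vector $e_m$. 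Quantifying this using simplicity of $\mu_m$ (so $A_{-1}-\mu_m I$ is boundedly invertible off $e_m$), together with the $\ell^2$-smallness of $\delta_m^k$ and of the Fourier coefficients of $A_2,A_3\in L_2$, yields $x_m^k=e_m+\eta_m^k$ with $\sum_{m,k}\|\eta_m^k\|^2<\infty$.

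Next I set $\widehat{\varphi}_m^k=(0,\;{\rm e}^{\Mi(\arg\mu_m+2\pi k)\theta}e_m)^T$. A direct computation shows $\langle\widehat{\varphi}_m^k,\widehat{\varphi}_{m'}^{k'}\rangle_{M_2}=\delta_{mm'}\delta_{kk'}$: the first blocks vanish, the $e_m$ are orthonormal in $\mathbb{C}^n$, and $\{{\rm e}^{\Mi 2\pi k\theta}\}_k$ is orthonormal in $L_2(-1,0)$; the distinct shifts $\arg\mu_m$ matter only within a fixed $m$, where they cancel. Hence $\{\widehat{\varphi}_m^k\}$ is orthonormal, so a Riesz basis of its closed span. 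I then estimate $\varphi_m^k-\widehat{\varphi}_m^k$: its first block $(I-{\rm e}^{-\lambda_m^k}A_{-1})x_m^k$ converges to $(I-\mu_m^{-1}A_{-1})e_m=0$ and is $\ell^2$ by the previous step, while in the second block ${\rm e}^{\lambda_m^k\theta}={\rm e}^{\widetilde{\lambda}_m^k\theta}{\rm e}^{\delta_m^k\theta}$ with ${\rm e}^{\delta_m^k\theta}=1+O(\delta_m^k)$ uniformly on $[-1,0]$, so that block difference is controlled by $|\delta_m^k|+\|\eta_m^k\|$. Summing gives the quadratic closeness $\sum_{m,k}\|\varphi_m^k-\widehat{\varphi}_m^k\|^2<\infty$. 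Minimality is then free from the spectral picture: by Proposition~\ref{ut_ut1} each $\lambda_m^k$ with $|k|\ge N_1$ is a simple eigenvalue of ${\mathcal A}$, whose Riesz projection has the form $\langle\cdot,\psi_m^k\rangle\varphi_m^k/\langle\varphi_m^k,\psi_m^k\rangle$, forcing $\langle\varphi_m^k,\psi_m^k\rangle\neq0$; combined with the orthogonality $\langle\varphi_m^k,\psi_{m'}^{k'}\rangle=0$ for $(m,k)\neq(m',k')$ from Lemma~\ref{lm_lm4}, the rescaled family $\psi_m^k/\overline{\langle\varphi_m^k,\psi_m^k\rangle}$ is biorthogonal to $\{\varphi_m^k\}$, so $\{\varphi_m^k\}$ is minimal. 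I finish by the Bari-type theorem: choosing $M$ with $\sum_{m,\,|k|\ge M}\|\varphi_m^k-\widehat{\varphi}_m^k\|^2<1$ makes the tail a Riesz sequence via a synthesis-operator/Neumann-series argument, and minimality lets the finitely many remaining vectors (linearly independent modulo the closed span of the tail) be adjoined without destroying the Riesz property.

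For $\{\psi_m^k\}$ the argument is identical, applied to ${\mathcal A}^*$ and the explicit form~(\ref{eq_eq28}). Here the leading term $\overline{\lambda_m^k}\,{\rm e}^{-\overline{\lambda_m^k}\theta}y_m^k$ grows like $|\lambda_m^k|$, so one first renormalizes (scaling $y_m^k$ so that $\|\psi_m^k\|$ stays bounded above and below, which is forced anyway since a Riesz basis has comparable norms), after which $\psi_m^k$ is quadratically close to the orthonormal system $(0,\;{\rm e}^{-\Mi(\arg\mu_m+2\pi k)\theta}\widehat{y}_m)^T$, with $\widehat{y}_m$ the normalized eigenvector of $A_{-1}^*$; minimality again follows from simplicity and Lemma~\ref{lm_lm4}. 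I expect the main obstacle to be precisely the quantitative asymptotics of the first step --- establishing $\sum_k\|x_m^k-e_m\|^2<\infty$ with the correct $\ell^2$ bookkeeping of both the eigenvalue perturbations $\delta_m^k$ from Proposition~\ref{ut_ut1} and the $L_2$-tails of $A_2,A_3$ --- since the entire Riesz-basis conclusion rests on this quadratic-closeness estimate.
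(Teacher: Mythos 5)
Your proposal is correct and takes essentially the same route as the paper: the paper's sketch likewise compares $\varphi_m^k$ with the orthogonal eigenvectors $\widetilde{\varphi}_m^k=\bigl(0,\;{\rm e}^{\widetilde{\lambda}_m^k\theta}e_m\bigr)^T$ of $\widetilde{\mathcal A}$, invokes quadratic closeness (deferring the estimate to the argument of Theorem~15 of \cite{Rabah_Sklyar_Rezounenko_2003} and Kato), and concludes by a Bari-type perturbation theorem, with the $\psi_m^k$ handled by the same reasoning after the $1/\overline{\lambda_m^k}$ renormalization. The extra details you supply --- the $\ell^2$ bookkeeping for $x_m^k-e_m$, the minimality/biorthogonality check via Lemma~\ref{lm_lm4}, and the tail-splitting in the Bari argument --- are precisely what the paper leaves implicit.
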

A more general formulation of this proposition have been given in \cite[Theorem~7,~Theorem~15]{Rabah_Sklyar_Rezounenko_2003}
under the assumption $\det A_{-1}\not=0$.
We give a sketch of the proof in our case.

The families of functions $\{{\rm e}^{\widetilde{\lambda}_m^k \theta}\}_{k\in\mathbb{Z}}$ form an orthogonal basis of
the space $L_2([-1,0], \mathbb{C})$ for each $m=1,\ldots,\ell_1$,
where $\widetilde{\lambda}_m^{k}={\rm i}(\arg \mu_m +2\pi k)$ are eigenvalues of the operator $\widetilde{\mathcal A}$.
Thus, the functions $\{{\rm e}^{\widetilde{\lambda}_m^k \theta}\}_{|k|\ge N}$, $N\in\mathbb{N}$ form a basis of
the closure of their linear span.

Since we have chosen the matrix $A_{-1}$ in the form~(\ref{eq_eq34}) and due to~(\ref{eq_eq27}), the eigenvectors
$\widetilde{\varphi}_m^k$ of $\widetilde{\mathcal A}$ are of the form
$\widetilde{\varphi}_m^k=\left(
\begin{array}{c}
0\\
{\rm e}^{\widetilde{\lambda}_m^{k} \theta}e_m
\end{array}
\right)$, $e_m=(0,\ldots,1,\ldots,0)^T$. Therefore, the family $\{\widetilde{\varphi}_m^k:\; m=1,\ldots,\ell_1: |k|\ge N_1\}$
is a basis of the closure of its linear span.

The eigenvectors $\varphi_m^k=\left(
\begin{array}{c}
(I-{\rm e}^{-{\lambda_m^k}}A_{-1})x_m^k\\
{\rm e}^{{\lambda_m^k} \theta}x_m^k
\end{array}
\right)$ of ${\mathcal A}$ are quadratically close to $\widetilde{\varphi}_m^k$.
To prove this fact we should argue similar to Theorem~15 given in \cite{Rabah_Sklyar_Rezounenko_2003} (see also \cite{Kato_1972}).
Thus, eigenvectors $\varphi_m^k$, $m=1,\ldots,\ell_1$, $k: |k|\ge N_1$
constitute a Riesz basis of the closure of their linear span.

\vskip1ex

From Lemma~\ref{lm_lm4} and Lemma~\ref{lm_lm333} we conclude the following.

\begin{col}\label{col_col23}
The sequences $\varphi_m^k$ and $\psi_m^k$, $m=1,\ldots,\ell_1$, $k: |k|\ge N_1$ are biorthogonal after
normalization and $\langle \varphi_m^k, \psi_m^k\rangle_{M_2}= -\langle \triangle'({\lambda_m^k})x_m^k, y_m^k\rangle_{\mathbb{C}^n}$.
\end{col}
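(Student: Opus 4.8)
The plan is to read off both assertions of the corollary from Lemma~\ref{lm_lm4}, the only genuine extra ingredient being the non-degeneracy of the diagonal pairing, which is what makes the normalization possible. The displayed formula $\langle\varphi_m^k,\psi_m^k\rangle_{M_2}=-\langle\triangle'(\lambda_m^k)x_m^k,y_m^k\rangle_{\mathbb{C}^n}$ is simply the case $\lambda_0=\lambda_1=\lambda_m^k$ of~(\ref{eq_eq26}). For the off-diagonal pairs I would first note that by Proposition~\ref{ut_ut1} the eigenvalues $\lambda_m^k$, $m=1,\dots,\ell_1$, $|k|\ge N_1$, lie in pairwise disjoint circles $L_m^k(r^{(k)})$ and are simple; in particular $\lambda_m^k\ne\lambda_{m'}^{k'}$ whenever $(m,k)\ne(m',k')$. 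Hence the first branch of~(\ref{eq_eq26}) applies and $\langle\varphi_m^k,\psi_{m'}^{k'}\rangle_{M_2}=0$ for all distinct index pairs.

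The substantive step is therefore to show that each diagonal value $c_m^k:=\langle\varphi_m^k,\psi_m^k\rangle_{M_2}=-\langle\triangle'(\lambda_m^k)x_m^k,y_m^k\rangle_{\mathbb{C}^n}$ is nonzero, so that the rescaled vectors $\psi_m^k/\overline{c_m^k}$ (equivalently, a rescaling of the $y_m^k\in\Ker\triangle^*(\overline{\lambda_m^k})$) give $\langle\varphi_m^k,\psi_{m'}^{k'}\rangle_{M_2}=\delta_{mm'}\delta_{kk'}$. I would deduce the non-vanishing from the simplicity of $\lambda_m^k$ as a zero of $d(\lambda):=\det\triangle(\lambda)$ (Proposition~\ref{ut_ut1}) via Jacobi's formula $d'(\lambda)=\operatorname{tr}\bigl(\operatorname{adj}(\triangle(\lambda))\,\triangle'(\lambda)\bigr)$. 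Since the algebraic multiplicity of the zero is $1$, the geometric multiplicity $\dim\Ker\triangle(\lambda_m^k)$ equals $1$, so $\triangle(\lambda_m^k)$ has rank $n-1$ and its adjugate has rank one. From $\triangle(\lambda_m^k)\operatorname{adj}(\triangle(\lambda_m^k))=0=\operatorname{adj}(\triangle(\lambda_m^k))\triangle(\lambda_m^k)$ together with $(\triangle(\lambda_m^k))^*=\triangle^*(\overline{\lambda_m^k})$, the column space of the adjugate lies in $\Ker\triangle(\lambda_m^k)=\operatorname{span}(x_m^k)$ and its row space in $\Ker(\triangle(\lambda_m^k))^*=\operatorname{span}(y_m^k)$, whence $\operatorname{adj}(\triangle(\lambda_m^k))=\gamma\,x_m^k(y_m^k)^*$ with $\gamma\ne0$. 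Then $d'(\lambda_m^k)=\gamma\,(y_m^k)^*\triangle'(\lambda_m^k)x_m^k=\gamma\,\langle\triangle'(\lambda_m^k)x_m^k,y_m^k\rangle_{\mathbb{C}^n}$, and since $\lambda_m^k$ is a simple zero we have $d'(\lambda_m^k)\ne0$, so $c_m^k\ne0$.

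Finally I would invoke Lemma~\ref{lm_lm333}: the families $\{\varphi_m^k\}$ and $\{\psi_m^k\}$ are Riesz bases of the closures of their linear spans, hence each is minimal and possesses a unique biorthogonal system. The two computations above show precisely that, after the normalization $c_m^k=1$, the normalized family $\{\psi_m^k\}$ is biorthogonal to $\{\varphi_m^k\}$, which is the claim of the corollary.

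The step I expect to be the only real obstacle is the non-vanishing of the diagonal pairing $c_m^k$. Neither Lemma~\ref{lm_lm4} nor the Riesz basis property of Lemma~\ref{lm_lm333} yields it directly: a priori a Riesz basis vector $\psi_{m}^{k}$ could be orthogonal to the entire family $\{\varphi_{m'}^{k'}\}$ if the closed spans of the two families differed, so the pairing must be extracted from the algebraic simplicity of $\lambda_m^k$ through the determinant computation above. Everything else in the argument is a direct reading of~(\ref{eq_eq26}) followed by the elementary rescaling.
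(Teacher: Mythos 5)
Your proposal is correct, and its first two steps are exactly how the paper reads Lemma~\ref{lm_lm4}: the off-diagonal vanishing is the case $\lambda_0\not=\lambda_1$ of (\ref{eq_eq26}) (legitimate because the circles $L_m^k(r^{(k)})$, $|k|\ge N_1$, are pairwise disjoint, so distinct index pairs give distinct eigenvalues), and the displayed diagonal formula is the case $\lambda_0=\lambda_1$. Indeed, the paper's entire proof of the corollary is the one-line remark that it follows from Lemma~\ref{lm_lm4} and Lemma~\ref{lm_lm333}. Where you genuinely diverge from the paper is the non-degeneracy of the diagonal pairing $c_m^k=-\langle\triangle'(\lambda_m^k)x_m^k,y_m^k\rangle_{\mathbb{C}^n}$, which, as you rightly observe, neither cited lemma delivers by itself and which is indispensable for the words ``biorthogonal after normalization''. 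The paper does not prove this inside the corollary at all: the non-vanishing surfaces only later, in Lemma~\ref{lm_lm7}, as the two-sided estimate (\ref{eq_eq62}), obtained through the asymptotic machinery of the matrices $P_{m,k}$, $Q_{m,k}$, $R_m$ and valid for $|k|\ge N$ with a possibly larger $N$. Your substitute --- simplicity of $\lambda_m^k$ as a zero of $\det\triangle$ forces $\dim\Ker\triangle(\lambda_m^k)=1$, hence $\operatorname{adj}(\triangle(\lambda_m^k))=\gamma\,x_m^k(y_m^k)^*$ with $\gamma\not=0$, and Jacobi's formula gives $0\not=\frac{\rm d}{{\rm d}\lambda}\det\triangle(\lambda)\big|_{\lambda=\lambda_m^k}=\gamma\,\langle\triangle'(\lambda_m^k)x_m^k,y_m^k\rangle_{\mathbb{C}^n}$ --- is a clean, purely algebraic argument that works for every individual index with $|k|\ge N_1$, needs no asymptotics, and makes the corollary self-contained. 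What it does not give, and what the paper's Lemma~\ref{lm_lm7} is really designed for, is the \emph{uniform} lower bound $0<C_1\le\left|\frac{1}{\lambda_m^k}\langle\triangle'(\lambda_m^k)x_m^k,y_m^k\rangle\right|$ needed later in the proof of Theorem~\ref{thr_decomposition}; but for the corollary as stated, pointwise non-vanishing suffices, so your proof is complete, and arguably tighter than the paper's own justification at this point in the text.
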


The following relation will be essentially used in the analysis of the boundedness of the resolvent in Section~4.

\begin{lem}\label{lm_lm1_plelim}
Let $\psi=\psi(\overline{\lambda_0})$, $\lambda_0\in\sigma(\mathcal A)$ be an eigenvector of the operator $\mathcal A^*$
and let $g=(z, \xi(\cdot))\in M_2$ be orthogonal to $\psi$: $g\bot\psi$.
Then the following relation holds:
\begin{equation}\label{orthogonality_col}
D(z, \xi, \lambda_0) \in  {\rm Im}\triangle(\lambda_0),
\end{equation}
where $D(z, \xi, \lambda)$ is defined by (\ref{eq_D}).
\end{lem}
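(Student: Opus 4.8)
The plan is to translate the membership $D(z,\xi,\lambda_0)\in\MIm\triangle(\lambda_0)$ into a single scalar orthogonality in $\mathbb{C}^n$ and then to recognize that orthogonality as exactly the hypothesis $g\perp\psi$. Since $\triangle(\lambda_0)$ is a square matrix with $(\triangle(\lambda_0))^*=\triangle^*(\overline{\lambda_0})$, standard linear algebra gives $\MIm\triangle(\lambda_0)=(\Ker\triangle^*(\overline{\lambda_0}))^\perp$. The vector $y=y(\overline{\lambda_0})\in\Ker\triangle^*(\overline{\lambda_0})$ generating $\psi$ through (\ref{eq_eq28}) spans this kernel for the geometrically simple eigenvalues relevant to Section~4, so it is enough to prove the one identity $\langle D(z,\xi,\lambda_0),y\rangle_{\mathbb{C}^n}=0$.

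The heart of the argument is the claim
$\langle g,\psi\rangle_{M_2}=\langle D(z,\xi,\lambda_0),y\rangle_{\mathbb{C}^n}$,
after which the hypothesis $g\perp\psi$ yields $\langle D,y\rangle=0$ at once. To prove this identity I would expand $\langle g,\psi\rangle_{M_2}=\langle z,y\rangle_{\mathbb{C}^n}+\int_{-1}^0\langle\xi(\theta),\psi_2(\theta)\rangle_{\mathbb{C}^n}\dd\theta$ using the explicit second component $\psi_2$ from (\ref{eq_eq28}), and transfer every adjoint matrix ($A_2^*$, $A_3^*$, and the scalar factor ${\rm e}^{-\overline{\lambda_0}\theta}\overline{\lambda_0}$) off $\psi_2$ onto $\xi(\theta)$ via $\langle\xi,M^*y\rangle=\langle M\xi,y\rangle$. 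This produces $\langle\widetilde D,y\rangle_{\mathbb{C}^n}$ for an explicit vector $\widetilde D$ assembled from $z$, $\int_{-1}^0{\rm e}^{-\lambda_0\theta}\xi\dd\theta$, $\int_{-1}^0 A_2(\theta)\xi(\theta)\dd\theta$, and a double integral over $0\le s\le\theta$.

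The remaining computation runs exactly parallel to the evaluation of $\Gamma(\lambda_0,\lambda_1)$ in the proof of Lemma~\ref{lm_lm4}. Rewriting the double integral with the Fubini-type identity $\int_{-1}^0\int_0^\theta G\dd s\dd\theta=-\int_{-1}^0\int_{-1}^s G\dd\theta\dd s$ and regrouping, and then comparing $\widetilde D$ with the definition (\ref{eq_D}) of $D$, I expect to obtain precisely $\widetilde D=D(z,\xi,\lambda_0)-\triangle(\lambda_0)v$, where $v=\int_{-1}^0{\rm e}^{-\lambda_0\theta}\xi(\theta)\dd\theta$; the matrix coefficient of $v$ collapses to $\triangle(\lambda_0)$ exactly because $\lambda_0{\rm e}^{-\lambda_0}A_{-1}-\lambda_0 I+\int_{-1}^0{\rm e}^{\lambda_0 s}(A_3(s)+\lambda_0 A_2(s))\dd s=\triangle(\lambda_0)$ by (\ref{eq_delta}). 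Finally, since $y\in\Ker\triangle^*(\overline{\lambda_0})$, the correction is annihilated: $\langle\triangle(\lambda_0)v,y\rangle_{\mathbb{C}^n}=\langle v,\triangle^*(\overline{\lambda_0})y\rangle_{\mathbb{C}^n}=0$, so $\langle g,\psi\rangle_{M_2}=\langle\widetilde D,y\rangle=\langle D,y\rangle$, which is the claimed identity.

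I expect the main obstacle to be the bookkeeping in this matching computation rather than any conceptual difficulty: keeping the conjugations correct while transferring the adjoint operators, handling the orientation of the inner integral when applying the order-swap identity, and verifying that the leftover matrix coefficient of $v$ is \emph{exactly} $\triangle(\lambda_0)$ and not a perturbation of it. This is the same delicate algebra already carried out for $\Gamma(\lambda_0)$ in Lemma~\ref{lm_lm4}, so the order-swap identity together with formula (\ref{eq_delta}) for $\triangle$ are the two ingredients that make the computation close up cleanly.
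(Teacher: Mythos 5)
Your proposal is correct and takes essentially the same route as the paper: expand $\langle g,\psi\rangle_{M_2}$, transfer the adjoints onto $\xi$, apply the order-swap identity, and observe that the resulting vector differs from $D(z,\xi,\lambda_0)$ exactly by $\triangle(\lambda_0)v$ with $v=\int_{-1}^0 {\rm e}^{-\lambda_0\theta}\xi(\theta)\dd\theta$, which is annihilated against $y$ since $\triangle^*(\overline{\lambda_0})y=0$. The paper organizes the identical algebra by adding the zero quantity $\bigl\langle \int_{-1}^0 {\rm e}^{-\lambda_0\theta}\triangle(\lambda_0)\xi(\theta)\dd\theta,\, y\bigr\rangle$ to the expansion of $0=\langle g,\psi\rangle_{M_2}$, which is precisely your correction term, so the two arguments coincide.
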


\begin{proof}
We show the relation $D(z, \xi, \lambda_0)\bot \Ker\triangle^*(\overline{\lambda_0})$
which is equivalent to~(\ref{orthogonality_col}).
The eigenvector $\psi$ is of the form~(\ref{eq_eq28}):
$$
\psi=\left(
\begin{array}{c}
y\\
\left[\overline{\lambda_0} {\rm e}^{-\overline{\lambda_0}
\theta}-A_2^*(\theta)+ {\rm e}^{-\overline{\lambda_0}
\theta}\int_0^\theta {\rm e}^{\overline{\lambda_0} s} A_3^*(s) \dd s +
\overline{\lambda_0} {\rm e}^{-\overline{\lambda_0}
\theta}\int_0^\theta {\rm e}^{\overline{\lambda_0} s} A_2^*(s) \dd s
\right]y
\end{array}
\right),
$$
where $y=y(\overline{\lambda_0})\in \Ker\triangle^*(\overline{\lambda_0})$.
For any $g=(z, \xi(\cdot))$, which is orthogonal to $\psi$, we obtain:
\begin{equation}\label{eq_eq1234}
\begin{array}{rcl}
0=\langle g, \psi\rangle_{M_2} & = & \left\langle z,\; y\right\rangle_{\mathbb{C}^n} + \int\limits_{-1}^0
\left\langle \xi(\theta), \overline{\lambda_0}
{\rm e}^{-\overline{\lambda_0} \theta}y\right\rangle_{\mathbb{C}^n} \dd \theta -
\int\limits_{-1}^0 \left\langle \xi(\theta),\; A_2^*(\theta)y \right\rangle_{\mathbb{C}^n} \dd \theta \\
& & + \int\limits_{-1}^0 \left\langle \xi(\theta),\; {\rm e}^{-\overline{\lambda_0} \theta}
\int\limits_0^\theta {\rm e}^{\overline{\lambda_0} s} (A_3^*(s) + \overline{\lambda_0} A_2^*(s)) \dd s\cdot y\right\rangle_{\mathbb{C}^n} \dd \theta \\
& = & \left\langle z, y\right\rangle_{\mathbb{C}^n} + \left\langle \int\limits_{-1}^0 \lambda_0
{\rm e}^{-\lambda_0 \theta} \xi(\theta) \dd \theta, y\right\rangle_{\mathbb{C}^n} -
\left\langle \int\limits_{-1}^0 A_2(\theta)\xi(\theta) \dd \theta, y \right\rangle_{\mathbb{C}^n} \\
& & + \left\langle \int\limits_{-1}^0 \left[ {\rm e}^{-\lambda_0 \theta}
\int\limits_0^\theta {\rm e}^{\lambda_0 s} (A_3(s)+ \lambda_0 A_2(s)) \dd s \right] \xi(\theta) \dd \theta,\; y\right\rangle_{\mathbb{C}^n}.
\end{array}
\end{equation}
Using the identity $\int_{-1}^0 \int_0^\theta G(s,\theta) \dd s\dd \theta =
-\int_{-1}^0 \int_{-1}^s G(s,\theta) \dd \theta \dd s$ which holds for any function $G(s,\theta)$, we rewrite the the last term of~(\ref{eq_eq1234}),
and, finally, we obtain the relation:
\begin{equation}\label{eq_eq1}
\begin{array}{rcl}
0=\langle g,\; \psi\rangle_{M_2} & = & \left\langle z+ \int\limits_{-1}^0 \lambda_0
{\rm e}^{-\lambda_0 \theta} \xi(\theta) \dd \theta +\int\limits_{-1}^0
A_2(\theta)\xi(\theta) \dd \theta \right.\\
& & \left. -\int\limits_{-1}^0 {\rm e}^{\lambda_0 s} \left[A_3(s)+\lambda_0 A_2(s)\right] \int\limits_{-1}^s
{\rm e}^{-\lambda_0 \theta} \xi(\theta)\dd \theta \dd s,\; y\right\rangle_{\mathbb{C}^n}.
\end{array}
\end{equation}
Since  $y\in \Ker\triangle^*(\overline{\lambda_0})$, then for
any $x\in \mathbb{C}^n$ we have:
$$
0=\langle x, \triangle^*(\overline{\lambda_0})y \rangle_{\mathbb{C}^n} =
\langle \triangle(\lambda_0) x,\; y \rangle_{\mathbb{C}^n}.
$$
Therefore, for any $\theta$ the relation $\langle {\rm e}^{-\lambda_0 \theta} \triangle(\lambda_0) \xi(\theta), y \rangle_{\mathbb{C}^n}=0$ holds,
and, integrating it by $\theta$ from $-1$ to $0$, we obtain:
\begin{equation}\label{eq_eq2}
\begin{array}{rcl}
0=\left\langle \int\limits_{-1}^0 {\rm e}^{-\lambda_0 \theta} \triangle(\lambda_0) \xi(\theta) \dd \theta,\; y \right\rangle & = &
\left\langle -\int\limits_{-1}^0 \lambda_0 {\rm e}^{-\lambda_0 \theta}
\xi(\theta) \dd \theta + \int\limits_{-1}^0 \lambda_0 {\rm e}^{-\lambda_0
\theta} A_{-1} \xi(\theta) \dd \theta \right.\\
& & \left. + \int\limits_{-1}^0  {\rm e}^{\lambda_0 s} \left[A_3(s) + \lambda_0 A_2(s)\right] \dd s \int\limits_{-1}^0
{\rm e}^{-\lambda_0 \theta} \xi(\theta) \dd \theta,\; y \right\rangle_{\mathbb{C}^n}.\\
\end{array}
\end{equation}
Let us sum up the left-hand sides and the right-hand sides of the relations (\ref{eq_eq2}) and (\ref{eq_eq1}).
In the obtained relation the term $\int_{-1}^0 \lambda_0 {\rm e}^{-\lambda_0 \theta} \xi(\theta) \dd \theta$ is cancelled.
The last terms of (\ref{eq_eq2}) and (\ref{eq_eq1}) we sum up according to the identity
$-\int_{-1}^0 \int_{-1}^s G(s,\theta) \dd \theta \dd s + \int_{-1}^0 \int_{-1}^0 G(s,\theta) \dd \theta \dd s
= -\int_{-1}^0 \int_0^s G(s,\theta) \dd \theta \dd s = -\int_{-1}^0 \int_0^\theta G(\theta, s) \dd s \dd \theta$
which holds true for any function $G(s,\theta)$.
Finally, we obtain:
$$
\begin{array}{rcl}
0 & = & \left\langle z+ \lambda_0 {\rm e}^{-\lambda_0}A_{-1}\int\limits_{-1}^0
{\rm e}^{-\lambda_0 \theta} \xi(\theta) \dd \theta -\int\limits_{-1}^0
A_2(\theta) \xi(\theta) \dd \theta \right.\\
& & \left.- \int\limits_{-1}^0 {\rm e}^{\lambda_0 \theta} \left[A_3(\theta) + \lambda_0 A_2(\theta)\right]
\left[\int\limits_{0}^\theta {\rm e}^{-\lambda_0 s}\xi(s)\dd s \right] \dd \theta,\; y \right\rangle_{\mathbb{C}^n}\\
& \equiv & \left\langle D(z, \xi, \lambda_0),\; y \right\rangle_{\mathbb{C}^n}.
\end{array}
$$

Since $y\in \Ker\triangle^*(\overline{\lambda_0})$,
we conclude that $D(z, \xi, \lambda_0)\bot \Ker\triangle^*(\overline{\lambda_0})$,
what completes the proof of the lemma.
\end{proof}

\begin{rem}
We emphasize the fact that $\det \triangle(\lambda_0)=0$ and, therefore,
the matrix $\triangle^{-1}(\lambda_0)$ does not exist.
However, the proved relation $D(z, \xi, \lambda_0) \in  {\rm Im}\triangle(\lambda_0)$ means
that there exists the inverse image of the vector $D(z, \xi, \lambda_0)$ with respect to the matrix $\triangle(\lambda_0)$.
\end{rem}
\section{Spectral decompositions of the state space}\label{sect_decomposition}

We recall that we consider the operator $\mathcal A$ in the case when
all eigenvalues from $\sigma_1\subset\sigma(A_{-1})$ are simple.
In this section we construct construct two spectral decompositions of the state space $M_2$.
Assuming that $\sigma(\mathcal A)\subset \{\lambda:\; {\rm Re} \lambda <0\}$, in the first subsection
we construct a decomposition which we further us in Section~5 for the stability analysis.
In the second subsection we assume only $|\mu|\le 1$ for all $\mu\in \sigma(A_{-1})$
(i.e. a part of the spectrum of $\mathcal A$ may belongs to the closed right half-plane) and construct a decomposition
needed in Section~6 for the stabilizability analysis.
The structures of these decomposition are very similar.
In the third subsection we prove some technical results used in the proofs of validity of the decompositions.

\subsection{Spectral decomposition for the stability problem}

For the stability analysis our aim is to divide the system onto exponentially stable part and
strongly asymptotically stable part. To do this we construct a decomposition of the state space $M_2$ onto the direct sum of two
${\cal A}$-invariant subspaces and prove its validity.

We divide the spectrum of ${\mathcal A}$ onto two parts. For some $N\ge N_1$ we define
\begin{equation}\label{eq_eq69}
\Lambda_1=\Lambda_1(N)=\{\lambda_m^k\in \sigma({\mathcal A}), \; m=1,\ldots,\ell_1, |k|\ge N\},
\end{equation}
and represent the spectrum as follows:
$$\sigma({\mathcal A})=\Lambda_0\cup\Lambda_1.$$

\begin{rem} The set $\Lambda_1$ is determined by $N\in \mathbb{N}$.
For any small $\varepsilon>0$ there exists big enough $N$ such that
$\Lambda_1$ belongs to the vertical strip $\{\lambda:\; -\varepsilon <{\rm Re} \lambda<0\}$.
\end{rem}
The following figure illustrates our idea:
\begin{center}
\includegraphics[scale=0.7]{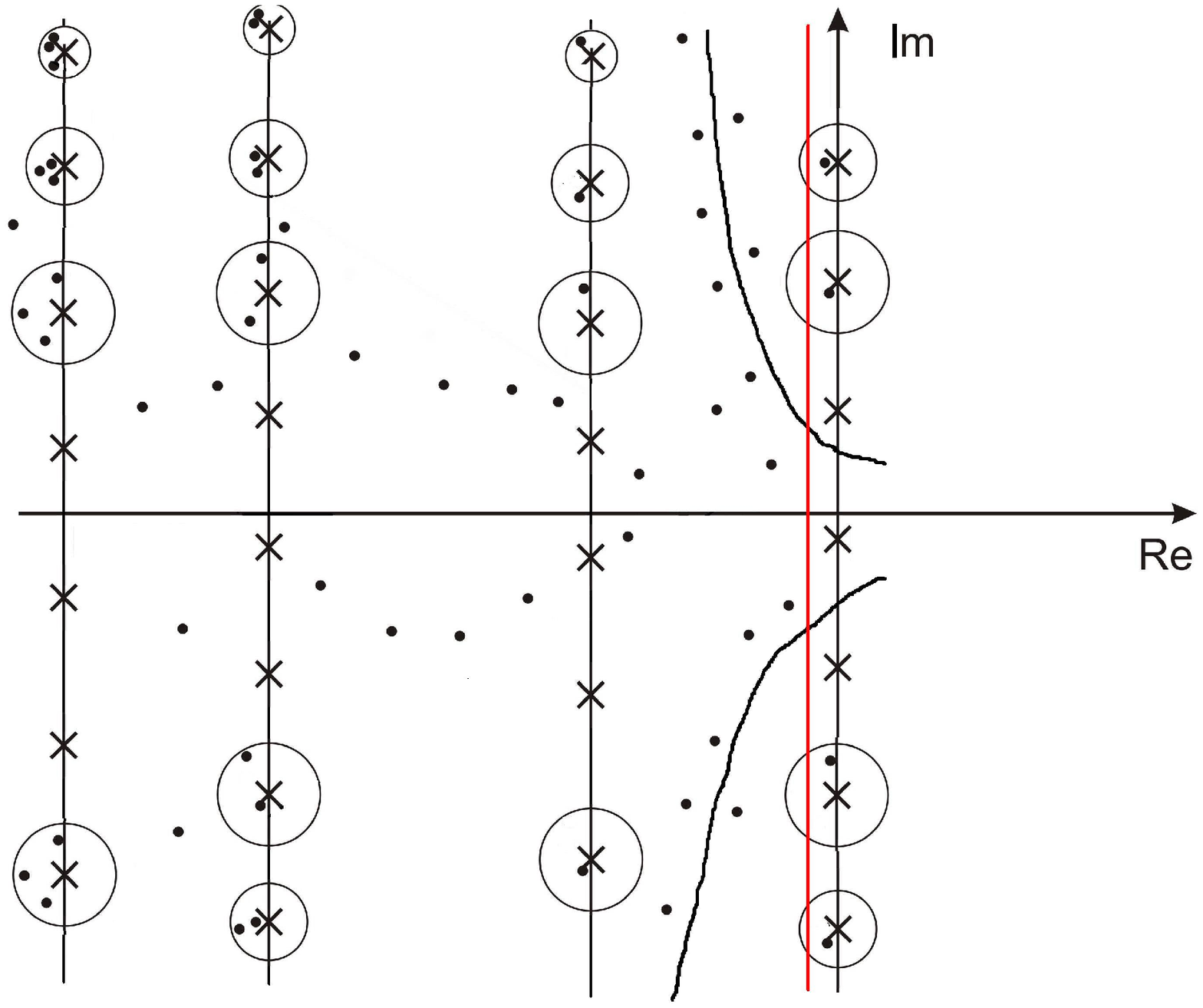}
\vskip 1ex
Figure 1.
\end{center}
By crosses we denote $\widetilde{\lambda}_m^{k}$, eigenvalues of $\widetilde{\mathcal A}$ and by points
we denote eigenvalues of the operator ${\mathcal A}$.

We introduce two subspaces of $M_2$:
\begin{equation}\label{eq_eq32}
M_2^1=M_2^1(N)={{\rm Cl}\; {\rm Lin}\{\varphi:\; ({\mathcal A}-\lambda I)\varphi=0,\; \lambda\in\Lambda_1\}},
\end{equation}
\begin{equation}\label{eq_eq31}
\widehat{M}_2^1=\widehat{M}_2^1(N)={{\rm Cl}\; {\rm Lin}\{\psi:\; ({\mathcal A}^*-\overline{\lambda} I)\psi=0,\;
\lambda\in\Lambda_1\}}.
\end{equation}
Obviously, $M_2^1$ is ${\mathcal A}$-invariant and $\widehat{M}_2^1$ is ${\mathcal A}^*$-invariant.
We introduce $M_2^0=M_2^0(N)$ which satisfies
\begin{equation}\label{eq_eq23}
M_2= \widehat{M}_2^1 {\stackrel{\bot}{\oplus}} M_2^0.
\end{equation}
Due to the construction, $M_2^0$ is an ${\mathcal A}$-invariant subspace.

\begin{rem}
We recall that due to Lemma~\ref{lm_lm333} eigenvectors $\{\varphi_m^k\}$ of $\mathcal A$, corresponding to $\lambda_m^k\in \Lambda_1$,
form a Riesz basis of the closure of their linear span. The same holds for eigenvectors
$\{\psi_m^k\}$ of $\mathcal A^*$, corresponding to $\overline{\lambda_m^k}$, $\lambda_m^k\in {\Lambda_1}$.
\end{rem}

The main result of this subsection is the following theorem.

\begin{thr}[on direct decomposition]\label{thr_decomposition}
Let $\sigma_1=\{\mu_1,\ldots,\mu_{\ell_1}\}$ consists of simple eigenvalues only.
For any $N\ge N_1$ the subset $\Lambda_1=\Lambda_1(N)\subset\sigma({\mathcal A})$ given by (\ref{eq_eq69})
and the subspaces $M_2^0$, $M_2^1$, $\widehat{M}_2^1$, given by (\ref{eq_eq32}),
(\ref{eq_eq31}) and (\ref{eq_eq23}) define the direct decomposition of the space:
\begin{equation}\label{eq_eq3933}
M_2= M_2^1 \oplus M_2^0
\end{equation}
where the subspaces $M_2^1$, $M_2^0$ are ${\cal A}$-invariant.
\end{thr}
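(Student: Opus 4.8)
The plan is to realize the decomposition through a bounded projection built from the two biorthogonal families. By Lemma~\ref{lm_lm333} the eigenvectors $\{\varphi_m^k\}$ form a Riesz basis of $M_2^1$ and $\{\psi_m^k\}$ form a Riesz basis of $\widehat{M}_2^1$, while Lemma~\ref{lm_lm4} and Corollary~\ref{col_col23} give their biorthogonality, $\langle\varphi_m^k,\psi_{m'}^{k'}\rangle_{M_2}=c_m^k\,\delta_{(m,k),(m',k')}$ with $c_m^k=-\langle\triangle'(\lambda_m^k)x_m^k,y_m^k\rangle_{\mathbb{C}^n}$. After normalizing so that $\|\varphi_m^k\|=\|\psi_m^k\|=1$ (which preserves the Riesz basis property, since the vectors of a Riesz basis have norms bounded above and below), I would define
\[
Px=\sum_{m=1}^{\ell_1}\sum_{|k|\ge N}\frac{\langle x,\psi_m^k\rangle_{M_2}}{c_m^k}\,\varphi_m^k,\qquad x\in M_2 .
\]

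First I would check that $P$ is well defined and bounded. Since $\psi_m^k\in\widehat{M}_2^1$ and $\{\psi_m^k\}$ is a Riesz basis there, the coefficients satisfy a Bessel bound $\sum_{m,k}|\langle x,\psi_m^k\rangle_{M_2}|^2\le B_\psi\|x\|^2$ for all $x\in M_2$, while the Riesz basis $\{\varphi_m^k\}$ gives the synthesis bound $\|\sum_{m,k}a_m^k\varphi_m^k\|^2\le B_\varphi\sum_{m,k}|a_m^k|^2$. Consequently $\|Px\|^2\le (B_\varphi B_\psi/\inf_{m,k}|c_m^k|^2)\,\|x\|^2$, so the whole question of boundedness reduces to a uniform lower bound $\inf_{m,k}|c_m^k|>0$; the matching upper bound $|c_m^k|\le 1$ is automatic after normalization by Cauchy--Schwarz.

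Granting that bound, the remaining steps are routine. Using biorthogonality one computes $Px=x$ for $x\in M_2^1$, so $P$ is idempotent with range exactly $M_2^1$; and $Px=0$ holds iff $\langle x,\psi_m^k\rangle_{M_2}=0$ for all $(m,k)$, that is iff $x\perp\widehat{M}_2^1$, so $\ker P=(\widehat{M}_2^1)^\perp=M_2^0$. Every $x$ then splits as $x=Px+(x-Px)$ with $Px\in M_2^1$ and $x-Px\in M_2^0$, giving $M_2=M_2^1+M_2^0$, and $M_2^1\cap M_2^0=\{0\}$ because any vector there is simultaneously fixed and annihilated by $P$; this makes the sum~(\ref{eq_eq3933}) a topological direct sum. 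The $\mathcal A$-invariance of $M_2^1$ (spanned by eigenvectors) and of $M_2^0=(\widehat{M}_2^1)^\perp$ (the orthogonal complement of the $\mathcal A^*$-invariant $\widehat{M}_2^1$) has already been observed above.

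The main obstacle is precisely the uniform positivity $\inf_{m,k}|c_m^k|>0$, and here the simplicity of the eigenvalues $\mu_m\in\sigma_1$ is decisive. As $|k|\to\infty$ one has $\lambda_m^k=\mathrm{i}(\arg\mu_m+2\pi k)+\overline{\rm o}(1)$ and $e^{-\lambda_m^k}\to\overline{\mu_m}=\mu_m^{-1}$ (since $|\mu_m|=1$); dividing $\triangle(\lambda_m^k)x_m^k=0$ by $\lambda_m^k$ and passing to the limit shows that the normalized $x_m^k$ tend to an eigenvector of $A_{-1}$ for $\mu_m$, and likewise $y_m^k$ tends to an eigenvector of $A_{-1}^*$ for $\overline{\mu_m}$. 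Because the dominant term of $\triangle'(\lambda_m^k)$ is $-\lambda_m^k e^{-\lambda_m^k}A_{-1}$, which acts on $x_m^k$ asymptotically as multiplication by $-\lambda_m^k$, the pairing $c_m^k=-\langle\triangle'(\lambda_m^k)x_m^k,y_m^k\rangle_{\mathbb{C}^n}$ behaves to leading order like $\lambda_m^k\langle x_m^k,y_m^k\rangle_{\mathbb{C}^n}$; with $\|\psi_m^k\|=1$ forcing $|y_m^k|\sim|\lambda_m^k|^{-1}$, this is of the order of the pairing between the right and left eigenvectors of $A_{-1}$ at $\mu_m$, which is nonzero exactly because $\mu_m$ is simple. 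Converting this limiting non-degeneracy into a bound uniform in $(m,k)$ — controlling the $\overline{\rm o}(1)$ corrections and the delicate $L_2$-integral remainders in $\triangle$ and $\triangle'$ — is the technical heart of the argument, and I expect this estimate, rather than the functional-analytic assembly, to be the real difficulty, which is why it is deferred to the technical results of the third subsection.
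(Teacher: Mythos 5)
Your proposal is correct and follows essentially the same route as the paper: both arguments rest on the Riesz basis property of $\{\varphi_m^k\}$ and $\{\psi_m^k\}$ (Lemma~\ref{lm_lm333}), their biorthogonality (Lemma~\ref{lm_lm4}, Corollary~\ref{col_col23}), and reduce the entire theorem to the uniform two-sided estimate on $\frac{1}{\lambda_m^k}\langle\triangle'(\lambda_m^k)x_m^k,y_m^k\rangle$ proved in Lemma~\ref{lm_lm7}, which you correctly identify as the technical heart and defer exactly as the paper does. The only difference is packaging: you assemble these ingredients into an explicit bounded biorthogonal projector $P$ with range $M_2^1$ and kernel $M_2^0=(\widehat{M}_2^1)^\perp$, whereas the paper performs the equivalent substitution $\widehat{\psi}_m^k=(\varphi_m^k-\gamma_m^k)/a_m^k$ term by term in the expansion of $\xi$ over the orthogonal decomposition~(\ref{eq_eq23}).
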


\begin{proof} To prove (\ref{eq_eq3933}) we show that
any element $\xi\in M_2$ allows the following representation:
\begin{equation}\label{eq_eq39}
\xi=\xi_0+\sum_{m=1}^{\ell_1}\sum_{|k|\ge N} c_m^k \varphi_m^k, \quad
\xi_0\in M_2^0,\; \varphi_m^k\in M_2^1,\; \sum_{m=1}^{\ell_1}\sum_{|k|\ge N} |c_m^k|^2<\infty.
\end{equation}

As we have noticed above, the eigenvectors
$\{\varphi_m^k:\; ({\cal A}-{\lambda_m^k} I)\varphi_m^k=0,\;
\lambda_m^k\in\Lambda_1\}$ given by (\ref{eq_eq27}) form a Riesz basis of the
closure of their linear span.
Besides the eigenvectors
$\{\frac{1}{{\overline{\lambda_m^k}}}\psi_m^k:\; ({\cal
A}^*-{\overline{\lambda_m^k}} I)\psi_m^k=0,\;
\lambda_m^k\in\Lambda_1\}$ form a Riesz basis of the
closure of their linear span, where eigenvectors
$\psi_m^k$ are given by (\ref{eq_eq28}).
We use the notation $\widehat{\psi}_m^k=\frac{1}{\overline{\lambda_m^k}}\psi_m^k$.

Using the representations (\ref{eq_eq27}) and (\ref{eq_eq28}), we choose eigenvectors with $\|x_m^k\|_{\mathbb{C}^n}=1$ and
$\|y_m^k\|_{\mathbb{C}^n}=1$.
Due to Lemma~\ref{lm_lm5} given in Subsection~\ref{subsect_33}
there exists $C>0$ such that $\|\varphi_m^k\|_{M^2}\le
C$ and
$\|\widehat{\psi}_m^k\|_{M^2}\le C$ for all $m=1,\ldots,\ell_1$, $|k|\ge N$.

Applying the decomposition~(\ref{eq_eq23}) to vectors $\varphi_m^k$, we
obtain
$$
\varphi_m^k=\gamma_m^k + \sum_{i=1}^{\ell_1}\sum_{|j|\ge N} a_i^j
\widehat{\psi}_i^j, \quad \gamma_m^k\in M_2^0.
$$
Since $\langle \varphi_m^k, \widehat{\psi}_i^j\rangle=0$ for $(m,k)\not=(i,j)$ (Corollary~\ref{col_col23}),
the last representation may be rewritten as follows:
\begin{equation}\label{eq_eq43}
\varphi_m^k=\gamma_m^k+a_m^k \widehat{\psi}_m^k, \quad \gamma_m^k\in
M_2^0,
\end{equation}
moreover, due to (\ref{eq_eq26}) we have the relation
\begin{equation}\label{eq_eq44}
a_m^k= \frac{\langle \varphi_m^k, \widehat{\psi}_m^k\rangle_{M_2}}
{\|\widehat{\psi}_m^k\|^2_{M_2}} =
\frac{\frac{1}{\lambda_m^k}\langle \varphi_m^k, \psi_m^k
\rangle_{M_2}} {\|\widehat{\psi}_m^k\|^2_{M_2}}
=\frac{-\frac{1}{{\lambda_m^k}} \langle
\Delta'({\lambda_m^k})x_m^k, y_m^k\rangle_{\mathbb{C}^n}} {
\|\widehat{\psi}_m^k\|^2_{M_2}}.
\end{equation}
From (\ref{eq_eq43}) and (\ref{eq_eq44}) it also follows that
\begin{equation}\label{eq_eq46}
\|\gamma_m^k\|\le \|\varphi_m^k\|+|a_m^k| \|\widehat{\psi}_m^k\| \le
C+\sqrt{|\frac{1}{{\lambda_m^k}} \langle
\Delta'({\lambda_m^k})x_m^k, y_m^k\rangle|}.
\end{equation}

Using the decomposition~(\ref{eq_eq23}) and the relation (\ref{eq_eq43}),
we represent each vector $\xi\in M_2$ as follows:
\begin{equation}\label{eq_eq47}
\begin{array}{rcl}
\xi & = & \widehat{\xi}_0+\sum\limits_{m=1}^{\ell_1}\sum\limits_{|k|\ge N} b_m^k
\widehat{\psi}_m^k =\widehat{\xi}_0 - \sum\limits_{m=1}^{\ell_1}\sum\limits_{|k|\ge N} \frac{b_m^k}{a_m^k} \gamma_m^k
+ \sum\limits_{m=1}^{\ell_1}\sum\limits_{|k|\ge N} \frac{b_m^k}{a_m^k}  \varphi_m^k \\
& = & \xi_0+\sum\limits_{m=1}^{\ell_1}\sum\limits_{|k|\ge N} c_m^k \varphi_m^k,
\end{array}
\end{equation}
where $\widehat{\xi}_0\in M_2^0$, $\sum\limits_{m=1}^{\ell_1}\sum\limits_{|k|\ge N} |b_m^k|^2<\infty$,
$\xi_0=\widehat{\xi}_0 - \sum\limits_{m=1}^{\ell_1}\sum\limits_{|k|\ge N} \frac{b_m^k}{a_m^k} \gamma_m^k \in M_2^0$,
$c_m^k=\frac{b_m^k}{a_m^k}$.

To prove the validity of the decomposition~(\ref{eq_eq47}) it is
enough to show that $\left| \frac{1}{a_m^k}\right|\le C_1$ and
$\|\gamma_m^k\|\le C_2$ for some $0<C_1, C_2<+\infty$.
Taking into account (\ref{eq_eq44}) and (\ref{eq_eq46}), the last means to give the estimate
\begin{equation}\label{eq_eq48}
0<C_1\le \left|\frac{1}{{\lambda_m^k}} \langle
\Delta'({\lambda_m^k})x_m^k, y_m^k\rangle\right|\le C_2, \qquad
{\lambda_m^k}\in \Lambda_1.
\end{equation}
This estimate is proved by Lemma~\ref{lm_lm7} given in Subsection~\ref{subsect_33}.

Thus, the representation~(\ref{eq_eq47}) holds for any $\xi\in
M_2$, what completes the proof of the theorem.
\end{proof}

\subsection{Spectral decomposition for the stabilizability problem}\label{subsect_decomp_stabilizability}

We recall that for the stabilizability problem we assume only that $|\mu|\le 1$ for all $\mu\in \sigma(A_{-1})$.
In this case an infinite number of eigenvalues may belong to the right-half plane.
On the other hand, only a finite number of eigenvalues may be located on the right of
a vertical line ${\rm Re}\lambda = \varepsilon$ for any $\varepsilon>0$. For the analysis of stabilizability
it is convenient to construct a decomposition of the state space onto three ${\mathcal A}$-invariant subspaces.

We divide the spectrum of $\mathcal{A}$ onto three parts:
\begin{equation}\label{eq_eq22-2}
\sigma({\mathcal A})=\Lambda_0\cup \Lambda_1\cup \Lambda_2,
\end{equation}
where the subsets $\Lambda_0$, $\Lambda_1$, $\Lambda_2$ are constructed by the following procedure.

Let $N_0$ be such that $\lambda_m^k\in L_m^{k}(r)$, $r\le \frac13 |\widetilde{\lambda}_m^{k}-\widetilde{\lambda}_i^{j}|$,
$(m,k)\not=(i,j)$ for all $k\ge N_0$ and for all $m$ such that $\mu_m\not=0$.
First, we construct an auxiliary division
$$\sigma({\mathcal A})=\chi_1\cup \chi_0,\qquad \chi_1=\{\lambda_m^k\in \sigma({\mathcal A}): \; |k|\ge N_0, m=1,\ldots, \ell, \mu_m\not=0\}.$$

Due to the construction, any vertical strip ${\rm St}(\delta_1, \delta_2)=\{\lambda:\; \delta_1< {\rm Re} \lambda < \delta_2\}$ contains
only a finite number of eigenvalues from $\chi_0$.
We also recall that $\omega_s=\sup\{{\rm Re} \lambda: \; \lambda\in
\sigma({\mathcal A})\}<+\infty$.

If $\sigma_1\not=\emptyset$ then for any $r>0$ the strip ${\rm St}(-r, r)$ contains an infinite number of eigenvalues from  $\chi_1$
and, as we have noticed above, only a finite number of eigenvalues from $\chi_0$.
Let us fix some $r>0$ and consider the value
$$\varepsilon=\min\limits_{\lambda\in {\rm St}(-r, r)\cap \chi_0} |{\rm Re} \lambda|.$$

If $\varepsilon>0$, then the vertical strip ${\rm St}(-\varepsilon, \varepsilon)$ does not contain eigenvalues from
$\chi_0$ and contains an infinite number of eigenvalues from  $\chi_1$.
Moreover, the strip ${\rm St}(\varepsilon, r)$ contains only a finite number of eigenvalues from  $\chi_1$.
Thus, the strip ${\rm St}(\varepsilon, \omega_s)$ contains a finite number of eigenvalues of the operator $\mathcal A$ and,
therefore, we conclude that these eigenvalues are located in
a rectangle $\{\lambda: \; \varepsilon\le {\rm Re} \lambda\le \omega_0,\; |{\rm Im} \lambda|<M\}$ for some $M>0$.
Finally, we put
\begin{equation}\label{eq_eq22-3}
\begin{array}{l}
\Lambda_0= \sigma({\mathcal A}) \cap \{\lambda: \; {\rm Re} \lambda\le -\varepsilon \},\\
\Lambda_1= \sigma({\mathcal A}) \cap {\rm St}(-\varepsilon, \varepsilon),\\
\Lambda_2= \sigma({\mathcal A}) \cap {\rm St}(\varepsilon, \omega_s).
\end{array}
\end{equation}

We illustrate the mentioned construction by the following figure:
\begin{center}
\includegraphics[scale=0.7]{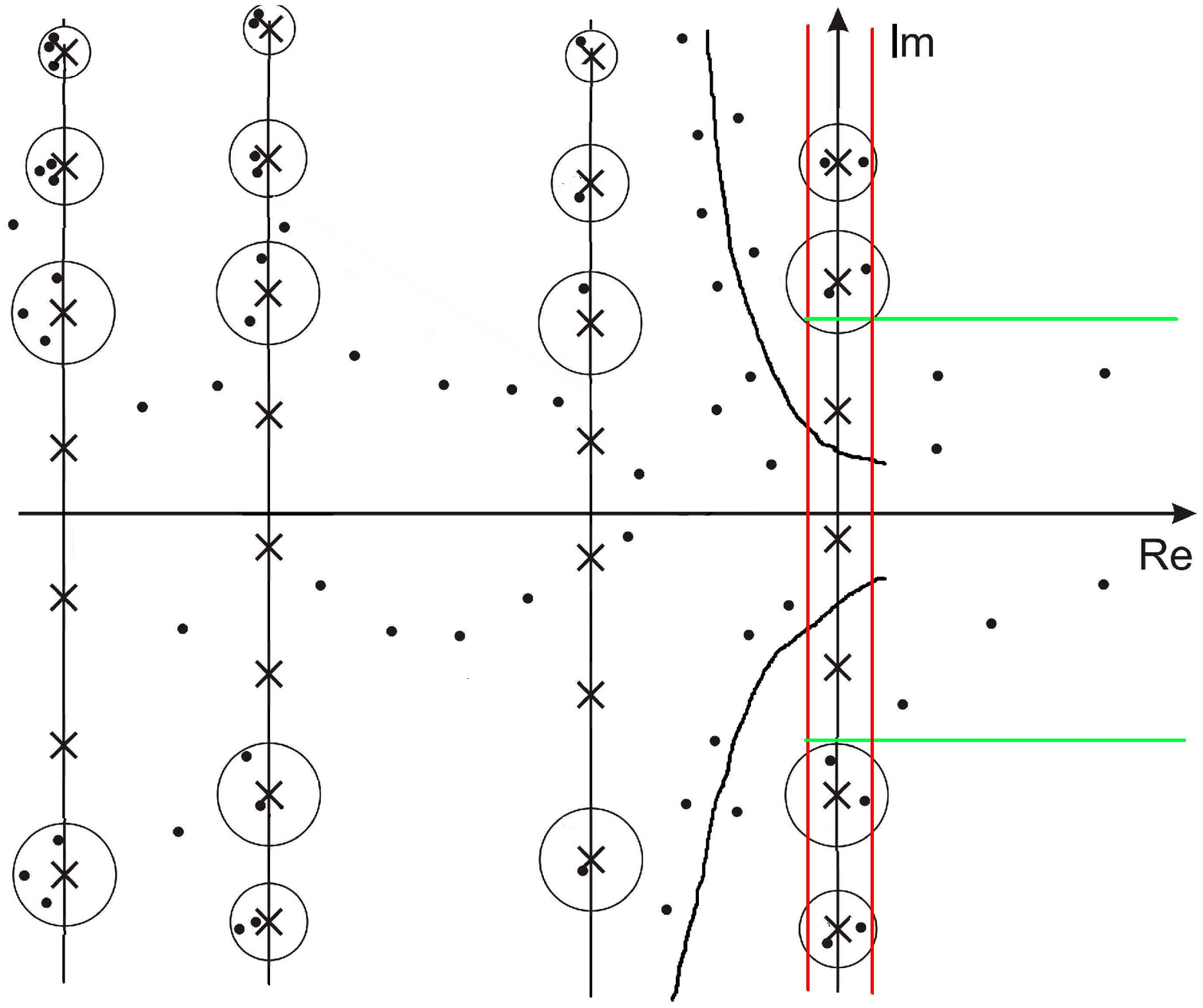}
\vskip 1ex
Figure 2.
\end{center}
Again by crosses we denote $\widetilde{\lambda}_m^{k}$, eigenvalues of $\widetilde{\mathcal A}$ and by points
we denote eigenvalues of the operator ${\mathcal A}$.

We notice that the relation $\varepsilon=0$ means that there exists eigenvalues with zero real part.
In these case we calculate $\min\limits_{\lambda\in {\rm St}(-r, r)\cap \chi_0} |{\rm Re} \lambda|$
without taking these eigenvalues into consideration and after constructing (\ref{eq_eq22-3})
we add these eigenvalues to $\Lambda_2$.

The obtained sets of eigenvalues may be described as follows:
$\Lambda_0$ belongs to the left half-plane and is separated from the imaginary axis;
\noindent $\Lambda_1$ consists of infinite number of simple eigenvalues which may be as stable as unstable,
the corresponding eigenvectors form a Riesz basis of the closure of their linear span;
$\Lambda_2$ consists of finite number of unstable eigenvalues.

Passing over to the construction of invariant subspaces,  let us denote the elements of the finite set $\Lambda_2$ as $\lambda_i$, $i=1,\ldots,r$.
We denote the corresponding generalized eigenvectors  by $\varphi_{i,j}$: $({\mathcal A}-\lambda_i I)^j\varphi_{i,j}=0$, $j=0,\ldots,s_i-1$.
As before, the eigenvalues from $\Lambda_1$ we denote as $\lambda_m^k$ and the corresponding eigenvectors
we denote as $\varphi_m^k$, $m=1,\ldots,\ell_1$, $|k|\ge N$.

We introduce the following two infinite-dimensional subspaces of eigenvectors:
\begin{equation}\label{eq_eq32-2}
M_2^{1}={{\rm Cl}\; {\rm Lin}\{\varphi_m^k:\; ({\mathcal A}-\lambda_m^k I)\varphi_m^k=0,\; \lambda_m^k\in\Lambda_1\}},
$$
$$
\widehat{M}_2^{1}={{\rm Cl}\; {\rm Lin}\{\psi_m^k:\; ({\mathcal A}^*-\overline{\lambda_m^k} I)\psi_m^k=0,\;
\lambda_m^k\in\Lambda_1\}},
\end{equation}
two finite-dimensional subspaces of eigenvectors and root-vectors:
\begin{equation}\label{eq_eq33-2}
M_2^{2}={\; {\rm Lin}\{\varphi_{i,j}:\; ({\mathcal A}-\lambda_i I)^j\varphi_{i,j}=0,\; \lambda_i\in\Lambda_2, \; j=0,\ldots, s_i-1\}},
$$
$$
\widehat{M}_2^{2}={\; {\rm Lin}\{\psi_{i,j}:\; ({\mathcal A}^*-\overline{\lambda_i} I)^j\psi_{i,j}=0,\;
\lambda_i\in\Lambda_2, \; j=0,\ldots, s_i-1\}}
\end{equation}
and the subspace $M_2^0$, which satisfies
\begin{equation}\label{eq_eq20-2}
M_2=(\widehat{M}_2^1 \oplus \widehat{M}_2^2) {\stackrel{\bot}{\oplus}} M_2^0.
\end{equation}
Thus, we have constructed three ${\mathcal A}$-invariant subspaces: $M_2^0$, $M_2^{1}$ and $M_2^{2}$.
The main result of this subsection is the following theorem.

\begin{thr}\label{thr_decomposition_stab}
Let $\sigma_1=\{\mu_1,\ldots,\mu_{\ell_1}\}$ consists of simple eigenvalues only.
For any $N\ge N_1$ the decomposition of the spectrum (\ref{eq_eq22-3})
and the subspaces given by (\ref{eq_eq32-2}), (\ref{eq_eq33-2}) and (\ref{eq_eq20-2})
define the direct decomposition of the space $M_2$:
$$
M_2= M_2^0 \oplus M_2^{1}\oplus M_2^{2},
$$
where the subspaces $M_2^0$, $M_2^{1}$, $M_2^{2}$ are ${\cal A}$-invariant.
\end{thr}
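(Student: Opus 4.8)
The plan is to follow the scheme of the proof of Theorem~\ref{thr_decomposition}, the only new feature being the finite-dimensional block $M_2^2$ attached to the finite set $\Lambda_2$. As there, it suffices to show that every $\xi\in M_2$ admits a representation
\[
\xi=\xi_0+\sum_{m=1}^{\ell_1}\sum_{|k|\ge N}c_m^k\varphi_m^k
+\sum_{i=1}^{r}\sum_{j=0}^{s_i-1}e_{i,j}\varphi_{i,j},
\qquad \xi_0\in M_2^0,\ \sum_{m,k}|c_m^k|^2<\infty,
\]
with $\varphi_m^k\in M_2^1$ and $\varphi_{i,j}\in M_2^2$. Starting from the orthogonal decomposition~(\ref{eq_eq20-2}), together with the Riesz-basis property of $\{\widehat\psi_m^k\}$ in $\widehat M_2^1$ and the finite basis $\{\psi_{i,j}\}$ of $\widehat M_2^2$, I would first write $\xi=\widehat\xi_0+\sum_{m,k}b_m^k\widehat\psi_m^k+\sum_{i,j}d_{i,j}\psi_{i,j}$ with $\widehat\xi_0\in M_2^0$, and then replace every $\widehat\psi$-term by a $\varphi$-term modulo $M_2^0$.

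The decisive structural point is that the infinite block $\Lambda_1$ and the finite block $\Lambda_2$ \emph{decouple}. Since $\Lambda_1\cap\Lambda_2=\emptyset$, the orthogonality computation of Lemma~\ref{lm_lm4}, extended by the standard spectral argument to generalized eigenvectors of ${\mathcal A}^*$ attached to a different eigenvalue, yields $M_2^1\bot\widehat M_2^2$ and $M_2^2\bot\widehat M_2^1$. Consequently, applying~(\ref{eq_eq20-2}) to a generalized eigenvector $\varphi_{i,j}\in M_2^2$ annihilates its $\widehat M_2^1$-component, so that $\varphi_{i,j}=\gamma_{i,j}+\sum_{i',j'}\alpha_{i,j}^{i',j'}\psi_{i',j'}$ with $\gamma_{i,j}\in M_2^0$. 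The coefficient array is governed by the Gram matrix $(\langle\varphi_{i,j},\psi_{i',j'}\rangle)$, which is invertible because the pairing between $M_2^2=\MIm P_2$ and $\widehat M_2^2=\MIm P_2^*$ is non-degenerate; here $P_2$ is the Riesz projection attached to the isolated finite set $\Lambda_2$. Inverting this finite linear system expresses each $\psi_{i,j}$ as a finite combination of the $\varphi_{i',j'}$ modulo $M_2^0$, which disposes of the finite block.

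For the infinite block the argument is verbatim that of Theorem~\ref{thr_decomposition}: applying~(\ref{eq_eq20-2}) to $\varphi_m^k$ and using $\varphi_m^k\bot\widehat M_2^2$ together with the biorthogonality of Corollary~\ref{col_col23} collapses the expansion to $\varphi_m^k=\gamma_m^k+a_m^k\widehat\psi_m^k$ with $\gamma_m^k\in M_2^0$, exactly as in~(\ref{eq_eq43}). Substituting $\widehat\psi_m^k=\frac1{a_m^k}(\varphi_m^k-\gamma_m^k)$ and the finite-block replacements into the expansion of $\xi$ and collecting all $M_2^0$-contributions then produces the required representation; the series converges by virtue of the uniform two-sided estimate~(\ref{eq_eq48}) for the coefficients $a_m^k$ provided by Lemma~\ref{lm_lm7}, together with the bound $\|\gamma_m^k\|\le C$.

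I expect the main obstacle to be precisely the one already isolated in Theorem~\ref{thr_decomposition}, namely the uniform estimate~(\ref{eq_eq48}) that guarantees convergence of the infinite $\Lambda_1$-series and is the genuine analytic content, delegated to Lemma~\ref{lm_lm7}. By contrast, the new finite-dimensional ingredient is routine once the cross-orthogonality $M_2^1\bot\widehat M_2^2$ and $M_2^2\bot\widehat M_2^1$ is established, since it reduces to the inversion of a non-degenerate finite Gram matrix and involves no convergence issue.
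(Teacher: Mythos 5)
Your proposal is correct and follows the same overall scheme as the paper's proof: expand $\xi$ via the orthogonal decomposition~(\ref{eq_eq20-2}), replace each $\widehat{\psi}$-term by a $\varphi$-term modulo $M_2^0$, and control the infinite $\Lambda_1$-series by the two-sided estimate of Lemma~\ref{lm_lm7} together with the norm bounds of Lemma~\ref{lm_lm5}. The one place where you genuinely diverge is the treatment of the coupling between the two blocks: the paper simply declares that ``without loss of generality'' the sets $\{\varphi_m^k\}\cup\{\varphi_{i,j}\}$ and $\{\widehat{\psi}_m^k\}\cup\{\widehat{\psi}_{i,j}\}$ may be assumed biorthogonal, and then collapses $\varphi_{i,j}$ to a single term $\gamma_{i,j}+a_{i,j}\widehat{\psi}_{i,j_1}$, whereas you actually prove the decoupling, namely $M_2^1\bot\widehat{M}_2^2$ and $M_2^2\bot\widehat{M}_2^1$, by the standard adjoint-power argument extending Lemma~\ref{lm_lm4} to generalized eigenvectors at distinct eigenvalues, and you handle the finite block by inverting the Gram matrix $\bigl(\langle\varphi_{i,j},\psi_{i',j'}\rangle\bigr)$, whose non-degeneracy you derive from the duality of the Riesz projections $P_2$ and $P_2^*$ for the isolated set $\Lambda_2$. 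Your route costs a little extra work but is self-contained and justifies precisely the point the paper glosses over; the paper's route is shorter but leaves the reader to check that the asserted biorthogonal normalization is actually achievable (which your Gram-matrix argument is, in effect, the proof of). Both versions then rest on the same analytic core, the uniform estimate~(\ref{eq_eq48-st}) from Lemma~\ref{lm_lm7}, so your identification of that estimate as the only genuine obstacle is accurate.
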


\begin{proof}
The proof of this proposition is very similar to the proof of Theorem~\ref{thr_decomposition}.
We prove that any element $\xi\in M_2$ allows the representation:
\begin{equation}\label{eq_eq39-st}
\xi=\xi_0+\sum\limits_{m=1}^{\ell_1}\sum\limits_{|k|\ge N} c_m^k \varphi_m^k + \sum_{i=1}^r\sum_{j=0}^{s_i-1} c_{i,j} \varphi_{i,j},
\quad \xi_0\in M_2^0,\; \sum\limits_{m=1}^{\ell_1}\sum\limits_{|k|\ge N} |c_m^k|^2<\infty.
\end{equation}

The eigenvectors $\{\varphi_m^k:\; \lambda_m^k\in \Lambda_1\}$ form a Riesz basis of the
closure of their linear span. The finite set of the generalized eigenvectors $\{\varphi_{i,j}:\; \lambda_i\in \Lambda_2\}$
is also a basis of their linear span. These vectors are linearly independent from eigenvectors $\{\varphi_m^k\}$.
Thus eigenvectors $\{\varphi_m^k\}\cup \{\varphi_{i,j}\}$ form a basis of the closure of their linear span.
Arguing the same way, we conclude that the set $\{\widehat{\psi}_m^k\}\cup \{\widehat{\psi}_{i,j}\}$ is also a basis of the closure of its linear span,
where $\widehat{\psi}_m^k=\frac{1}{\overline{\lambda_m^k}}\psi_m^k$ and $\widehat{\psi}_{i,j}=\frac{1}{\overline{\lambda_i}}\psi_{i,j}$.
Moreover, without loss of generality, we may assume that the sets $\{\varphi_m^k\}\cup \{\varphi_{i,j}\}$
and $\{\widehat{\psi}_m^k\}\cup \{\widehat{\psi}_{i,j}\}$ are biorthogonal after the normalization.

Using the representations (\ref{eq_eq27}) and (\ref{eq_eq28}), we choose eigenvectors with $\|x_m^k\|_{\mathbb{C}^n}=1$ and
$\|y_m^k\|_{\mathbb{C}^n}=1$.
Due to Lemma~\ref{lm_lm5} there exists $C>0$ such that $\|\varphi_m^k\|_{M^2}\le
C$ and
$\|\widehat{\psi}_m^k\|_{M^2}\le C$ for all $m=1,\ldots,\ell_1$, $|k|\ge N$.

Applying the decomposition~(\ref{eq_eq23}) to vectors $\varphi_m^k$, we
obtain
$$
\varphi_m^k=\gamma_m^k +
\sum\limits_{i=1}^{\ell_1}\sum\limits_{|j|\ge N} a_m^k \widehat{\psi}_m^k +
\sum_{i=1}^r\sum_{j=0}^{s_i-1} a_{i,j} \widehat{\psi}_{i,j},
 \quad \gamma_m^k \in M_2^0.
$$
Since the sets $\{\varphi\}$ and $\{\psi\}$ are biorthogonal, the last representation can be rewritten as follows:
\begin{equation}\label{eq_eq43-st}
\varphi_m^k=\gamma_m^k+a_m^k \widehat{\psi}_m^k, \quad \gamma_m^k\in M_2^0,
\end{equation}
and, moreover, due to Lemma~\ref{lm_lm4} we have
\begin{equation}\label{eq_eq44-st}
a_m^k= \frac{\langle \varphi_m^k, \widehat{\psi}_m^k\rangle_{M_2}}
{\|\widehat{\psi}_m^k\|^2_{M_2}} =
\frac{\frac{1}{\lambda_m^k}\langle \varphi_m^k, \psi_m^k
\rangle_{M_2}} {\|\widehat{\psi}_m^k\|^2_{M_2}}
=\frac{-\frac{1}{{\lambda_m^k}} \langle
\Delta'({\lambda_m^k})x_m^k, y_m^k \rangle_{\mathbb{C}^n}} {
\|\widehat{\psi}_m^k\|^2_{M_2}}.
\end{equation}
Besides, arguing the same we obtain:
$$
\varphi_{i,j}=\gamma_{i,j}+a_{i,j} \widehat{\psi}_{i,j_1}, \quad \gamma_{i,j}\in M_2^0, j_1=0,\ldots,s_i-1.
$$
From (\ref{eq_eq43-st}) and (\ref{eq_eq44-st}) it also follows that
\begin{equation}\label{eq_eq46-st}
\|\gamma_m^k\|\le \|\varphi_m^k\|+|a_m^k| \|\widehat{\psi}_m^k\| \le
C+\sqrt{|\frac{1}{{\lambda}} \langle
\Delta'({\lambda_m^k})x_m^k, y_m^k\rangle|}.
\end{equation}

Using the decomposition~(\ref{eq_eq23}) and the relation (\ref{eq_eq43-st}),
we represent each vector $\xi\in M_2$ as follows:
\begin{equation}\label{eq_eq47-st}
\xi=\widehat{\xi}_0+\sum\limits_{m=1}^{\ell_1}\sum\limits_{|k|\ge N} b_m^k \widehat{\psi}_m^k +
\sum_{i=1}^r\sum_{j=0}^{s_i-1} b_{i,j} \widehat{\psi}_{i,j}
$$
$$
=\widehat{\xi}_0 - \sum\limits_{m=1}^{\ell_1}\sum\limits_{|k|\ge N} \frac{b_m^k}{a_m^k} \gamma_m^k -
\sum_{i=1}^r\sum_{j=0}^{s_i-1} \frac{b_{i,j}}{a_{i,j}} \gamma_{i,j}
+  \sum\limits_{m=1}^{\ell_1}\sum\limits_{|k|\ge N} \frac{b_\lambda}{a_m^k}  \varphi_m^k
+ \sum_{i=1}^r\sum_{j=0}^{s_i-1} \frac{b_{i,j}}{a_{i,j}}  \varphi_{i,j}
$$
$$
=\xi_0+\sum\limits_{m=1}^{\ell_1}\sum\limits_{|k|\ge N} c_m^k \varphi_m^k + \sum_{i=1}^r\sum_{j=0}^{s_i-1} c_{i,j} \varphi_{i,j},
\end{equation}
where $\widehat{\xi}_0\in M_2^0$, $\sum\limits_{m=1}^{\ell_1}\sum\limits_{|k|\ge N} |b_m^k|^2<\infty$,
$\xi_0=\widehat{\xi}_0- \sum\limits_{m=1}^{\ell_1}\sum\limits_{|k|\ge N} \frac{b_m^k}{a_m^k} \gamma_m^k
- \sum\limits_{i=1}^r\sum\limits_{j=0}^{s_i-1} \frac{b_{i,j}}{a_{i,j}} \gamma_{i,j}\in M_2^0$,
$c_m^k=\frac{b_m^k}{a_m^k}$.

To prove the validity of the decomposition~(\ref{eq_eq47-st}) it is
enough to show that $\left| \frac{1}{a_m^k}\right|\le C_1$ and
$\|\gamma_m^k\|\le C_2$. Taking into account (\ref{eq_eq44-st}) and
(\ref{eq_eq46-st}), the last means to give the estimate
\begin{equation}\label{eq_eq48-st}
0<C_1\le \left|\frac{1}{\lambda_m^k} \langle
\Delta'(\lambda_m^k)x_m^k, y_m^k\rangle\right|\le C_2, \qquad
\lambda_m^k\in \Lambda_1,
\end{equation}
This estimate is proved by Lemma~\ref{lm_lm7}.
Therefore, the representation~(\ref{eq_eq47-st}) holds for any $\xi\in M_2$,
what completes the proof of the theorem.
\end{proof}

\subsection{Auxiliary results}\label{subsect_33}

In this subsection we prove several estimates which have been used in the proofs of Theorem~\ref{thr_decomposition}
and Theorem~\ref{thr_decomposition_stab}.

\begin{lem}\label{lm_lm5}
Let us consider eigenvectors $\varphi_m^k=\varphi(\lambda_m^k)$ and $\psi_m^k=\psi(\overline{\lambda_m^k})$
and their representations (\ref{eq_eq27}) and (\ref{eq_eq28}).
Let us assume that $\|x_m^k\|_{\mathbb{C}^n}=\|y_m^k\|_{\mathbb{C}^n}=1$ in these representations.
Then for any $N\in\mathbb{N}$ there exists a constant $C>0$ such that
\begin{equation}\label{eq_eq45}
\|\varphi_m^k\|\le C,\quad
\frac{1}{|\lambda_m^k|}\|\psi_m^k\|\le C, \qquad {m=1,\ldots,\ell_1,\; |k|\ge N}.
\end{equation}
In other words the two families of eigenvectors $\{\varphi_m^k:\; m=1,\ldots,\ell_1, |k|\ge N\}$ with $\|x_m^k\|_{\mathbb{C}^n}=1$
and $\left\{\frac{1}{{\overline{\lambda_m^k}}}\psi_m^k:\; m=1,\ldots,\ell_1, |k|\ge N\right\}$
with $\|y_m^k\|_{\mathbb{C}^n}=1$ are bounded.
\end{lem}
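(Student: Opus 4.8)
The plan is to estimate the two components of each eigenvector separately in the norm of $M_2=\mathbb{C}^n\times L_2(-1,0;\mathbb{C}^n)$, where $\|(z,\xi)\|^2=\|z\|^2_{\mathbb{C}^n}+\int_{-1}^0\|\xi(\theta)\|^2_{\mathbb{C}^n}\,{\rm d}\theta$. The decisive structural fact I would establish first is that, since $|\mu_m|=1$ for $m=1,\ldots,\ell_1$, the centers $\widetilde{\lambda}_m^{k}={\rm i}(\arg\mu_m+2\pi k)$ are purely imaginary, and by Proposition~\ref{ut_ut1} each $\lambda_m^k$ lies in the disc $L_m^{k}(r^{(k)})$ with $\sum_k(r^{(k)})^2<\infty$, so $r^{(k)}$ is bounded and tends to $0$. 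Consequently $|{\rm Re}\,\lambda_m^k|\le r^{(k)}\le C_0$ uniformly in $m,k$, while $|\lambda_m^k|\to\infty$ like $2\pi|k|$. From $|{\rm Re}\,\lambda_m^k|\le C_0$ I obtain that $|e^{-\lambda_m^k}|=e^{-{\rm Re}\,\lambda_m^k}$ and every factor $|e^{\lambda_m^k(s-\theta)}|=e^{{\rm Re}\,\lambda_m^k(s-\theta)}$ with $s,\theta\in[-1,0]$ are bounded by a constant independent of $m,k$; this uniform control of the exponential factors is what makes all subsequent estimates uniform.

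For $\varphi_m^k$ given by~(\ref{eq_eq27}) the first (finite-dimensional) component satisfies $\|(I-e^{-\lambda_m^k}A_{-1})x_m^k\|\le 1+|e^{-\lambda_m^k}|\,\|A_{-1}\|\le C$, using $\|x_m^k\|=1$ and the bound on $|e^{-\lambda_m^k}|$. For the $L_2$ component $e^{\lambda_m^k\theta}x_m^k$ I would compute $\int_{-1}^0|e^{\lambda_m^k\theta}|^2\,{\rm d}\theta=\int_{-1}^0 e^{2{\rm Re}\,\lambda_m^k\theta}\,{\rm d}\theta$, which is bounded uniformly because $|{\rm Re}\,\lambda_m^k|\le C_0$. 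This yields $\|\varphi_m^k\|\le C$ and settles the first family.

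For the second family I must show $\frac{1}{|\lambda_m^k|}\|\psi_m^k\|\le C$, with $\psi_m^k$ from~(\ref{eq_eq28}). The first component contributes $\frac{1}{|\lambda_m^k|}\|y_m^k\|=\frac{1}{|\lambda_m^k|}\to 0$. Writing $\lambda=\lambda_m^k$, I would expand the $L_2$ component into the four summands $\overline{\lambda}e^{-\overline{\lambda}\theta}y$, $-A_2^*(\theta)y$, $e^{-\overline{\lambda}\theta}\int_0^\theta e^{\overline{\lambda}s}A_3^*(s)\,{\rm d}s\,y$ and $\overline{\lambda}e^{-\overline{\lambda}\theta}\int_0^\theta e^{\overline{\lambda}s}A_2^*(s)\,{\rm d}s\,y$, and estimate each after division by $|\lambda|$. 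The first becomes $\frac{\overline{\lambda}}{|\lambda|}e^{-\overline{\lambda}\theta}y$, a unimodular factor times an $L_2$-bounded function (the same computation as for $\varphi_m^k$); the second and third carry a prefactor $\frac{1}{|\lambda|}$ in front of an $L_2$-bounded quantity (for the third, $\|e^{-\overline{\lambda}\theta}\int_0^\theta e^{\overline{\lambda}s}A_3^*(s)\,{\rm d}s\|_{L_2}$ is bounded since $A_3\in L_2$), so both tend to $0$.

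The hard part will be the last summand $\overline{\lambda}e^{-\overline{\lambda}\theta}\int_0^\theta e^{\overline{\lambda}s}A_2^*(s)\,{\rm d}s\,y$, because it grows linearly in $\lambda$ and is therefore merely controlled by the $\frac{1}{|\lambda|}$ normalization rather than killed by it. After dividing by $|\lambda|$ it becomes $\frac{\overline{\lambda}}{|\lambda|}\,u(\theta)\,y$ with $u(\theta)=e^{-\overline{\lambda}\theta}\int_0^\theta e^{\overline{\lambda}s}A_2^*(s)\,{\rm d}s=-\int_\theta^0 e^{\overline{\lambda}(s-\theta)}A_2^*(s)\,{\rm d}s$; the unimodular factor $\frac{\overline{\lambda}}{|\lambda|}$ is harmless, so it remains to bound $\|u\|_{L_2}$ uniformly in $m,k$. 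Here I would use the uniform estimate $|e^{\overline{\lambda}(s-\theta)}|=e^{{\rm Re}\,\lambda(s-\theta)}\le C_0'$ valid for $\theta\le s\le 0$, together with the Cauchy--Schwarz inequality, to get the pointwise bound $|u(\theta)|\le C_0'\int_\theta^0|A_2^*(s)|\,{\rm d}s\le C_0'\|A_2\|_{L_2}$, a constant independent of $\theta$ and of $m,k$; integrating $|u(\theta)|^2$ over $[-1,0]$ then gives $\|u\|_{L_2}\le C$. Collecting the four estimates yields $\frac{1}{|\lambda_m^k|}\|\psi_m^k\|\le C$, which together with the bound on $\|\varphi_m^k\|$ completes the proof.
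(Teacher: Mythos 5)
Your proposal is correct and follows essentially the same route as the paper's proof: a direct componentwise estimate of the $M_2$-norm, resting on the key fact that for $|\mu_m|=1$ the centers $\widetilde{\lambda}_m^k$ are purely imaginary, so $|\MRe\lambda_m^k|\le r^{(k)}$ is uniformly bounded and all exponential factors (and hence the integrals over $[-1,0]$) are controlled uniformly in $m,k$. The only cosmetic difference is that the paper bounds the resulting integrals via monotonicity of $(1-\Me^{-y})/y$ and $(\Me^{y}-1)/y$, while you bound the exponentials pointwise by $\Me^{C_0}$ before integrating.
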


\begin{proof} Using (\ref{eq_eq27}) and the relation $\|x_m^k\|_{\mathbb{C}^n}=1$ we obtain:
$$
\begin{array}{rcl}
\|\varphi_m^k\|^2 & = & \|(I-{\rm e}^{-{\lambda_m^k}}A_{-1})x_m^k\|^2 +
\int\limits_{-1}^0 \|{\rm e}^{{\lambda_m^k} \theta} x_m^k\|^2 \dd \theta \\
& \le & \|I-{\rm e}^{-{\lambda_m^k}}A_{-1}\|^2 + \int\limits_{-1}^0 {\rm e}^{2\MRe{\lambda_m^k} \theta} \dd \theta\\
& \le & 1+ {\rm e}^{2\MRe{\lambda_m^k}}
\|A_{-1}\|^2+\frac{1-{\rm e}^{-2\MRe{\lambda_m^k}}}{2\MRe{\lambda_m^k}}
\le 1+\|A_{-1}\|^2+\frac{1-{\rm e}^{-2r}}{2r} \le C,
\end{array}
$$
where $r=\mathop{\max}\limits_{k\in\mathbb{N}} r^{(k)}$ and $r^{(k)}$ are the radii of the circles $L_m^{k}(r^{(k)})$.
The last inequality holds since the real function
$\frac{1-{\rm e}^{-y}}{y}$ decreases monotone from $+\infty$ to $1$ when
the variable $y$ runs from $\infty$ to $-0$.

From (\ref{eq_eq28}) and since $\|y_m^k\|_{\mathbb{C}^n}=1$, we have:
$$
\begin{array}{rcl}
\|\frac{1}{{\overline{\lambda_m^k}}} \psi_m^k\|^2 & = & \frac{1}{|\lambda_m^k|^2}\|y_m^k\|^2
 + \int\limits_{-1}^0 \left \|({\rm e}^{-{\overline{\lambda_m^k}} \theta}-\frac{1}{{\overline{\lambda_m^k}}}A_2^*(\theta)+ \right.\\
& &+
\left.\frac{1}{{\overline{\lambda_m^k}}} {\rm e}^{-{\overline{\lambda_m^k}}
\theta} \int\limits_0^\theta {\rm e}^{{\overline{\lambda_m^k}} s} A_3^*(s) \dd s
+ {\rm e}^{-{\overline{\lambda_m^k}} \theta}\int\limits_0^\theta
{\rm e}^{{\overline{\lambda_m^k}} s} A_2^*(s) \dd s) y_m^k\right\|^2\dd \theta\\
& \le &  \|y_m^k\|^2\left(\frac{1}{|\lambda_m^k|^2}+\frac{{\rm e}^{2\MRe \lambda_m^k}-1}{2\MRe \lambda_m^k}
+\frac{1}{|\lambda_m^k|^2}\|A_2^*(\theta)\|^2_{L_2}\right)\\
& & +\int\limits_{-1}^0 {\rm e}^{-2\MRe\lambda_m^k \theta} \dd\theta
\int\limits_{-1}^0 {\rm e}^{2\MRe\lambda_m^k s}\left(\frac{1}{|\lambda_m^k|^2} \|A_3^*(s)\| + \|A_2^*(s)\|\right)\dd s\\
& \le & \frac{1}{|\lambda_m^k|^2}+\frac{1}{|\lambda_m^k|^2}\|A_2^*(\theta)\|^2_{L_2} +\frac{{\rm e}^{2\MRe \lambda_m^k}-1}{2\MRe \lambda_m^k}
\left(1+\frac{1}{|\lambda_m^k|^2}\|A_3^*(\theta)\|^2_{L_2} +\|A_2^*(\theta)\|^2_{L_2}\right)\\
& \le &
\left(\frac{1}{|\lambda_m^k|^2}+1\right)(\|A_2^*(\theta)\|^2_{L_2}+1) +\frac{1}{|\lambda_m^k|^2}\|A_3^*(\theta)\|^2_{L_2}\le C,
\end{array}
$$
where $\|A_i^*(\theta)\|_{L_2}=\|A_i^*(\theta)\|_{L_2(-1,0; \mathbb{C}^{n\times n})}$.
Here we used the fact that the real function $\frac{{\rm e}^{y}-1}{y}$ increases monotone from $0$ to $1$
when the variable $y$ runs from $\infty$ to $-0$.
\end{proof}

\begin{rem}\label{rm_rm1}
We notice that the norm of eigenvectors $\psi_m^k$ (assuming $\|y_m^k\|=1$) increases infinitely when $k\rightarrow\infty$.
This could be seen on the example of eigenvectors $\widetilde{\psi}_m^k$ of
the operator $\widetilde{{\cal A}}^*$ ($A_2^*(\theta)=A_2^*(\theta)\equiv 0$):
$$
\|\widetilde{\psi}_m^k\|^2=\|y_m^k\|^2 +\int_{-1}^0
\|{\overline{\lambda_m^k}} {\rm e}^{-{\overline{\lambda_m^k}} \theta}
y_m^k\|^2 \dd \theta =\|y_m^k\|^2\left(1+|\lambda_m^k|^2
\frac{{\rm e}^{2\MRe \lambda_m^k}-1}{2\MRe \lambda_m^k}\right)
$$
$$
\ge (1+C|\lambda_m^k|^2)\rightarrow +\infty, \qquad k \rightarrow \infty.
$$
\end{rem}

To formulate the next proposition we introduce the matrices
$$
R_m=\left(
\begin{array}{cc}
\widehat{R}_m & 0\\
0 & I
\end{array}
\right),\quad \quad
\widehat{R}_m= \left(
\begin{array}{ccccc}
0 & 0 & \ldots & 0 & 1\\
0 & 1 & \ldots & 0 & 0\\
\vdots  & \vdots  & \ddots & \vdots  & \vdots \\
0 & 0 & \ldots & 1 & 0\\
1 & 0 & \ldots & 0 & 0
\end{array}
\right)\in \mathbb{C}^{m\times m}, \quad m=1,\ldots,\ell_1,
$$
where $I=I_{n-m}$ is the is the identity matrix of the dimension $n-m$.
Obviously, $R_1=I$ and $R_m^{-1}=R_m^*=R_m$ for all $m=1,\ldots,\ell_1$.

\begin{lem}\label{lm_lm6}
Assume that $\sigma_1=\{\mu_1,\ldots,\mu_{\ell_1}\}$ consists of simple eigenvalues only.
There exists $N\in\mathbb{N}$ such that for any ${\lambda_m^k}\in \Lambda_1$, $|k|\ge N$
and the corresponding matrix $\Delta({\lambda_m^k})$  there exist matrices
$P_{m,k}$, $Q_{m,k}$ of the form
\begin{equation}\label{eq_eq49}
P_{m,k}= \left(
\begin{array}{cccc}
1 & -p_2 & \ldots & -p_n \\
0 & 1 & \ldots & 0\\
\vdots & \vdots  & \ddots & \vdots \\
0 & 0 & \ldots & 1
\end{array}
\right),\quad Q_{m,k}= \left(
\begin{array}{cccc}
1 & 0 & \ldots & 0 \\
-q_2 & 1 & \ldots & 0\\
\vdots & \vdots  & \ddots & \vdots \\
-q_n & 0 & \ldots & 1
\end{array}
\right)
\end{equation}
such that the product $\frac{1}{{\lambda_m^k}} P_{m,k} R_m \Delta({\lambda_m^k}) R_m Q_{m,k}$ has the following form:
\begin{equation}\label{eq_eq50}
\frac{1}{{\lambda_m^k}} P_{m,k} R_m \Delta({\lambda_m^k}) R_m
Q_{m,k}= \left(
\begin{array}{cc}
0 & \begin{array}{ccc} 0 & \ldots & 0 \end{array} \\
\begin{array}{c}
0\\
\vdots \\
0
\end{array} & S_{m,k}
\end{array}
\right), \quad \det S_{m,k}\not=0,
\end{equation}

Moreover, for any $\varepsilon>0$ there exists $N\in \mathbb{Z}$
such that for any $|k|\ge N$ the components $p_i=p_i(m,k)$, $q_i=q_i(m,k)$ of the matrices~(\ref{eq_eq49})
may be estimated as follows:
\begin{equation}\label{eq_eq51}
|p_i|\le \varepsilon, \; |q_i|\le \varepsilon, \quad
i=2,\ldots,n.
\end{equation}
\end{lem}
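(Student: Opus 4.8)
The plan is to study the asymptotic behaviour of the normalized matrix $\frac{1}{\lambda_m^k}\triangle(\lambda_m^k)$ as $|k|\to\infty$ and then to reduce the claimed factorization to elementary linear algebra exploiting the crucial fact that $\det\triangle(\lambda_m^k)=0$. First I would identify the limit of the normalized matrix. Since $\mu_m\in\sigma_1$ has $|\mu_m|=1$, the eigenvalue $\lambda_m^k$ lies in the circle $L_m^{k}(r^{(k)})$ centered at $\widetilde{\lambda}_m^{k}={\rm i}(\arg\mu_m+2\pi k)$ with $r^{(k)}\to 0$, so that $\MRe\lambda_m^k\to 0$ and $|\MIm\lambda_m^k|\to\infty$. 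Dividing (\ref{eq_delta}) by $\lambda_m^k$ gives
$$\frac{1}{\lambda_m^k}\triangle(\lambda_m^k)=-I+{\rm e}^{-\lambda_m^k}A_{-1}+\int_{-1}^0 {\rm e}^{\lambda_m^k s}A_2(s)\dd s+\frac{1}{\lambda_m^k}\int_{-1}^0 {\rm e}^{\lambda_m^k s}A_3(s)\dd s.$$
I would show each integral term vanishes in the limit: because $\MRe\lambda_m^k\to 0$ the factor ${\rm e}^{\lambda_m^k s}$ stays bounded on $[-1,0]$, and the Riemann--Lebesgue lemma applied to the $L_2$-kernels $A_2, A_3$ forces $\int_{-1}^0 {\rm e}^{\lambda_m^k s}A_2(s)\dd s\to 0$, while the $A_3$-term additionally carries the factor $1/\lambda_m^k\to 0$. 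Since $k\in\mathbb{Z}$ gives ${\rm e}^{-\lambda_m^k}={\rm e}^{-{\rm i}\arg\mu_m}{\rm e}^{-\MRe\lambda_m^k}\to\mu_m^{-1}$, we obtain $\frac{1}{\lambda_m^k}\triangle(\lambda_m^k)\to\Theta_m:=-I+\mu_m^{-1}A_{-1}$, hence $\frac{1}{\lambda_m^k}R_m\triangle(\lambda_m^k)R_m\to R_m\Theta_m R_m$.

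Next I would analyze the structure of the limit. Because $A_{-1}$ is in the Jordan form (\ref{eq_eq34}) with the \emph{simple} eigenvalues $\mu_1,\ldots,\mu_{\ell_1}$ occupying the leading diagonal positions, the $m$-th row and the $m$-th column of $\Theta_m$ vanish identically: the $(m,m)$-entry is $-1+\mu_m/\mu_m=0$ and there is no off-diagonal coupling at a simple diagonal eigenvalue. The remaining principal submatrix (delete row and column $m$) is $-I_{n-1}+\mu_m^{-1}\widehat{A}_{-1}$, which is block triangular with eigenvalues $-1+\mu_i/\mu_m$ over all $\mu_i\in\sigma(A_{-1})$, $\mu_i\ne\mu_m$ (counted with multiplicity), all nonzero since the $\mu_i$ are distinct; hence this submatrix is invertible. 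The transposition $R_m$ swaps coordinates $1$ and $m$, so $R_m\Theta_m R_m$ has vanishing first row and first column and an invertible lower-right $(n-1)\times(n-1)$ block.

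Writing $B_{m,k}:=\frac{1}{\lambda_m^k}R_m\triangle(\lambda_m^k)R_m$ in the block form with scalar corner $b_{11}$, first-row remainder $b_1^r$, first-column remainder $b_1^c$ and lower block $B_{22}$, the convergence above guarantees that for $|k|$ large $B_{22}$ is invertible (set $S_{m,k}:=B_{22}$) and $b_1^r, b_1^c$ are small. I would then define $P_{m,k}, Q_{m,k}$ to be exactly the row and column operations that clear these remainders, namely $p^T=b_1^r S_{m,k}^{-1}$ and $q=S_{m,k}^{-1}b_1^c$; these have precisely the shape (\ref{eq_eq49}). A direct block multiplication gives $P_{m,k}B_{m,k}Q_{m,k}={\rm diag}(c, S_{m,k})$ for a scalar $c$. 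The decisive point is that $\det P_{m,k}=\det Q_{m,k}=1$ and $\det\triangle(\lambda_m^k)=0$ (as $\lambda_m^k\in\sigma(\mathcal A)$), so $c\,\det S_{m,k}=\det(P_{m,k}B_{m,k}Q_{m,k})=\det B_{m,k}=0$, whence $c=0$; this yields (\ref{eq_eq50}) \emph{without} any separate computation of the corner entry. Finally the bounds (\ref{eq_eq51}) follow because $b_1^r, b_1^c\to 0$ while $S_{m,k}^{-1}$ stays bounded, so $p, q\to 0$; since $m$ ranges over the finite set $\{1,\ldots,\ell_1\}$, one $N\in\mathbb{N}$ works uniformly.

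The technical heart, and the main obstacle, is the first step: making the limit $\frac{1}{\lambda_m^k}\triangle(\lambda_m^k)\to\Theta_m$ rigorous and uniform in $m$, in particular controlling the oscillatory $L_2$-integrals of $A_2, A_3$ along a sequence whose real parts tend to zero while imaginary parts diverge. Once this limit and the vanishing $m$-th row and column of $\Theta_m$ are established, the factorization is routine, and the automatic vanishing of the corner entry through $\det\triangle(\lambda_m^k)=0$ eliminates what would otherwise be the most delicate computation.
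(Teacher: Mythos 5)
Your proposal is correct and follows essentially the same route as the paper's own proof: both pass to $\frac{1}{\lambda_m^k}R_m\Delta(\lambda_m^k)R_m$, identify its limit $-I+\mu_m^{-1}R_mA_{-1}R_m$ as $|k|\to\infty$ (the paper's Proposition~\ref{ut_u13} on oscillatory integrals plays exactly the role of your Riemann--Lebesgue step), use simplicity of $\mu_m$ to get uniform invertibility of the lower-right block $S_{m,k}$, and define $p,q$ by the same formulas $p^T=b_1^r S_{m,k}^{-1}$, $q=S_{m,k}^{-1}b_1^c$, with the bounds (\ref{eq_eq51}) following from the smallness of the off-corner entries and the uniform bound on $S_{m,k}^{-1}$. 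The only cosmetic difference is your Schur-complement determinant argument ($c\,\det S_{m,k}=\det B_{m,k}=0$, hence $c=0$) for the vanishing corner entry, where the paper instead observes that singularity of $\Delta(\lambda_m^k)$ together with $\det S_{m,k}\neq 0$ forces the first row and column to be linear combinations of the remaining ones --- the same underlying fact used in a slightly different packaging.
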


\begin{proof} We begin with the analysis of the structure of the matrix
$$
\frac{1}{{\lambda_m^k}}R_m\Delta({\lambda_m^k})R_m= -I+
{\rm e}^{-{\lambda_m^k}} R_m A_{-1}R_m +
\int_{-1}^0 {\rm e}^{{\lambda_m^k} s} R_m \left(A_2(s) + \frac{1}{\lambda_m^k}A_3(s)\right) R_m \dd s.
$$
Since the matrix $A_{-1}$ is in Jordan form~(\ref{eq_eq34}), then
the multiplication of $A_{-1}$ on $R_m$ from the left and from the right
changes the places of the one-dimensional Jordan blocks $\mu_1$ and $\mu_m$:
$$
R_m A_{-1}R_m= \left(
\begin{array}{cc}
\mu_m & 0\\
0 & S
\end{array}
\right),\qquad S\in C^{(n-1)\times (n-1)}.
$$
We introduce the notation
\begin{equation}\label{eq_eq1888}
\int_{-1}^0 {\rm e}^{{\lambda} s} R_m \left(A_2(s) + \frac{1}{\lambda}A_3(s)\right) R_m \dd s=
\left(
\begin{array}{ccc}
\varepsilon_{11}({\lambda}) & \ldots & \varepsilon_{1n}({\lambda}) \\
\vdots & \ddots & \vdots \\
\varepsilon_{n1}({\lambda}) & \ldots & \varepsilon_{nn}({\lambda})
\end{array}
\right).
\end{equation}
According to Proposition~\ref{ut_u13}, elements of the matrix~(\ref{eq_eq1888})
tends to zero when $|{\rm Im}\lambda|\rightarrow\infty$ (and $|{\rm Re}\lambda|\le C<\infty$).
Thus, $|\varepsilon_{ij}(\lambda_m^k)|\rightarrow 0$ when $k\rightarrow\infty$.

In the introduced notation the singular matrix $\frac{1}{\lambda_m^k}R_m\Delta(\lambda_m^k) R_m$ has the
following form:
\begin{equation}\label{eq_eq24}
\frac{1}{\lambda_m^k}R_m\Delta(\lambda_m^k)R_m= \left(
\begin{array}{cc}
-1+{\rm e}^{-\lambda_m^k}\mu_m + \varepsilon_{11}(\lambda_m^k) &
\begin{array}{ccc}\varepsilon_{12}(\lambda_m^k)& \ldots & \varepsilon_{1n}(\lambda_m^k)
\end{array} \\
\begin{array}{c}
\varepsilon_{21}(\lambda_m^k)\\
\vdots \\
\varepsilon_{n1}(\lambda_m^k)
\end{array} & S_{m,k}
\end{array}
\right),
\end{equation}
where
\begin{equation}\label{eq_eq19}
S_{m,k} =
\left(-I_{n-1}+ {\rm e}^{-\lambda_m^k}S + \left(
\begin{array}{ccc}
\varepsilon_{22}(\lambda_m^k) & \ldots & \varepsilon_{2n}(\lambda_m^k) \\
\vdots & \ddots & \vdots \\
\varepsilon_{n2}(\lambda_m^k) & \ldots &
\varepsilon_{nn}(\lambda_m^k)
\end{array}
\right)\right)
\end{equation}
and $I_{n-1}$ is the identity matrix of the dimension $n-1$.
Let us prove that
\begin{equation}\label{eq_eq1999}
\det S_{m,k} \not=0.
\end{equation}
Consider the identity $-I_{n-1}+{\rm e}^{-\lambda_m^{k}} S =
-I_{n-1}+{\rm e}^{-\widetilde{\lambda}_m^{k}} S + ({\rm e}^{-\lambda_m^{k}}-{\rm e}^{-\widetilde{\lambda}_m^{k}})S$,
where $\widetilde{\lambda}_m^k=\Mi(\arg \mu_m +2\pi k)$ is an eigenvalue of the operator $\widetilde{\cal A}$.
Since ${\rm e}^{\widetilde{\lambda}_m^k}=\mu_m$ we have
$$
-I+{\rm e}^{-\widetilde{\lambda}_m^{k}} R_mA_{-1}R_m= \left(
\begin{array}{cc}
0 & 0\\
0 & -I_{n-1}+{\rm e}^{-\widetilde{\lambda}_m^{k}} S
\end{array}
\right),
$$
and since the multiplicity of the eigenvalue $\mu_m\in \sigma_1$ equals $1$,
we conclude that $\det(-I_{n-1}+{\rm e}^{-\widetilde{\lambda}_m^{k}} S)\not=0$.
Since $|\lambda_m^k - \widetilde{\lambda}_m^{k}|\rightarrow 0$
when $k\rightarrow\infty$, then for any $\varepsilon>0$ there exists
$N>0$ such that for $k:\; |k|\ge N$ the estimates
$|{\rm e}^{-\lambda_m^k}-{\rm e}^{-\widetilde{\lambda}_m^{k}}|\|S\|\le
\frac{\varepsilon}{2}$ and $|\varepsilon_{ij}({\lambda_m^k})|\le \frac{\varepsilon}{2}$ hold.
Thus, we have that $\det S_{m,k}= \det(-I_{n-1}+{\rm e}^{-\widetilde{\lambda}_m^{k}}S + B_{m,k})$, where the absolute value of each
component of $B_{m,k}$ is less than $\varepsilon$. Therefore, there exists $N>0$ such that $S_{m,k}$ is invertible and
we obtain the relation~(\ref{eq_eq1999}).

Since $\det S_{m,k}\not=0$ then from the relation~(\ref{eq_eq24}) we conclude that
the first row of the matrix $\frac{1}{\lambda_m^k}R_m\Delta(\lambda_m^k)R_m$
is a linear combination of all other rows and the first column is a linear combination of all other columns:
\begin{equation}\label{eq_eq16}
\varepsilon_{1i}(\lambda_m^k)=p_2 s_{2i}+\ldots + p_n s_{ni}, \quad
i=2,\ldots,n
$$
$$
\varepsilon_{j1}(\lambda_m^k)=q_2 s_{j2}+\ldots + q_n s_{jn}, \quad
j=2,\ldots,n.
\end{equation}
where $s_{ij}=s_{ij}(m,k)$, $2\le i,j \le n$ are the components of the matrix $S_{m,k}$.

Let us consider the matrices $P_{m,k}$, $Q_{m,k}$ of the form~(\ref{eq_eq49}) with the coefficients
$p_2,\ldots,p_n$ and $q_2,\ldots,q_n$ defined by (\ref{eq_eq16}).
Direct computations gives us that $\frac{1}{\lambda_m^k} P_{m,k} R_m \Delta(\lambda_m^k) R_m Q_{m,k}$ is
of the form~(\ref{eq_eq50}), i.e.:
$$
\frac{1}{\lambda_m^k} P_{m,k} R_m \Delta(\lambda_m^k) R_m
Q_{m,k}= \left(
\begin{array}{cc}
0 & \begin{array}{ccc} 0 & \ldots & 0 \end{array} \\
\begin{array}{c}
0\\
\vdots \\
0
\end{array} & S_{m,k}
\end{array}
\right).
$$

Let us estimate the coefficients $p_2,\ldots,p_n$ and $q_2,\ldots,q_n$. The equations (\ref{eq_eq16}) may be rewritten in the form:
$$
v_1=(S_{m,k})^T w_1, \qquad v_2=S_{m,k} w_2,
$$
where $v_1=(\varepsilon_{12}(\lambda_m^k), \ldots,
\varepsilon_{1n}(\lambda_m^k))^T$, $w_1=(p_2, \ldots, p_n)^T$,
$v_2=(\varepsilon_{21}(\lambda_m^k), \ldots,
\varepsilon_{n1}(\lambda_m^k))^T$, $w_2=(q_2, \ldots, q_n)^T$. Since
$\det S_{m,k}\not=0$ then
$$
w_1=(S_{m,k}^{-1})^T v_1, \qquad w_2=S_{m,k}^{-1} v_2
$$
and since the values $\varepsilon_{ij}(\lambda_m^k)$ are small then to show (\ref{eq_eq51})
we have to prove that the estimate $\|S_{m,k}^{-1}\|\le C$, $C>0$ holds for all $k: |k|\ge N$.

As we have shown above $S_{m,k}=-I_{n-1}+\frac{1}{\mu_m}S+B_{m,k}$, where elements of the
matrices $B_{m,k}$ tend to zero when $k\rightarrow\infty$. Thus, there
exists $N\in\mathbb{Z}$ such that for all $k: |k|\ge N$ the norm of the matrix
$\widetilde{B}_{m,k}{\stackrel{\rm def}{=}}-\left(-I_{n-1}+\frac{1}{\mu_m}S\right)^{-1}B_{m,k}$
is small enough, say $\|\widetilde{B}_{m,k}\|<\frac12$. Thus, there exists the
inverse matrix of $I_{n-1}-\widetilde{B}_{m,k}$ for every $|k|\ge N$, and these inverse matrices are bounded uniformly by $k$:
$$
\|(I_{n-1}-\widetilde{B}_{m,k})^{-1}\|=\|\sum_{i=0}^{\infty}(\widetilde{B}_{m,k})^i\|\le C_1, \quad |k|\ge N.
$$
Thus, we obtain the estimate
$$
\|S_{m,k}^{-1}\|=\left\|(I_{n-1}-\widetilde{B}_{m,k})^{-1}\left(-I_{n-1}+\frac{1}{\mu_m}S\right)^{-1}\right\|
\le C_1\left\|\left(-I_{n-1}+\frac{1}{\mu_m}S\right)^{-1}\right\| \le C,
$$
what completes the proof of the lemma.
\end{proof}

\begin{col}\label{col_col1}
The matrix function $\widehat{\Delta}_{m,k}({\lambda})\stackrel{{\rm def}}{=}
\frac{1}{{\lambda}} P_{m,k} R_m \Delta({\lambda}) R_m  Q_{m,k}$, where $P_{m,k}$, $Q_{m,k}$ are given by (\ref{eq_eq49}),
allows the following representation in a neighborhood $U({\lambda_m^k})$ of the corresponding eigenvalue ${\lambda_m^k}\in \Lambda_1$, $|k|\ge N$:
\begin{equation}\label{eq_eq56}
\widehat{\Delta}_{m,k}(\lambda)= \left(
\begin{array}{cc}
(\lambda-{\lambda_m^k})r_{11}^{m,k}(\lambda) &
\begin{array}{ccc} (\lambda-{\lambda_m^k})r_{12}^{m,k}(\lambda) & \ldots & (\lambda-{\lambda_m^k})r_{1n}^{m,k}(\lambda)
\end{array}\\
\begin{array}{c}
(\lambda-{\lambda_m^k})r_{21}^{m,k}(\lambda)\\
\vdots \\
(\lambda-{\lambda_m^k})r_{n1}^{m,k}(\lambda)
\end{array} & S_{m,k}(\lambda)
\end{array}
\right),
\end{equation}
where the functions $r_{ij}^{m,k}(\lambda)$ are analytic in $U({\lambda_m^k})$. Moreover,
\begin{equation}\label{eq_eq57}
r_{11}^{m,k}({\lambda_m^k})\not=0, \qquad
|r_{11}^{m,k}({\lambda_m^k})|\rightarrow 1, \; k \rightarrow \infty.
\end{equation}
\end{col}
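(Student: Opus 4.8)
The plan is to read off (\ref{eq_eq56})--(\ref{eq_eq57}) from the structure (\ref{eq_eq50}) already established in Lemma~\ref{lm_lm6}, combined with the analyticity of $\Delta$ and the smallness estimates (\ref{eq_eq51}). Throughout I write $M(\lambda)=\frac{1}{\lambda}R_m\Delta(\lambda)R_m=-I+{\rm e}^{-\lambda}R_mA_{-1}R_m+E(\lambda)$, where $E(\lambda)$ is the matrix (\ref{eq_eq1888}) with entries $\varepsilon_{ij}(\lambda)$, so that $\widehat\Delta_{m,k}(\lambda)=P_{m,k}M(\lambda)Q_{m,k}$.

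First I would settle analyticity and the factorization. Each entry of $\widehat\Delta_{m,k}$ is analytic in some neighborhood $U(\lambda_m^k)$: $\Delta(\cdot)$ is entire by (\ref{eq_delta}), the matrices $P_{m,k},Q_{m,k},R_m$ are constant, and $\lambda_m^k\neq0$ for $|k|\ge N$ so $1/\lambda$ is analytic there. Lemma~\ref{lm_lm6} evaluated at $\lambda_m^k$ says that the entire first row and first column of $\widehat\Delta_{m,k}(\lambda_m^k)$ vanish while the trailing $(n-1)\times(n-1)$ block is $S_{m,k}$ with $\det S_{m,k}\neq0$. Since each of these $2n-1$ entries is analytic and vanishes at $\lambda_m^k$, the analytic factor theorem writes it as $(\lambda-\lambda_m^k)r_{ij}^{m,k}(\lambda)$ with $r_{ij}^{m,k}$ analytic; taking $S_{m,k}(\lambda)$ to be the trailing block (analytic, equal to $S_{m,k}$ at $\lambda_m^k$) gives exactly (\ref{eq_eq56}).

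For $r_{11}^{m,k}(\lambda_m^k)\neq0$ I would argue by determinants. Since $\det P_{m,k}=\det Q_{m,k}=1$ and $(\det R_m)^2=1$, one has $\det\widehat\Delta_{m,k}(\lambda)=\lambda^{-n}\det\Delta(\lambda)$. On the other hand, shrinking $U(\lambda_m^k)$ so that $S_{m,k}(\lambda)$ stays invertible and expanding (\ref{eq_eq56}) by the Schur complement yields
\[
\det\widehat\Delta_{m,k}(\lambda)=(\lambda-\lambda_m^k)\det S_{m,k}(\lambda)\bigl[r_{11}^{m,k}(\lambda)-(\lambda-\lambda_m^k)v(\lambda)\bigr],
\]
with $v(\lambda)$ analytic. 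Differentiating both expressions at $\lambda_m^k$ and using $\det\Delta(\lambda_m^k)=0$ gives $\det S_{m,k}\cdot r_{11}^{m,k}(\lambda_m^k)=(\lambda_m^k)^{-n}\frac{\rm d}{{\rm d}\lambda}\det\Delta(\lambda)\big|_{\lambda_m^k}$. By Proposition~\ref{ut_ut1} the root $\lambda_m^k$ is simple, so the right-hand side is nonzero, whence $r_{11}^{m,k}(\lambda_m^k)\neq0$.

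Finally, for $|r_{11}^{m,k}(\lambda_m^k)|\to1$ I would compute $r_{11}^{m,k}(\lambda_m^k)$ directly, noting that it equals $\frac{\rm d}{{\rm d}\lambda}[\widehat\Delta_{m,k}]_{11}\big|_{\lambda_m^k}$. Since $P_{m,k},Q_{m,k}$ have constant entries, the $(1,1)$-entry is $M_{11}-\sum_{j\ge2}q_jM_{1j}-\sum_{i\ge2}p_iM_{i1}+\sum_{i,j\ge2}p_iq_jM_{ij}$, with $M_{11}(\lambda)=-1+{\rm e}^{-\lambda}\mu_m+\varepsilon_{11}(\lambda)$, so that $M_{11}'(\lambda_m^k)=-{\rm e}^{-\lambda_m^k}\mu_m+\varepsilon_{11}'(\lambda_m^k)$. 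Because $|\lambda_m^k-\widetilde\lambda_m^k|\to0$ and ${\rm e}^{-\widetilde\lambda_m^k}\mu_m=1$, we get ${\rm e}^{-\lambda_m^k}\mu_m\to1$; meanwhile (\ref{eq_eq51}) forces $p_i,q_j\to0$, and the Riemann--Lebesgue argument behind Proposition~\ref{ut_u13} (applied to the $\lambda$-derivatives) gives $\varepsilon_{ij}'(\lambda_m^k)\to0$ together with boundedness of the remaining $M_{ij}'$ on $\Lambda_1$. Hence every term carrying a factor $p_i$ or $q_j$ vanishes in the limit and $r_{11}^{m,k}(\lambda_m^k)\to-1$, i.e. $|r_{11}^{m,k}(\lambda_m^k)|\to1$. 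The main obstacle I anticipate is precisely this last step: one must control not only the perturbation entries $\varepsilon_{ij}(\lambda_m^k)$ but their derivatives, and verify that the many cross terms weighted by the small $p_i,q_j$ are genuinely negligible, which rests on the uniform-in-$k$ bounds already secured in the proof of Lemma~\ref{lm_lm6}.
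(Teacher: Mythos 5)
Your proof is correct and follows essentially the same route as the paper: analyticity plus the vanishing first row and column at $\lambda_m^k$ (Lemma~\ref{lm_lm6}) give the factorization (\ref{eq_eq56}), simplicity of $\lambda_m^k$ as a root of $\det\Delta(\lambda)=0$ (Proposition~\ref{ut_ut1}) gives $r_{11}^{m,k}(\lambda_m^k)\not=0$, and the limit (\ref{eq_eq57}) comes from differentiating the $(1,1)$ entry and using $p_i,q_j\rightarrow 0$, $\varepsilon_{ij}'(\lambda_m^k)\rightarrow 0$ and ${\rm e}^{-\lambda_m^k}\mu_m\rightarrow 1$. The only cosmetic differences are that you establish $r_{11}^{m,k}(\lambda_m^k)\not=0$ via a Schur-complement/determinant-derivative identity where the paper argues by contradiction (a double zero of $\det\widehat{\Delta}_{m,k}$ would contradict simplicity), and that you correctly retain the quadratic cross terms $\sum_{i,j\ge 2}p_iq_jM_{ij}$ which the paper's display~(\ref{eq_eq5553}) silently drops --- harmless either way, since, as you note, they are negligible in the limit.
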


\begin{proof} Since $\Delta({\lambda})$ is analytic, then all the components of the matrix function
$\widehat{\Delta}_{m,k}({\lambda})=\frac{1}{{\lambda}} P_{m,k} R_m \Delta({\lambda}) R_m Q_{m,k}$ are
analytic in a neighborhood of the point $\lambda_m^k$.
Moreover, since the matrix $\widehat{\Delta}_{m,k}(\lambda_m^k)$ has the form (\ref{eq_eq50}),
then we conclude that $\widehat{\Delta}_{m,k}({\lambda})$ is of the form (\ref{eq_eq56}).

Let us prove the relation~(\ref{eq_eq57}). If we assume that $r_{11}^{m,k}(\lambda_m^k)=0$, then
$(\lambda-\lambda_m^k)r_{11}^{m,k}(\lambda)=(\lambda-\lambda_m^k)^2\widehat{r}_{11}^{m,k}(\lambda)$,
where $\widehat{r}_{11}^{m,k}(\lambda)$ is analytic.
The last implies that the multiplicity of the root $\lambda=\lambda_m^k$ of the equation $\det \widehat{\Delta}_{m,k}(\lambda) = 0$
is greater or equal to $2$, i.e. $\det \widehat{\Delta}_{m,k}(\lambda) = (\lambda-\lambda_m^k)^2 r(\lambda)$,
where $r(\lambda)$ is an analytic function.
Indeed, decomposing $\det \widehat{\Delta}_{m,k}(\lambda)$ by the
elements of the first row, we see that all the term of this
decomposition have the common multiplier $(\lambda-\lambda_m^k)^2$.
Thus, we obtain that the multiplicity of $\lambda=\lambda_m^k$ as the root of the equation
$\det \Delta(\lambda) = 0$ is greater or equal to $2$, what contradicts to the assumption that $\lambda_m^k$ is an eigenvalue of
the multiplicity one of the operator ${\cal A}$.

Taking into account (\ref{eq_eq24}) and the form of the
transformations (\ref{eq_eq50}), we see that
\begin{equation}\label{eq_eq5553}
(\lambda-\lambda_m^k)r_{11}^{m,k}(\lambda)=\left(-1+{\rm e}^{-\lambda}\mu_m + \varepsilon_{11}(\lambda)\right)
-\sum_{i=2}^n p_i\varepsilon_{i1}(\lambda)- \sum_{j=2}^n
q_j\varepsilon_{1j}(\lambda).
\end{equation}
Differentiating (\ref{eq_eq5553}) by $\lambda$ and substituting $\lambda=\lambda_m^k$, we obtain
$$
r_{11}^{m,k}(\lambda_m^k)=
-{\rm e}^{-\lambda_m^k}\mu_m+ \left(\varepsilon_{11}(\lambda)-\sum_{i=2}^n
p_i\varepsilon_{i1}(\lambda)- \sum_{j=2}^n
q_j\varepsilon_{1j}(\lambda)\right)'_{\lambda=\lambda_m^k}.
$$
The terms $(\varepsilon_{ij}(\lambda))'$ are of the form
$$
(\varepsilon_{ij}(\lambda))'=
\int_{-1}^0 {\rm e}^{\lambda s} \left(sA_2(s) + \frac{s}{\lambda}A_3(s)-\frac{1}{\lambda^2}A_3(s) \right)_{ij} \dd s,
$$
therefore, due to Proposition~\ref{ut_u13} and Lemma~\ref{lm_lm6}, we conclude that
$$
\left(\varepsilon_{11}(\lambda)-\sum_{i=2}^n
p_i\varepsilon_{i1}(\lambda)- \sum_{j=2}^n
q_j\varepsilon_{1j}(\lambda)\right)'_{\lambda=\lambda_m^k}
\rightarrow 0, \quad k\rightarrow\infty.
$$

Since $-{\rm e}^{-\lambda_m^k}\mu_m\rightarrow -1$ when
$k\rightarrow\infty$, then we obtain the relation~(\ref{eq_eq57})
and, in particular, there exists a constant $C>0$ and an integer $N$
such that for $|k|>N$ we have
\begin{equation}\label{eq_eq5333}
0<C\le \left|r_{11}^{m,k}(\lambda_m^k)\right|.
\end{equation}
The last completes the proof of the proposition.
\end{proof}

\begin{rem}\label{rm_rm_r0}
The same arguments gives us that
\begin{equation}\label{eq_eq5333aw}
|r_{i1}^{m,k}({\lambda_m^k})|\rightarrow 0, \; |r_{1j}^{m,k}({\lambda_m^k})|\rightarrow 0,\quad k \rightarrow \infty.
\end{equation}
for all $i=2,\ldots,n$ and for all $i=2,\ldots,n$.
\end{rem}
\begin{proof}
Indeed, let us consider $r_{1j}^{m,k}({\lambda})$ for $j=2,\ldots,n$ and use the fact that $A_{-1}$ is in a Jordan form:
$$
(\lambda-\lambda_m^k)r_{1j}^{m,k}(\lambda)=\varepsilon_{1j}(\lambda) -\sum_{i=2}^n p_i\varepsilon_{ij}(\lambda)
+p_j \left(-1+{\rm e}^{-\lambda}\mu\right) +p_{j-1} c,
$$
where $\mu\in\sigma(A_{-1})$ and the constant $c=0$ if $\mu$ is geometrically simple or, otherwise, $c=1$.
Thus, we obtain
$$
r_{1j}^{m,k}(\lambda_m^k)=
-p_j{\rm e}^{-\lambda_m^k}\mu+ \left(\varepsilon_{1j}(\lambda)-\sum_{i=2}^n
p_i\varepsilon_{i1}(\lambda)\right)'_{\lambda=\lambda_m^k}
$$
and since $p_i=p_i(m,k)\rightarrow 0$ when $k \rightarrow \infty$ due to Lemma~\ref{lm_lm6}, then
we conclude that $|r_{1j}^{m,k}({\lambda_m^k})|\rightarrow 0$ when $k \rightarrow \infty$.
\end{proof}

\begin{rem}\label{rm_rm2}
Direct computations give us:
\begin{equation}\label{eq_eq53}
P_{m,k}^{-1}= \left(
\begin{array}{cccc}
1 & p_2 & \ldots & p_n \\
0 & 1 & \ldots & 0\\
\vdots & \vdots  & \ddots & \vdots \\
0 & 0 & \ldots & 1
\end{array}
\right),\quad Q_{m,k}^{-1}= \left(
\begin{array}{cccc}
1 & 0 & \ldots & 0 \\
q_2 & 1 & \ldots & 0\\
\vdots & \vdots  & \ddots & \vdots \\
q_n & 0 & \ldots & 1
\end{array}
\right).
\end{equation}
\end{rem}

\begin{lem}\label{lm_lm7}
Let $\sigma_1=\{\mu_1,\ldots,\mu_{\ell_1}\}$ consists of simple eigenvalues only.
There exist constants $0<C_1<C_2$, $N\in\mathbb{Z}$ such that for
any ${\lambda_m^k}\in\Lambda_1$, $|k|\ge N$ the following estimate holds:
\begin{equation}\label{eq_eq62}
0<C_1\le \left|\frac{1}{{\lambda_m^k}} \langle
\Delta'({\lambda_m^k})x_m^k, y_m^k\rangle\right|\le C_2,
\end{equation}
where  $\Delta'({\lambda})=\frac{{\rm d}}{{\rm d}\lambda}\Delta(\lambda)$;
$x_m^k=x(\lambda_m^k)$, $y_m^k=y(\overline{\lambda}_m^k)$ are defined by (\ref{eq_eq27}), (\ref{eq_eq28})
and $\|x_m^k\|=\|y_m^k\|=1$.
\end{lem}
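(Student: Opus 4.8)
The plan is to collapse the bilinear form $\langle\Delta'(\lambda_m^k)x_m^k,y_m^k\rangle$ onto the single scalar $r_{11}^{m,k}(\lambda_m^k)$ that was analysed in Corollary~\ref{col_col1}, using the factorization of Lemma~\ref{lm_lm6}. Since the matrices $P_{m,k},Q_{m,k},R_m$ do not depend on $\lambda$, the identity
$$
\Delta(\lambda)=\lambda\, R_m P_{m,k}^{-1}\widehat{\Delta}_{m,k}(\lambda)Q_{m,k}^{-1}R_m
$$
holds for all $\lambda$ in a neighbourhood of $\lambda_m^k$, and differentiating it gives
$$
\frac{1}{\lambda_m^k}\Delta'(\lambda_m^k)=R_m P_{m,k}^{-1}\left[\frac{1}{\lambda_m^k}\widehat{\Delta}_{m,k}(\lambda_m^k)+\widehat{\Delta}_{m,k}'(\lambda_m^k)\right]Q_{m,k}^{-1}R_m .
$$

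First I would pin down the two kernel vectors in the factored coordinates. Because $\widehat{\Delta}_{m,k}(\lambda_m^k)$ has the block form~(\ref{eq_eq50}) with $\det S_{m,k}\ne0$, its kernel and the kernel of its adjoint are both spanned by the first coordinate vector $e_1$. Transporting this through the factorization yields $x_m^k=c\,R_m Q_{m,k}e_1$ and $y_m^k=\tilde d\,R_m P_{m,k}^{*}e_1$ for suitable scalars $c,\tilde d$, where $Q_{m,k}e_1=(1,-q_2,\dots,-q_n)^{T}$ and $P_{m,k}^{*}e_1=(1,-\overline{p_2},\dots,-\overline{p_n})^{T}$ are exactly the first columns of the matrices in~(\ref{eq_eq49}). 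The normalizations $\|x_m^k\|=\|y_m^k\|=1$ together with the unitarity of $R_m$ then force $|c|=\|Q_{m,k}e_1\|^{-1}$ and $|\tilde d|=\|P_{m,k}^{*}e_1\|^{-1}$; by the estimate~(\ref{eq_eq51}) both norms tend to $1$ as $|k|\to\infty$, so $|c|\to1$ and $|\tilde d|\to1$.

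Next I would substitute these expressions into the form. Using $R_m^{2}=I$, $P_{m,k}P_{m,k}^{-1}=I$ and $Q_{m,k}^{-1}Q_{m,k}=I$, all the conjugating factors cancel and the form reduces to the $(1,1)$ entry of the bracket:
$$
\frac{1}{\lambda_m^k}\langle\Delta'(\lambda_m^k)x_m^k,y_m^k\rangle=\overline{\tilde d}\,c\;e_1^{*}\!\left[\frac{1}{\lambda_m^k}\widehat{\Delta}_{m,k}(\lambda_m^k)+\widehat{\Delta}_{m,k}'(\lambda_m^k)\right]e_1 .
$$
By~(\ref{eq_eq50}) the $(1,1)$ entry of $\widehat{\Delta}_{m,k}(\lambda_m^k)$ vanishes, while by~(\ref{eq_eq56}) the $(1,1)$ entry of $\widehat{\Delta}_{m,k}(\lambda)$ equals $(\lambda-\lambda_m^k)r_{11}^{m,k}(\lambda)$, whose derivative at $\lambda=\lambda_m^k$ is precisely $r_{11}^{m,k}(\lambda_m^k)$. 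Hence the whole expression equals $\overline{\tilde d}\,c\,r_{11}^{m,k}(\lambda_m^k)$, and its modulus is $|c|\,|\tilde d|\,|r_{11}^{m,k}(\lambda_m^k)|$.

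Finally, the two-sided bound follows at once: the factors $|c|$ and $|\tilde d|$ tend to $1$ by the second step, and $|r_{11}^{m,k}(\lambda_m^k)|\to1$ by~(\ref{eq_eq57}). Thus the modulus converges to $1$ as $|k|\to\infty$ and stays in a compact subinterval of $(0,\infty)$ for all $|k|\ge N$ with $N$ large enough, giving constants $0<C_1\le C_2<\infty$ as claimed. The only genuinely delicate point is the bookkeeping that makes the form collapse exactly to $r_{11}^{m,k}$; once the kernel vectors are written in the factored coordinates this is forced by the block structure~(\ref{eq_eq50}), and all remaining quantities are controlled by the smallness estimates~(\ref{eq_eq51}) and the normalization~(\ref{eq_eq57}) already established.
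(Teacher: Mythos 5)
Your proposal is correct and follows essentially the same route as the paper's proof: both use the factorization of Lemma~\ref{lm_lm6} to transport $x_m^k$ and $y_m^k$ to multiples of $e_1$ via $Q_{m,k}^{-1}R_m$ and $(P_{m,k}^*)^{-1}R_m$, reduce the form to $r_{11}^{m,k}(\lambda_m^k)$ times the two normalization scalars, and conclude from (\ref{eq_eq57}) and (\ref{eq_eq51}). The only differences are cosmetic: you treat all $m$ uniformly instead of doing $m=1$ first, and you keep the term $\frac{1}{\lambda_m^k}\widehat{\Delta}_{m,k}(\lambda_m^k)$ (whose $(1,1)$ entry vanishes by the block form) where the paper instead drops $\Delta(\lambda_m^k)x_m^k$ using the kernel relation.
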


\begin{proof} First, we prove the estimate~(\ref{eq_eq62}) for eigenvalues
${\lambda_1^k}\in\Lambda_1$.
Since $x_1^k\in \Ker \Delta({\lambda_1^k})$, then
\begin{equation}\label{eq_eq52}
0=\frac{1}{{\lambda_1^k}} P_{1,k}^{-1} P_{1,k}
\Delta({\lambda_1^k}) Q_{1,k} Q_{1,k}^{-1}x_1^k =P_{1,k}^{-1}
\widehat{\Delta}_{1,k}({\lambda_1^k}) Q_{1,k}^{-1}x_1^k,
\end{equation}
where $P_{1,k}$, $Q_{1,k}$, $\widehat{\Delta}_{1,k}({\lambda_1^k})$ are
defined by (\ref{eq_eq49}), (\ref{eq_eq50}).
Thus, $Q_{1,k}^{-1}x_1^k\in \Ker \widehat{\Delta}_{1,k}({\lambda_1^k})$
and, taking into account the form~(\ref{eq_eq50}) of the matrix $\widehat{\Delta}_{1,k}({\lambda_1^k})$,
we conclude that $Q_{1,k}^{-1}x_1^k = (\widehat{x}_1, 0,\ldots, 0)^T$, $\widehat{x}_1\not=0$.
On the other hand, multiplying directly $Q_{1,k}^{-1}$ given by (\ref{eq_eq53})
on $x_1^k=((x_1^k)_1, \ldots, (x_1^k)_n)^T$,
we conclude that $\widehat{x}_1=(x_1^k)_1$ and $(x_1^k)_i=-q_i(x_1^k)_1$, $i=2,\ldots,n$.

Due to the relation (\ref{eq_eq51}), for any $\varepsilon>0$ there exists $N\in\mathbb{N}$ such that for all $k: |k|\ge N$ we have:
$$
1=\|x_1^k\|^2=|(x_1^k)_1|^2\left(1+|q_2|^2+\ldots+|q_n|^2\right)\le
|(x_1^k)_1|^2\left(1+(n-1)\varepsilon^2\right)
$$
what implies that $|(x_1^k)_1|\rightarrow 1$ when $k \rightarrow \infty$.
Finally, we obtain
\begin{equation}\label{eq_eq60}
Q_{1,k}^{-1}x_1^k = \left((x_1^k)_1, 0,\ldots, 0\right)^T, \qquad 0< C\le
|(x_1^k)_1|\le 1, \; |k|\ge N.
\end{equation}

Conjugating (\ref{eq_eq50}), we have the relation
\begin{equation}\label{eq_eq54}
\left(
\begin{array}{cc}
0 & \begin{array}{ccc} 0 & \ldots & 0 \end{array} \\
\begin{array}{c}
0\\
\vdots \\
0
\end{array} & S_{1,k}^*
\end{array}
\right) =\left(\frac{1}{{\lambda_1^k}} P_{1,k} \Delta({\lambda_1^k})
Q_{1,k}\right)^* =\frac{1}{\overline{{\lambda_1^k}}} Q_{1,k}^*
\Delta^*(\overline{{\lambda_1^k}}) P_{1,k}^*
=\widehat{\Delta}_{1,k}^*(\overline{{\lambda_1^k}}).
\end{equation}

Let us use the fact that $y_1^k\in \Ker \Delta^*(\overline{{\lambda_1^k}})$:
\begin{equation}\label{eq_eq55}
0=\frac{1}{\overline{{\lambda_1^k}}} (Q_{1,k}^*)^{-1} Q_{1,k}^*
\Delta^*(\overline{{\lambda_1^k}}) P_{1,k}^* (P_{1,k}^*)^{-1} y_1^k
=(Q_{1,k}^*)^{-1} \widehat{\Delta}_{1,k}^*(\overline{{\lambda_1^k}})
(P_{1,k}^*)^{-1} y_1^k.
\end{equation}
From the last we obtain that $(P_{1,k}^*)^{-1} y_1^k\in \Ker \widehat{\Delta}_{1,k}^*(\overline{{\lambda_1^k}})$,
and, taking into account the left-hand side of~(\ref{eq_eq54}),
we conclude that $(P_{1,k}^*)^{-1} y_1^k = (\widehat{y}_1, 0,\ldots, 0)^T$, $\widehat{y}_1\not=0$.
Multiplying $(P_{1,k}^*)^{-1}$ on $y_1^k=((y_1^k)_1,\ldots,(y_1^k)_1)^T$ we obtain the relations
$\widehat{y}_1=(y_1^k)_1$ and $(y_1^k)_i=-\overline{p}_i(y_1^k)_1$, $i=2,\ldots,n$.
Thus, due to (\ref{eq_eq51}), any $\varepsilon>0$ and $k: |k|\ge N$ we have:
$$
1=\|y_1^k\|^2=|(y_1^k)_1|^2\left(1+|\overline{p}_2|^2+\ldots+|\overline{p}_n|^2\right)
\le |(y_1^k)_1|^2\left(1+(n-1)\varepsilon^2\right)
$$
and we conclude that $|(y_1^k)_1|\rightarrow 1$ when $k\rightarrow \infty$. Finally,
\begin{equation}\label{eq_eq61}
(P_{1,k}^*)^{-1} y_1^k = ((y_1^k)_1, 0,\ldots, 0)^T, \qquad 0< C\le
|(y_1^k)_1|\le 1,  \; |k|\ge N.
\end{equation}

Differentiating~(\ref{eq_eq56}) by $\lambda$ and putting $\lambda={\lambda_1^k}$,
we obtain
\begin{equation}\label{eq_eq58}
\left(
\begin{array}{cc}
r_{11}^{1,k}({\lambda_1^k}) &
\begin{array}{ccc} r_{12}^{1,k}({\lambda_1^k}) & \ldots & r_{1n}^{1,k}({\lambda_1^k})
\end{array}\\
\begin{array}{c}
r_{21}^{1,k}({\lambda_1^k})\\
\vdots \\
r_{n1}^{1,k}({\lambda_1^k})
\end{array} & S'_{m,k}({\lambda_1^k})
\end{array}
\right)=\widehat{\Delta}_{1,k}'({\lambda_1^k})
=\left(\frac{1}{\lambda} P_{1,k} \Delta(\lambda) Q_{1,k}\right)'_{\lambda={\lambda_1^k}}
$$
$$
= P_{1,k} \left(\frac{1}{{\lambda_1^k}} \Delta'({\lambda_1^k}) -
\frac{1}{(\lambda_1^k)^2}\Delta({\lambda_1^k})\right) Q_{1,k}.
\end{equation}
Using (\ref{eq_eq58}) and the relation $x_1^k\in \Ker \Delta({\lambda_1^k})$, we obtain
\begin{equation}\label{eq_eq59}
\begin{array}{rcl}
\frac{1}{{\lambda_1^k}} \left\langle \Delta'({\lambda_1^k})x_1^k,\; y_1^k \right\rangle & = &
\left\langle  P_{1,k}^{-1}
P_{1,k}\left(\frac{1}{{\lambda_1^k}} \Delta'({\lambda_1^k})-
\frac{1}{(\lambda_1^k)^2}\Delta({\lambda_1^k})\right)Q_{1,k}
Q_{1,k}^{-1} x_1^k,\; y_1^k\right\rangle\\
& = & \quad\left\langle  P_{1,k}\left(\frac{1}{{\lambda_1^k}}
\Delta'({\lambda_1^k})-
\frac{1}{(\lambda_1^k)^2}\Delta({\lambda_1^k})\right)Q_{1,k}
Q_{1,k}^{-1} x_1^k,\; (P_{1,k}^{-1})^* y_1^k\right\rangle\\
&= &\left\langle \widehat{\Delta}_{1,k}'({\lambda_1^k}) Q_{1,k}^{-1} x_1^k,\; (P_{1,k}^{-1})^* y_1^k \right\rangle.
\end{array}
\end{equation}
Finally, using the representation~(\ref{eq_eq58}) of the matrix $\widehat{\Delta}_{1,k}'({\lambda_1^k})$ and
representations (\ref{eq_eq60}), (\ref{eq_eq61}) of the vectors $Q_{1,k}^{-1} x_1^k$, $(P_{1,k}^{-1})^* y_1^k$,
we conclude that
\begin{equation}\label{eq_eq59999}
\frac{1}{{\lambda_1^k}} \left\langle \Delta'({\lambda_1^k})x_1^k,\; y_1^k\right\rangle
=r_{11}^{1,k}({\lambda_1^k})(x_1^k)_1 (y_1^k)_1.
\end{equation}
Moreover, taking into account the estimate~(\ref{eq_eq57}) of Corollary~\ref{col_col1} and (\ref{eq_eq60}), (\ref{eq_eq61}),
we obtain the estimate~(\ref{eq_eq62}), what proves the lemma for the case of eigenvalues ${\lambda_1^k}$, i.e. for $m=1$.

Let us now prove the estimate~(\ref{eq_eq62}) for ${\lambda_m^k}\in \Lambda_1$, $m=2,\ldots,\ell_1$.
In this case the idea of the proof remains the same but the arguing appears to be more cumbersome.
In the proof we omit some detailed explanations that were given above for the case $m=1$.

Let us consider the product $R_m \Delta({\lambda_m^k}) R_m$.
Using the relation $x_m^k\in \Ker \Delta({\lambda_m^k})$, we have
\begin{equation}\label{eq_eq64}
0=\frac{1}{{\lambda_m^k}} R_m P_{m,k}^{-1} P_{m,k}  R_m
\Delta({\lambda_m^k}) R_m Q_{m,k} Q_{m,k}^{-1} R_m x_m^k =R_m
P_{m,k}^{-1}  \widehat{\Delta}_{m,k}({\lambda_m^k}) Q_{m,k}^{-1} R_m x_m^k.
\end{equation}
Thus, $Q_{m,k}^{-1} R_m x_m^k\in \Ker \widehat{\Delta}_{m,k}({\lambda_m^k})$ and
from the explicit form (\ref{eq_eq50}) of $\widehat{\Delta}_{m,k}({\lambda_m^k})$
we conclude that $Q_{m,k}^{-1}R_m x_m^k = (\widehat{x}_1, 0,\ldots, 0)^T$, $\widehat{x}_1\not=0$.
Multiplying $Q_{m,k}^{-1}$ on $R_m$ from the right, we changes places
the first and the $m$-th column of $Q_{m,k}^{-1}$, therefore,
we obtain:
$$(x_m^k)_m=\widehat{x}_1,\; (x_m^k)_1=-q_m(x_m^k)_m, \; (x_m^k)_i=-q_i(x_m^k)_m,\; i=2,\ldots, n, i\not=m.$$
Thus, taking into account (\ref{eq_eq51}), for any $\varepsilon>0$ there exists $N\in\mathbb{N}$ such that for all $k: |k|\ge N$ we have:
$$
1=\|x_m^k\|^2\le|(x_m^k)_m|^2(1+(n-1)\varepsilon^2)
$$
and, thus, $|(x_m^k)_m|\rightarrow 1$ when $k \rightarrow \infty$. Therefore,
\begin{equation}\label{eq_eq65}
Q_{m,k}^{-1}R_m x_m^k = \left((x_m^k)_m, 0,\ldots, 0\right)^T, \qquad 0< C\le
|(x_m^k)_m|\le 1,\; |k|\ge N.
\end{equation}
The similar arguing gives us that
\begin{equation}\label{eq_eq66}
(P_{m,k}^{-1})^*R_m y_m^k = ((y_m^k)_m, 0,\ldots, 0)^T, \qquad 0<
C\le |(y_m^k)_m|\le 1,\; |k|\ge N.
\end{equation}
Differentiating~(\ref{eq_eq56}) by $\lambda$ and putting $\lambda={\lambda_m^k}$,
we obtain
\begin{equation}\label{eq_eq67}
\left(
\begin{array}{cc}
r_{11}^{m,k}({\lambda_m^k}) &
\begin{array}{ccc} r_{12}^{m,k}({\lambda_m^k}) & \ldots & r_{1n}^{m,k}({\lambda_m^k})
\end{array}\\
\begin{array}{c}
r_{21}^{m,k}({\lambda_m^k})\\
\vdots \\
r_{n1}^{m,k}({\lambda_m^k})
\end{array} & S'_{m,k}({\lambda_m^k})
\end{array}
\right) =\widehat{\Delta}_{m,k}'({\lambda_m^k})
$$
$$
=P_{m,k} R_m\left(\frac{1}{{\lambda_m^k}}
\Delta'({\lambda_m^k}) -
\frac{1}{(\lambda_m^k)^2}\Delta({\lambda_m^k})\right)R_m Q_{m,k}.
\end{equation}

Finally, using  (\ref{eq_eq65}),  (\ref{eq_eq66}), (\ref{eq_eq67}) and the relation $x_1^k\in \Ker \Delta({\lambda_1^k})$, we obtain
$$
\begin{array}{l}
\frac{1}{{\lambda_m^k}} \left\langle \Delta'({\lambda_m^k})x_m^k,\; y_m^k\right\rangle = \\
\qquad\qquad = \left\langle R_m P_{m,k}^{-1} P_{m,k} R_m
\left(\frac{1}{{\lambda_m^k}} \Delta'({\lambda_m^k})-
\frac{1}{(\lambda_m^k)^2}\Delta({\lambda_m^k})\right)R_m Q_{m,k}
Q_{m,k}^{-1}R_m x_m^k,\; y_m^k\right\rangle \\
\qquad\qquad\; = \left\langle  \widehat{\Delta}_{m,k}'({\lambda_m^k}) R_m Q_{m,k}^{-1} x_m^k,\; (P_{m,k}^{-1})^* R_m y_m^k\right\rangle \\
\qquad\qquad = r_{11}^{m,k}({\lambda_m^k})(x_m^k)_m (y_m^k)_m.
\end{array}
$$
To complete the proof of the lemma it remains to apply the estimates~(\ref{eq_eq57}), (\ref{eq_eq65}) and  (\ref{eq_eq66}).
\end{proof}
\section{Boundedness of the resolvent on invariant subspaces}\label{sect_resolvent}

In this section we prove the exponential stability of the restriction of
the semigroup $\{{\rm e}^{t {\mathcal A}}\}_{t\ge 0}$ onto $M_2^0$
(i.e. the semigroup $\{{\rm e}^{t {\mathcal A}}|_{M_2^0}\}_{t\ge 0}$),
where the invariant subspace $M_2^0$ is defined in Section~3 by~(\ref{eq_eq23}).

To show this we use the following well-known equivalent condition of exponential stability
(see e.g. \cite[p.119]{Neerven_1996} or \cite[p.139]{Luo_Guo_Morgul_1999}):

\noindent
{\it Let $T(t)$ be a $C_0$-semigroup on a Hilbert space $H$ with a generator $A$.
Then $T(t)$ is exponentially stable if and only if the following
conditions hold:

1. $\{\lambda: \MRe \lambda \ge 0\}\subset \rho(A)$;

2. $\|R(\lambda, A)\|\le M$ for all $\{\lambda: \MRe \lambda \ge
0\}$ and for some constant $M>0$.}

\vskip1ex
\noindent
Thus, we reformulate our aim as follows.
\begin{thr}[on resolvent boundedness]\label{thr_boundedness}
Let $\sigma_1=\{\mu_1,\ldots,\mu_{\ell_1}\}$ consists of simple eigenvalues only.
On the subspace $M_2^0$ defined by~(\ref{eq_eq23}) the restriction of the resolvent $R(\lambda, {\cal A})|_{M_2^0}$
is uniformly bounded for $\lambda: \MRe\lambda\ge 0$, i.e. there exists $C>0$ such that
\begin{equation}\label{eq_eq33}
\left\|R(\lambda, {\cal A})x\right\|\le C \|x\|, \quad x\in M_2^0.
\end{equation}
\end{thr}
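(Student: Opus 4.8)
The plan is to start from the explicit resolvent formula of Proposition~\ref{ut_ut001} and to isolate the only term which can become unbounded as $\lambda$ approaches the imaginary axis, namely $\triangle^{-1}(\lambda)D(z,\xi,\lambda)$. First I would note that for $\MRe\lambda\ge 0$ all the remaining ingredients of (\ref{resolvent}) are harmless: since $\theta\in[-1,0]$ and $1+s\in[0,1]$, the factors $e^{\lambda\theta}$, $I-e^{-\lambda}A_{-1}$ and the kernels $e^{-\lambda(1+s)}$, $e^{\lambda(\theta-s)}$ all have modulus at most one, so by Cauchy--Schwarz the integral terms are bounded by $C\|x\|$ uniformly. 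A direct inspection of (\ref{eq_D}) gives the a priori growth estimate $\|D(z,\xi,\lambda)\|\le C(1+|\lambda|)\|x\|$ together with a bound of the same order for the difference quotient $(D(z,\xi,\lambda)-D(z,\xi,\mu))/(\lambda-\mu)$. Hence it suffices to prove $\|\triangle^{-1}(\lambda)D(z,\xi,\lambda)\|\le C\|x\|$ for $x=(z,\xi)\in M_2^0$ and $\MRe\lambda\ge 0$.

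Next I would cover $\{\MRe\lambda\ge 0\}$ by small disks $U(\lambda_m^k)$ around the eigenvalues $\lambda_m^k\in\Lambda_1$ and a remaining region $G$. On $G$ the matrix $\triangle(\lambda)$ is uniformly invertible with $\|\triangle^{-1}(\lambda)\|\le C/(1+|\lambda|)$: for large $|\lambda|$ this follows from $\triangle(\lambda)=-\lambda I+o(\lambda)$ (Proposition~\ref{ut_u13}), and on the bounded part of $G$ from $\sigma(\mathcal A)\subset\{\MRe\lambda<0\}$ and compactness, the disks $U(\lambda_m^k)$ having been removed. Combined with the growth bound for $D$ this gives $\|\triangle^{-1}(\lambda)D(z,\xi,\lambda)\|\le C\|x\|$ on $G$ for every $x\in M_2$; the condition $x\in M_2^0$ is not needed here.

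The heart of the matter is the behaviour inside the disks $U(\lambda_m^k)$, and this is where the subspace $M_2^0$ enters. Using Lemma~\ref{lm_lm6} and Corollary~\ref{col_col1} I would write $\triangle^{-1}(\lambda)=\frac{1}{\lambda}R_m Q_{m,k}\widehat{\Delta}_{m,k}^{-1}(\lambda)P_{m,k}R_m$, where by the block form (\ref{eq_eq56}) the only entry of $\widehat{\Delta}_{m,k}^{-1}(\lambda)$ that blows up as $\lambda\to\lambda_m^k$ is the $(1,1)$ entry, of order $\big((\lambda-\lambda_m^k)r_{11}^{m,k}(\lambda)\big)^{-1}$, while (\ref{eq_eq57}) keeps $|r_{11}^{m,k}(\lambda_m^k)|$ bounded away from zero uniformly in $k$. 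Consequently the singular part of $\triangle^{-1}(\lambda)D(z,\xi,\lambda)$ is governed by $w_1:=(P_{m,k}R_m D(z,\xi,\lambda))_1$ divided by $(\lambda-\lambda_m^k)r_{11}^{m,k}$. Now Lemma~\ref{lm_lm1_plelim} applied to $x\in M_2^0=(\widehat{M}_2^1)^\bot$ gives $D(z,\xi,\lambda_m^k)\in\MIm\triangle(\lambda_m^k)$, i.e. $D(z,\xi,\lambda_m^k)\bot y_m^k$; combining this with $(P_{m,k}^{-1})^*R_m y_m^k=((y_m^k)_m,0,\ldots,0)^T$ and $|(y_m^k)_m|\ge C>0$ from (\ref{eq_eq66}), one obtains $w_1=\langle D(z,\xi,\lambda),y_m^k\rangle/\overline{(y_m^k)_m}$, which vanishes at $\lambda=\lambda_m^k$. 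Writing $w_1(\lambda)=w_1(\lambda)-w_1(\lambda_m^k)$ and invoking the difference-quotient bound for $D$, the factor $\lambda-\lambda_m^k$ cancels the pole, and the singular contribution is bounded by $C(1+|\lambda|)/|\lambda|\cdot\|x\|$, which stays uniformly bounded on $U(\lambda_m^k)$ since $|\lambda|$ is bounded away from zero there.

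The main obstacle will be making every constant uniform in $k$: this forces me to lean on the quantitative estimates of Subsection~\ref{subsect_33} --- the uniform bound $\|S_{m,k}^{-1}\|\le C$, the smallness $|p_i|,|q_i|\le\varepsilon$ of the entries of $P_{m,k},Q_{m,k}$, the limit $|r_{11}^{m,k}(\lambda_m^k)|\to 1$, and the lower bound $|(y_m^k)_m|\ge C$ --- and to control $\|D(z,\xi,\lambda)\|$ together with its $\lambda$-derivative uniformly over all disks by means of Proposition~\ref{ut_u13}. The delicate bookkeeping is to glue the estimate on $G$ to the estimates on the infinitely many disks $U(\lambda_m^k)$ with constants independent of $k$, after which the bound (\ref{eq_eq33}) follows for all $x\in M_2^0$ and $\MRe\lambda\ge 0$.
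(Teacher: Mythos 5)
Your handling of the disks $U(\lambda_m^k)$ --- which is the heart of the theorem --- is essentially the paper's own proof (Lemmas~\ref{lm_lm2} and~\ref{lm_lm1}): the same factorization $\triangle(\lambda)=\lambda R_mP_{m,k}^{-1}\widehat{\Delta}_{m,k}(\lambda)Q_{m,k}^{-1}R_m$ coming from Lemma~\ref{lm_lm6} and Corollary~\ref{col_col1}, the same observation that only the $(1,1)$ entry of $\widehat{\Delta}_{m,k}^{-1}(\lambda)$ is singular, and the same use of the orthogonality defining $M_2^0$, via Lemma~\ref{lm_lm1_plelim}, to make the singular numerator vanish at $\lambda=\lambda_m^k$. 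Your identity $w_1=\langle D(z,\xi,\lambda),y_m^k\rangle/\overline{(y_m^k)_m}$ is a compact and correct restatement of the paper's steps (\ref{eq_eq73})--(\ref{eq_eq11}), and the uniformity ingredients you invoke ($\|S_{m,k}^{-1}\|\le C$, $|p_i|,|q_i|\le\varepsilon$, $|r_{11}^{m,k}(\lambda_m^k)|\to 1$, the lower bound in (\ref{eq_eq66})) are exactly those the paper uses in Subsection~\ref{subsect_33} and in the proof of Lemma~\ref{lm_lm2}.

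The genuine gap is in your exterior region $G$. The claim that for large $|\lambda|$ one has $\triangle(\lambda)=-\lambda I+o(\lambda)$ is false precisely in the critical regime where $\MRe\lambda$ stays bounded and $|\MIm\lambda|\to\infty$: Proposition~\ref{ut_u13} kills only the $A_2$, $A_3$ integral terms, while the neutral term $\lambda{\rm e}^{-\lambda}A_{-1}$ has norm of order $|\lambda|\,\|A_{-1}\|$ on and near the imaginary axis, so it is not $o(\lambda)$. What is true is $\frac{1}{\lambda}\triangle(\lambda)=-I+{\rm e}^{-\lambda}A_{-1}+o(1)$, and uniform invertibility on $G$ therefore requires the additional fact that $-I+{\rm e}^{-\lambda}A_{-1}$ is uniformly invertible there; this holds because $\det(-I+{\rm e}^{-\lambda}A_{-1})=\prod_m(-1+{\rm e}^{-\lambda}\mu_m)$, where the factors with $|\mu_m|<1$ are bounded below by $1-|\mu_m|$ and the factors with $|\mu_m|=1$ vanish only at the points $\widetilde{\lambda}_m^k$, whose neighborhoods have been removed from $G$ (uniformity following from periodicity in $\MIm\lambda$). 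This is exactly what the paper's lower bound $|\frac{1}{\lambda}\det\triangle(\lambda)|\ge\varepsilon$ on $K_2(\delta)$ supplies via its compactness/contradiction argument. Without this step your estimate on $G$ --- and hence the gluing of the two regions --- fails in the only regime where a neutral-type system genuinely differs from a retarded one; the rest of your argument goes through as written.
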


Let us briefly describe the ideas of the proof.
From the explicit form of the resolvent~(\ref{resolvent}) we conclude that the main difficulty
is to prove the uniform boundedness of the term $\triangle^{-1}(\lambda) D(z, \xi, \lambda)$ in neighborhoods
of the eigenvalues of $\mathcal A$ located close to the imaginary axis.
Indeed, since $\det \triangle(\lambda_m^k)=0$ for $\lambda_m^k\in \Lambda_1$ and
${\rm Re}\lambda_m^k\rightarrow 0$ when $k\rightarrow\infty$ then norm of $\triangle^{-1}(\lambda)$ grows infinitely
when ${\rm Re}\lambda \rightarrow 0$ and ${\rm Im}\lambda \rightarrow \infty$ simultaneously.
However, the product $\triangle^{-1}(\lambda) D(z, \xi, \lambda)$ turns out to be bounded for $(z, \xi(\cdot))\in M_2^0$.
\begin{lem}\label{lm_lm2}
The vector-function $\triangle^{-1}(\lambda) D(z, \xi, \lambda): M_2^0\times \mathbb{C}^n\rightarrow  \mathbb{C}^n$ is
uniformly bounded in neighborhoods $U_\delta(\lambda_m^k)$ of
eigenvalues $\lambda_m^k\in\Lambda_1$ for some fixed $\delta>0$,
i.e.:

1.for any $k: |k|>N$ and $m=1,\ldots,\ell_1$ there exists a constant $C_{m,k}$
such that the estimate $\left\|\triangle^{-1}(\lambda) D(z, \xi, \lambda)\right\|\le C_{m,k}\|(z, \xi(\cdot))\|_{M_2}$
holds for all $\lambda\in U_\delta(\lambda_m^k)$ and $(z, \xi(\cdot))\in M_2^0$.

2. there exists a constant $C>0$ such that $C_{m,k}\le C$ for all $m=1,\ldots,\ell_1$, $k: |k|>N$.
\end{lem}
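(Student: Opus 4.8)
The plan is to combine the factorization of $\triangle(\lambda)$ supplied by Lemma~\ref{lm_lm6} and Corollary~\ref{col_col1} with the orthogonality relation of Lemma~\ref{lm_lm1_plelim}: the point is that membership of $(z,\xi(\cdot))$ in $M_2^0$ forces the numerator $D(z,\xi,\lambda)$ to cancel exactly the single singular entry of the inverse of the transformed symbol at $\lambda_m^k$. First I would localize near a fixed eigenvalue $\lambda_m^k\in\Lambda_1$ and rewrite $\triangle^{-1}(\lambda)$ through the transformation of Lemma~\ref{lm_lm6}. Since $\widehat{\Delta}_{m,k}(\lambda)=\frac{1}{\lambda}P_{m,k}R_m\triangle(\lambda)R_mQ_{m,k}$ and $R_m^{-1}=R_m$, one has $\triangle^{-1}(\lambda)=\frac{1}{\lambda}R_mQ_{m,k}\widehat{\Delta}_{m,k}^{-1}(\lambda)P_{m,k}R_m$, hence
$$
\triangle^{-1}(\lambda)D(z,\xi,\lambda)=\frac{1}{\lambda}R_mQ_{m,k}\,\widehat{\Delta}_{m,k}^{-1}(\lambda)\,\eta(\lambda),\qquad \eta(\lambda):=P_{m,k}R_mD(z,\xi,\lambda),
$$
where $\eta(\lambda)$ is analytic on $U_\delta(\lambda_m^k)$ because $D(z,\xi,\cdot)$ is analytic and $P_{m,k},R_m$ are constant.

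By Corollary~\ref{col_col1} the matrix $\widehat{\Delta}_{m,k}(\lambda)$ has the block form~(\ref{eq_eq56}): a scalar $(1,1)$-entry $(\lambda-\lambda_m^k)r_{11}^{m,k}(\lambda)$, first row and first column each carrying the factor $(\lambda-\lambda_m^k)$, and an invertible lower-right block $S_{m,k}(\lambda)$. Computing $\widehat{\Delta}_{m,k}^{-1}$ by the Schur complement with respect to $S_{m,k}(\lambda)$, that complement equals $(\lambda-\lambda_m^k)\kappa(\lambda)$ with $\kappa(\lambda)=r_{11}^{m,k}(\lambda)-(\lambda-\lambda_m^k)\,b^{T}S_{m,k}^{-1}c$ and $\kappa(\lambda_m^k)=r_{11}^{m,k}(\lambda_m^k)\neq0$. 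Consequently every entry of $\widehat{\Delta}_{m,k}^{-1}(\lambda)$ stays bounded as $\lambda\to\lambda_m^k$ except the $(1,1)$-entry, which behaves like $1/[(\lambda-\lambda_m^k)\kappa(\lambda)]$. Thus the only possibly unbounded contribution to $\widehat{\Delta}_{m,k}^{-1}(\lambda)\eta(\lambda)$ is the term $\eta_1(\lambda)/[(\lambda-\lambda_m^k)\kappa(\lambda)]$.

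The key step uses the hypothesis $(z,\xi(\cdot))\in M_2^0$. By~(\ref{eq_eq23}) this means $(z,\xi(\cdot))\bot\widehat{M}_2^1$, in particular $(z,\xi(\cdot))\bot\psi_m^k$, so Lemma~\ref{lm_lm1_plelim} gives $D(z,\xi,\lambda_m^k)\in{\rm Im}\,\triangle(\lambda_m^k)$. Because $\widehat{\Delta}_{m,k}(\lambda_m^k)$ has the form~(\ref{eq_eq50}) with invertible $S_{m,k}$, its image is $\{v:\ v_1=0\}$, whence ${\rm Im}\,\triangle(\lambda_m^k)=R_mP_{m,k}^{-1}\{v:\ v_1=0\}$, and therefore $\eta(\lambda_m^k)=P_{m,k}R_mD(z,\xi,\lambda_m^k)$ has vanishing first coordinate, $\eta_1(\lambda_m^k)=0$. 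Since $\eta_1$ is analytic, $\eta_1(\lambda)/(\lambda-\lambda_m^k)$ is analytic on $U_\delta(\lambda_m^k)$, and by the maximum modulus principle it is bounded there by $\delta^{-1}\max_{|\lambda-\lambda_m^k|=\delta}|\eta_1(\lambda)|$. This removes the singularity and proves part~1 for each individual $(m,k)$.

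For the uniform bound in part~2 I would gather the estimates that are uniform in $k$. The matrices $P_{m,k},Q_{m,k}$ and their inverses are uniformly bounded because $|p_i|,|q_i|\le\varepsilon$ by~(\ref{eq_eq51}); $|\kappa(\lambda)|$ is bounded below on $U_\delta(\lambda_m^k)$ since $|r_{11}^{m,k}(\lambda_m^k)|\to1$ by~(\ref{eq_eq57}) and the Schur correction is $O(\delta)$; and $\|S_{m,k}^{-1}(\lambda)\|\le C$ on $U_\delta(\lambda_m^k)$ by the argument already used in the proof of Lemma~\ref{lm_lm6}. It remains to estimate $\eta$: a direct Cauchy--Schwarz bound on the four terms of $D$ in~(\ref{eq_D}), valid because ${\rm Re}\,\lambda$ is small on $U_\delta(\lambda_m^k)$, yields $\|D(z,\xi,\lambda)\|\le C|\lambda|\,\|(z,\xi(\cdot))\|_{M_2}$, hence $\|\eta(\lambda)\|\le C|\lambda|\,\|(z,\xi(\cdot))\|_{M_2}$ and $\max_{|\lambda-\lambda_m^k|=\delta}|\eta_1|\le C|\lambda_m^k|\,\|(z,\xi(\cdot))\|_{M_2}$. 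Since $|\lambda|/|\lambda_m^k|\to1$ uniformly on $U_\delta(\lambda_m^k)$ as $k\to\infty$, the prefactor $1/\lambda$ compensates exactly the factor $|\lambda_m^k|$ arising both from the regular part of $\widehat{\Delta}_{m,k}^{-1}\eta$ and from the bound on $\eta_1(\lambda)/(\lambda-\lambda_m^k)$, giving $\|\triangle^{-1}(\lambda)D(z,\xi,\lambda)\|\le C\|(z,\xi(\cdot))\|_{M_2}$ with $C$ independent of $(m,k)$. The hard part will be precisely this last bookkeeping: one must check that the linear-in-$|\lambda|$ growth of $D$ is killed by the $1/\lambda$ prefactor together with the first-order vanishing $\eta_1(\lambda_m^k)=0$, uniformly as $k\to\infty$, while everything else reduces to the uniform estimates already established in Section~\ref{sect_decomposition}.
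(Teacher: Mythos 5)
Your proposal is correct, and its skeleton coincides with the paper's own proof: you use the same conjugation by $P_{m,k}$, $R_m$, $Q_{m,k}$ supplied by Lemma~\ref{lm_lm6} and Corollary~\ref{col_col1}, the same key fact that $(z,\xi(\cdot))\in M_2^0$ together with Lemma~\ref{lm_lm1_plelim} forces the first component of $P_{m,k}R_m D(z,\xi,\lambda)$ to vanish at $\lambda=\lambda_m^k$ (the paper routes this through Proposition~\ref{ut_u11} and the form~(\ref{eq_eq50})), and the same stock of uniform estimates for part~2 (boundedness of $P_{m,k},Q_{m,k}$ from~(\ref{eq_eq51}), the lower bound~(\ref{eq_eq57}) on $|r_{11}^{m,k}(\lambda_m^k)|$, linear growth of $D$ killed by the prefactor $1/\lambda$). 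Where you genuinely diverge is the mechanism for controlling $\widehat{\triangle}_{m,k}^{-1}(\lambda)$ near $\lambda_m^k$: the paper builds the auxiliary matrix $F_{m,k}(\lambda)$ of~(\ref{eq_eq71}), inverts it explicitly as~(\ref{eq_eq13}), and factors $\widehat{\triangle}_{m,k}=F_{m,k}(I+\Upsilon_{m,k})$ with a lengthy entry-by-entry proof that $\|\Upsilon_{m,k}\|\le\varepsilon$, whereas you invert the block form~(\ref{eq_eq56}) in one stroke by the Schur complement of $S_{m,k}(\lambda)$, exhibiting the $(1,1)$ entry $1/[(\lambda-\lambda_m^k)\kappa(\lambda)]$ as the only singular one; your maximum-modulus bound on $\eta_1(\lambda)/(\lambda-\lambda_m^k)$ then replaces the paper's limit computation~(\ref{eq_eq14}) and pointwise evaluation~(\ref{eq_eq15}). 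This is cleaner and shorter. The one place you should add a line — though the paper is equally brisk here — is the uniformity in $k$ of two claims you assert on the \emph{whole} disk $U_\delta(\lambda_m^k)$ rather than at its center: the invertibility of $S_{m,k}(\lambda)$ with $\|S_{m,k}^{-1}(\lambda)\|\le C$, and the lower bound on $|\kappa(\lambda)|$ (your ``Schur correction is $O(\delta)$''). Lemma~\ref{lm_lm6} gives these only at $\lambda=\lambda_m^k$; to propagate them over $U_\delta(\lambda_m^k)$ uniformly in $k$ one needs uniform equicontinuity of the entries, which follows because the functions $(\lambda-\lambda_m^k)r_{ij}^{m,k}(\lambda)$ and $s_{ij}^{m,k}(\lambda)$ have derivatives bounded uniformly in $k$ on a strip (Proposition~\ref{ut_u13} applied to the explicit expressions such as~(\ref{eq_eq5553})), combined with Remark~\ref{rm_rm_r0} for the smallness of $r_{1j}^{m,k}$, $r_{i1}^{m,k}$. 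With that line added, both parts of the lemma follow exactly as you describe.
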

The proof of this lemma is technically difficult. It essentially uses the following relation.
\begin{lem}\label{lm_lm1}
For any vector $g=(z, \xi(\cdot))^T\in M_2^0$ and for any
eigenvalue $\lambda_m^k\in \Lambda_1$ the following relation holds:
\begin{equation}\label{orthogonality_col_rs}
D(z, \xi, \lambda_m^k) \in  {\rm Im}\triangle(\lambda_m^k).
\end{equation}
\end{lem}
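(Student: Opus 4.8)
The plan is to obtain this lemma as a direct consequence of Lemma~\ref{lm_lm1_plelim}, which already establishes the inclusion $D(z,\xi,\lambda_0)\in {\rm Im}\triangle(\lambda_0)$ under the single hypothesis that $g$ is orthogonal to the eigenvector $\psi(\overline{\lambda_0})$ of $\mathcal A^*$. The entire remaining task is to verify that membership of $g$ in $M_2^0$ supplies precisely this orthogonality for each relevant value $\lambda_0=\lambda_m^k$.

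First I would unwind the definition of $M_2^0$. By (\ref{eq_eq23}) the subspace $M_2^0$ is the orthogonal complement of $\widehat{M}_2^1$ in $M_2$, so every $g=(z,\xi(\cdot))^T\in M_2^0$ satisfies $g\bot \widehat{M}_2^1$. On the other hand, by the definition (\ref{eq_eq31}) the subspace $\widehat{M}_2^1$ is the closed linear span of the eigenvectors $\psi$ of $\mathcal A^*$ corresponding to the eigenvalues $\overline{\lambda}$ with $\lambda\in\Lambda_1$. In particular, for any fixed $\lambda_m^k\in\Lambda_1$ the eigenvector $\psi_m^k=\psi(\overline{\lambda_m^k})$ belongs to $\widehat{M}_2^1$, and therefore $g\bot\psi_m^k$.

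With this orthogonality in hand I would invoke Lemma~\ref{lm_lm1_plelim} with $\lambda_0=\lambda_m^k$ and $\psi=\psi_m^k$, which yields at once $D(z,\xi,\lambda_m^k)\in {\rm Im}\triangle(\lambda_m^k)$. Since the index $\lambda_m^k\in\Lambda_1$ was arbitrary, the conclusion holds for every eigenvalue of $\Lambda_1$, which completes the argument.

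There is essentially no analytic obstacle here: the whole computational content --- rewriting the scalar product $\langle g,\psi\rangle_{M_2}$ and recognising the resulting expression as $\langle D(z,\xi,\lambda_0),y\rangle_{\mathbb{C}^n}$ with $y\in\Ker\triangle^*(\overline{\lambda_0})$ --- was already carried out in the proof of Lemma~\ref{lm_lm1_plelim}. The only point requiring care is purely one of bookkeeping: matching the weaker orthogonality hypothesis of that lemma (orthogonality to a single eigenvector) against the stronger condition $g\in M_2^0$ (orthogonality to the entire span $\widehat{M}_2^1$), and observing that the assumed simplicity of the eigenvalues $\lambda_m^k$ for $|k|\ge N_1$ guarantees that $\psi_m^k$ is a genuine eigenvector lying in $\widehat{M}_2^1$, so that no generalized (root) vectors enter and the reduction to Lemma~\ref{lm_lm1_plelim} is legitimate.
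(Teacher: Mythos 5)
Your proposal is correct and follows exactly the paper's own argument: the paper's proof likewise observes that $g\in M_2^0$ implies $g\bot\psi_m^k$ for every $\lambda_m^k\in\Lambda_1$ (by the orthogonal decomposition~(\ref{eq_eq23}) and the definition~(\ref{eq_eq31}) of $\widehat{M}_2^1$) and then invokes Lemma~\ref{lm_lm1_plelim}. Your version merely spells out the bookkeeping more explicitly, which is fine.
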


The complete proofs of the mentioned propositions are given in the next subsection.

\begin{rem}\label{thr_boundedness_stab}
Theorem~\ref{thr_boundedness} holds also for the subspace $M_2^0$ defined by~(\ref{eq_eq20-2}) under the assumption $\Lambda_2=\emptyset$.
The proof remains the same.
\end{rem}
\subsection{The proof of Theorem~\ref{thr_boundedness}}

We begin with several auxiliary propositions.

\begin{ut}\label{ut_u11}
If the vector $y\in \MIm A$, $A\in\mathbb{C}^{n\times n}$, then for
any two matrices $P$, $Q$ such that $\det Q\not=0$ the relation $Py\in \MIm (PAQ)$ holds.
\end{ut}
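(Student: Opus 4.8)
The plan is to unravel the definition of the image of a linear map and then insert the factor $Q^{-1}Q$ at the right place. Since $y\in\MIm A$ by hypothesis, there exists a vector $x\in\mathbb{C}^n$ with $y=Ax$. Applying $P$ on the left immediately gives $Py=PAx$, so it only remains to exhibit $Py$ as $PAQ$ applied to some explicit vector.

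Here is where the invertibility of $Q$ enters, and it is the only place it is needed. Because $\det Q\not=0$, the matrix $Q^{-1}$ exists, and we may write $x=Q\left(Q^{-1}x\right)$. Substituting this into $Py=PAx$ yields
\begin{equation*}
Py=PAx=PA\,Q\left(Q^{-1}x\right)=\left(PAQ\right)\left(Q^{-1}x\right),
\end{equation*}
so $Py$ is the image of the vector $Q^{-1}x\in\mathbb{C}^n$ under $PAQ$, whence $Py\in\MIm(PAQ)$.

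There is no genuine obstacle in this argument; it is a one-line computation once the definitions are spelled out. The only subtlety worth emphasizing is that the hypothesis is asymmetric in $P$ and $Q$: invertibility is required of $Q$ (to produce the preimage $Q^{-1}x$) but not of $P$, which may be an arbitrary square matrix. This matches exactly how the proposition will be applied later, where the conjugating factors of the form $P_{m,k}R_m$ and $R_mQ_{m,k}$ play the two roles, $Q$ being invertible by construction.
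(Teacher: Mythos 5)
Your proof is correct and is essentially the same as the paper's: both take $x$ with $Ax=y$, use the invertibility of $Q$ to write $x=Qx_1$ with $x_1=Q^{-1}x$, and conclude $PAQx_1=Py$. No differences worth noting.
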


\begin{proof} The relation $y\in \MIm A$
means that there exists a vector $x$ such that $Ax=y$. Since $Q$ is
non-singular then there exists a vector $x_1$ such that $x=Qx_1$.
Therefore, $AQx_1=y$ and, multiplying on $P$ from the left we obtain
$PAQx_1=Py$.
\end{proof}

\begin{ut}\label{ut_u13}
Let $L_0\subset\mathbb{C}$ be a bounded closed set and $f(s)\in
L_2[-1,0]$. Denote by $a_k(\lambda)=\int\limits_{-1}^0 {\rm e}^{2\pi {\rm i}ks}
{\rm e}^{\lambda s} f(s) \dd s$, $\lambda\in L_0$, $k\in \mathbb{Z}$. Then
$|a_k(\lambda)|\rightarrow 0$ when $k\rightarrow \infty$ uniformly
on the set $L_0$.
\end{ut}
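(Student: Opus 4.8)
The plan is to recognize $a_k(\lambda)$ as a Fourier coefficient depending on the parameter $\lambda$ and to upgrade the pointwise Riemann--Lebesgue decay to uniform decay over $L_0$ by a density argument together with integration by parts. Writing $g_\lambda(s)={\rm e}^{\lambda s}f(s)$, the family $\{{\rm e}^{2\pi{\rm i}ks}\}_{k\in\mathbb{Z}}$ is an orthonormal basis of $L_2[-1,0]$, so for each fixed $\lambda$ the number $a_k(\lambda)$ is, up to conjugation, the $k$-th Fourier coefficient of $g_\lambda\in L_2[-1,0]$ and hence tends to $0$. The entire content of the statement is therefore the uniformity of this convergence over the compact set $L_0$.

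First I would record two elementary consequences of the boundedness of $L_0$. Setting $R=\sup_{\lambda\in L_0}|\lambda|<\infty$, for every $\lambda\in L_0$ and every $s\in[-1,0]$ one has $|{\rm e}^{\lambda s}|={\rm e}^{({\rm Re}\,\lambda)s}\le {\rm e}^{R}=:M$ (using $s\le 0$), and moreover $|\lambda+2\pi{\rm i}k|\ge 2\pi|k|-R\to\infty$ as $|k|\to\infty$, uniformly in $\lambda\in L_0$.

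Next, fix $\varepsilon>0$ and choose $f_\varepsilon\in C^1([-1,0])$ with $\|f-f_\varepsilon\|_{L_2}<\varepsilon$, which is possible by density of $C^1$ in $L_2$. I would split $a_k(\lambda)=\int_{-1}^0{\rm e}^{2\pi{\rm i}ks}{\rm e}^{\lambda s}(f-f_\varepsilon)(s)\dd s+\int_{-1}^0{\rm e}^{2\pi{\rm i}ks}{\rm e}^{\lambda s}f_\varepsilon(s)\dd s$ and treat the two pieces separately. The first piece is estimated by Cauchy--Schwarz, using $|{\rm e}^{2\pi{\rm i}ks}|=1$ and the bound $M$, giving $\le M\|f-f_\varepsilon\|_{L_2}\le M\varepsilon$ uniformly in $k$ and in $\lambda\in L_0$. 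For the second piece, writing $\mu=\lambda+2\pi{\rm i}k$ and integrating by parts,
$$
\int_{-1}^0{\rm e}^{\mu s}f_\varepsilon(s)\dd s=\frac{1}{\mu}\Bigl[{\rm e}^{\mu s}f_\varepsilon(s)\Bigr]_{-1}^0-\frac{1}{\mu}\int_{-1}^0{\rm e}^{\mu s}f_\varepsilon'(s)\dd s,
$$
whose modulus is bounded by $\frac{1}{|\mu|}\bigl(M(|f_\varepsilon(0)|+|f_\varepsilon(-1)|)+M\int_{-1}^0|f_\varepsilon'(s)|\dd s\bigr)=:\frac{C_\varepsilon}{|\mu|}$; since $|\mu|\ge 2\pi|k|-R$ this is $O(1/|k|)$ uniformly in $\lambda\in L_0$. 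Consequently there is $N_\varepsilon$ with $|a_k(\lambda)|\le M\varepsilon+C_\varepsilon/(2\pi|k|-R)\le (M+1)\varepsilon$ for all $|k|\ge N_\varepsilon$ and all $\lambda\in L_0$, and since $\varepsilon$ is arbitrary this gives $\sup_{\lambda\in L_0}|a_k(\lambda)|\to 0$.

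I do not anticipate a serious obstacle: the only delicate point is keeping every estimate uniform in $\lambda$, which is ensured by the boundedness of $L_0$ through the constant $M$ and through the lower bound on $|\mu|$. As an alternative route one could argue abstractly, noting that $\lambda\mapsto g_\lambda$ is continuous from $L_0$ into $L_2[-1,0]$, so its image is compact, and the Fourier coefficients tend to $0$ uniformly on any compact subset of $L_2[-1,0]$ (cover it by finitely many $\varepsilon$-balls and use $|c_k(g)-c_k(h)|\le\|g-h\|_{L_2}$); this yields the same conclusion.
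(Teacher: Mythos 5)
Your proof is correct, but it follows a genuinely different route from the paper. The paper argues by contradiction: assuming non-uniformity, it extracts a sequence $\lambda_{k_i}$ with $|a_{k_i}(\lambda_{k_i})|\ge\varepsilon$, uses closedness and boundedness of $L_0$ to pass to a convergent subsequence $\lambda_j\to\lambda_0\in L_0$, and then transfers the lower bound to $|a_j(\lambda_0)|\ge\varepsilon/2$ via the uniform-in-$s$ estimate $|{\rm e}^{\lambda_0 s}-{\rm e}^{\lambda_j s}|$ being small, contradicting the pointwise Riemann--Lebesgue decay of the Fourier coefficients of ${\rm e}^{\lambda_0 s}f(s)$. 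You instead give a direct, quantitative argument: approximate $f$ in $L_2$ by $f_\varepsilon\in C^1$, control the error term by Cauchy--Schwarz uniformly over $L_0$, and integrate by parts to get an explicit $O(1/|k|)$ bound (with constant depending on $f_\varepsilon$) for the smooth part, with all constants uniform because $L_0$ is bounded. What each approach buys: the paper's proof is shorter if one takes the classical Riemann--Lebesgue lemma as a black box, but it is purely qualitative and uses sequential compactness (hence closedness) of $L_0$; your proof is self-contained, produces an explicit rate for the approximant, and in fact only uses boundedness of $L_0$ --- closedness is never needed --- so it proves a slightly stronger statement. Your alternative sketch (continuity of $\lambda\mapsto{\rm e}^{\lambda\cdot}f$ into $L_2$, compactness of the image, and equicontinuity $|c_k(g)-c_k(h)|\le\|g-h\|_{L_2}$ of the coefficient functionals) is the abstract form of the paper's compactness idea, recast as a finite covering argument instead of a proof by contradiction; either version is acceptable.
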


\begin{proof} Integrals $a_k(\lambda)$
can be considered as Fourier coefficients of the function
${\rm e}^{\lambda s} f(s)$, thus, they converge to zero when $k\rightarrow
\infty$. It remains to prove that they converge uniformly on the set
$L_0$. The last means that for any $\varepsilon>0$ there exists
$n\in \mathbb{N}$ such that for any $|k|\ge n$ and for any
$\lambda\in L_0$ we have $|a_k(\lambda)|<\varepsilon$.

Let us suppose the contrary: $\exists \varepsilon >0$ such that
$\forall n\in \mathbb{N}$, $\exists |k|\ge n$, $\exists \lambda\in
L_0$: $|a_k(\lambda)| \ge\varepsilon$. Thus, there exists a sequence
$k_1<k_2<\ldots$ and a sequence $\{\lambda_{k_i}\}_{i=1}^\infty$
such that $|a_{k_i}(\lambda_{k_i})| \ge\varepsilon$.

Since $L_0$ is a bounded set then there exists a converging
subsequence of $\{\lambda_{k_i}\}_{i=1}^\infty$ which we denote by
$\{\lambda_j\}_{j\in J}$, where $J\subset N$ is a strictly
increasing sequence. Moreover, since $L_0$ is also closed, then the
limit of $\{\lambda_{j}\}_{j\in J}$ belongs to $L_0$:
$\lambda_{j}\rightarrow \lambda_0\in L_0$. Let us show that the
sequence $\{a_k(\lambda_0)\}$ does not converge to zero.

Indeed, choosing big enough $n\in \mathbb{N}$, such that for any
$j>n$, $j\in J$ and any $s\in [-1,0]$: $|{\rm e}^{\lambda_0 s} -
{\rm e}^{\lambda_j s}|\le \frac{\varepsilon}{2}\|f(s)\|$, we obtain
$$
|a_j(\lambda_0)-a_j(\lambda_j)|=\left|\int_{-1}^0 {\rm e}^{2\pi {\rm i} j s}
({\rm e}^{\lambda_0 s} - {\rm e}^{\lambda_j s}) f(s) \dd s\right|
\le \int_{-1}^0 |{\rm e}^{\lambda_0 s} - {\rm e}^{\lambda_j s}| f(s) \dd s \le
\frac{\varepsilon}{2}.
$$
Since $|a_j(\lambda_j)| \ge\varepsilon$ and assuming that
$|a_j(\lambda_0)|\le |a_j(\lambda_j)|$, we obtain
$$
\frac{\varepsilon}{2} \ge |a_j(\lambda_0)-a_j(\lambda_j)| \ge
|a_j(\lambda_j)| - |a_j(\lambda_0)| \ge \varepsilon-
|a_j(\lambda_0)|,
$$
and, thus, $|a_j(\lambda_0)|\ge \frac{\varepsilon}{2}$ for any $j\in
J$, $j>n$.

Thus, $\{a_k(\lambda_0)\}$ does not converge to zero and we have
obtained a contradiction with the fact that they are the
coefficients of the Fourier series of the function ${\rm e}^{\lambda_0 s}
f(s)$. The last completes the proof of the proposition.
\end{proof}

\begin{col}\label{col_col2}
If the sequence $\{\lambda_k\}$ is such that
$\MIm\lambda_k\rightarrow\infty$ and $-\infty<a\le\MRe\lambda_k\le
b<\infty$ then for any $f(s)\in L_2(0,1;\mathbb{C}^{n\times m})$ we
have: $\int_{-1}^0 {\rm e}^{\lambda_k s} f(s) \dd s \rightarrow 0$ when $k
\rightarrow\infty$.
\end{col}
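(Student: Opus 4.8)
The plan is to deduce the statement directly from Proposition~\ref{ut_u13}, whose crucial feature is that the convergence $|a_k(\lambda)|\to 0$ is \emph{uniform} over a bounded closed set $L_0$. The obstacle to applying that proposition verbatim is that Proposition~\ref{ut_u13} only controls oscillatory factors of the special form ${\rm e}^{2\pi\Mi k s}$ with integer frequency, whereas here the frequency $\MIm\lambda_k$ is an arbitrary real number tending to infinity. The idea is therefore to split $\MIm\lambda_k$ into an integer multiple of $2\pi$, which feeds the integer index of Proposition~\ref{ut_u13}, plus a bounded remainder, which gets absorbed into the $\lambda$-argument lying in a fixed compact set. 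The uniformity in $\lambda$ is exactly what lets the remainder wander while the integer index runs off to infinity.

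Concretely, I would first reduce to the scalar case: since $f$ takes values in $\mathbb{C}^{n\times m}$ and there are only finitely many entries, it suffices to prove $\int_{-1}^0 {\rm e}^{\lambda_k s} f_{ij}(s)\dd s\to 0$ for each scalar entry $f_{ij}\in L_2[-1,0]$, after which convergence in any matrix norm follows. Next, for each $k$ I would write $\MIm\lambda_k = 2\pi m_k + \rho_k$ with $m_k=\lfloor \MIm\lambda_k/(2\pi)\rfloor\in\mathbb{Z}$ and $\rho_k\in[0,2\pi)$, and set $\mu_k = \MRe\lambda_k + \Mi\rho_k$. Then the factorization ${\rm e}^{\lambda_k s} = {\rm e}^{2\pi\Mi m_k s}\,{\rm e}^{\mu_k s}$ gives
\[
\int_{-1}^0 {\rm e}^{\lambda_k s} f_{ij}(s)\dd s = \int_{-1}^0 {\rm e}^{2\pi\Mi m_k s}\,{\rm e}^{\mu_k s} f_{ij}(s)\dd s = a_{m_k}(\mu_k),
\]
in the notation of Proposition~\ref{ut_u13}.

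It remains to observe that $\mu_k$ lies in the fixed bounded closed set $L_0=\{z\in\mathbb{C}:\ a\le\MRe z\le b,\ 0\le\MIm z\le 2\pi\}$, since $\MRe\mu_k=\MRe\lambda_k\in[a,b]$ and $\MIm\mu_k=\rho_k\in[0,2\pi)$, and that $m_k\to\infty$ because $\MIm\lambda_k\to\infty$. By the uniform convergence asserted in Proposition~\ref{ut_u13}, for every $\varepsilon>0$ there is $n$ with $|a_j(\lambda)|<\varepsilon$ for all $|j|\ge n$ and all $\lambda\in L_0$; taking $k$ large enough that $m_k\ge n$ then forces $|a_{m_k}(\mu_k)|<\varepsilon$. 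Hence $a_{m_k}(\mu_k)\to 0$, which is the desired conclusion. The only delicate point is the uniformity: without it the moving argument $\mu_k$ could not be handled, so the entire reduction hinges on invoking Proposition~\ref{ut_u13} in its uniform form rather than as a pointwise Riemann--Lebesgue statement.
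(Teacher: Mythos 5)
Your proposal is correct and follows essentially the same route as the paper: the paper states this result as an immediate corollary of Proposition~\ref{ut_u13} (offering no separate argument), and your reduction --- writing $\MIm\lambda_k = 2\pi m_k + \rho_k$, confining $\mu_k = \MRe\lambda_k + \Mi\rho_k$ to the fixed compact set $L_0$, and invoking the \emph{uniform} convergence of $a_k(\lambda)$ on $L_0$ --- is precisely the intended deduction, just spelled out in full. You also correctly identify the uniformity as the essential ingredient, since the pointwise Riemann--Lebesgue statement alone would not handle the moving argument $\mu_k$.
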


\begin{lem}\label{lm_lm3} The following estimates hold:

1. There exists a constant $C>0$ such that $\left\|\frac{1}{\lambda}\triangle(\lambda)\right\|\le C$
for all $\lambda\in \{\lambda:\; \MRe \lambda\ge 0\}\backslash U(0)$, and $\left\|\triangle(\lambda)\right\|\le C$ for all $\lambda\in U(0)$.

2. There exists a constant $C>0$ such that $\left\|\frac{1}{\lambda} D(z, \xi, \lambda)\right\|\le C\|(z, \xi(\cdot))\|_{M_2}$
for all $\lambda\in \{\lambda:\; \MRe \lambda\ge 0\}\backslash U(0)$, and $\left\|D(z, \xi, \lambda)\right\|\le C\|(z, \xi(\cdot))\|_{M_2}$
for all $\lambda\in U(0)$, $(z, \xi(\cdot))\in {M_2}$.
\end{lem}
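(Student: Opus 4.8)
The plan is to estimate $\triangle(\lambda)$ and $D(z,\xi,\lambda)$ summand by summand, splitting the region $\{\MRe\lambda\ge0\}$ into the part away from the origin, $\{\MRe\lambda\ge0\}\setminus U(0)$, where the factor $1/\lambda$ is harmless and will be used to absorb the non-decaying terms, and the bounded part $U(0)$, where I bound $\triangle$ and $D$ directly since every exponential is trivially bounded there. I fix $\delta>0$ with $U(0)\supseteq\{|\lambda|<\delta\}$, so that $|\lambda|\ge\delta$ off $U(0)$. Two elementary facts are used throughout: for $s\in[-1,0]$ and $\MRe\lambda\ge0$ one has $|{\rm e}^{\lambda s}|={\rm e}^{\MRe\lambda\, s}\le1$, and by Cauchy--Schwarz (with $\|1\|_{L_2[-1,0]}=1$) $\int_{-1}^0\|A_i(\theta)\|\dd\theta\le\|A_i\|_{L_2(-1,0)}$ and $\int_{-1}^0\|A_i(\theta)\|\,\|\xi(\theta)\|\dd\theta\le\|A_i\|_{L_2}\|\xi\|_{L_2}$.

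For the first assertion I write $\frac1\lambda\triangle(\lambda)=-I+{\rm e}^{-\lambda}A_{-1}+\int_{-1}^0{\rm e}^{\lambda s}A_2(s)\dd s+\frac1\lambda\int_{-1}^0{\rm e}^{\lambda s}A_3(s)\dd s$. On $\{\MRe\lambda\ge0\}$ the first three summands are bounded by $1$, $\|A_{-1}\|$ and $\|A_2\|_{L_2}$, while off $U(0)$ the last is bounded by $\frac1\delta\|A_3\|_{L_2}$; this yields $\|\frac1\lambda\triangle(\lambda)\|\le C$. On the bounded set $U(0)$ the factors $\lambda$, ${\rm e}^{-\lambda}$ and ${\rm e}^{\lambda s}$ are all bounded, so reading off the definition of $\triangle(\lambda)$ gives $\|\triangle(\lambda)\|\le C$ directly.

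The second assertion is the substantive one, and the main obstacle is that the individual factors ${\rm e}^{-\lambda\theta}$ and ${\rm e}^{-\lambda s}$ occurring in $D$ grow like ${\rm e}^{\MRe\lambda}$ as $\MRe\lambda\to+\infty$, so a naive term-by-term bound diverges. The remedy is to pair each such factor with its prefactor before estimating: in the $A_{-1}$-term I combine ${\rm e}^{-\lambda}{\rm e}^{-\lambda\theta}={\rm e}^{-\lambda(1+\theta)}$ and use $1+\theta\in[0,1]$, so $|{\rm e}^{-\lambda(1+\theta)}|\le1$; in the last term I combine ${\rm e}^{\lambda\theta}{\rm e}^{-\lambda s}={\rm e}^{\lambda(\theta-s)}$ and note that on the domain of integration $\theta\le s\le0$, whence $\theta-s\le0$ and $|{\rm e}^{\lambda(\theta-s)}|\le1$. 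These sign observations control the inner integrals uniformly, e.g.\ $\|\int_0^\theta{\rm e}^{\lambda(\theta-s)}\xi(s)\dd s\|\le\|\xi\|_{L_2}$. Writing $\frac1\lambda D=\frac1\lambda z+{\rm e}^{-\lambda}A_{-1}\int_{-1}^0{\rm e}^{-\lambda\theta}\xi\dd\theta-\frac1\lambda\int_{-1}^0A_2\xi\dd\theta-\int_{-1}^0[A_2(\theta)+\frac1\lambda A_3(\theta)]\int_0^\theta{\rm e}^{\lambda(\theta-s)}\xi(s)\dd s\,\dd\theta$, the four summands are bounded off $U(0)$ by $\frac1\delta\|z\|$, by $\|A_{-1}\|\,\|\xi\|_{L_2}$, by $\frac1\delta\|A_2\|_{L_2}\|\xi\|_{L_2}$ and by $(\|A_2\|_{L_2}+\frac1\delta\|A_3\|_{L_2})\|\xi\|_{L_2}$ respectively, giving $\|\frac1\lambda D\|\le C\|(z,\xi)\|_{M_2}$.

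Finally, for $\lambda\in U(0)$ I estimate $D(z,\xi,\lambda)$ itself from its definition: the boundedness of $\lambda$, ${\rm e}^{-\lambda}$, ${\rm e}^{\pm\lambda\theta}$ and of the inner integral $\int_0^\theta{\rm e}^{-\lambda s}\xi(s)\dd s$ on the compact set $U(0)$, combined with Cauchy--Schwarz, bounds each term by a constant times $\|(z,\xi)\|_{M_2}$, so $\|D(z,\xi,\lambda)\|\le C\|(z,\xi)\|_{M_2}$. The only delicate point in the whole argument is the exponent-pairing described above; once it is in place, everything reduces to routine applications of Cauchy--Schwarz and the bound $|{\rm e}^{\lambda s}|\le1$.
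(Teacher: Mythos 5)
Your proof is correct, and its overall skeleton coincides with the paper's: split $\{\lambda:\;\MRe\lambda\ge 0\}$ into $U(0)$ and its complement, estimate $\frac{1}{\lambda}\triangle(\lambda)$ and $\frac{1}{\lambda}D(z,\xi,\lambda)$ summand by summand, and tame the potentially growing exponentials in $D$ by pairing them, ${\rm e}^{-\lambda}{\rm e}^{-\lambda\theta}={\rm e}^{-\lambda(1+\theta)}$ and ${\rm e}^{\lambda\theta}{\rm e}^{-\lambda s}={\rm e}^{\lambda(\theta-s)}$, so that every exponent has non-positive real part on the relevant domain. The genuine difference lies in how the key bound $\bigl\|\int_{-1}^0 {\rm e}^{\lambda s}A_i(s)\,{\rm d}s\bigr\|\le C$ is justified: you get it in one line from $|{\rm e}^{\lambda s}|\le 1$ (for $s\in[-1,0]$, $\MRe\lambda\ge 0$) together with Cauchy--Schwarz, whereas the paper argues by contradiction, using that $\int_{-1}^0{\rm e}^{\lambda s}s^k\,{\rm d}s\to 0$ as $|\lambda|\to\infty$ in the closed right half-plane and the density of polynomials in $L_2(-1,0)$, thereby proving the stronger fact that these integrals actually tend to zero there. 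That stronger decay is not needed for the lemma itself, but the paper reuses it later (for instance in the unbounded-sequence case of the proof of Theorem~\ref{thr_boundedness}), so its indirect density argument buys a statement of independent use; your direct estimate is more elementary, gives explicit constants, and --- especially for $D(z,\xi,\lambda)$, which the paper dispatches with ``checked directly in the same manner'' --- is considerably more complete than what is written in the paper.
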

\begin{proof}
From the explicit form~(\ref{eq_delta}) of $\triangle(\lambda)$ we have an estimate
$$
\left\|\frac{1}{\lambda}\triangle(\lambda)\right\|\le 1 + \|A_{-1}\|
+ \left\|\int\nolimits_{-1}^0 {\rm e}^{\lambda s} A_2(s) {\rm d} s\right\|
+\frac{1}{|\lambda|} \left\|\int\nolimits_{-1}^0 {\rm e}^{\lambda s} A_3(s) {\rm d} s\right\|
$$
for $\lambda\in \{\lambda:\; \MRe \lambda\ge 0\}\backslash U(0)$,
and, thus, it remains to prove that there exists a constant $C_1>0$
such that $\left\|\int\nolimits_{-1}^0 {\rm e}^{\lambda s} A_i(s) {\rm d} s\right\|\le C_1$, $i=2,3$.
Indeed, if we suppose the contrary, then there exists an unbounded sequence $\{\lambda_j\}_{j=1}^\infty$ such that
$\left\|\int\nolimits_{-1}^0 {\rm e}^{\lambda_j s} A_i(s) {\rm d} s\right\|\rightarrow\infty$ when $j\rightarrow\infty$.
On the other hand, it is easy to see that for any $k\ge 0$: $\int\nolimits_{-1}^0 {\rm e}^{\lambda s} s^k {\rm d} s\rightarrow 0$ when
$|\lambda|\rightarrow\infty$ and $\lambda\in \{\lambda:\; \MRe \lambda\ge 0\}$. Since the set of polynomials is everywhere dense in $L_2(-1,0)$,
then $\left\|\int\nolimits_{-1}^0 {\rm e}^{\lambda s} A_i(s) {\rm d} s\right\|\rightarrow 0$ when
$|\lambda|\rightarrow\infty$, $\lambda\in \{\lambda:\; \MRe \lambda\ge 0\}$ and we have come to a contradiction.

The estimate $\left\|\triangle(\lambda)\right\|\le C$ for all $\lambda\in U(0)$ follows easily from the explicit form~(\ref{eq_delta}).

The estimates for $D(z, \xi, \lambda)$ may be checked directly in the same manner, taking into account that
${\rm e}^{-\lambda}\int\nolimits_{-1}^0 {\rm e}^{-\lambda s} s^k {\rm d} s \rightarrow 0$ when
$|\lambda|\rightarrow\infty$ and $\lambda\in \{\lambda:\; \MRe \lambda\ge 0\}$, $k\ge 0$.
\end{proof}

Now we pass over to the proofs of the main propositions mentioned in the beginning of the section.

\begin{proof}[Proof of Lemma~\ref{lm_lm2}]

Let us introduce the following notation:
\begin{equation}\label{eq_eq5}
f(\lambda){\stackrel{\rm def}{=}}\triangle^{-1}(\lambda) D(z, \xi, \lambda)=
\left(\frac{1}{\lambda}\triangle(\lambda)\right)^{-1}
\left(\frac{1}{\lambda} {D}(z, \xi, \lambda)\right), \quad (z, \xi(\cdot))\in M_2^0.
\end{equation}
We analyze the  behavior of the vector-function $f(\lambda)$ near the imaginary axis.
For the points $\lambda_m^k\in \Lambda_1$, which are the eigenvalues of the operator ${\cal A}$,
the inverse to the matrix $\triangle(\lambda_m^k)$ does not exists.
These eigenvalues approach to the imaginary axis when $k\rightarrow\infty$.
Our first aim is to prove that $f(\lambda)$ is bounded in each neighborhood $U(\lambda_m^k)$ of $\lambda_m^k\in \Lambda_1$,
i.e. that the limit $\mathop{\lim}\limits_{\lambda\rightarrow \lambda_m^k}\triangle^{-1}(\lambda) D(z, \xi, \lambda)$ exists
for all $(z, \xi(\cdot))\in M_2^0$.

Since $\triangle(\lambda)$ and $D(z, \xi, \lambda)$ are analytic and since, by the construction, all eigenvalues $\lambda_m^k\in \Lambda_1$
are simple, then we have that if $\lambda_m^k$ is a pole of $f(\lambda)$ then it is a simple pole.
In other words, in every neighborhood $U(\lambda_m^k)$ the vector-function $f(\lambda)$ may be represented as follows:
\begin{equation}\label{eq_eq5asd}
f(\lambda)=\frac{1}{\lambda-\lambda_m^k}f_{-1}+\sum_{i=0}^\infty (\lambda-\lambda_m^k)^i f_i.
\end{equation}
Thus, our aim is to prove that for each $\lambda_m^k$ the coefficient
$f_{-1}=\mathop{\lim}\limits_{\lambda\rightarrow\lambda_m^k} (\lambda-\lambda_m^k) f(\lambda)$ is equal to zero in the representation~(\ref{eq_eq5asd}),
i.e. that $f(\lambda)$ is analytic.
To prove this, we construct a representation of the matrix
$\left(\frac{1}{\lambda}\triangle(\lambda)\right)^{-1}$
which separates the singularity of this matrix.

According to Lemma~\ref{lm_lm5}, for each $\lambda_m^k\in \Lambda_1$ there exist matrices $P_{m,k}$,
$Q_{m,k}$ such that the value of the matrix-function
$\widehat{\triangle}_{m,k}(\lambda)=
\frac{1}{\lambda} P_{m,k} R_m \triangle(\lambda) R_m Q_{m,k}$ at the
point $\lambda=\lambda_m^k$ has the form~(\ref{eq_eq50}), i.e.
$$
\widehat{\triangle}_{m,k}({\lambda_m^k})=
\frac{1}{{\lambda_m^k}} P_{m,k} R_m \triangle({\lambda_m^k}) R_m
Q_{m,k}= \left(
\begin{array}{cc}
0 & \begin{array}{ccc} 0 & \ldots & 0 \end{array} \\
\begin{array}{c}
0\\
\vdots \\
0
\end{array} & S_{m,k}
\end{array}
\right), \quad \det S_{m,k}\not=0.
$$
We rewrite the representation~(\ref{eq_eq5}) of the function
$f(\lambda)$ in a neighborhood $U(\lambda_m^k)$ as follows:
\begin{equation}\label{eq_eq7}
\begin{array}{rcl}
f(\lambda) & = & \left(\frac{1}{\lambda}R_m P_{m,k}^{-1}P_{m,k} R_m
\triangle(\lambda) R_m Q_{m,k} Q_{m,k}^{-1}R_m\right)^{-1}
\left(\frac{1}{\lambda} {D}(z, \xi, \lambda)\right)\\
& = & R_m Q_{m,k}\left(\frac{1}{\lambda}P_{m,k} R_m \triangle(\lambda)
R_m Q_{m,k} \right)^{-1} P_{m,k} R_m \left(\frac{1}{\lambda} {D}(z, \xi, \lambda)\right) \\
& = & R_m Q_{m,k}\left(\widehat{\triangle}_{m,k}(\lambda) \right)^{-1}
P_{m,k} R_m \left(\frac{1}{\lambda} {D}(z, \xi, \lambda)\right).
\end{array}
\end{equation}
Let us consider the Taylor expansion of the analytic matrix-function $\widehat{\triangle}_{m,k}(\lambda)$ in $U(\lambda_m^k)$:
\begin{equation}\label{eq_eq7asd}
\widehat{\triangle}_{m,k}(\lambda)=
\widehat{\triangle}_{m,k}(\lambda_m^k)+
(\lambda-\lambda_m^k)\widehat{\triangle}_{m,k}'(\lambda_m^k)
+\sum_{i=2}^\infty \frac{1}{i!}(\lambda-\lambda_m^k)^i\widehat{\triangle}_{m,k}^{(i)}(\lambda_m^k).
\end{equation}
Due to Corollary~(\ref{col_col1}), $\widehat{\triangle}_{m,k}(\lambda)$ allows the representation~(\ref{eq_eq56}) in some $U(\lambda_m^k)$, i.e.
$$
\widehat{\triangle}_{m,k}(\lambda)= \left(
\begin{array}{cc}
(\lambda-{\lambda_m^k})r_{11}^{m,k}(\lambda) &
\begin{array}{ccc} (\lambda-{\lambda_m^k})r_{12}^{m,k}(\lambda) & \ldots & (\lambda-{\lambda_m^k})r_{1n}^{m,k}(\lambda)
\end{array}\\
\begin{array}{c}
(\lambda-{\lambda_m^k})r_{21}^{m,k}(\lambda)\\
\vdots \\
(\lambda-{\lambda_m^k})r_{n1}^{m,k}(\lambda)
\end{array} & S_{m,k}(\lambda)
\end{array}
\right),
$$
where $r_{ij}^{m,k}(\lambda)$ are analytic functions, and we note that $S_{m,k}(\lambda_m^k)=S_{m,k}$, where $S_{m,k}$ is defined by~(\ref{eq_eq50}).
Differentiating the last relation by $\lambda$ at $\lambda=\lambda_m^k$, we obtain:
$$
\widehat{\triangle}_{m,k}'(\lambda_m^k)= \left(
\begin{array}{cc}
r_{11}^{m,k}(\lambda_m^k) &
\begin{array}{ccc} r_{12}^{m,k}(\lambda_m^k) & \ldots & r_{1n}^{m,k}(\lambda_m^k)
\end{array}\\
\begin{array}{c}
r_{21}^{m,k}(\lambda_m^k)\\
\vdots \\
r_{n1}^{m,k}(\lambda_m^k)
\end{array} & S_{m,k}'(\lambda_m^k)
\end{array}
\right)=\Gamma_{m,k}^0+\Gamma_{m,k}^1,
$$
$$
\Gamma_{m,k}^0{\stackrel{\rm def}{=}}\left(
\begin{array}{cc}
r_{11}^{m,k}(\lambda_m^k) &
\begin{array}{ccc} r_{12}^{m,k}(\lambda_m^k) & \ldots & r_{1n}^{m,k}(\lambda_m^k)
\end{array}\\
\begin{array}{c}
0\\
\vdots \\
0
\end{array} & 0
\end{array}
\right),\quad
\Gamma_{m,k}^1{\stackrel{\rm def}{=}}\left(
\begin{array}{cc}
0 &
\begin{array}{ccc} 0 & \ldots & 0
\end{array}\\
\begin{array}{c}
r_{21}^{m,k}(\lambda_m^k)\\
\vdots \\
r_{n1}^{m,k}(\lambda_m^k)
\end{array} & S_{m,k}'(\lambda_m^k)
\end{array}
\right).
$$
We introduce the matrix-function $F_{m,k}(\lambda){\stackrel{\rm def}{=}}\widehat{\triangle}_{m,k}(\lambda_m^k)+(\lambda-\lambda_m^k)\Gamma_{m,k}^0$,
which has the following structure:
\begin{equation}\label{eq_eq71}
F_{m,k}(\lambda)= \left(
\begin{array}{cc}
r_{11}^{m,k}(\lambda_m^k)(\lambda-\lambda_m^k) &
\begin{array}{ccc} r_{12}^{m,k}(\lambda_m^k)(\lambda-\lambda_m^k) & \ldots & r_{1n}^{m,k}(\lambda_m^k)(\lambda-\lambda_m^k)
\end{array}\\
\begin{array}{c}
0\\
\vdots \\
0
\end{array} & S_{m,k}
\end{array}
\right).
\end{equation}
The matrix $F_{m,k}(\lambda)$ is non-singular in a neighborhood
$U(\lambda_m^k)\backslash\{\lambda_m^k\}$. Indeed, due to Lemma~\ref{lm_lm5} and Corollary~\ref{col_col1}
we have that $\det S_{m,k}\not=0$, $r_{11}^{m,k}(\lambda_m^k)\not=0$, and, thus
$$\det F_{m,k}(\lambda)=r_{11}^{m,k}(\lambda_m^k)(\lambda-\lambda_m^k) \det S_{m,k}\not=0,\quad \lambda\in U(\lambda_m^k)\backslash\{\lambda_m^k\}.$$
Therefore, there exists the inverse matrix $F_{m,k}^{-1}(\lambda)$, which is of the following form:
\begin{equation}\label{eq_eq13}
F_{m,k}^{-1}(\lambda)=
\left(
\begin{array}{cccc}
\frac{1}{r_{11}^{m,k}(\lambda_m^k)(\lambda-\lambda_m^k)} & F_{21}^{m,k} & \ldots & F_{n1}^{m,k} \\
0 &  F_{22}^{m,k}  & \ldots &  F_{n2}^{m,k} \\
\vdots & \vdots  & \ddots & \vdots \\
0 &  F_{2n}^{m,k}  & \ldots &  F_{nn}^{m,k}
\end{array}
\right),
\end{equation}
where
\begin{equation}\label{eq_eq13sd}
\begin{array}{rclr}
F_{i1}^{m,k} & = & \frac{1}{r_{11}^{m,k}(\lambda_m^k) \det S_{m,k}} \sum\limits_{j=2}^n (-1)^{i+j} r_{1j}^{m,k}(\lambda_m^k) [S_{m,k}(\lambda_m^k)]_{ij},
& i=2,\ldots,n,\\
F_{ij}^{m,k} & = & (-1)^{i+j}[S_{m,k}(\lambda_m^k)]_{ij}, &
i,j=2,\ldots,n,
\end{array}
\end{equation}
and by $[S_{m,k}(\lambda)]_{ij}$ we denote the complementary minor of the element $s_{ij}^{m,k}(\lambda)$, $i,j=2,\ldots,n$
of the matrix $S_{m,k}(\lambda)$. Since the matrix-functions $S_{m,k}(\lambda)$
are analytic and since $S_{m,k}(\lambda_m^k)\rightarrow S$ when $k\rightarrow\infty$,
then $\|S_{m,k}(\lambda)\|$, $\|[S_{m,k}(\lambda)]_{ij}\|$ and $|s_{ij}^{m,k}(\lambda)|$ are uniformly bounded for all $k$
and $\lambda\in U_\delta(\widetilde{\lambda}_m^k)$.
Thus, we conclude that $|F_{ij}^{m,k}|\le C$ for all $k$ and $i,j=2,\ldots,n$.
Moreover, since $r_{1j}^{m,k}(\lambda_m^k)\rightarrow 0$ due to Remark~\ref{rm_rm_r0},
then $F_{i1}^{m,k} \rightarrow 0$, $i=2,\ldots,n$ when $k\rightarrow\infty$.

Let us rewrite the representation~(\ref{eq_eq7asd}) as follows:
\begin{equation}\label{eq_eq8}
\begin{array}{rcl}
\widehat{\triangle}_{m,k}(\lambda) & = & F_{m,k}(\lambda)+(\lambda-\lambda_m^k)\Gamma_{m,k}^1+
\sum\limits_{i=2}^\infty \frac{1}{i!}(\lambda-\lambda_m^k)^i\widehat{\triangle}_{m,k}^{(i)}(\lambda_m^k) \\
& = & F_{m,k}(\lambda)\left(I+(\lambda-\lambda_m^k)F_{m,k}^{-1}(\lambda)\Gamma_{m,k}^1
+ \sum\limits_{i=2}^\infty \frac{1}{i!}(\lambda-\lambda_m^k)^i F_{m,k}^{-1}(\lambda)\widehat{\triangle}_{m,k}^{(i)}(\lambda_m^k)\right)
\end{array}
\end{equation}
and introduce the notation
$$
\Upsilon_{m,k}(\lambda){\stackrel{\rm def}{=}}(\lambda-\lambda_m^k)F_{m,k}^{-1}(\lambda)\Gamma_{m,k}^1+
\sum\limits_{i=2}^\infty \frac{1}{i!}(\lambda-\lambda_m^k)^i F_{m,k}^{-1}(\lambda)\widehat{\triangle}_{m,k}^{(i)}(\lambda_m^k).
$$
Let us prove that for any $\varepsilon>0$ there exist $\delta>0$ and  $N\in \mathbb{N}$
such that for any $k:\;|k|>N$ the following estimate holds:
\begin{equation}\label{eq_eq72}
\|\Upsilon_{m,k}(\lambda)\|\le \varepsilon, \qquad \lambda\in U_\delta(\widetilde{\lambda}_m^k).
\end{equation}

From (\ref{eq_eq8}) we have that $\Upsilon_{m,k}(\lambda)=F_{m,k}^{-1}(\lambda)\widehat{\triangle}_{m,k}(\lambda)-I$,
and we are proving the estimate $\|F_{m,k}^{-1}(\lambda)\widehat{\triangle}_{m,k}(\lambda)-I\|\le \varepsilon$,
$\lambda\in U_\delta(\widetilde{\lambda}_m^k)$.
Using the representations~(\ref{eq_eq56}), (\ref{eq_eq13}) and (\ref{eq_eq13sd}),
we estimate the elements $\{\gamma_{ij}^{m,k}(\lambda)\}_{i,j=1}^n$
of the matrix $\Upsilon_{m,k}(\lambda)$.
For the sake of convenience, we divide these elements onto several groups
and we begin with the element $\gamma_{11}^{m,k}(\lambda)$:
$$
\gamma_{11}^{m,k}(\lambda) = \frac{r_{11}^{m,k}(\lambda)}{r_{11}^{m,k}(\lambda_m^k)}
 + (\lambda-\lambda_m^k)\sum_{i=2}^n F_{i1}^{m,k} r_{i1}^{m,k}(\lambda)-1.
$$
Due to Corollary~\ref{col_col1} and since $r_{11}^{m,k}(\lambda)$ is analytic, there exist $\delta>0$ and  $N\in \mathbb{N}$
such that $\left|\frac{r_{11}^{m,k}(\lambda)}{r_{11}^{m,k}(\lambda_m^k)}\right|<\frac{\varepsilon}{2n}$ for all $k:\;|k|>N$
and $\lambda\in U_\delta(\widetilde{\lambda}_m^k)$. Besides, since
$F_{i1}^{m,k} \rightarrow 0$, $i=2,\ldots,n$ and $r_{i1}^{m,k}(\lambda_m^k)\rightarrow 0$ when $k\rightarrow\infty$,
we obtain the estimate $\left|\gamma_{11}^{m,k}(\lambda)\right|<\frac{\varepsilon}{n}$ for all $k:\;|k|>N$,
$\lambda\in U_\delta(\widetilde{\lambda}_m^k)$.

Let us consider other diagonal elements of the matrix $\Upsilon_{m,k}(\lambda)$:
$$
\gamma_{jj}^{m,k}(\lambda) = \sum_{i=2}^n F_{ij}^{m,k} s_{ij}^{m,k}(\lambda)-1
=  \sum_{i=2}^n (-1)^{i+j} [S_{m,k}(\lambda)]_{ij} (s_{ij}^{m,k}(\lambda)-s_{ij}^{m,k}(\lambda_m^k)),\quad j=2,\ldots,n.
$$
There exists $\delta>0$ such that $\left|\gamma_{jj}^{m,k}(\lambda)\right|<\frac{\varepsilon}{n}$ for all $k:\;|k|>N$,
$\lambda\in U_\delta(\widetilde{\lambda}_m^k)$.
Further, we consider the elements of the first row:
$$
\gamma_{1j}^{m,k}(\lambda)= \frac{r_{1j}^{m,k}(\lambda)}{r_{11}^{m,k}(\lambda_m^k)} +
\sum_{i=2}^n F_{i1}^{m,k} s_{ij}^{m,k}(\lambda), \quad j=2,\ldots,n.
$$
Since $F_{i1}^{m,k} \rightarrow 0$, $i=2,\ldots,n$ and $r_{1j}^{m,k}(\lambda_m^k)\rightarrow 0$ when $k\rightarrow\infty$,
we obtain the estimate $\left|\gamma_{1j}^{m,k}(\lambda)\right|<\frac{\varepsilon}{n}$ for all $k:\;|k|>N$,
$\lambda\in U_\delta(\widetilde{\lambda}_m^k)$.

Finally we consider all other elements:
$$
\gamma_{ij}^{m,k}(\lambda)=\sum_{r=2}^n F_{ri}^{m,k} s_{rj}^{m,k}(\lambda)
= \sum_{r=2}^n (-1)^{i+r} [S_{m,k}(\lambda)]_{ir} (s_{rj}^{m,k}(\lambda)-s_{rj}^{m,k}(\lambda_m^k)),
\; i,j=2,\ldots,n; i\not=j.
$$
They may be estimated as $\left|\gamma_{ij}^{m,k}(\lambda)\right|<\frac{\varepsilon}{n}$ for all $k:\;|k|>N$,
$\lambda\in U_\delta(\widetilde{\lambda}_m^k)$ choosing small enough $\delta>0$.

Finally, we obtain the estimate~(\ref{eq_eq72}) and, therefore, there exist $\delta>0$, $N\in\mathbb{N}$
such that the matrix $I+\Upsilon_{m,k}(\lambda)$ has an inverse for any
$\lambda\in U_\delta(\widetilde{\lambda}_m^k)$, $k:|k|>N$:
\begin{equation}\label{eq_eq9}
(I+\Upsilon_{m,k}(\lambda))^{-1}=I+(\lambda-\lambda_m^k)\Gamma_{m,k}(\lambda),
\end{equation}
where $\Gamma_{m,k}(\lambda)$ is analytic in a neighborhood $U_\delta(\lambda_m^k)$.

Finally, from (\ref{eq_eq7}), (\ref{eq_eq8}) and (\ref{eq_eq9}) we
obtain:
\begin{equation}\label{eq_eq10}
\begin{array}{rcl}
f(\lambda) & = & R_m Q_{m,k} \widehat{\triangle}_{m,k}^{-1}(\lambda)
P_{m,k} R_m \left(\frac{1}{\lambda} {D}(z, \xi, \lambda)\right)\\
& = & R_m Q_{m,k}
\left(F_{m,k}(\lambda)(I+\Upsilon_{m,k}(\lambda))\right)^{-1} R_m P_{m,k} \left(\frac{1}{\lambda} {D}(z, \xi, \lambda)\right)\\
& = & R_m Q_{m,k} \left (I+(\lambda-\lambda_m^k)\Gamma_{m,k}(\lambda)\right)F_{m,k}^{-1}(\lambda) P_{m,k}R_m
\left(\frac{1}{\lambda} {D}(z, \xi, \lambda)\right).
\end{array}
\end{equation}
Let us use the fact that $(z, \xi(\cdot))\in M_2^0$.
In Lemma~\ref{lm_lm1} the important relation ${D}(z, \xi, \lambda_m^k)\in \MIm {\triangle}(\lambda_m^k)$, $(z, \xi(\cdot))\in M_2^0$ is stated.
Thus, due to Proposition~\ref{ut_u11}, we conclude that
\begin{equation}\label{eq_eq73}
\frac{1}{\lambda_m^k} P_{m,k} R_m D(z, \xi, \lambda_m^k)\in \MIm
\widehat{\triangle}_{m,k}(\lambda_m^k).
\end{equation}

Moreover, since the matrix $\widehat{\triangle}_{m,k}(\lambda_m^k)$ is of
the form~(\ref{eq_eq50}), we conclude that the first component of the vector $\frac{1}{\lambda_m^k} P_{m,k} R_m D(z, \xi, \lambda_m^k)$
equals to zero:
\begin{equation}\label{eq_eq11}
\frac{1}{\lambda_m^k} P_{m,k} R_m D(z, \xi, \lambda_m^k)\;{\stackrel{\rm def}{=}}\; \widehat{d}_{m,k}=(0, c_2, \ldots, c_n)^T,
\end{equation}
and since the vector-function $\frac{1}{\lambda} P_{m,k} R_m D(z, \xi, \lambda)$ is analytic in a
neighborhood $U(\lambda_m^k)$, we conclude that
\begin{equation}\label{eq_eq12}
\frac{1}{\lambda} P_{m,k} R_m D(z, \xi, \lambda)= \widehat{d}_{m,k} + d_{m,k}(\lambda),\quad d_{m,k}(\lambda_m^k)=0.
\end{equation}
Finally, we note that  $F_{m,k}^{-1}(\lambda) \widehat{d}_{m,k}$ is a constant vector
and the vector-function $F_{m,k}^{-1}(\lambda)d_{m,k}(\lambda)$ is bounded in $U(\lambda_m^k)$.
Taking into account (\ref{eq_eq10}), (\ref{eq_eq13}) and (\ref{eq_eq12}), we obtain that
\begin{equation}\label{eq_eq14}
\begin{array}{rcl}
\lim\limits_{\lambda\rightarrow\lambda_m^k}(\lambda-\lambda_m^k)f(\lambda) & = &
R_m Q_{m,k}\lim\limits_{\lambda\rightarrow\lambda_m^k} (\lambda-\lambda_m^k)  F_{m,k}^{-1}(\lambda)
P_{m,k} R_m \left(\frac{1}{\lambda} {D}(z, \xi, \lambda)\right) \\
& = & R_m Q_{m,k} \lim\limits_{\lambda\rightarrow\lambda_m^k} (\lambda-\lambda_m^k) F_{m,k}^{-1}(\lambda) (\widehat{d}_{m,k} + d_{m,k}(\lambda))\\
& = & 0.
\end{array}
\end{equation}

Thus, we have proved that $f(\lambda)=\triangle^{-1}(\lambda) D(z,
\xi, \lambda)$ is an analytic vector-function in $U_\delta(\lambda_m^k)$, $k: |k|>N$, $m=1,\ldots,\ell_1$ what
gives the estimate $\left\|\triangle^{-1}(\lambda) D(z, \xi, \lambda)\right\|\le C_{m,k}$, $(z, \xi(\cdot))\in M_2^0$.

It remains to prove that  $f(\lambda)$ is uniformly bounded in the
neighborhoods $U_\delta(\lambda_m^k)$ for all $k: |k|>N$, $m=1,\ldots,\ell_1$. In other words,
our next aim is to prove that the set of vectors
$$
f_0=f_0^{m,k}=f(\lambda_m^k)=(\triangle^{-1}(\lambda) D(z, \xi,
\lambda))_{\lambda=\lambda_m^k}
$$
is bounded. Taking into account the representation (\ref{eq_eq10}),
we obtain:
\begin{equation}\label{eq_eq15}
\begin{array}{rcl}
f_0^{m,k} & = & \left(R_m Q_{m,k} F_{m,k}^{-1}(\lambda) P_{m,k} R_m \left(\frac{1}{\lambda} D(z, \xi, \lambda)\right)\right)_{\lambda=\lambda_m^k} \\
& = & R_m Q_{m,k} F_{m,k}^{-1}(\lambda) (\widehat{d}_{m,k} + d_{m,k}(\lambda))_{\lambda=\lambda_m^k}\\
& = & R_m Q_{m,k}\left(
\frac{d_1^1}{r_{11}^{m,k}(\lambda_m^k)} + \sum\limits_{i=2}^n c_i F_{i1}^{m,k},\;
\sum\limits_{i=2}^n c_i F_{i2}^{m,k},\ldots,\; \sum\limits_{i=2}^n c_i F_{in}^{m,k}\right)^T,
\end{array}
\end{equation}
where $d_1^1=(d'_{m,k}(\lambda_m^k))_1$ is the first component of the derivative of $d_{m,k}(\lambda)$ at the point $\lambda_m^k$.

As we have mentioned above, there exists a constant $C_1>0$ such that $\|F_{ij}^{m,k}\|\le C_1$, for all $k\in\mathbb{N}$, $m=1,\ldots,\ell_1$.
The estimates $\|P_{m,k}\|\le C_1$ and $\|Q_{m,k}\|\le C_1$ follow from the estimate~(\ref{eq_eq51}) of Lemma~\ref{lm_lm6}.
The estimates $|c_i|<C_1$ and $|d_1^1|<C_1$ follow immediately from Lemma~\ref{lm_lm3}.
From the relation~(\ref{eq_eq57}) of the Corollary~\ref{col_col1} it follows that there exists a constant $C_2>0$ such that
$0<C_2\le |r_{11}^{m,k}(\lambda_m^k)|$ for all $k\in\mathbb{N}$, $m=1,\ldots,\ell_1$ and, thus,
$$
\frac{1}{|r_{11}^{m,k}(\lambda_m^k)|}\le \frac{1}{C_2}.
$$

Finally, we conclude that $\|f_0^{m,k}\|\le C$ for all $m=1,\ldots,\ell_1$, $k: |k|\ge N$,
what completes the proof of the lemma.
\end{proof}

\begin{proof}[Proof of Lemma~\ref{lm_lm1}]
Since $g\in M_2^0$ then $g\bot\psi_m^k=\psi(\overline{\lambda_m^k})$ for all $\lambda_m^k\in \Lambda_1$.
Therefore, the proposition follows from Lemma~\ref{lm_lm1_plelim}.
\end{proof}

\begin{proof}[Proof of Theorem~\ref{thr_boundedness}]
Let $\delta>0$ be such that Lemma~\ref{lm_lm2} holds.
We divide the closed right half-plane onto the following two sets:
\begin{equation}\label{eq_eq155a}
\begin{array}{l}
K_1(\delta)=\{\lambda:\;\MRe\lambda \ge 0,\; \lambda\in U_\delta(\widetilde{\lambda}_m^k),\; \lambda_m^k\in\Lambda_1\}\\
K_2(\delta)=\{\lambda:\; \MRe\lambda \ge 0\}\backslash K_1.
\end{array}
\end{equation}

First, let us estimate $\|R(\lambda, {\cal A})x\|$ for
any $\lambda\in K_1(\delta)$ and $x\in M_2^0$. Due to
Lemma~\ref{lm_lm2} we have: $\|\triangle^{-1}(\lambda) D(z, \xi,
\lambda)\|\le C_1\|x\|$, $x=(z, \xi(\cdot))\in M_2^0$.
Due to Corollary~\ref{col_col2} we have the estimate $\|\int_{-1}^0 {\rm e}^{-\lambda s} \xi(s)\dd s\|\le C_2\|x\|$.
Thus, for any $x=(z, \xi(\cdot))\in M_2^0$, $\lambda\in K_1(\delta)$ we obtain:
\begin{equation}\label{eq_eq155avc}
\begin{array}{rcl}
\|R(\lambda, {\cal A})x\| & = & \left\|{\rm e}^{-\lambda}A_{-1}\int\limits_{-1}^0 {\rm e}^{-\lambda s} \xi(s)\dd s
+ (I-{\rm e}^{-\lambda}A_{-1}) \triangle^{-1}(\lambda) D(z, \xi, \lambda)\right\|_{\mathbb{C}^n} \\
& & +\left\|\int\limits_0^\theta {\rm e}^{\lambda(\theta- s)} \xi(s) \dd s
+ {\rm e}^{\lambda\theta}\triangle^{-1}(\lambda) D(z, \xi, \lambda)\right\|_{L_2}\\
& \le & {\rm e}^{\delta} \|A_{-1}\| C_2\|x\| +(1+{\rm e}^{\delta} \|A_{-1}\|)C_1\|x\|\\
& & +\left(\int\limits_{-1}^0 \left\|\int\limits_0^\theta {\rm e}^{\lambda(\theta- s)} \xi(s) \dd s
+ {\rm e}^{\lambda\theta}\triangle^{-1}(\lambda) D(z, \xi, \lambda)\right\|_{\mathbb{C}^n}^2 \dd \theta\right)^{\frac{1}{2}}\\
& \le & \left[{\rm e}^{\delta} \|A_{-1}\| C_2 +(1+{\rm e}^{\delta} \|A_{-1}\|)C_1 +
({\rm e}^{\delta} C_2+ C_1^2)^{\frac{1}{2}}\right]\|x\|= C\|x\|.
\end{array}
\end{equation}

Let us consider $\lambda\in K_2(\delta)$. There exists $\varepsilon>0$
such that $\left|\frac{1}{\lambda}\det\triangle(\lambda)\right|\ge \varepsilon$ for any $\lambda\in K_2(\delta)\backslash U(0)$.
Indeed, if we suppose the contrary then there exists a sequence $\{\lambda_i\}_{i=1}^\infty$ such that
$\left|\frac{1}{\lambda_i}\det\triangle (\lambda_i)\right| \rightarrow 0$, $i\rightarrow\infty$.
If the sequence $\{\lambda_i\}_{i=1}^\infty$ is bounded,
then it possesses a converging subsequence: $\lambda_{i_j}\rightarrow \widehat{\lambda}$
and, thus, $\left|\frac{1}{\widehat{\lambda}}\det\triangle (\widehat{\lambda})\right|=0$.
However, the closure of the set $K_2(\delta)\backslash U(0)$ does not contain zeroes of the function $\det\triangle (\lambda)$
and we have obtained a contradiction.

If the sequence $\{\lambda_i\}_{i=1}^\infty$ is unbounded, i.e. $|\lambda_i| \rightarrow \infty$ when $i \rightarrow \infty$,
then we have $\int_{-1}^0 {\rm e}^{\lambda_i s} A_2(s) \dd s \rightarrow 0$
and $\frac{1}{\lambda_i}\int_{-1}^0 {\rm e}^{\lambda_i s} A_3(s) \dd s\rightarrow 0$, $i \rightarrow \infty$.
Moreover, since $|\det (-I + {\rm e}^{-\lambda} A_{-1})|=|\prod (1+ {\rm e}^{-\lambda}\mu_m)|\ge \prod |1+{\rm e}^{\delta}\mu_m|$,
we conclude that
$$\left|\frac{1}{\lambda}\det\triangle(\lambda_i)\right|
=\det (-I + {\rm e}^{-\lambda_i} A_{-1} + \int_{-1}^0 {\rm e}^{\lambda_i s} A_2(s) \dd s
+ \frac{1}{\lambda_i}\int_{-1}^0 {\rm e}^{\lambda_i s} A_3(s) \dd s \not\rightarrow 0, \quad i \rightarrow \infty.
$$
Thus, we have obtained a contradiction again.

Taking into account the estimates $\|\frac{1}{\lambda} \triangle (\lambda)\|\le C_1$ and
$\left\|\frac{1}{\lambda} D(z, \xi, \lambda)\right\|\le C_2\|x\|$ from Lemma~\ref{lm_lm3},
we conclude that $\|\triangle^{-1}(\lambda) D(z, \xi, \lambda)\|\le C_3\|x\|$ for all $\lambda\in K_2(\delta)\backslash U(0)$.
It is easy to see that
$\|{\rm e}^{-\lambda} \int_{-1}^0 {\rm e}^{-\lambda s} \xi(s)\dd s\|\le C_4\|x\|$ for
all $\lambda\in K_2(\delta)\backslash U(0)$. Finally, similarly to~(\ref{eq_eq155avc}) we obtain the following estimate
$$\|R(\lambda, {\cal A})x\|\le C\|x\|,\quad \lambda\in K_2.$$
The last completes the proof of the theorem.
\end{proof}

\section{Stability analysis}

Basing on the results from Section~\ref{sect_decomposition} and Section~\ref{sect_resolvent},
we prove the main result on stability which does not assume the condition $\det A_{-1}\not=0$.

\begin{thr}[on stability]\label{thr_stability}
If $\sigma({\cal A})\subset \{\lambda:\; \MRe \lambda<0\}$ and
$\sigma_1=\sigma(A_{-1})\cap \{\mu: |\mu|= 1 \}$ consists only of
simple eigenvalues, then system (\ref{syst_delay_gen}) is strongly
asymptotically stable.
\end{thr}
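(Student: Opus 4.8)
The plan is to exploit the direct decomposition $M_2 = M_2^1 \oplus M_2^0$ furnished by Theorem~\ref{thr_decomposition}, thereby reducing the strong stability of $\{\Me^{t\mathcal{A}}\}$ to that of the two restrictions $\mathcal{A}_0 = \mathcal{A}|_{M_2^0}$ and $\mathcal{A}_1 = \mathcal{A}|_{M_2^1}$ separately. First I would fix $N \ge N_1$ large enough that $\Lambda_1 = \Lambda_1(N)$ lies in a strip $\{-\varepsilon < \MRe\lambda < 0\}$. Since both subspaces are $\mathcal{A}$-invariant and the decomposition is topological (the associated projections are bounded, because the expansion constructed in the proof of Theorem~\ref{thr_decomposition} depends continuously on $\xi$), every $x\in M_2$ splits as $x = x_0 + x_1$ with $x_0\in M_2^0$, $x_1\in M_2^1$, and $\Me^{t\mathcal{A}}x = \Me^{t\mathcal{A}_0}x_0 + \Me^{t\mathcal{A}_1}x_1$. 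It therefore suffices to prove $\Me^{t\mathcal{A}_0}x_0 \to 0$ and $\Me^{t\mathcal{A}_1}x_1 \to 0$.

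For the subspace $M_2^0$ I would invoke Theorem~\ref{thr_boundedness}: the restriction $R(\lambda,\mathcal{A})|_{M_2^0}$ is uniformly bounded on $\{\MRe\lambda \ge 0\}$, and since $\sigma(\mathcal{A}_0)\subset\sigma(\mathcal{A})\subset\{\MRe\lambda<0\}$ the entire closed right half-plane lies in $\rho(\mathcal{A}_0)$. By the equivalent condition for exponential stability recalled at the opening of Section~\ref{sect_resolvent}, the semigroup $\{\Me^{t\mathcal{A}_0}\}$ is exponentially stable, so $\|\Me^{t\mathcal{A}_0}x_0\| \le M\Me^{-\omega t}\|x_0\| \to 0$.

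For the subspace $M_2^1$ the decay is genuinely non-exponential. By Lemma~\ref{lm_lm333} the eigenvectors $\{\varphi_m^k:\lambda_m^k\in\Lambda_1\}$ form a Riesz basis of $M_2^1$, and since $\Lambda_1$ lies in a bounded vertical strip with $\sup_{m,k}\MRe\lambda_m^k = 0$, the operator $\mathcal{A}_1$ generates a uniformly bounded semigroup acting diagonally on this basis: writing $x_1 = \sum_{m,k} c_m^k\varphi_m^k$ with $\sum_{m,k}|c_m^k|^2<\infty$, one has $\Me^{t\mathcal{A}_1}x_1 = \sum_{m,k} c_m^k \Me^{\lambda_m^k t}\varphi_m^k$, and the upper Riesz bound gives
\[
\|\Me^{t\mathcal{A}_1}x_1\|^2 \le C\sum_{m,k}|c_m^k|^2\,\Me^{2\MRe\lambda_m^k\,t}.
\]
Each summand tends to $0$ as $t\to\infty$ because $\MRe\lambda_m^k<0$, while being dominated by the summable majorant $|c_m^k|^2$; by dominated convergence for series the right-hand side tends to $0$, whence $\|\Me^{t\mathcal{A}_1}x_1\|\to 0$. (Equivalently, since the semigroup is bounded and $\sigma(\mathcal{A}_1)\cap\Mi\mathbb{R}=\emptyset$, strong stability also follows from the Arendt--Batty--Lyubich--Phong theorem.) Combining the two parts yields $\Me^{t\mathcal{A}}x\to 0$ for every $x\in M_2$, i.e. strong asymptotic stability.

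The genuinely hard work is already absorbed into the results being quoted: the topological validity of the decomposition (Theorem~\ref{thr_decomposition}, resting on the estimates of Subsection~\ref{subsect_33}) and, above all, the resolvent boundedness on $M_2^0$ (Theorem~\ref{thr_boundedness}), whose proof separates the singularity of $\triangle^{-1}(\lambda)$ near each $\lambda_m^k$ through Lemma~\ref{lm_lm1} and Lemma~\ref{lm_lm2}. Given those, the only delicate conceptual point remaining is that the growth bound of $\{\Me^{t\mathcal{A}_1}\}$ equals $0$ yet strong stability persists --- precisely the dominated-convergence argument above, which is where the simplicity of the eigenvalues in $\sigma_1$ is essential, since it is what guarantees a Riesz basis of eigenvectors (rather than Jordan chains, which would produce instability as in item~(ii) of Theorem~\ref{thr_stability_intro}).
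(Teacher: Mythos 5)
Your proposal is correct and takes essentially the same route as the paper: the same decomposition $M_2=M_2^1\oplus M_2^0$ from Theorem~\ref{thr_decomposition}, exponential stability on $M_2^0$ via the resolvent bound of Theorem~\ref{thr_boundedness}, and strong convergence on $M_2^1$ via the Riesz basis of eigenvectors. Your dominated-convergence argument for the series $\sum_{m,k}|c_m^k|^2\Me^{2\MRe\lambda_m^k t}$ is exactly the paper's tail-splitting estimate (small tail by summability, finitely many decaying exponentials for the rest) phrased abstractly, so the two proofs coincide in substance.
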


\begin{proof} Let us show that $\|{\rm e}^{t{\cal A}}x\|\rightarrow 0$ when $t\rightarrow+\infty$ for any
$x\in M_2$.
Due to Theorem~\ref{thr_decomposition} each $x\in M_2$
allows the following representation:
$$
x=x_0+x_1, \quad x_0\in M_2^0, \; x_1\in M_2^1,
$$
where $M_2^0$ and $M_2^1$ are defined by (\ref{eq_eq32})--(\ref{eq_eq23})
Moreover, the basis of $M_2^1$ consists of the following eigenvectors:
\begin{equation}\label{eq_eq70}
\{\varphi_m^k:\; ({\cal A}-\lambda_m^k I)\varphi_m^k=0,\;
\lambda_m^k\in\Lambda_1=\Lambda_1(N)\}.
\end{equation}
Thus, for any $x_1\in M_2^1$ we have the representation
$$
x_1=\sum\limits_{m=1}^{\ell_1}\sum\limits_{|k|\ge N} c_m^k \varphi_m^k, \qquad
{\rm e}^{t{\cal A}}x_1=\sum\limits_{m=1}^{\ell_1}\sum\limits_{|k|\ge N} {\rm e}^{\lambda_m^k
t}c_m^k \varphi_m^k, \qquad \sum\limits_{m=1}^{\ell_1}\sum\limits_{|k|\ge N}
|c_m^k|^2<\infty.
$$
Let us consider a norm $\|\cdot\|_1$ in which the Riesz
basis~(\ref{eq_eq70}) is orthogonal, then we have the following
estimate:
\begin{equation}\label{eq_eq35}
\|{\rm e}^{t{\cal A}}x_1\|_1=\left(\sum\limits_{m=1}^{\ell_1}\sum\limits_{|k|\ge N}
{\rm e}^{2\MRe\lambda_m^k t} \|c_m^k \varphi_m^k\|_1^2\right)^{\frac12}\le
\|x_1\|_1.
\end{equation}

Since the series $\sum\limits_{m=1}^{\ell_1}\sum\limits_{|k|\ge N} c_m^k
\varphi_m^k$ converges and since $\|\varphi_m^k\|_1\le C$ for all $k$ and $m=1,\ldots,\ell_1$,
then for any $\varepsilon>0$ there exists $N_1\ge N$ such
that $\sum\limits_{m=1}^{\ell_1}\sum\limits_{|k|\ge N_1} \|c_m^k \varphi_m^k\|_1^2\le \frac{\varepsilon^2}{8}$.
Moreover, since the set $\{(m, k):\; m=1,\ldots,\ell_1,\; N\le |k|\le
N_1\}$ is finite and since $\MRe\lambda_m^k<0$, then there exists
$t_0>0$ such that for any $t\ge t_0$ we have:
$\sum\limits_{m=1}^{\ell_1}\sum\limits_{N\le |k|\le N_1}
{\rm e}^{2\MRe\lambda_m^k t} \|c_m^k \varphi_m^k\|_1^2\le
\frac{\varepsilon^2}{8}$.
Thus, we obtain
\begin{equation}\label{eq_eq41}
\sum\limits_{m=1}^{\ell_1}\sum\limits_{|k|\ge N} {\rm e}^{2\MRe\lambda_m^k t} \|c_m^k
\varphi_m^k\|_1^2\le \sum\limits_{m=1}^{\ell_1}\sum\limits_{N\le |k|\le N_1}
{\rm e}^{2\MRe_m^k\lambda t} \|c_m^k \varphi_m^k\|_1^2+\sum\limits_{m=1}^{\ell_1}\sum\limits_{|k|\ge N_1} \|c_m^k \varphi_m^k\|_1^2 \le
\frac{\varepsilon^2}{4}.
\end{equation}

Due to Theorem~\ref{thr_boundedness} the semigroup ${\rm e}^{t{\cal
A}}|_{M_2^0}$ is exponentially stable, i.e. by definition there
exist some positive constants $M$, $\omega$ such that  $\|{\rm e}^{t{\cal
A}}|_{M_2^0}\|\le M {\rm e}^{-\omega t}$. Thus, for any $x_0\in M_2^0$
there exists $t_0>0$ such that for any $t\ge t_0$ we have an
estimate
\begin{equation}\label{eq_eq40}
\|{\rm e}^{t{\cal A}}x_0\|_1\le M {\rm e}^{-\omega t} \|x_0\|_1\le
\frac{\varepsilon}{2}.
\end{equation}

Finally, from the estimates (\ref{eq_eq35}), (\ref{eq_eq41}) and
(\ref{eq_eq40}) we conclude that for any $x\in M_2$ and for any
$\varepsilon>0$ there exists $t_0>0$ such that for any $t\ge t_0$
the following estimate holds:
\begin{equation}\label{eq_eq42}
\|{\rm e}^{t{\cal A}}x\|_1\le \|{\rm e}^{t{\cal A}}x_0\|_1+ \|{\rm e}^{t{\cal
A}}x_1\|_1\le \varepsilon.
\end{equation}

Therefore, $\lim\limits_{t\rightarrow +\infty}\|{\rm e}^{t{\cal
A}}x\|_1=0$, what implies that the system~(\ref{syst_delay_opmodel})
is strongly asymptotically stable.
\end{proof}

\subsection{An example of dilemma: stable and unstable situations}

In this subsection we give an explicit example illustrating the item~(iii) of Theorem~\ref{thr_stability_intro}.
Namely, we construct two systems having the same spectrum and satisfying the following conditions:
$\sigma({\cal A})\subset\{\lambda: \MRe \lambda<0\}$ and there are no Jordan blocks,
corresponding to eigenvalues from $\sigma_1=\sigma(A_{-1})\cap \{\mu: |\mu|= 1 \}$, but there exists
an eigenvalue $\mu\in \sigma_1$ whose eigenspace is at least two dimensional.
Moreover, one of the constructed systems appears to be stable while the other is unstable.

We consider the system of the form
\begin{equation}\label{syst_delay_ex}
\dot{z}(t)=\left(
\begin{array}{rr}
-1 & 0\\
0 & -1
\end{array}
\right)\dot{z}(t-1)
+ \left(
\begin{array}{rr}
-b & s\\
0 & -b
\end{array}
\right) z(t),\qquad z\in\mathbb{C}^2,\; t\ge 0,
\end{equation}
where $b$ is a real positive number and for the value of $s$ we essentially distinguish two cases: $s=0$ and $s\not=0$.

\begin{rem}\label{rm_1}
The systems of the form $\dot{z}=A_{-1}\dot{z}(t-1)+A_0 z(t)$ is a special case of the systems~(\ref{syst_delay_gen}).
\end{rem}
\begin{proof}
Indeed, we choose $A_2(\theta)=(\theta+1)A_0$ and $A_3(\theta)=A_0$. Thus,
$$
\int_{-1}^0 A_2(\theta)\dot{z}(t+\theta)\;{\rm d}\theta +\int_{-1}^0
A_3(\theta){z}(t+\theta)\;{\rm d}\theta= \int_{-1}^0 (\theta+1)A_0
\dot{z}(t+\theta)\;{\rm d}\theta +\int_{-1}^0 A_0{z}(t+\theta)\;{\rm
d}\theta$$
$$
=A_0\int_{-1}^0  ((\theta+1) z(t+\theta))'\;{\rm d}\theta=A_0 z(t).
$$
\end{proof}

The eigenvalues of the operator $\cal A$ are the roots of the equation
$\det\triangle_{\cal A}(\lambda)=0$, which, in our particular case, has the form:
$$
\det(-\lambda I+\lambda {\rm e}^{-\lambda}A_{-1}+A_0) =\det\left(
\begin{array}{cc}
-\lambda - \lambda {\rm e}^{-\lambda} - b & s\\
0 & -\lambda - \lambda {\rm e}^{-\lambda} - b
\end{array}
\right)=0.
$$
Thus, all the eigenvalues of the operator $\cal A$ satisfy the
equation
\begin{equation}\label{ex_eq1}
\lambda {\rm e}^{\lambda} + \lambda + b {\rm e}^{\lambda}=0
\end{equation}
and the multiplicity of each eigenvalue equals two.
To prove that $\sigma({\cal A})\subset\{\lambda: \MRe \lambda<0 \}$,
we use the results on transcendental equations obtained by L.~Pontryagin \cite{Pontryagin_function_zeros}.

Let us consider the equation $H(z)=0$, where $H(z)=h(z, {\rm e}^z)$ is a
polynomial with respect to $z$ and ${\rm e}^z$.

\begin{df}\label{df_1}
We say that the function $H(z)=\sum\limits_{m,n} a_{mn}z^m {\rm e}^{nz}$
possesses the principal term $a_{rs}z^r {\rm e}^{sz}$, if for all other
terms $a_{mn}z^m {\rm e}^{nz}$ we have that $r\ge m$ and $s\ge n$.
\end{df}

We denote by $F(y): \mathbb{R}\rightarrow \mathbb{R}$ and $G(y): \mathbb{R}\rightarrow \mathbb{R}$,
correspondingly the real and the imaginary parts of the function $H(iy)$, i.e. $H(iy)=F(y)+iG(y)$, $y\in\mathbb{R}$.

\begin{df}\label{df_2}
We say that the zeroes of two real-valued functions of a real
variable alternate if and only if

\noindent a) each of these functions has no multiple roots;

\noindent b) between any two zeroes of one of these functions there exists
at least one zero of the other;

\noindent c) the functions are never simultaneously zero.
\end{df}

\begin{thr}[{\cite[Pontryagin, 1942]{Pontryagin_function_zeros}}]\label{pontr_th7}
Let $H(z)=h(z, {\rm e}^z)$ be a polynomial with a principal term.

\noindent
1. If all the zeroes of the function $H(z)$ belong to the open left half-plane: $\MRe \lambda_k <0$,
then the zeroes of $F(y)$ and $G(y)$ are real, alternating and for all $y\in
\mathbb{R}$ the following inequality holds
\begin{equation}\label{equation_1}
G'(y)F(y)-G(y)F'(y)>0.
\end{equation}

\noindent
2. Any of the conditions below is sufficient for all the zeroes of
the function $H(z)$ to lie in the open left half-plane:

a) all the zeroes of the functions $F(y)$ and $G(y)$ are real,
alternating and the inequality (\ref{equation_1}) is satisfied for
at least one value of $y$;

b) all the zeroes of the function $F(y)$ are real and for each zero
$y=y_0$ the inequality (\ref{equation_1}) is satisfied, i.e.
$G(y_0)F'(y_0)<0$;

c) all the zeroes of the function $G(y)$ are real and for each zero
$y=y_0$ the inequality (\ref{equation_1}) is satisfied, i.e.
$G'(y_0)F(y_0)>0$.
\end{thr}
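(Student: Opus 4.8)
The plan is to reduce both assertions to a single winding-number computation furnished by the argument principle, in which the asymptotics of the count are dictated by the principal term $a_{rs}z^{r}\Me^{sz}$. The starting point is the elementary identity, valid wherever $F^2+G^2\neq0$,
\[
\frac{\rm d}{{\rm d}y}\arg H(\Mi y)=\frac{F(y)G'(y)-G(y)F'(y)}{F(y)^2+G(y)^2},
\]
which shows that inequality~(\ref{equation_1}) holds for every $y$ precisely when $\Phi(y)\stackrel{\rm def}{=}\arg H(\Mi y)$ is strictly increasing, i.e. when the curve $y\mapsto H(\Mi y)$ turns monotonically counterclockwise about the origin. In the same picture the real zeros of $F$ are the crossings of this curve with the imaginary axis of the value plane and the real zeros of $G$ are its crossings with the real axis; the two sets of zeros being real, simple and alternating is exactly the statement that the curve meets the two coordinate axes transversally and in strictly alternating order.

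First I would invoke the argument principle on the contour $\Gamma_R$ formed by the segment of the imaginary axis between $-\Mi R$ and $\Mi R$ and closed by the semicircle of radius $R$ lying in the left half-plane, with $R$ running through a sequence chosen to avoid the zeros of $H$. Up to orientation, the number of zeros of $H$ with $\MRe\lambda<0$ enclosed by $\Gamma_R$ equals $\frac{1}{2\pi}$ times the total increment of $\arg H$ around $\Gamma_R$, and this increment splits into the contribution $\Phi(R)-\Phi(-R)$ along the imaginary axis and the contribution along the arc.

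The technical heart, and the step I expect to be the main obstacle, is the precise evaluation of these two contributions by means of the principal term. Since $a_{rs}z^{r}\Me^{sz}$ dominates every other monomial as $|z|\to\infty$, one analyses the growth of $H$ sector by sector and shows that the arc increment is asymptotically a fixed linear function of $R$, while at the same time the total number of zeros of $H$ in $|z|\le R$ obeys the sharp asymptotics of an exponential polynomial of bidegree $(r,s)$. The delicate issue is the uniform control of $H$ near the imaginary axis, where $\Me^{sz}$ is purely oscillatory rather than decaying; it is precisely this analysis that pins the number of real zeros of $F$ (respectively $G$) in a long symmetric interval to the definite value $4ks+r$ predicted by the principal term, and hence forces $\Phi(R)-\Phi(-R)$ to a prescribed value for large $R$ along the chosen sequence.

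Granting this count, part~1 follows: if every zero lies in $\{\MRe\lambda<0\}$, the argument principle makes $\Phi(R)-\Phi(-R)$ attain the full increment allowed by the arc asymptotics, which is possible only if $\Phi$ increases monotonically and $F,G$ realise their entire quota of interlacing real zeros, giving both the alternation and~(\ref{equation_1}). For part~2 the monotone winding guaranteed by the hypotheses produces exactly the increment needed to account, through the same arc asymptotics, for \emph{all} the zeros of $H$, so that none can remain in the right half-plane; condition~(a) is the direct form of this argument. To obtain~(b) and~(c) I would show, by an interlacing argument of Hermite--Biehler type, that the reality of the zeros of $F$ together with the sign condition $G(y_{0})F'(y_{0})<0$ at each of them already forces $G$ to possess interlacing real zeros and $\Phi$ to be monotone, thereby reducing~(b), and symmetrically~(c), to~(a).
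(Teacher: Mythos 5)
You should first be aware that the paper offers no proof of Theorem~\ref{pontr_th7}: it is quoted from Pontryagin's 1942 paper \cite{Pontryagin_function_zeros} and used, together with Theorem~\ref{pontr_th3}, as a black box in the stability example of Section~5.1. So your attempt is not being measured against an argument in the paper; it has to stand on its own as a proof of a deep classical result. As an outline it does point in the historically correct direction: this statement is the Hermite--Biehler theorem for quasi-polynomials, the identity $\frac{{\rm d}}{{\rm d}y}\arg H(\Mi y)=\bigl(G'F-GF'\bigr)/\bigl(F^2+G^2\bigr)$ is the right dictionary between inequality~(\ref{equation_1}) and monotone winding, and Pontryagin's own proof is indeed argument-principle bookkeeping in which the principal term controls all asymptotics.

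As a proof, however, the proposal has genuine gaps. First, the step you yourself call ``the technical heart'' --- the uniform evaluation of the axis and arc contributions and the resulting exact count of real zeros (the $4ks+r$ of Theorem~\ref{pontr_th3}) --- is precisely the content of Pontryagin's preparatory theorems; deferring it leaves both implications unproved, since every later sentence of your argument leans on that count. Second, and more seriously as a matter of logic, in part~1 the inference ``the winding attains its maximal value, hence $\Phi$ is monotone'' is invalid: the winding number constrains only the crossings of the two coordinate axes of the value plane, i.e.\ the real zeros of $F$ and of $G$, while between two consecutive crossings the curve $y\mapsto H(\Mi y)$ is free to oscillate in argument without creating any extra zero of $F$ or $G$. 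Hence your count cannot exclude points where $G'F-GF'\le 0$, and the pointwise inequality~(\ref{equation_1}) for \emph{all} $y$ does not follow. The classical repair is to run the same crossing-and-counting argument for every rotation $\Me^{\Mi\theta}H$, with real and imaginary parts $F_\theta=F\cos\theta-G\sin\theta$ and $G_\theta=F\sin\theta+G\cos\theta$: a direct computation gives $G_\theta'F_\theta-G_\theta F_\theta'=G'F-GF'$ for every $\theta$, every $y_0$ with $H(\Mi y_0)\not=0$ is a zero of $G_\theta$ for a suitable $\theta$, and the counting argument forces every such crossing to be counterclockwise, so positivity at the crossings of all the rotated functions yields positivity everywhere; this additional idea (or some substitute, such as a product representation of $H$) is absent from your argument. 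Third, the reduction of the sufficiency conditions b) and c) to a) is asserted (``an interlacing argument of Hermite--Biehler type'') but not carried out; it again rests on the counting theorem, since one must show that the sign conditions at the real zeros of $F$ produce interlacing zeros of $G$ which, by the count, exhaust all zeros of $G$ and are therefore real. Until these points are filled in, what you have is a correct plan rather than a proof.
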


The following theorem gives a criterion for all zeroes of a function to be real.

\begin{thr}[{\cite[Pontryagin, 1942]{Pontryagin_function_zeros}}]\label{pontr_th3}
Let $F(z)=f(z, \cos z, \sin z)$ be a polynomial with a principal
term $z^r\phi^{(s)}_m(\cos z, \sin z)$, where $\phi^{(s)}_m(\cos z,
\sin z)$ is homogeneous with respect to $\cos z$ and $\sin z$
polynomial.

The function $F(z)$, $z\in \mathbb{C}$ possesses only real zeroes if
and only if for all big enough $k\in \mathbb{Z}$ the function
$F(x)$, $x\in \mathbb{R}$ possesses exactly $4ks+r$ real roots on
the interval $-2\pi k+\varepsilon\le x \le 2\pi k+\varepsilon$ for
some $\varepsilon>0$.
\end{thr}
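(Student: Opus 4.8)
The plan is to prove the equivalence by comparing two independent counts of the zeros of $F$: the \emph{total} number of zeros (real and non-real, with multiplicity) inside a large rectangle, obtained from the argument principle, against the number of \emph{real} roots on the prescribed interval. The reality criterion then reduces to the tautology that a subset (the real zeros) exhausts the whole (all zeros) exactly when no zero leaves the real axis; the whole point is that the principal-term hypothesis fixes the total count to be precisely $4ks+r$.

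First I would use the principal term to localize the zeros. Expressing $\cos z$ and $\sin z$ through $e^{\pm \Mi z}$ turns $F$ into an exponential polynomial in $z,\,e^{\Mi z},\,e^{-\Mi z}$, and the assumption that $z^{r}\phi_m^{(s)}(\cos z,\sin z)$ is the principal term means that the extreme exponentials $e^{\pm \Mi s z}$ occur with nonzero coefficients carrying the maximal power $z^{r}$. Consequently $F(z)\sim c_{\pm}\,z^{r}e^{\mp \Mi s z}$ as $\MIm z\to\pm\infty$ with $c_{\pm}\neq 0$, so $F$ has no zeros once $|\MIm z|\ge h$ for some $h>0$; all zeros lie in a fixed horizontal strip (this is the analogue of the indicator diagram being the vertical segment $[-\Mi s,\Mi s]$, whose conjugate length $2s$ governs the zero density).

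Next I would count the total number $N_k$ of zeros of $F$ inside the rectangle $\Gamma_k$ with vertical sides $\MRe z=\pm(2\pi k+\varepsilon)$ and horizontal sides $\MIm z=\pm h'$, $h'>h$, via $N_k=\frac{1}{2\pi}\Delta_{\Gamma_k}\arg F$. On the two horizontal sides the asymptotics $F\sim c_{\pm}z^{r}e^{\mp \Mi s z}$ hold, and the dominant factor $e^{\mp \Mi s z}$ produces an argument increment close to $4\pi k s$ on each side, contributing $4ks$ after division by $2\pi$; the algebraic factor $z^{r}$ contributes its full winding $2\pi r$ about the enclosed origin, i.e. $r$; the vertical sides contribute only lower order once $k$ is large. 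Since $N_k$ is an integer and the remaining error tends to $0$, I would conclude $N_k=4ks+r$ for all large $k$. The shift $\varepsilon$ is chosen, using the asymptotically periodic location of the zeros, so that no zero sits on the lines $\MRe z=\pm(2\pi k+\varepsilon)$.

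With the unconditional identity $N_k=4ks+r$ established, both directions follow at once: the real roots on $[-2\pi k+\varepsilon,\,2\pi k+\varepsilon]$ are precisely those zeros of $F$ lying on the real segment inside $\Gamma_k$, hence always at most $N_k=4ks+r$, with equality exactly when $\Gamma_k$ contains no non-real zero. If all zeros are real, equality holds for every large $k$; conversely, if the real-root count equals $4ks+r$ for all large $k$, then each $\Gamma_k$ is free of non-real zeros, and since these rectangles exhaust $\mathbb{C}$ as $k\to\infty$ (the zeros already being confined to $|\MIm z|\le h$), every zero is real. I expect the \textbf{main obstacle} to be the exact evaluation $N_k=4ks+r$: one must control the change of argument on the vertical sides, in particular the transition of the dominant exponential from $e^{\Mi s z}$ to $e^{-\Mi s z}$ as the contour crosses the real axis, and justify a single $\varepsilon$ keeping those sides zero-free for all large $k$. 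This is exactly where the principal-term hypothesis is indispensable, since it pins down both the extreme exponents $\pm \Mi s$ and the leading power $r$ that together force the constant $4ks+r$.
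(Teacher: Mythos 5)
The paper itself contains no proof of Theorem~\ref{pontr_th3}: it is quoted from Pontryagin \cite{Pontryagin_function_zeros} and used as a black box in Proposition~\ref{ut_u14}, so your proposal can only be judged against the classical argument. Your overall strategy --- confine all zeros to a horizontal strip, evaluate the \emph{total} number of zeros in a large rectangle by the argument principle, and reduce reality of the zeros to the real count exhausting that total --- is exactly Pontryagin's route, and the architecture is sound. But your very first step contains a genuine gap: the principal-term hypothesis does \emph{not} force the extreme exponentials $z^r e^{\mp\Mi s z}$ to carry nonzero coefficients. Only the principal term can contribute to them, and its contribution is $2^{-s}\phi^{(s)}_m(1,\pm\Mi)$; for a real polynomial these numbers vanish precisely when $u^2+v^2$ divides $\phi^{(s)}_m(u,v)$. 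Concretely, $f(z,u,v)=z^r(u^2+v^2)$ has principal term $z^r(u^2+v^2)$, yet $F(z)=z^r$ has only real zeros while possessing $r$, not $8k+r$, roots on the interval. So without the standard (and here unstated) normalization that $\phi^{(s)}_m$ is not divisible by $u^2+v^2$, your asymptotics $F\sim c_\pm z^r e^{\mp\Mi s z}$ fail and the theorem as quoted is itself false; any complete proof must invoke this non-degeneracy explicitly. (It holds automatically in the paper's application, where $F$ and $G$ arise as real and imaginary parts of $H(\Mi y)$ with $H$ having principal term $a_{rs}z^r e^{sz}$, $a_{rs}\neq0$.)

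The second gap is the vertical sides, which you flag as the main obstacle but then dispose of incorrectly. In the strip one has $F(z)=z^r\left[X(z)+O(1/z)\right]$, where $X$ is the $2\pi$-periodic trigonometric polynomial collecting \emph{all} terms carrying the maximal power $z^r$ (not just the principal one); hence the argument variation of $F$ along a vertical side converges to that of $X$ along a fixed vertical segment --- an $O(1)$ quantity that does \emph{not} tend to zero. The exact value $4ks+r$ emerges only because Pontryagin's contour has its vertical sides on the lines $\MRe z=-2\pi k+\varepsilon$ and $\MRe z=2\pi k+\varepsilon$, matching the interval in the statement: by periodicity $X$ takes the \emph{same} values on both lines, the sides are traversed in opposite directions, and their contributions cancel exactly (with $\varepsilon$ chosen so that $X$ does not vanish on the line $\MRe z=\varepsilon$, keeping both sides uniformly away from zeros of $F$ for every large $k$). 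Your symmetric rectangle with sides at $\MRe z=\pm(2\pi k+\varepsilon)$ destroys this mechanism: the two sides then carry the values of $X$ on the distinct lines $\MRe z=\varepsilon$ and $\MRe z=-\varepsilon$, whose argument variations need not cancel, so the computation yields only $4ks+r+O(1)$ and the constant cannot be identified as zero. Moreover your rectangle covers $[-2\pi k-\varepsilon,\,2\pi k+\varepsilon]$ while the theorem counts real roots on $[-2\pi k+\varepsilon,\,2\pi k+\varepsilon]$, so your concluding equivalence compares counts over different sets. Both defects are repairable, but they sit exactly where the principal-term hypothesis and the precise, asymmetric form of the interval do their work; as written, the proof does not go through.
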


We use the results mentioned above to analyze the location of the roots of the equation~(\ref{ex_eq1}).

\begin{ut}\label{ut_u14}
For any $b>0$ and any $s\in \mathbb{C}$ the spectrum of the
corresponding operator ${\cal A}$ belongs to the open left half-plane.
\end{ut}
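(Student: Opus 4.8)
The plan is to reduce the claim to a scalar transcendental equation and then apply Pontryagin's criteria (Theorem~\ref{pontr_th7} and Theorem~\ref{pontr_th3}). Since $A_{-1}$ and $A_0$ are upper triangular, $\triangle_{\cal A}(\lambda)$ is upper triangular and the parameter $s$ sits only off the diagonal; hence $\det\triangle_{\cal A}(\lambda)=(-\lambda-\lambda {\rm e}^{-\lambda}-b)^2$ and the spectrum is governed entirely by~(\ref{ex_eq1}), independently of $s$. So I set $H(z)=z{\rm e}^z+z+b{\rm e}^z=h(z,{\rm e}^z)$ and observe that its principal term, in the sense of Definition~\ref{df_1}, is $z{\rm e}^z$, so Theorem~\ref{pontr_th7} applies. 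Writing $H({\rm i}y)=F(y)+{\rm i}G(y)$ with $y\in\mathbb{R}$, a direct computation gives $F(y)=b\cos y-y\sin y$ and $G(y)=y+y\cos y+b\sin y$. I would then establish $\sigma({\cal A})\subset\{\lambda:\MRe\lambda<0\}$ by checking condition~2(c) of Theorem~\ref{pontr_th7}: all zeros of $G$ are real, and $G'(y_0)F(y_0)>0$ at each of them.

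I would dispatch the inequality first, as it is the easy half. Using $1+\cos y=2\cos^2(y/2)$ and $\sin y=2\sin(y/2)\cos(y/2)$ one factors $G(y)=2\cos(y/2)\bigl[y\cos(y/2)+b\sin(y/2)\bigr]$, so a real zero $y_0$ falls into one of two cases. If $\cos(y_0/2)=0$, then $y_0$ is an odd multiple of $\pi$, whence $\cos y_0=-1$, $\sin y_0=0$, giving $F(y_0)=-b$ and $G'(y_0)=-b$, so $G'(y_0)F(y_0)=b^2>0$. If instead $y_0\cos(y_0/2)+b\sin(y_0/2)=0$, substituting $y_0=-b\tan(y_0/2)$ into $F$ and into $G'(y)=1+(1+b)\cos y-y\sin y$ collapses both expressions, yielding $F(y_0)=b$ and $G'(y_0)=b+2\cos^2(y_0/2)$, so again $G'(y_0)F(y_0)=b\,(b+2\cos^2(y_0/2))>0$ since $b>0$. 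Thus the inequality~(\ref{equation_1}) holds at every real zero of $G$.

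The main obstacle is the remaining hypothesis of 2(c): that $G$, viewed as the entire function $G(z)=z+z\cos z+b\sin z$, has only real zeros. Here I would invoke Theorem~\ref{pontr_th3} with principal term $z\cos z$, i.e. $r=1$ and trigonometric degree $s=1$, so that I must verify $G(x)$ has exactly $4ks+r=4k+1$ real roots on $[-2\pi k+\varepsilon,2\pi k+\varepsilon]$ for small $\varepsilon>0$ and all large $k$. Using the same factorization the roots split into the zeros of $\cos(x/2)$, namely the odd multiples of $\pi$ (exactly $2k$ of them in the interval), and the zeros of $\phi(x)=x+b\tan(x/2)$; since $\phi'(x)=1+\frac b2\sec^2(x/2)>0$, the function $\phi$ increases strictly from $-\infty$ to $+\infty$ on each interval between consecutive poles and so has exactly one root there, and the two families never coincide because $x\cos(x/2)+b\sin(x/2)=\pm b\neq0$ at the poles. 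Counting the $2k-1$ interior gaps together with the two boundary gaps (a short computation with $\phi(\mp2\pi k)=\mp2\pi k$ and the sign of $\phi$ at $\mp2\pi k+\varepsilon$ shows each boundary root lies inside the interval) gives $2k+1$ roots of $\phi$, hence $2k+(2k+1)=4k+1$ real roots of $G$ in total, exactly the Pontryagin count. Therefore $G$ has only real zeros, which with the inequality above verifies condition~2(c), and Theorem~\ref{pontr_th7} yields $\sigma({\cal A})\subset\{\lambda:\MRe\lambda<0\}$. The delicate point throughout is the bookkeeping of roots near the endpoints $\pm2\pi k$, where $\varepsilon$ must be chosen so that no boundary root is miscounted.
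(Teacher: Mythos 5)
Your proposal is correct. It shares the paper's framework --- the spectrum is governed by the scalar equation~(\ref{ex_eq1}) independently of $s$, and the location of its roots is settled by Pontryagin's Theorems~\ref{pontr_th7} and~\ref{pontr_th3} applied to $H(z)=z{\rm e}^z+z+b{\rm e}^z$ with principal term $z{\rm e}^z$ --- but you verify the \emph{dual} sufficient condition. The paper uses condition 2(b) of Theorem~\ref{pontr_th7}: it shows that all zeros of the real part $F(y)=b\cos y-y\sin y$ are real by rewriting $F=0$ as $\tan y=b/y$ and counting $4k+1$ roots per window (a count justified by inspection of the figure), and then checks $G(y_0)F'(y_0)<0$ at those zeros by trigonometric substitution. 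You instead use condition 2(c): all zeros of the imaginary part $G(y)=y+y\cos y+b\sin y$ are real, and $G'(y_0)F(y_0)>0$ at each of them. Your computations are sound: the half-angle factorization $G(y)=2\cos(y/2)\bigl[y\cos(y/2)+b\sin(y/2)\bigr]$ is correct; the identities $F(y_0)=-b$, $G'(y_0)=-b$ on the family $\cos(y_0/2)=0$, and $F(y_0)=b$, $G'(y_0)=b+2\cos^2(y_0/2)$ on the family $y_0=-b\tan(y_0/2)$, check out by direct substitution; and the root count $2k+(2k+1)=4k+1$, including the endpoint bookkeeping via $\phi(\pm2\pi k)=\pm2\pi k$, matches the Pontryagin requirement with $r=s=1$. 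What your route buys: the factorization splits the zeros of $G$ into two explicit families, so the reality-of-zeros step follows from the strict monotonicity of $\phi(x)=x+b\tan(x/2)$ between consecutive poles --- a fully self-contained argument, whereas the corresponding step for $F$ in the paper is the part treated most informally (read off a graph); moreover, at the zeros of $G$ the inequality~(\ref{equation_1}) collapses to exact constants rather than requiring a sign discussion. The two proofs are otherwise of comparable length and rest on exactly the same pair of theorems.
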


\begin{proof} We represent $H(iy)=iy {\rm e}^{iy} + iy + b {\rm e}^{iy}$ as follows:
$$
\begin{array}{rcl}
H(iy) & = & iy(\cos y + i\sin y)+iy+b(\cos y+i \sin y)\\
& = & (b \cos y - y \sin y) + i (y\cos y + y + è \sin y)\\
& = & F(y)+iG(y).
\end{array}
$$
Since the roots of the equations $y=0$ and $\cos y=0$ are not roots of the equation $F(y)=0$,
we rewrite $F(y)=0$ as $\tg y = \frac{b}{y}$.
Several zeroes of the equation $F(y)=0$ may be seen on the figure below:

\begin{center}
\includegraphics[scale=0.7]{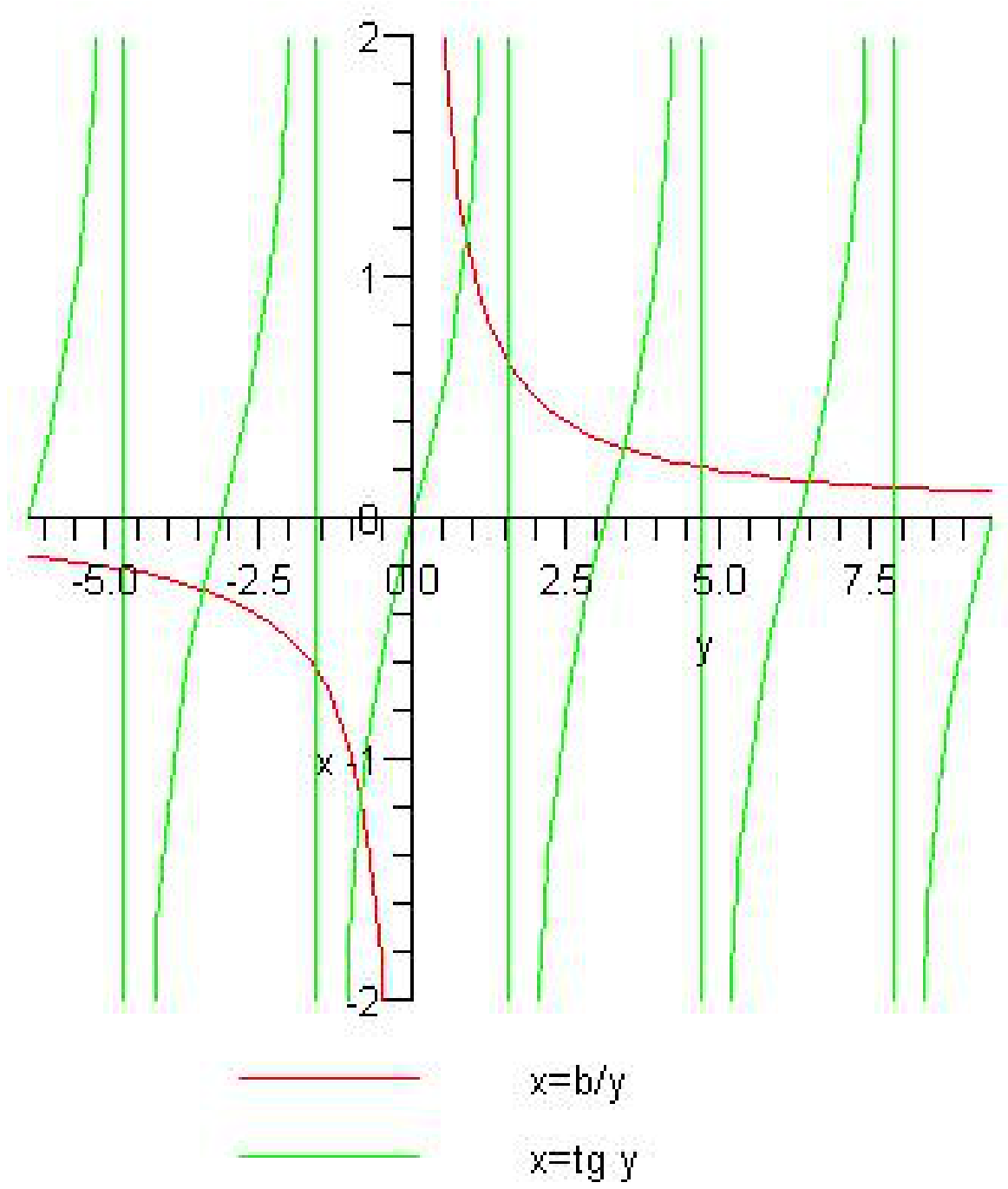}
\vskip 1ex
Figure 3.
\end{center}

We see that exactly 5 zeroes of the equation $F(y)=0$ belong to the interval $-2\pi+\varepsilon\le y\le
2\pi+\varepsilon$ if we choose $\varepsilon>0$ "big" enough such
that the zero on the right from $2\pi$ belongs to the interval.
Adding to this interval from the left and from the right $2\pi$ we
add each time 4 other zeroes. Thus, on each interval $-2\pi
k+\varepsilon\le y\le 2\pi k+\varepsilon$ we have exactly $4k+1$
zeroes of the equation $F(y)=0$ and, therefore, the conditions of
the Theorem~\ref{pontr_th3} are satisfied. Thus, all the zeroes of
the equation $F(y)=0$ are real.

Let us now prove that for each root $y_0$ of the equation $F(y)=0$
the inequality $G(y_0)F'(y_0)<0$ holds. We use the notation $\cos
y=C$, $\sin y=S$:
$$
-GF'=-(-b S - S - y_0C)(y_0 C+y_0 +bS)=(b S + S + y_0 C)(y_0
C+y_0+bS).
$$
We rewrite $\tg y_0 = \frac{b}{y_0}$ as $C=\frac{Sb}{y_0}$ and
substitute $C$ at the last relation:
$$
-GF'=\left(bS + \frac{y_0^2}{b}S+S\right)\left(\frac{y_0^2}{b}S+bS+y_0\right)
=S^2\left(b+\frac{y_0^2}{b}+1\right)\left(\frac{y_0^2}{b}+b+\frac{y_0}{S}\right).
$$
Since $S^2>0$ and $b+\frac{y_0^2}{b}+1>0$, then it remains to prove,
that the third multiplier is greater then zero. From $\tg y_0 =
\frac{b}{y_0}$ we also have $b=\frac{y_0S}{C}$ and substituting we
obtain:
$$
\frac{y_0^2}{b}+b+\frac{y_0}{S} = \frac{y_0C}{S} + \frac{y_0S}{C} +
\frac{y_0}{S} = y_0\frac{C^2+S^2+C}{SC} = y_0\frac{1+C}{SC}=
(1+C)bC^2>0.
$$
We note also that the fact that $y_0SC>0$ can be easily seen from
the picture above.

and use the sufficient condition b) of Theorem~\ref{pontr_th3}.

Applying Theorem~\ref{pontr_th3} we complete the proof of the fact
that all the eigenvalues of the operator ${\cal A}$ belongs to the
open left half-plane.
\end{proof}

\begin{ut}\label{ut_u15}
If $s=0$ then the operator $\cal A$ possesses eigenvectors only,
i.e. it possesses no root vectors; if $s\not=0$ then to each
eigenvalue $\lambda\in\sigma({\cal A})$ there corresponds a pair of
eigenvector and root vector of the operator $\cal A$.
\end{ut}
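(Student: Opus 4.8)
The plan is to read off the entire Jordan structure of $\mathcal A$ directly from the characteristic matrix~(\ref{eq_delta}). For the system~(\ref{syst_delay_ex}) we have $A_{-1}=-I$, so
\begin{equation*}
\triangle(\lambda)=\left(\begin{array}{cc}\delta(\lambda)& s\\ 0 & \delta(\lambda)\end{array}\right),\qquad \delta(\lambda)=-\lambda-\lambda{\rm e}^{-\lambda}-b,
\end{equation*}
whence $\det\triangle(\lambda)=\delta(\lambda)^2$ and every eigenvalue $\lambda_0$ of $\mathcal A$ is a zero of $\delta$. Since the algebraic multiplicity of $\lambda_0$ (the dimension of its root subspace) equals the multiplicity of $\lambda_0$ as a zero of $\det\triangle(\lambda)$ — the fact already used in Proposition~\ref{ut_ut1} — the decisive preliminary step is to show that \emph{every} zero of $\delta$ is simple. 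Granting this, each $\lambda_0$ is a double zero of $\det\triangle$ and therefore has algebraic multiplicity exactly two, which caps the length of any Jordan chain at two.

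The simplicity of the zeros of $\delta$ is the technical heart of the argument, and I would prove it by contradiction. Suppose $\delta(\lambda_0)=\delta'(\lambda_0)=0$, where $\delta'(\lambda)=-1+(\lambda-1){\rm e}^{-\lambda}$. Since $b>0$ forces $\lambda_0\neq0$, the relation $\delta(\lambda_0)=0$ gives ${\rm e}^{-\lambda_0}=-(\lambda_0+b)/\lambda_0$, while $\delta'(\lambda_0)=0$ gives ${\rm e}^{-\lambda_0}=1/(\lambda_0-1)$. Eliminating ${\rm e}^{-\lambda_0}$ yields the polynomial relation $\lambda_0^2+b\lambda_0-b=0$, whose only solutions are the two real numbers $\tfrac12(-b\pm\sqrt{b^2+4b})$; hence any common zero of $\delta$ and $\delta'$ must be real. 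But for real $\lambda_0$ the equation ${\rm e}^{-\lambda_0}=1/(\lambda_0-1)$ forces $\lambda_0>1$ (positivity of the left side), whereas $\lambda_0^2=b(1-\lambda_0)\ge0$ forces $\lambda_0\le1$ — a contradiction. Thus $\delta$ and $\delta'$ have no common zero, and all zeros of $\delta$ are simple. The remaining work is then elementary $2\times2$ linear algebra.

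Finally I would translate this into the Jordan structure of $\mathcal A$. By the computation underlying Proposition~\ref{ut_ut2}, a root vector of $\mathcal A$ at $\lambda_0$ has second component ${\rm e}^{\lambda_0\theta}(x_1+\theta x_0)$ and corresponds to a Jordan chain $x_0,x_1$ of $\triangle$, characterized by $\triangle(\lambda_0)x_0=0$ and $\triangle(\lambda_0)x_1=-\triangle'(\lambda_0)x_0$, where $\triangle'(\lambda_0)=\delta'(\lambda_0)I$. If $s=0$ then $\triangle(\lambda_0)=\delta(\lambda_0)I=0$, so $\Ker\triangle(\lambda_0)=\mathbb{C}^2$ yields two independent eigenvectors, while the chain condition collapses to $\delta'(\lambda_0)x_0=0$; as $\delta'(\lambda_0)\neq0$ this is impossible, so $\mathcal A$ has no root vectors. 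If $s\neq0$ then $\triangle(\lambda_0)=\bigl(\begin{smallmatrix}0&s\\0&0\end{smallmatrix}\bigr)$ has rank one, so $\Ker\triangle(\lambda_0)$ is one-dimensional, spanned by $e_1=(1,0)^T$, giving a single eigenvector; moreover $-\delta'(\lambda_0)e_1\in\MIm\triangle(\lambda_0)=\MLin\{e_1\}$, so $\triangle(\lambda_0)x_1=-\delta'(\lambda_0)e_1$ is solvable and produces a root vector. Because the algebraic multiplicity is two, this Jordan chain has length exactly two, so to each eigenvalue there corresponds precisely one eigenvector–root vector pair, which is the assertion.
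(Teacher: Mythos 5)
Your proof is correct, and its decisive step takes a genuinely different route from the paper's. Both arguments reduce the statement to linear algebra for the characteristic matrix: eigenvectors come from $\Ker\triangle(\lambda_0)$, root vectors from solvability of the chain equation $\triangle(\lambda_0)x_1=-\triangle'(\lambda_0)x_0$ with $\triangle'(\lambda_0)=\delta'(\lambda_0)I$, so everything hinges on the non-vanishing $\delta'(\lambda_0)\not=0$. The paper establishes that non-vanishing only inside its $s\not=0$ case and only at eigenvalues: assuming $1+{\rm e}^{-\lambda_k}-\lambda_k{\rm e}^{-\lambda_k}=0$ it derives ${\rm e}^{2\lambda_k}+3{\rm e}^{\lambda_k}+1=0$ (a computation written for $b=1$) and contradicts Proposition~\ref{ut_u14}, thus leaning on the Pontryagin-based fact that $\sigma({\cal A})$ lies in the open left half-plane. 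You instead prove the stronger statement that $\delta$ and $\delta'$ have no common zero anywhere in $\mathbb{C}$, by eliminating ${\rm e}^{-\lambda_0}$ to get the real quadratic $\lambda_0^2+b\lambda_0-b=0$ and then the sign contradiction $\lambda_0>1$ versus $\lambda_0\le 1$; this is elementary, uniform in $b>0$, and completely independent of the stability analysis. Your route also closes a gap the paper leaves open: for $s=0$ the paper only exhibits the two-dimensional eigenspace, so its claimed absence of root vectors tacitly rests on the earlier unproven assertion that each eigenvalue has multiplicity two, whereas your collapse of the chain condition to $\delta'(\lambda_0)x_0=0$ proves that claim outright. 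What the paper's computation buys in exchange is the explicit eigenvectors $f_k^1$, $f_k^2$ and the explicit root vector in $M_2$, which are reused in the proof of Proposition~\ref{ut_u16}; your $\triangle$-level argument does not produce these, but the statement does not require them. Finally, your appeal to ``algebraic multiplicity equals the order of the zero of $\det\triangle$'' is a standard fact for this operator model, used implicitly by the paper itself (e.g.\ in the proof of Corollary~\ref{col_col1}); since you invoke it only to cap chains at length two, while both existence claims are proved directly, nothing essential hinges on it.
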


\begin{proof}
If $s=0$ then $\Delta_{\cal A}(\lambda_k)$ is the zero-matrix for
each eigenvalue $\lambda_k$, and,
therefore, the the space of solutions of the equation
$\Delta_{\cal A}(\lambda_k)z=0$ is two-dimensional. We choose the
following basis of this space: $z^1_k=(1,0)^T$, $z^2_k=(0,1)^T$.

Let us consider the equation for eigenvectors:
$$
({\cal A}-\lambda_k I)\left(
\begin{array}{c}
y\\
z(\theta)
\end{array}
\right)=0 \Leftrightarrow \left(
\begin{array}{c}
A_0z(0) - \lambda_k y\\
\dot{z}(\theta)-\lambda_k z(\theta)
\end{array}
\right)=0.
$$
The solution of the second equation is given by
$z(\theta)={\rm e}^{\lambda_k \theta}z(0)$, thus,
$z(-1)={\rm e}^{-\lambda_k}z(0)$. Taking into account the domain of the
operator ${\cal A}$:
$y=z(0)-A_{-1}z(-1)=(I-{\rm e}^{-\lambda_k}A_{-1})z(0)$, we obtain from
the first equation: $(A_0 - \lambda_k I + \lambda_k
{\rm e}^{-\lambda_k}A_{-1})z(0)=0$, or, in an equivalent form:
$\Delta_{\cal A}(\lambda_k)z(0)=0$.

As we have noted above the last equation has two-dimensional
solution: $z(0)=z^1_k=(1,0)^T$ and $z(0)=z^2_k=(0,1)^T$, and
therefore, there is a two-dimensional eigenspace of the operator
${\cal A}$ corresponding to the eigenvalue $\lambda_k$. Two
eigenvectors of this subspace can be chosen in the following form:
$f^1_k=\left(
\begin{array}{c}
y^1_k\\
z^1_k(\cdot)
\end{array}
\right)$, where $y^1_k=\left(
\begin{array}{c}
1+{\rm e}^{-\lambda_k}\\
0
\end{array}
\right)$, $z^1_k(\theta)=\left(
\begin{array}{c}
{\rm e}^{\lambda_k \theta}\\
0
\end{array}
\right)$ and $f^2_k=\left(
\begin{array}{c}
y^2_k\\
z^2_k(\cdot)
\end{array}
\right)$, where $y^2_k=\left(
\begin{array}{c}
0\\
1+{\rm e}^{-\lambda_k}
\end{array}
\right)$, $z^2_k(\theta)=\left(
\begin{array}{c}
0\\
{\rm e}^{\lambda_k \theta}
\end{array}
\right)$.

Thus, to any eigenvalue $\lambda_k$ of the operator $\cal A$ there
corresponds the two-dimensional eigenspace.

If  $s=1$ (or $s\not=0$) we have that $\Delta_{\cal
A}(\lambda_k)=\left(
\begin{array}{cc}
0 & {\rm e}^{-\lambda_k}\\
0 & 0
\end{array}
\right)$ and, therefore, the space of solutions of the equation
$\Delta_{\cal A}(\lambda_k)z=0$ is one-dimensional:
$z^1_k=(1,0)^T$ (since, obviously, ${\rm e}^{-\lambda_k}\not=0$). Thus,
the equation $\Delta_{\cal A}(\lambda_k)z=z^1_k$ has also
one-dimensional solution which we denote by $z^2_k=(0,1)^T$.

Arguing as above we show that the operator  $\cal A$ possesses one
eigenvector corresponding to the eigenvalue $\lambda_k$:
$f^1_k=\left(
\begin{array}{c}
y^1_k\\
z^1_k(\cdot)
\end{array}
\right)$, where $y^1_k=\left(
\begin{array}{c}
1+{\rm e}^{-\lambda_k}\\
0
\end{array}
\right)$, $z^1_k(\theta)=\left(
\begin{array}{c}
{\rm e}^{\lambda_k \theta}\\
0
\end{array}
\right)$.

Let us show that the operator ${\cal A}$ possesses one root vector.
Each root vector $f$ of the operator ${\cal A}$ satisfies the
following equation: $({\cal A}-\lambda_k I)^2f=0$, i.e.
$\widetilde{f}=({\cal A}-\lambda_k I)f\in \Ker({\cal A}-\lambda_k
I)$. From the last we conclude that $\widetilde{f}=f^1_k=\left(
\begin{array}{c}
(1+{\rm e}^{-\lambda_k}, 0)^T\\
({\rm e}^{\lambda_k \theta}, 0)^T
\end{array}
\right)$ and the root vector satisfies the relation
$$
\left(
\begin{array}{c}
A_0z(-1) - \lambda_k y\\
\dot{z}(\theta)-\lambda_k z(\theta)
\end{array}
\right)=\left(
\begin{array}{c}
(1+{\rm e}^{-\lambda_k}, 0)^T\\
({\rm e}^{\lambda_k \theta}, 0)^T
\end{array}
\right).
$$
The solution of the second equation is given by
$$z(\theta)={\rm e}^{\lambda_k \theta}z(0)+
\int_{0}^\theta {\rm e}^{\lambda_k (\theta-\tau)}{\rm e}^{\lambda_k \tau}y_k^1
\;{\rm d}\tau= {\rm e}^{\lambda_k \theta}z(0)+\theta {\rm e}^{\lambda_k
\theta}y_k^1,$$ what implies
$z(-1)={\rm e}^{-\lambda_k}z(0)-{\rm e}^{-\lambda_k}y_k^1$. Taking into account
the domain of the operator ${\cal A}$, we write down the first
equation in the following form:
$$
A_0z(0)-\lambda_k\left(z(0)-A_{-1}({\rm e}^{-\lambda_k}z(0)-{\rm e}^{-\lambda_k}y_k^1)\right)=(1+{\rm e}^{-\lambda_k})y_k^1
$$
which is equivalent to
$$
\Delta_{\cal A}(\lambda_k)z(0)= (1+{\rm e}^{-\lambda_k})y_k^1+\lambda_k
A_{-1}{\rm e}^{-\lambda_k}y_k^1= \left(
\begin{array}{c}
1+{\rm e}^{-\lambda_k}-\lambda_k {\rm e}^{-\lambda_k}\\
0
\end{array}
\right).
$$

Thus, if $1+{\rm e}^{-\lambda_k}-\lambda_k {\rm e}^{-\lambda_k}\not=0$ then
$z(0)$ is the root vector of the matrix $\Delta_{\cal
A}(\lambda_k)$ and, therefore, $z(0)=z^2_k=(0,1)^T$ and the operator
${\cal A}$ possesses the root vector $f^2_k = \left(
\begin{array}{c}
y^2_k\\
z^2_k(\cdot)
\end{array}
\right)$, where $y^2_k=\left(
\begin{array}{c}
{\rm e}^{-\lambda_k}\\
1+{\rm e}^{-\lambda_k}
\end{array}
\right)$ and $z^2_k(\theta)=\left(
\begin{array}{c}
\theta {\rm e}^{\lambda_k \theta}\\
{\rm e}^{\lambda_k \theta}
\end{array}
\right)$.

It remains to show that
$1+{\rm e}^{-\lambda_k}-\lambda_k {\rm e}^{-\lambda_k}\not=0$. We suppose the
contrary: $\lambda_k {\rm e}^{-\lambda_k}=1+{\rm e}^{-\lambda_k}$ and,
multiplying the last expression onto ${\rm e}^{\lambda_k}$, we obtain
$\lambda_k={\rm e}^{\lambda_k}+1$. Since $\lambda_k$ is an eigenvalue of
${\cal A}$, then we obtain
${\rm e}^{\lambda_k}({\rm e}^{\lambda_k}+1)+{\rm e}^{\lambda_k}+1+{\rm e}^{\lambda_k}=0$ or
${\rm e}^{2\lambda_k}+3{\rm e}^{\lambda_k}+1=0$, and we conclude that
${\rm e}^{\lambda_k}=\frac{-3\pm \sqrt{5}}{2}$. For the root
${\rm e}^{\lambda_k}=\frac{-3- \sqrt{5}}{2}$ we have that
$\MRe\lambda_k>0$, and for the root
 ${\rm e}^{\lambda_k}=\frac{-3+ \sqrt{5}}{2}$ we have:
$\lambda_k=\frac{-3+ \sqrt{5}}{2}+1=\frac{\sqrt{5}-1}{2}>0$. We have
obtained the contradiction what completes the analysis of
eigenvectors and root vectors of the operator ${\cal A}$.
\end{proof}

\begin{ut}\label{ut_u16}
If $s=0$ then the system (\ref{syst_delay_ex}) is strongly asymptotically stable; if
$s\not=0$ then the system is unstable.
\end{ut}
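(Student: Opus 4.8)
The plan is to treat the two cases separately, exploiting the spectral information already established in Propositions~\ref{ut_u14} and~\ref{ut_u15}. For $s=0$ the matrix $A_0$ is diagonal, and since in the construction of Remark~\ref{rm_1} we took $A_2(\theta)=(\theta+1)A_0$, $A_3(\theta)=A_0$ together with $A_{-1}=-I$, all coefficient matrices are diagonal. Consequently the state space $M_2=\mathbb{C}^2\times L_2(-1,0;\mathbb{C}^2)$ splits as the orthogonal direct sum of two copies of $\mathbb{C}\times L_2(-1,0;\mathbb{C})$ according to the two scalar components of $z$, and $\mathcal A$ respects this splitting: $\mathcal A=\mathcal A_1\oplus\mathcal A_2$, where each $\mathcal A_i$ is the operator model of the scalar neutral equation $\dot\zeta(t)=-\dot\zeta(t-1)-b\zeta(t)$. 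First I would verify this decoupling explicitly. For each scalar block $A_{-1}=-1$ is a $1\times 1$ matrix whose only eigenvalue $-1\in\sigma_1$ is trivially simple, and by Proposition~\ref{ut_u14} the scalar spectrum lies in the open left half-plane. Hence Theorem~\ref{thr_stability} applies to each block and yields strong asymptotic stability of $\{{\rm e}^{t\mathcal A_i}\}$; since ${\rm e}^{t\mathcal A}={\rm e}^{t\mathcal A_1}\oplus {\rm e}^{t\mathcal A_2}$, strong stability of the full system follows.

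For $s\neq 0$ I would first reduce to $s=1$ by the bounded invertible change of variable $z_2\mapsto s z_2$, which carries the system with parameter $s$ into the one with parameter $1$ and preserves stability. Then I would invoke Proposition~\ref{ut_u15}: to each eigenvalue $\lambda_k$ there corresponds a genuine Jordan chain consisting of an eigenvector $f^1_k$ and a root vector $f^2_k$ with $(\mathcal A-\lambda_k I)f^2_k=f^1_k$. Because the two-dimensional $\mathcal A$-invariant subspace spanned by $f^1_k$ and $f^2_k$ is contained in ${\cal D}({\cal A})$, the semigroup acts on it as the matrix exponential, so that
\begin{equation}\label{eq_jordan_action}
{\rm e}^{t\mathcal A}f^2_k={\rm e}^{\lambda_k t}\bigl(f^2_k+t f^1_k\bigr),\qquad t\ge 0.
\end{equation}
The idea is to let the algebraic factor $t$ produce unbounded growth faster than ${\rm e}^{\MRe\lambda_k\, t}$ can damp it.

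The technical core is a pair of norm estimates. Using the explicit forms of $f^1_k$, $f^2_k$ from Proposition~\ref{ut_u15} together with the identity $1+{\rm e}^{-\lambda_k}=-b/\lambda_k$ coming from~(\ref{ex_eq1}), I would compute directly that $\|f^1_k\|_{M_2}\to 1$ and $\|f^2_k\|_{M_2}\to\sqrt{7/3}$ as $k\to\infty$; in particular both norms are bounded above and $\|f^1_k\|$ is bounded away from $0$. At the same time $1+{\rm e}^{-\lambda_k}=-b/\lambda_k\to 0$ forces ${\rm e}^{-\lambda_k}\to -1$, hence ${\rm e}^{-\MRe\lambda_k}\to 1$ and $\MRe\lambda_k\to 0^-$. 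Given these facts I would choose a sequence $t_k\to +\infty$ with $\MRe\lambda_k\cdot t_k\to 0$ (possible precisely because $\MRe\lambda_k\to 0$, e.g. $t_k=|\MRe\lambda_k|^{-1/2}$) and deduce from~(\ref{eq_jordan_action}) and the reverse triangle inequality that
\begin{equation}\label{eq_blowup}
\frac{\|{\rm e}^{t_k\mathcal A}f^2_k\|}{\|f^2_k\|}\ge {\rm e}^{\MRe\lambda_k\, t_k}\,\frac{t_k\|f^1_k\|-\|f^2_k\|}{\|f^2_k\|}\longrightarrow +\infty .
\end{equation}
Thus $\{{\rm e}^{t\mathcal A}\}$ is not uniformly bounded, and since a strongly asymptotically stable semigroup is necessarily bounded (Banach--Steinhaus), the system is unstable.

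The main obstacle I anticipate is entirely in the second case: one must show that the $t$-growth in~(\ref{eq_jordan_action}) genuinely wins over the exponential damping, which hinges on the quantitative fact $\MRe\lambda_k\to 0$ combined with the uniform control of $\|f^1_k\|$ and $\|f^2_k\|$. Extracting clean, $k$-uniform bounds on these norms from the explicit eigenvector and root-vector formulas, and in particular confirming that $\|f^1_k\|$ stays bounded below, is the delicate point; once~(\ref{eq_blowup}) is in hand the rest is routine bookkeeping.
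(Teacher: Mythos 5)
Your proof is correct, and on the stability half it takes a genuinely different route from the paper's. For $s=0$ the paper works directly with the two-dimensional system: it computes the two-sided bounds~(\ref{ex_riesz}) on the eigenvectors $f^1_k$, $f^2_k$, invokes the result of \cite{Rabah_Sklyar_Rezounenko_2005} that the subspaces $V^{(k)}={\rm Lin}\{f^1_k,f^2_k\}$ form a Riesz basis of $M_2$, deduces from the norm bounds that the eigenvectors themselves form a Riesz basis, and then bounds $\{{\rm e}^{t{\mathcal A}}\}$ in an equivalent norm, repeating the scheme of \cite[Theorem~23]{Rabah_Sklyar_Rezounenko_2005}. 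You instead observe that for $s=0$ all coefficient matrices ($A_{-1}=-I$, $A_2(\theta)=-(\theta+1)bI$, $A_3(\theta)=-bI$) are scalar multiples of the identity, so ${\mathcal A}$ decouples into two copies of the model of the scalar equation $\dot\zeta(t)=-\dot\zeta(t-1)-b\zeta(t)$; for each copy the $1\times 1$ matrix $A_{-1}=-1$ has only the simple unit-circle eigenvalue $-1$, so Proposition~\ref{ut_u14} together with Theorem~\ref{thr_stability} of this paper gives strong stability of each block, hence of their direct sum. This is cleaner and avoids importing the Riesz-basis-of-subspaces result from \cite{Rabah_Sklyar_Rezounenko_2005}; it also explains structurally why the dilemma is resolved in favor of stability precisely when the coupling vanishes (the semisimple two-dimensional eigenvalue of $A_{-1}$ splits into two simple scalar ones). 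Its only cost is that it exploits the special diagonal structure, whereas the paper's Riesz-basis argument does not require the system to split. For $s\neq 0$ your argument coincides with the paper's (Jordan-chain growth ${\rm e}^{\lambda_k t}(f^2_k+t f^1_k)$ with $\MRe\lambda_k\to 0^-$, then Banach--Steinhaus), but it is in fact tighter on one point: the paper normalizes $\|v^k\|=\|w^k\|=1$ while simultaneously using $({\mathcal A}-\lambda_k I)w^k=v^k$, which cannot in general both be arranged by rescaling; what the blow-up estimate actually needs is that the ratio $\|f^1_k\|/\|f^2_k\|$ stays bounded away from zero, and your explicit limits $\|f^1_k\|\to 1$ and $\|f^2_k\|\to\sqrt{7/3}$ (both correct, via $1+{\rm e}^{-\lambda_k}=-b/\lambda_k$ from~(\ref{ex_eq1})) supply exactly this. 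Your reduction of general $s\neq 0$ to $s=1$ by the change of variable $z_2\mapsto s z_2$ likewise makes explicit a step the paper leaves implicit.
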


\begin{proof} First we consider the system (\ref{syst_delay_ex}) when
$s=0$ and prove that in this case the system is stable. Let us
evaluate the norm of eigenvectors.
$$
\|f^1_k\|^2=\|f^2_k\|^2=(1+{\rm e}^{-\lambda_k})(\overline{1+{\rm e}^{-\lambda_k}})+ \int_{-1}^0 {\rm e}^{\lambda_k\theta}
\overline{{\rm e}^{\lambda_k\theta}} \;{\rm d}\theta=
|1+{\rm e}^{-\lambda_k}|^2+\int_{-1}^0 {\rm e}^{2\MRe\lambda_k\theta} \;{\rm
d}\theta
$$
$$
=|1+{\rm e}^{-\lambda_k}|^2+\frac{1}{2\MRe\lambda_k}\left(1-{\rm e}^{-2\MRe\lambda_k}\right).
$$
Since $\MRe\lambda_k\rightarrow 0$ when $k\rightarrow \infty$, then
$\mathop{\lim}\limits_{k\rightarrow
\infty}\frac{1}{2\MRe\lambda_k}\left(1-{\rm e}^{-2\MRe\lambda_k}\right)=1$.
Therefore, $0<C_1\le
\frac{1}{2\MRe\lambda_k}\left(1-{\rm e}^{-2\MRe\lambda_k}\right)\le C_2$.
Taking into account that $|{\rm e}^{-\lambda_k}|\le C_3$, we obtain the
following estimates:
\begin{equation}\label{ex_riesz}
C_1\le \|f^i_k\|^2\le (1+C_3)^2+C_2=C_4.
\end{equation}
As it has been shown in \cite{Rabah_Sklyar_Rezounenko_2005}, the
subspaces $V^{(k)}={\rm Lin}\{f^1_k, f^2_k\}$ (and the
finite-dimensional subspace $W_N$) form a Riesz basis of the space
$M_2$. Since we have proved the estimate (\ref{ex_riesz}) then the
eigenvectors $\{f^1_k, f^2_k\}$ (together with vectors from $W_N$)
form a basis of $M_2$. Therefore, we have a Riesz basis of
eigenvectors. Further we just repeat the proof given in
\cite[Theorem 23]{Rabah_Sklyar_Rezounenko_2005}.

We consider a norm $\|\cdot\|_1$ in which the eigenvectors $\{f^1_k,
f^2_k\}_{k\in {\mathbb Z}}$ are orthogonal. Let a vector $x$ belongs
to a closed span of the subspaces $V^{(k)}$, then
$x=\sum\limits_{k\in {\mathbb Z}}(\alpha_k f^1_k + \beta_k f^2_k)$
and we have:
$$
{\rm e}^{{\cal A} t}x= \sum_{k\in {\mathbb Z}}{\rm e}^{\lambda_k t}(\alpha_k
f^1_k + \beta_k f^2_k).
$$
Therefore,
$$\|{\rm e}^{{\cal A} t}x\|_1^2= \sum_{k\in {\mathbb Z}}{\rm e}^{\lambda_k t}(\|\alpha_k f^1_k\|_1^2 + \|\beta_k f^2_k\|_1^2)\le
\sum_{k\in {\mathbb Z}}(\|\alpha_k f^1_k\|_1^2 + \|\beta_k
f^2_k\|_1^2)=\|x\|_1^2$$ and, thus, the family ${\rm e}^{{\cal A} t}$ is
uniformly bounded in the subspace generated by the subspaces
$V^{(k)}$. From the last we conclude that the system is strongly
asymptotically stable.

Let us consider the system (\ref{syst_delay_ex}) when $s=1$.

Let an operator $\cal A$ has a sequence of eigenvalues
$\{\lambda_k\}_{k=1}^\infty$ such that $\MRe \lambda_k<0$ and $\MRe
\lambda_k\rightarrow 0$ when $k\rightarrow \infty$ and to each
$\lambda_k$ there corresponds one eigenvector $v^k$ and at least one
root vector $w^k$. We show that the equation $\dot{x}={\cal A}x$ is
unstable. Let us suppose that $\|v^k\|=\|w^k\|=1$. Since, for each
$w^k$ we have ${\rm e}^{{\cal A}t}w^k={\rm e}^{\lambda_k t}(tv^k+w^k)$ then
$$
\|{\rm e}^{{\cal A}t}w^k\|=|{\rm e}^{\lambda_k t}|\|(tv^k+w^k)\|\ge {\rm e}^{\MRe
\lambda_k t} (t-1).
$$

For any constant $C>0$ we take $t\ge 2C+1$ and for this $t$ we take
big enough $k$ such that ${\rm e}^{\MRe \lambda_k t}\ge\frac12$. Then we
have:
$$
\|{\rm e}^{{\cal A}t}w^k\|\ge\frac12(2C+1-1)=C
$$
and we conclude that $\|{\rm e}^{{\cal A}t}\|\ge C$ for $t\ge 2C+1$.
Therefore, the family of exponents ${\rm e}^{{\cal A}t}$ is not uniformly
bounded and because of Banach-Steinhaus theorem there exists $x\in
D(A)$ such that $\|{\rm e}^{{\cal A}t}x\|\rightarrow \infty$ when $t\rightarrow +\infty$.

Thus, the system (\ref{syst_delay_ex}) is unstable when $s=1$. The
last completes the proof of the proposition.
\end{proof}
\section{Stabilizability analysis}\label{sect_stabilizability}

In this section we prove Theorem~\ref{thr_stabilizability_intro} for control systems~(\ref{syst_delay_gen_cont}) with $\det A_{-1}=0$.
It is convenient to reformulate this theorem as follows.
\begin{thr}[on stabilizability]\label{thr_stabilizability}
Let $b_1, \ldots, b_p\in\mathbb{C}^n$ be the columns of the matrix $B$. Assume that the following four
conditions are satisfied.
\begin{enumerate}
\item[{\em (1)}]  All the eigenvalues of the matrix $A_{-1}$ satisfy $|\mu|\le 1$.
\item [{\em (2)}] All the eigenvalues $\mu\in \sigma_1$ are simple.
\item[{\em (3)}]  $\sum_{i=1}^p |\langle b_i, y \rangle_{\mathbb{C}^n}\not=0$ for all $i=1,\ldots,p$
and all vectors $y$ satisfying $y\in {\rm Ker}\triangle^*_{\mathcal A}(\lambda)$
for roots $\lambda$ of the equation $\det \triangle^*_{\mathcal A}(\lambda)=0$,
such that ${\rm Re} \lambda \ge 0$.
\item[{\em (4)}] $\sum_{i=1}^p |\langle b_i, y_m \rangle_{\mathbb{C}^n}\not=0$ for all vectors $y_m$ of the matrix $A_{-1}^*$,
corresponding to eigenvalues $\overline{\mu}_m$, $\mu_m\in\sigma_1$ and for all $i=1,\ldots,p$.
\end{enumerate}
Then there exists a regular control $u={\mathcal F} x$ of the form
\begin{equation}\label{eq_control}
u={\mathcal F} x=\int_{-1}^0 F_2(\theta)\dot{z}_t(\theta) \dd\theta +\int_{-1}^0 F_3(\theta)z_t(\theta) \dd\theta
\end{equation}
where $x=(y, z(\cdot))\in {\mathcal D}({\mathcal A})$, $F_2(\cdot), F_3(\cdot)\in L_2(-1, 0; \mathbb{C}^{n\times p})$.
And this control stabilizes the system~(\ref{syst_delay_gen_cont}),
i.e. ${\mathcal D}({\mathcal A})={\mathcal D}({\mathcal A}+ {\mathcal B} {\mathcal F})$ and
${\rm e}^{t({\mathcal A}+ {\mathcal B} {\mathcal F})}x_0\rightarrow 0$ as $t\rightarrow\infty$ for all $x_0\in M_2$.
\end{thr}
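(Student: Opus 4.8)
The plan is to build the stabilizing feedback piece by piece along the three-fold spectral decomposition $M_2 = M_2^0 \oplus M_2^1 \oplus M_2^2$ furnished by Theorem~\ref{thr_decomposition_stab}. Recall that, under condition~(1) — which is forced, since a regular feedback of the form~(\ref{eq_control}) cannot modify $A_{-1}$ — the subspace $M_2^0$ carries the spectral part with $\MRe\lambda \le -\varepsilon$, including the infinite family running off to $-\infty$ that is generated by the zero eigenvalues (singularities) of $A_{-1}$; the infinite-dimensional subspace $M_2^1$ carries the critical strip $\Lambda_1 = \sigma({\mathcal A}) \cap {\rm St}(-\varepsilon,\varepsilon)$, on which, by condition~(2) and Lemma~\ref{lm_lm333}, the eigenvectors form a Riesz basis; and the finite-dimensional $M_2^2$ carries the finitely many eigenvalues $\Lambda_2$ with $\MRe\lambda \ge \varepsilon$. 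The aim is to design ${\mathcal F}$ so that the closed-loop restrictions to these three subspaces are exponentially stable, strongly asymptotically stable, and exponentially stable, respectively, and then to conclude as in the proof of Theorem~\ref{thr_stability}.

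First I would dispose of the part that needs no control. By Theorem~\ref{thr_boundedness}, extended as in Remark~\ref{thr_boundedness_stab} to the $M_2^0$-component of~(\ref{eq_eq20-2}), the restricted semigroup $\{{\rm e}^{t{\mathcal A}}|_{M_2^0}\}_{t\ge 0}$ is exponentially stable. Since any feedback of the form~(\ref{eq_control}) is ${\mathcal A}$-relatively bounded and preserves the domain (${\mathcal D}({\mathcal A}) = {\mathcal D}({\mathcal A}+{\mathcal B}{\mathcal F})$), the task here is only to guarantee that the bounded perturbation ${\mathcal B}{\mathcal F}$ does not destroy this exponential stability; I would verify this by a resolvent/perturbation argument on the $M_2^0$-block, controlling the coupling terms by the resolvent bounds of Section~\ref{sect_resolvent}.

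Next comes the finite unstable block $M_2^2$: restricting to this finite-dimensional ${\mathcal A}$-invariant subspace, I would invoke the classical pole-assignment theorem. Condition~(3) — equivalently the Hautus-type rank condition ${\rm rank}(\triangle(\lambda), B) = n$ for all $\lambda$ with $\MRe\lambda \ge 0$ — guarantees controllability of the finitely many unstable modes, so a finite-rank feedback moves them into the open left half-plane. The genuinely infinite-dimensional work is on $M_2^1$, where I would apply the theorem on infinite pole assignment introduced in \cite{Rabah_Sklyar_Rezounenko_2008}. The eigenvalues of $\Lambda_1$ accumulate on the imaginary axis along the asymptotic directions fixed by $\sigma_1 = \sigma(A_{-1}) \cap \{|\mu|=1\}$, and condition~(4) — equivalently ${\rm rank}(\mu I - A_{-1}, B) = n$ for $\mu \in \sigma_1$ — supplies controllability of these boundary modes. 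The infinite pole assignment then produces a regular feedback reshaping the asymptotic eigenstructure so that the closed-loop restriction to $M_2^1$ satisfies the hypotheses of Theorem~\ref{thr_stability} (boundary eigenvalues simple with one-dimensional eigenspaces, spectrum in the open left half-plane), hence is strongly asymptotically stable.

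Finally I would superimpose the finite-dimensional and infinite-dimensional feedbacks, check that the result stays in the regular class~(\ref{eq_control}) with $F_2, F_3 \in L_2(-1,0;\mathbb{C}^{n\times p})$ and preserves the domain, and then split an arbitrary $x = x_0 + x_1 + x_2$ and estimate each summand exactly as in~(\ref{eq_eq35})--(\ref{eq_eq42}) to obtain ${\rm e}^{t({\mathcal A}+{\mathcal B}{\mathcal F})}x \to 0$. The main obstacle is the coupling: a single regular feedback must simultaneously achieve all three goals, and ${\mathcal B}{\mathcal F}$ does not respect the decomposition a priori. The delicate point is to show that the assignments on $M_2^1$ and $M_2^2$ do not push the already-stable $M_2^0$-part out of exponential stability, and that the Riesz-basis structure on $M_2^1$ — together with the uniform estimates of Subsection~\ref{subsect_33} — survives the feedback, so that the three-fold decomposition and the per-block stability statements persist for the closed-loop generator ${\mathcal A}+{\mathcal B}{\mathcal F}$.
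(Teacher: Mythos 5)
Your plan assembles the right ingredients (the three-fold decomposition of Theorem~\ref{thr_decomposition_stab}, classical pole assignment on $M_2^2$, the infinite pole assignment theorem on $M_2^1$, resolvent boundedness on $M_2^0$), but it leaves unresolved exactly the point on which the whole proof turns, and which you yourself flag as ``the delicate point'': how a single regular feedback can act on the three spectral parts without destroying the decomposition, the Riesz-basis structure, and the exponential stability of the $M_2^0$-part. Your proposed fix --- a resolvent/perturbation argument showing that ${\mathcal B}{\mathcal F}$ does not push the $M_2^0$-part out of exponential stability, and that the Riesz basis ``survives the feedback'' --- is not workable as stated: ${\mathcal B}{\mathcal F}$ is not a small perturbation, the closed-loop generator does not leave the subspaces $M_2^i({\mathcal A})$ invariant, so per-block statements about ${\rm e}^{t({\mathcal A}+{\mathcal B}{\mathcal F})}$ have no a priori meaning, and no quantitative smallness is available with which to run such an argument.

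The paper avoids this difficulty entirely by a sequential, two-stage design in which after each stage the closed-loop operator is again a generator of the \emph{same class}, so the machinery is re-derived from scratch for the new operator rather than ``preserved'' under perturbation. Stage one: the feedback ${\mathcal F}_2$ is chosen to vanish identically on $M_2^0\oplus M_2^1$ and to act only on the finite-dimensional $M_2^2$; consequently only $\Lambda_2$ moves, $\Lambda_0({\mathcal A})\cup\Lambda_1({\mathcal A})\subset\sigma({\mathcal A}+{\mathcal B}{\mathcal F}_2)$, and --- because the feedback is regular --- $\widehat{\mathcal A}={\mathcal A}+{\mathcal B}{\mathcal F}_2$ is still the generator of a neutral-type system with the same matrix $A_{-1}$ and simple $\sigma_1$. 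One then applies Theorem~\ref{thr_decomposition_stab} and Remark~\ref{thr_boundedness_stab} afresh to $\widehat{\mathcal A}$ (now with $\Lambda_2(\widehat{\mathcal A})=\emptyset$), obtaining a new decomposition $M_2=M_2^0(\widehat{\mathcal A})\oplus M_2^1(\widehat{\mathcal A})$ whose $M_2^0$-restriction is exponentially stable. Stage two: after reducing the multi-input system to a single-input one via Lemma~\ref{stz_multi-single} --- a step your proposal omits, although Theorem~\ref{thr_inf_p_assignment} is a single-input result and hypotheses (3)--(4) enter precisely through this reduction --- the infinite pole assignment produces ${\mathcal F}_1$ placing the critical eigenvalues at points $\widehat{\lambda}_m^k$ with negative real parts inside the circles $L_m^{k}(r^{(k)})$. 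The conclusion is then not obtained by per-block estimates either: the final closed-loop system is again a neutral-type system with the same $A_{-1}$, simple $\sigma_1$, and spectrum in the open left half-plane, so Theorem~\ref{thr_stability} applies to it wholesale. Without this ``stay in the class and re-apply the theorems'' structure, the outline you give cannot be completed.
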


The controllability conditions (3)--(4) of Theorem~\ref{thr_stabilizability} are equivalent to (3)--(4) of Theorem~\ref{thr_stabilizability_intro}
(for more details concerning these conditions see \cite{Rabah_Sklyar_2007}).
We also note that regular stabilizability for a particular case of the control systems~(\ref{syst_delay_gen_cont})
had been considered in \cite{Rabah_Sklyar_MatFiz_2004}.
Before giving the proof, let us discuss the conditions (1) and (2).
Since the regular feedback does not change the matrix $A_{-1}$ we need the assumption $\sigma(A_{-1})\subset\{\mu:\; |\mu|\le 1\}$.
Moreover, taking into account the results of Theorem~\ref{thr_stability} on strong stability,
we conclude that by means of a regular feedback it is possible to stabilize the system in the case when
the algebraic multiplicity of each eigenvalue $\mu\in\sigma_1$ equals to $1$.
In this case the closed loop system will be asymptotically stable if and only if all the eigenvectors of ${\mathcal A}+{\mathcal B}{\mathcal F}$
are in the left half-plane. Thus, the problem of regular stabilizability for such systems consists
in assigning the spectrum of the system in the left half-plane.
The most intensional problem in the situation appears when an infinite number of the eigenvalues located ``close'' to the imaginary axis
belongs to the right half-plane. This means, in particular, that $\sigma_1\not=\emptyset$.

\begin{proof}[Proof of Theorem~\ref{thr_stabilizability}]
Let us construct the decomposition of the spectrum of $\mathcal A$ introduced in Section~\ref{subsect_decomp_stabilizability} by (\ref{eq_eq22-2}):
\begin{equation}\label{eqn_adf}
\sigma({\mathcal A})=\Lambda_0({\mathcal A})\cup \Lambda_1({\mathcal A})\cup \Lambda_2({\mathcal A}),
\end{equation}
where the subsets are given by (\ref{eq_eq22-3}).
Since $\Lambda_0({\mathcal A})\subset\{\lambda: \; {\rm Re} \lambda\le -\varepsilon\}$, $\varepsilon>0$ then our aim is to construct the
feedback which moves the eigenvalues of $\Lambda_1({\mathcal A})$ and $\Lambda_2({\mathcal A})$ to the left half-plane.

We begin with moving the spectral set $\Lambda_2({\mathcal A})$.
The spectral decomposition of the state space $M_2= M_2^0 \oplus M_2^{1}\oplus M_2^{2}$ (Theorem~\ref{thr_decomposition_stab}),
corresponding to~(\ref{eqn_adf}),
allows us to rewrite the system~(\ref{syst_delay_opmodel_stab}) as
\begin{equation}\label{eqn_new_operator1}
\left\{
\begin{array}{c}
\dot{x_0}=\mathcal{A}_0 x_0 + \mathcal{B}_0 u, \qquad x_0\in M_2^0=M_2^0(\mathcal A)\\
\dot{x_1}=\mathcal{A}_1 x_1 + \mathcal{B}_1 u, \qquad x_1\in M_2^1=M_2^1(\mathcal A)\\
\dot{x_1}=\mathcal{A}_2 x_2 + \mathcal{B}_2 u, \qquad x_1\in M_2^1=M_2^2(\mathcal A),
\end{array}
\right.
\end{equation}
where $\mathcal{A}_0=\mathcal{A}|_{M_2^0}$, $\mathcal{A}_1=\mathcal{A}|_{M_2^{1}}$,  $\mathcal{A}_2=\mathcal{A}|_{M_2^2}$
and $\mathcal{B}_i$ are the projections of $\mathcal{B}$ onto subspaces $M_2^i$.

The spectrum $\Lambda_2({\mathcal A})$ of the operator $\mathcal{A}_2$ defined on
the finite-dimensional subspace $M_2^2$ belongs to the right half-plane. Due to the assumption~(3) of the theorem
all the eigenvalues of $\mathcal{A}_2$ are controllable and, thus,
they may be assigned arbitrary to the half-plane $\{\lambda: \; {\rm Re} \lambda\le -\varepsilon\}$.
Namely, let us consider a feedback $u={\mathcal F}_2x$, $\mathcal F_2: M_2\rightarrow \mathbb{C}^p$ which acts as follows:
$$
{\mathcal F}_2x_0={\mathcal F}_2x_1=0,\; x_0\in M_2^0, x_1\in M_2^1,\quad {\mathcal F}_2x_2=\widehat{\mathcal F}_2 x_2,\; x_2\in M_2^2,
$$
where $\widehat{\mathcal F}: M_2^2\rightarrow \mathbb{C}^p$ is a bounded operator.
It is easy to see that such form of feedback allows to move only eigenvalues of $\Lambda_2({\mathcal A})$,
i.e. $\Lambda_0({\mathcal A})\cup \Lambda_1({\mathcal A}) \subset \sigma(\mathcal{A} + \mathcal{B} \mathcal{F}_2)$.
Thus, we conclude that there exists $\widehat{\mathcal F}$ such that the spectrum of the closed-loop system $\mathcal{A} + \mathcal{B} \mathcal{F}_2$
is of the form
\begin{equation}\label{new_spectrum}
\sigma(\mathcal{A} + \mathcal{B} \mathcal{F}_2)=\sigma(\mathcal{A})\backslash \Lambda_2(\mathcal{A}) \cup \widehat{\Lambda}_2,
\quad \widehat{\Lambda}_2\subset\{\lambda: \; {\rm Re} \lambda\le -\varepsilon\}.
\end{equation}

We choose the control in the form $u={\mathcal F}_2x + v$, denote
$\widehat{\mathcal A}\:{\stackrel{\rm def}{=}}{\mathcal A}+{\mathcal B}{\mathcal F}_0$ and rewrite
the system~(\ref{syst_delay_opmodel_stab}) as
\begin{equation}\label{syst_delay_opmodel_stab_modif}
\dot{x}=\widehat{\mathcal A}x + {\mathcal B}v.
\end{equation}
We emphasize that due to the form of the feedback ${\mathcal F}_2$ the infinite-dimensional system~(\ref{syst_delay_opmodel_stab_modif})
corresponds to the neutral type system~(\ref{syst_delay_gen_cont}) with the same matrix $A_{-1}$.

Let us now construct the decomposition of the spectrum (\ref{eq_eq22-2}) for the operator $\widehat{\mathcal A}$.
Due to~(\ref{new_spectrum}), we obtain $\Lambda_2(\widehat{\mathcal A})=\emptyset$ and, thus,
$$
\sigma(\widehat{\mathcal A})=\Lambda_0(\widehat{\mathcal A})\cup \Lambda_1(\widehat{\mathcal A}),
\qquad \Lambda_1(\widehat{\mathcal A}) = \Lambda_1(\mathcal A).
$$

Applying Theorem~\ref{thr_decomposition_stab}, we construct the spectral decomposition of the state space
$M_2= M_2^0 \oplus M_2^{1}$. The operator model~(\ref{syst_delay_opmodel_stab_modif}) may be rewritten as follows:
\begin{equation}\label{eqn_new_operator}
\left\{
\begin{array}{c}
\dot{x_0}=\widehat{\mathcal{A}}_0 x_0 + \widehat{\mathcal{B}}_0 v, \qquad x_0\in M_2^0=M_2^0(\widehat{\mathcal{A}})\\
\dot{x_1}=\widehat{\mathcal{A}}_1 x_1 + \widehat{\mathcal{B}}_1 v, \qquad x_1\in M_2^1=M_2^1(\widehat{\mathcal{A}}),
\end{array}
\right.
\end{equation}
where $\widehat{\mathcal{A}}_0=\widehat{\mathcal{A}}|_{M_2^0}$, $\widehat{\mathcal{A}}_1=\widehat{\mathcal{A}}|_{M_2^{1}}$
and $\widehat{\mathcal{B}}_i$ are the projections of the vectors of $\mathcal{B}$ onto subspaces $M_2^i$.

Due to Theorem~\ref{thr_boundedness_stab} the restriction of the resolvent
$R(\lambda, \widehat{\mathcal{A}})|_{M_2^0}$ is uniformly bounded
on the set $\{\lambda: {\rm Re} \lambda \ge 0\}$ and, therefore,
the semigroup $\{{\rm e}^{t \widehat{\mathcal A}}|_{M_2^0}\}_{t\ge 0}$ is exponentially stable.

To stabilize the second equation we apply the approach introduced in \cite{Rabah_Sklyar_Rezounenko_2008}
which is based on the abstract theorem on infinite pole assignment.
Below, for the sake of completeness and due to the specific form of the operator $\widehat{\mathcal{A}}_1$,
we give a simplified formulation of the mentioned theorem (Theorem~\ref{thr_inf_p_assignment}).

The theorem on infinite pole assignment holds for a single input system.
However, as it is shown in \cite{Rabah_Sklyar_Rezounenko_2008},
the multivariable case may be reduced to the single input case by the following
considerations (see \cite{Rabah_Sklyar_Rezounenko_2008} and also the classical result on
finite-dimensional control systems in \cite{Wonham_1985}).

Let us consider the vector ${\bf b}$ given by
\begin{equation}\label{eq_stab_432}
{\bf b}=
\left(
\begin{array}{c}
b\\
0
\end{array}
\right)
=
\left(
\begin{array}{c}
c_1b_1+\ldots+c_p b_p\\
0
\end{array}
\right)
=
\left(
\begin{array}{c}
B\\
0
\end{array}
\right)c
=
{\mathcal B}c, \quad
c=
\left(
\begin{array}{c}
c_1\\
\vdots\\
c_p
\end{array}
\right),
\end{equation}
where $\{b_1, \ldots, b_p\}\subset\mathbb{C}^n$ are the columns of the matrix $B$.
The exist numbers $c_i\in\mathbb{C}$ such that the following relations hold (Lemma~\ref{stz_multi-single}):

\noindent (a) $\langle b, y_m\rangle_{\mathbb{C}^n}\not=0$ for all eigenvectors $y_m$ of the matrix $A_{-1}^*$ corresponding
to the eigenvalues $\overline{\mu_m}$, where $\mu_m\in\sigma_1$, $m=1,\ldots,\ell_1$.

\noindent (b) $\langle b, y_m^k\rangle_{\mathbb{C}^n}\not=0$ for all such $y_m^k$
which satisfy $y_m^k\in \Ker \triangle^*(\overline{\lambda_m^k})$ and ${\rm Re} \lambda_m^k\ge 0$.

Let us  denote the projection of ${\bf b}$ onto the subspace $M_2^1$  by ${\bf b}_1$ and consider the
single input system
\begin{equation}\label{eq_stab_432dsf}
\dot{x_1}=\widehat{\mathcal{A}}_1 x_1 + {\bf b}_1 \widehat{u},\quad \widehat{u}\in\mathbb{C}.
\end{equation}

Due to the construction, the eigenvalues of $\widehat{\mathcal{A}}_1$ are simple
and each eigenvalue $\lambda_m^k$, $m=1,\ldots,\ell_1$, $|k|\ge N_1$ belongs to the circle $L_m^{k}(r^{(k)})$
centered at $\widetilde{\lambda}_m^{k}={\rm i}(\arg \mu_m +2\pi k)$.
The corresponding eigenvectors $\varphi_m^k$ form a Riesz basis (Proposition~\ref{lm_lm333}).

Thus, the single input system~(\ref{eq_stab_432dsf}) satisfies the conditions (H1)--(H4) of Theorem~\ref{thr_inf_p_assignment}.
According to that theorem the spectrum $\Lambda_1(\mathcal A)$
of the operator $\widehat{\mathcal{A}}_1$ may be moved by the regular feedback~(\ref{eq_control})
as follows.

Let us chose scalars $\widehat{\lambda}_m^{k}$ such that ${\rm Re}\widehat{\lambda}_m^{k}<0$
and which are located inside the circles $L_m^{k}(r^{(k)})$.
There exists a feedback $\widehat{u}=\widehat{\mathcal F}_1x$,
$\widehat{\mathcal F}_1: M_2\rightarrow \mathbb{C}$
such that $\widehat{\lambda}_m^{k}$ are eigenvalues of the operator $\widehat{\mathcal A}_1+{\bf b}_1 \widehat{\mathcal F}_1$.
Taking into account~(\ref{eq_stab_432}), we define the feedback $v=\mathcal F_1 x$, $\mathcal F_1: M_2\rightarrow \mathbb{C}^p$ as follows
$$F_1 x{\stackrel{\rm def}{=}} c \widehat{\mathcal F}_1 x.$$
Due to the theorem on stability the semigroup $\{{\rm e}^{t (\widehat{\mathcal A}+{\mathcal B}{\mathcal F}_1)}|_{M_2^1}\}_{t\ge 0}$
is asymptotically stable. Thus, the feedback
$$
u={\mathcal F}_1 x_1+{\mathcal F}_2 x_2
$$
transforms the original system into a system where all the conditions of Theorem~\ref{thr_stability} on asymptotic stability are verified.
The last completes the proof of Theorem~\ref{thr_stabilizability}.
\end{proof}

\begin{rem}
We would like to emphasize that in the present paper to prove the result on stabilizability for the case $\det A_{-1}=0$
we have contributed the ideas which are mainly of technical character: the direct decomposition of the state space and
the proof of the resolvent boundedness on some subspace.
However, the main contribution from the stabilizability point of view is the abstract theorem on infinite pole assignment
which had been proved in~\cite{Rabah_Sklyar_Rezounenko_2008}.
\end{rem}

\begin{rem}
The stabilizability of the restriction of the system onto the subspace $M_2^2(\mathcal A)$ follows
also from some classical results. Let us denote by $\Gamma_\delta$ a rectifiable, simple, closed curve which
surround the spectral set $\Lambda_2(\mathcal A)$ and $\Gamma_\delta\subset\{\lambda: \; {\rm Re} \lambda\ge \varepsilon\}$.
By $P_2=\frac{1}{2\pi {\rm i}}\int\limits_{\Gamma_\delta} R(\mathcal A, \lambda) \dd \lambda$ we denote the spectral projector.
The subspaces $P_2 M_2$ and $(I-P_2)M_2$ are $\mathcal A$-invariant and the spectrum of the restriction
$(I-P_2)M_2$ belongs to the half-plane $\{\lambda: \; {\rm Re} \lambda< \varepsilon\}$.
According to \cite{Triggiani_JMAA_1975_2} the spectral set  $\Lambda_2(\mathcal A)$ may be assigned arbitrary
to the half-plane  $\{\lambda: \; {\rm Re} \lambda< \varepsilon\}$
by means of a finite rank input operator (under some controllability conditions).
The development of this approach was given in \cite{Nefedov_Sholokhovich_1986} and \cite{Jacobson_Nett_1988}
(see e.g. \cite{Curtain_Zwart_1995} as a survey).
\end{rem}

\begin{rem}
Let us discuss the assumptions (3)--(4) of Theorem~\ref{thr_stabilizability_intro}:

\noindent {\em (3)} ${\rm rank}(\triangle(\lambda),\; B)=n$ for all $\lambda: {\rm Re} \lambda \ge 0$.

\noindent {\em (4)} ${\rm rank}(\mu I - A_{-1},\; B)=n$ for all $\mu\in\sigma_1$.

The assumptions (3) may include an infinite number of relations (obviously, it should be verified only for $\lambda$ which are
eigenvalues of $\mathcal A$, i.e. $\det \triangle_{\mathcal A}(\lambda)=0$;
there may exist an infinite number of eigenvalues of $\mathcal A$ belonging to the right half-plane).
However, only a finite number of these relations have to be verified.
More precisely, there exists $M>0$ such that for any eigenvalue $\lambda$ such that $|{\rm Im}\lambda| > M$
the condition (3) follows from the condition (4).
\end{rem}
\begin{proof}
Let $A_{-1}$ be of the form~(\ref{eq_eq34}) and we prove the proposition for eigenvalues located inside the
circles $L_1^{k}(r^{(k)})$, i.e. for $\lambda_1^{k}$ (for all other indices $m=1,\ldots,\ell_1$ the idea of the proof remains the same).
The matrix $A_{-1}-\mu_1 I=A_{-1}-{\rm e}^{-\widetilde{\lambda}_1^k} I$ is of the form:
$$
A_{-1}-\mu_1 I=
\left(
\begin{array}{cc}
0 & 0\\
0 & \widehat{A}
\end{array}
\right),
\qquad \widehat{A}\in \mathbb{C}^{(n-1)\times (n-1)},\; \det \widehat{A}\not=0.
$$
Due to the assumption (4) there exists $i\in\overline{1,p}$ such that the column $b=(b_{1i},\ldots,b_{n i})^T$ of thee
matrix $B$ possesses a nonzero first component: $b_{1i}\not=0$.
Besides, we note that since $\det \widehat{A}\not=0$ then there exists the unique $\alpha=(\alpha_{2},\ldots, \alpha_{n})^T\in \mathbb{C}^{n-1}$
such that $\widehat{b}=\widehat{A}\alpha$, where $\widehat{b}=(b_{2i},\ldots,b_{ni})^T\in \mathbb{C}^{n-1}$.

Let us analyze the structure of $\triangle(\lambda_1^k)$, we rewrite it as follows:
$$\triangle(\lambda_1^k)=\lambda_1^k {\rm e}^{-\lambda_1^k} (A_{-1}-\mu_1 I + D_k),$$
where
$$
D_k=({\rm e}^{\widetilde{\lambda}_1^k}-{\rm e}^{\lambda_1^k})I
+ {\rm e}^{\lambda_1^k}\int_{-1}^0 {\rm e}^{\lambda_1^k \theta} \left(A_2(\theta) + \frac{1}{\lambda_1^k} A_3(\theta)\right) \dd \theta.
$$
From the last formula we conclude, that $D_k\rightarrow 0$, when $k\rightarrow\infty$.

Let us show that there exists $N\in\mathbb{N}$ such that for any $k:\; |k|\ge N$:
\begin{equation}\label{eq_rem1}
{\rm rank}(\triangle(\lambda_1^k),\; b)=n,
\end{equation}
which is equivalent to the statement of the proposition.

Since $\det \triangle(\lambda_1^k)=0$ then $\det (A_{-1}-\mu_1 I + D_k)=0$.
Let us denote by $\widehat{D}_k=\{(D_k)_{ij}\}_{i,j=2}^n$. Since $\det \widehat{A}\not=0$, then
there exists $N\in\mathbb{N}$ such that for any $k:\; |k|\ge N$ we have: $\det (\widehat{A}+\widehat{D}_k)\not=0$.
Therefore, there exists the unique vector $\alpha_k\in\mathbb{C}^n$ such that $\widehat{b}=(\widehat{A}+\widehat{D}_k)\alpha_k$.
From the last we conclude
$$
\alpha_k=(\widehat{A}+\widehat{D}_k)^{-1}\widehat{b}= (I+\widehat{A}^{-1}\widehat{D}_k)^{-1} \widehat{A}^{-1}\widehat{b}=
(I+\widehat{A}^{-1}\widehat{D}_k)^{-1} \alpha.
$$
Since $(I+\widehat{A}^{-1}\widehat{D}_k)^{-1}\rightarrow I$ when $k\rightarrow\infty$,
we conclude that $\alpha_k\rightarrow \alpha$.

If we suppose that (\ref{eq_rem1}) does not hold, then $b_1=b_{1i}$ allows the following representation:
$$
b_{1i}=(\alpha_k)_2 (D_k)_{12} + (\alpha_k)_{n} (D_k)_{1n}.
$$
However, since $(D_k)_{1i}\rightarrow 0$, then the right-hand side of the last relation tends to zero when $k\rightarrow\infty$.
We have come to the contrary, which completes the proof of the proposition.
\end{proof}

As it was mentioned above, for the sake of completeness
we give further the formulations of the results on infinite pole assignment from \cite{Rabah_Sklyar_Rezounenko_2008}.
We formulate them in a form, which takes into account the specific form of the operator $\widehat{\mathcal{A}}_1$.

\begin{thr}[On infinite pole assignment \cite{Rabah_Sklyar_Rezounenko_2008}]\label{thr_inf_p_assignment}
Let $H$ be a complex Hilbert space, $\mathcal A$ be an infinitesimal generator of a $C_0$-semigroup in $H$,
and the control system if given by $\dot{x}={\mathcal A}x + {\mathcal B}u$, $x\in \mathcal{D}(\mathcal A)\subset H$.
Let $\mu_1,\ldots, \mu_\ell$ be some nonzero complex numbers and we introduce complex numbers
$$
\widetilde{\lambda}_m^{k}=\ln |\mu_m|+ {\rm i}(\arg \mu_m +2\pi k), \quad m=1,\ldots,\ell, k\in\mathbb{Z},
$$
and the circles $L_m^{k}(r^{(k)})$ centered at $\widetilde{\lambda}_m^{k}$ with radii $r^{(k)}$
satisfying the relation $\sum\limits_{k\in\mathbb{Z}}(r^{(k)})^2 < \infty$.

Let the following assumptions hold:
\begin{enumerate}
\item[{\em (H1)}]  The spectrum of  $\mathcal A$ consists only of eigenvalues which are located in the circles $L_m^{k}(r^{(k)})$.
Moreover, all the eigenvalues of $\mathcal A$ are simple (i.e. its algebraic multiplicity equals $1$)
and there exists $N_1\in\mathbb{N}$ such that for any $k: |k|\ge N_1$ the total multiplicity
of the eigenvalues contained in the circles $L_m^{k}(r^{(k)})$ equals $1$.
\item [{\em (H2)}] The corresponding eigenvectors, which we denote by $\varphi_m^k$,
constitute a Riesz basis in $H$.
\item[{\em (H3)}] The system $\dot{x}={\mathcal A}x + {\mathcal B}u$ is of a single input, i.e.
the operator $\mathcal B: \mathbb{C}\rightarrow H$ is the operator of multiplication by ${\bf b}\in H$.
\item[{\em (H4)}] We assume the following controllability condition: ${\bf b}$ is not orthogonal to eigenvectors $\psi_m^k$
of the operator ${\mathcal A}^*$: $\langle {\bf b}, \psi_m^k \rangle \not=0$ and
\begin{equation}
\lim_{k\rightarrow\infty} k|\langle {\bf b}, \psi_m^k \rangle|= c_m, \qquad 0<c_m<+\infty.
\end{equation}
\end{enumerate}
Then there exists $N_2\ge N_1$ such that for any family of complex numbers $\widehat{\lambda}_m^{k}\in L_m^{k}(r^{(k)})$, $|k|\ge N_2$
there exists a linear control ${\mathcal F}: \mathcal{D}(\mathcal A)\rightarrow \mathbb{C}$, such that
\begin{enumerate}
\item[{\em (1)}] the complex numbers $\widehat{\lambda}_m^{k}$ are eigenvalues of the operator $\mathcal A+\mathcal B \mathcal F$;
\item[{\em (2)}] the operator $\mathcal B \mathcal F: \mathcal{D}(\mathcal A)\rightarrow H$ is relatively $\mathcal A$-bounded.
\end{enumerate}
\end{thr}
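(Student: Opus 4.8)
The plan is to reduce the pole assignment to a scalar moment problem for the feedback functional and then to verify that the functional so obtained is admissible. Since the system is single input by (H3), I write $\mathcal{B}u=u\,\mathbf{b}$ and look for $\mathcal{F}$ through its values $f_m^k:=\mathcal{F}\varphi_m^k$ on the Riesz basis supplied by (H2). First I would fix the biorthogonal framework: by Lemma~\ref{lm_lm4} and Corollary~\ref{col_col23} the suitably normalized adjoint eigenvectors form a dual Riesz system, so that expanding $\mathbf{b}=\sum_{m,k}b_m^k\varphi_m^k$ one has $b_m^k=\langle\mathbf{b},\psi_m^k\rangle/\langle\varphi_m^k,\psi_m^k\rangle$. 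The controllability hypothesis (H4) guarantees $b_m^k\neq 0$, and, combined with the two-sided bound $\tfrac1{|\lambda_m^k|}|\langle\Delta'(\lambda_m^k)x_m^k,y_m^k\rangle|\asymp 1$ of Lemma~\ref{lm_lm7} for the denominator, it pins down the sharp asymptotics $|b_m^k|\asymp 1/|\lambda_m^k|\asymp 1/|k|$. This exact rate is the quantity the whole argument will turn on.

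Next I would derive the closed-loop characteristic equation. For $\lambda\in\rho(\mathcal{A})$, an eigenvector $\widehat\varphi$ of $\mathcal{A}+\mathbf{b}\mathcal{F}$ at $\lambda$ must satisfy $\widehat\varphi=(\mathcal{F}\widehat\varphi)\,R(\lambda,\mathcal{A})\mathbf{b}$; applying $\mathcal{F}$ and cancelling $\mathcal{F}\widehat\varphi\neq 0$ yields the scalar condition
\[
\sum_{m,k}\frac{b_m^k f_m^k}{\lambda-\lambda_m^k}=1 .
\]
The task is then to choose the coefficients $f_m^k$ so that the prescribed points $\widehat\lambda_m^k\in L_m^k(r^{(k)})$ become roots, while the finitely many modes with $|k|<N_2$ are left untouched. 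Because the circles $L_m^k(r^{(k)})$ are mutually disjoint for $|k|\ge N_2$ and the eigenvalues satisfy a separation $|\lambda_m^k-\lambda_i^j|\gtrsim|k-j|+1$, I would argue locally inside each circle: the diagonal term $b_m^k f_m^k/(\lambda-\lambda_m^k)$ dominates, a Rouch\'e/fixed-point argument produces exactly one root there, and writing $\lambda=\lambda_m^k+\delta$ gives to leading order $\delta\approx b_m^k f_m^k/(1-g_{m,k})$, where $g_{m,k}$ collects the contributions of the other circles. Steering that root to $\widehat\lambda_m^k$ then prescribes $f_m^k\approx(\widehat\lambda_m^k-\lambda_m^k)(1-g_{m,k})/b_m^k$, so that $|f_m^k|\asymp r^{(k)}|\lambda_m^k|$.

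The hard part will be twofold and intertwined. First, the quantities $g_{m,k}$ themselves depend on all the $f_i^j$, so the relations above form a coupled (nonlinear) system rather than an explicit formula; I would set it up as a fixed-point equation in a weighted sequence space and prove it is a contraction for $N_2$ large, using the separation estimate to bound the off-diagonal couplings and $\sum_k(r^{(k)})^2<\infty$ to make the perturbation small. Second, and this is the genuine obstacle, I must show that the resulting functional is relatively $\mathcal{A}$-bounded (conclusion~(2)). For $x=\sum d_m^k\varphi_m^k\in\mathcal{D}(\mathcal{A})$ one has $\mathcal{F}x=\sum d_m^k f_m^k$ and $\|\mathcal{A}x\|^2\asymp\sum|d_m^k|^2|\lambda_m^k|^2$, so by Cauchy--Schwarz relative boundedness reduces to the single quantitative estimate $\sum_{m,k}|f_m^k|^2/|\lambda_m^k|^2<\infty$. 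With $|f_m^k|\asymp r^{(k)}|\lambda_m^k|$ this sum is $\asymp\sum_k(r^{(k)})^2$, which is finite precisely by the radius hypothesis. I want to stress that this is exactly where the calibration in (H4) is indispensable: had $b_m^k$ decayed faster than $1/|\lambda_m^k|$ the required $f_m^k$ would be too large and the perturbation would fail to be $\mathcal{A}$-bounded, so the $\asymp 1/|k|$ rate is not incidental but sharp.

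Finally, to certify that the construction does what is claimed, I would verify in parallel that the candidate eigenvectors $\widehat\varphi_m^k=R(\widehat\lambda_m^k,\mathcal{A})\mathbf{b}$, normalized by their dominant component, are quadratically close to $\{\varphi_m^k\}$ (the errors being governed by the same products $r^{(k)}|b_i^j/b_m^k|$ and the eigenvalue separation), hence form a Riesz basis by a Bari-type theorem. This guarantees that the $\widehat\lambda_m^k$ are \emph{exactly} the eigenvalues of $\mathcal{A}+\mathbf{b}\mathcal{F}$ with no spurious spectrum, that $\mathcal{A}+\mathbf{b}\mathcal{F}$ still generates a $C_0$-semigroup, and, together with the relative boundedness just established, that $\mathcal{D}(\mathcal{A}+\mathbf{b}\mathcal{F})=\mathcal{D}(\mathcal{A})$. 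Conclusions~(1) and~(2) then both follow, the finitely many modes with $|k|<N_2$ remaining fixed.
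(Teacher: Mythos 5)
First, a point of fact: this paper does \emph{not} prove Theorem~\ref{thr_inf_p_assignment}. It is imported, statement only, from \cite{Rabah_Sklyar_Rezounenko_2008} --- the surrounding text says the formulation is reproduced ``for the sake of completeness'' and a later remark stresses that the theorem ``had been proved in'' that reference. So there is no in-paper proof to compare yours against; your attempt can only be assessed as a free-standing proof of the abstract statement.

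Assessed that way, your plan identifies the correct mechanism and I see no fatal gap. The scalar characteristic equation $\mathcal{F}R(\lambda,\mathcal{A})\mathbf{b}=\sum_{m,k}b_m^k f_m^k/(\lambda-\lambda_m^k)=1$, the choice $f_m^k=0$ for the untouched modes, the resulting interpolation problem for the remaining $f_m^k$, and above all the calibration --- $|b_m^k|\asymp 1/|k|$ from (H4) and $|\widehat{\lambda}_m^k-\lambda_m^k|\le 2r^{(k)}$ forcing $|f_m^k|\lesssim r^{(k)}|k|$, so that $\sum_{m,k}|f_m^k|^2/|\lambda_m^k|^2\lesssim\sum_k(r^{(k)})^2<\infty$ gives relative $\mathcal{A}$-boundedness --- is exactly what makes the theorem true, and your fixed-point system does close: by Cauchy--Schwarz against the separation $|\widehat{\lambda}_m^k-\lambda_i^j|\gtrsim 1+|k-j|$, the off-diagonal couplings are $O\bigl((\sum_{|j|\ge N_2}(r^{(j)})^2)^{1/2}\bigr)$, hence a contraction for $N_2$ large. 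Two corrections, however. First, your appeal to Lemma~\ref{lm_lm4}, Corollary~\ref{col_col23} and Lemma~\ref{lm_lm7} is out of place: Theorem~\ref{thr_inf_p_assignment} is an abstract statement about a Hilbert space $H$, and there is no matrix $\triangle$ in it; those results are what the present paper uses to verify that the \emph{concrete} operator $\widehat{\mathcal{A}}_1$ satisfies hypotheses of the type (H1)--(H4). Inside the abstract proof, the fact you actually need --- that the normalizing factors $\langle\varphi_m^k,\psi_m^k\rangle$ are bounded above and away from zero --- must come from (H2) alone (the biorthogonal system of a Riesz basis of almost-normalized vectors is again such a basis); fortunately that substitution is routine, which is why this is a misattribution rather than a gap. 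Second, your closing step (quadratic closeness of $R(\widehat{\lambda}_m^k,\mathcal{A})\mathbf{b}$ to $\varphi_m^k$ plus a Bari-type theorem) is not required by the statement: conclusion (1) only asks that the $\widehat{\lambda}_m^k$ \emph{be} eigenvalues of $\mathcal{A}+\mathcal{B}\mathcal{F}$, and in the paper's application the asymptotic stability of the closed loop is obtained afterwards by feeding the relocated spectrum into Theorem~\ref{thr_stability}, not by a closed-loop Riesz-basis argument inside Theorem~\ref{thr_inf_p_assignment}. If you do keep that step, note that Bari's theorem also needs $\omega$-linear independence of the perturbed family, which you should state and check.
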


\begin{lem}[\cite{Rabah_Sklyar_Rezounenko_2008}]\label{stz_multi-single}
If the assumptions (3) and (4) of Theorem~\ref{thr_stabilizability} holds, then there exists a vector $b\in {\rm Im} B$,
say $b=c_1b_1+\ldots+c_p b_p$, $c_i\in\mathbb{C}$, such that the following relations hold:

\noindent (a) $\langle b, y_m\rangle_{\mathbb{C}^n}\not=0$ for all eigenvectors $y_m$ of the matrix $A_{-1}^*$ corresponding
to the eigenvalues $\overline{\mu_m}$, where $\mu_m\in\sigma_1$, $m=1,\ldots,\ell_1$.

\noindent (b) $\langle b, y_m^k\rangle_{\mathbb{C}^n}\not=0$ for all such $y_m^k$
which satisfy $y_m^k\in \Ker \triangle^*(\overline{\lambda_m^k})$ and ${\rm Re} \lambda_m^k\ge 0$.
\end{lem}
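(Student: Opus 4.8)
The plan is to read the two hypotheses (3)--(4) of Theorem~\ref{thr_stabilizability} as the statement that a certain family of \emph{linear functionals} on the coefficient space $\mathbb{C}^p$ is nonzero, and then to choose $c=(c_1,\dots,c_p)$ so as to avoid the hyperplanes on which these functionals vanish. Concretely, for a fixed $y\in\mathbb{C}^n$ the requirement $\langle b,y\rangle_{\mathbb{C}^n}=\sum_{i=1}^p c_i\langle b_i,y\rangle_{\mathbb{C}^n}\neq 0$ is exactly the requirement that $c$ avoid the subspace $H_y=\{c\in\mathbb{C}^p:\ \sum_{i=1}^p c_i\langle b_i,y\rangle_{\mathbb{C}^n}=0\}$. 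The assumptions (3)--(4) say precisely that for each admissible $y$ not all of the numbers $\langle b_i,y\rangle_{\mathbb{C}^n}$ vanish, i.e. the functional $c\mapsto\sum_i c_i\langle b_i,y\rangle_{\mathbb{C}^n}$ is not identically zero, so that each $H_y$ is a \emph{proper} subspace of $\mathbb{C}^p$.

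The delicate point is that requirement (b) must hold for the whole family $\{y_m^k\}$ with $\MRe\lambda_m^k\ge 0$, which may be infinite. First I would reduce this to finitely many conditions. Using the explicit form~(\ref{eq_delta_conj}) of $\triangle^*(\lambda)$, dividing by $\lambda$, and applying Corollary~\ref{col_col2} to kill the integral terms, one checks that $\frac{1}{\overline{\lambda_m^k}}\triangle^*(\overline{\lambda_m^k})\to -I+\mu_m A_{-1}^*$ as $|k|\to\infty$, since $e^{-\overline{\widetilde{\lambda}_m^k}}=\mu_m$ for $\mu_m\in\sigma_1$ and $\lambda_m^k-\widetilde{\lambda}_m^k\to 0$. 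The kernel of the limit matrix is $\Ker(A_{-1}^*-\overline{\mu_m}I)$, so after normalization the vectors $y_m^k\in\Ker\triangle^*(\overline{\lambda_m^k})$ converge to an eigenvector $y_m$ of $A_{-1}^*$ associated with $\overline{\mu_m}$, exactly the vector appearing in (a). Consequently $\langle b,y_m^k\rangle_{\mathbb{C}^n}\to\langle b,y_m\rangle_{\mathbb{C}^n}$, and once $b$ is chosen with $\langle b,y_m\rangle_{\mathbb{C}^n}\neq 0$ there is $N_2\ge N_1$ with $\langle b,y_m^k\rangle_{\mathbb{C}^n}\neq 0$ automatically for all $|k|\ge N_2$.

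After this reduction only finitely many genuine constraints remain: the $\ell_1$ eigenvectors $y_m$ of $A_{-1}^*$ from (a), the finitely many kernel vectors attached to the right half-plane roots with bounded imaginary part, and the finitely many $y_m^k$ with $|k|<N_2$ and $\MRe\lambda_m^k\ge 0$. This yields a finite collection of proper subspaces $H_{y^{(1)}},\dots,H_{y^{(q)}}\subsetneq\mathbb{C}^p$. The concluding step is the elementary fact that a finite union of proper subspaces of a vector space over an infinite field cannot exhaust the space; hence there exists $c\in\mathbb{C}^p\setminus\bigcup_{j=1}^q H_{y^{(j)}}$, and $b=c_1b_1+\dots+c_pb_p$ then satisfies (a) and (b) simultaneously.

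The main obstacle is the passage from infinitely many to finitely many conditions, that is, the spectral asymptotic $y_m^k\to y_m$. It rests on the cluster structure of the spectrum near $\widetilde{\lambda}_m^k$ recorded in Proposition~\ref{ut_ut1} together with the decay estimates of Corollary~\ref{col_col2}. Care is needed here because the norm of $\psi_m^k$ itself blows up (Remark~\ref{rm_rm1}); the convergence must therefore be argued for the normalized finite-dimensional kernel vectors $y_m^k\in\mathbb{C}^n$, not for the eigenvectors $\psi_m^k$ of $\mathcal A^*$. Once this asymptotic is secured, the hyperplane-avoidance argument is routine.
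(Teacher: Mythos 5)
Your overall strategy --- hyperplane avoidance in $\mathbb{C}^p$ combined with the spectral asymptotics of the kernel vectors --- is sound, and your treatment of (b) is in effect a self-contained proof of exactly the ingredient the paper imports by citation: the paper proves (a) by the same hyperplane argument (by assumption (4) each $M_m=L_m\cap{\rm Im}\,B$ is a proper subspace of ${\rm Im}\,B$, and a Baire-type argument gives $b$ outside their union), and then disposes of (b) in one line by quoting from \cite{Rabah_Sklyar_Rezounenko_2008} the relation $|\langle b, y_m^k\rangle_{\mathbb{C}^n}|\thicksim C_m(|k|+1)^{-1}$, which is the quantitative form of your convergence statement $\frac{1}{\overline{\lambda_m^k}}\triangle^*(\overline{\lambda_m^k})\to -I+\mu_m A_{-1}^*$. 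Your insistence on arguing with the normalized kernel vectors $y_m^k\in\mathbb{C}^n$ rather than with the eigenvectors $\psi_m^k$ (whose norms blow up, Remark~\ref{rm_rm1}) is also correct.

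There is, however, a genuine circularity in your concluding step. The threshold $N_2$ beyond which $\langle b,y_m^k\rangle\neq 0$ holds ``automatically'' is defined only \emph{after} $b$ is chosen: it depends on the size of $|\langle b,y_m\rangle|$ and blows up as $b$ approaches the hyperplane $H_{y_m}$. Yet your finite list of constraints --- in particular ``the finitely many $y_m^k$ with $|k|<N_2$ and ${\rm Re}\,\lambda_m^k\ge 0$'' --- is what you use to select $b$. As written, choosing a new $c$ to meet these constraints changes $N_2$, which changes the constraint list, and the argument never closes. Two standard repairs: (i) drop the reduction to finitely many conditions and observe that the bad set is a \emph{countable} union of proper subspaces of $\mathbb{C}^p$ --- one hyperplane $H_{y_m}$ for each $m\le\ell_1$ (proper by assumption (4)) and one hyperplane $H_{y_m^k}$ for each simple eigenvalue $\lambda_m^k$ with ${\rm Re}\,\lambda_m^k\ge 0$ (proper by assumption (3)) --- so Baire's theorem, the very tool the paper uses for (a), already yields $c$ outside the whole union; or (ii) keep your scheme but make the threshold uniform: fix $c_0$ avoiding $H_{y_1},\dots,H_{y_{\ell_1}}$, note that on a small ball around $c_0$ the quantities $|\langle Bc,y_m\rangle|$ stay bounded below so that $N_2$ can be chosen uniformly on that ball, and then pick $c$ in that ball off the finitely many remaining hyperplanes. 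A minor additional point: statement (b) concerns only the simple eigenvalues $\lambda_m^k$, so your extra constraints coming from ``right half-plane roots with bounded imaginary part'' should be dropped --- they are not required, and if such a root had a kernel of dimension at least two, no single $b$ could be non-orthogonal to all of its kernel vectors, so including them would actually break the avoidance argument.
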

\begin{proof}
Let us prove the relations (a). Consider the subspaces
$$
L_m=\{y:\; \langle y, y_m\rangle_{\mathbb{C}^n}=0\},\quad m=1,\ldots,\ell_1.
$$
These subspaces are of dimension $n-1$. Let us consider also $M_m=L_m\cap {\rm Im} B$.
By the assumption (4) we have: $\dim M_m<p$. Indeed, if $\dim M_m=p=\dim {\rm Im} B$, then $B^* y=0$
and the condition (4) is not satisfied. Thus, each $M_m$ is nowhere dense in ${\rm Im} B$ and
due to Baire theorem we have:
$$
\mathop\bigcup\limits_{m=1}^{\ell_1}M_m\not={\rm Im} B.
$$
The last means that there exists $b\in {\rm Im} B$ such that $b\not\in L_m$ for all $m=1,\ldots,\ell_1$,
i.e. the relations  $\langle b, y_m\rangle_{\mathbb{C}^n}\not=0$ hold.

The inequalities (b) follows from the relation
$$
|\langle b, y_m^k\rangle_{\mathbb{C}^n}|\thicksim C_m (|k|+1)^{-1}
$$
which is proved in \cite{Rabah_Sklyar_Rezounenko_2008} independently of the condition $\det A_{-1}\not=0$.
\end{proof}
\section{Conclusion and perspectives}
In the present paper we have generalized the results on strong asymptotic non-exponential stability
and regular stabilizability for the case of mixed retarded-neutral type systems.
The proofs of the mentioned generalizations are technically complicated and requires subtle estimates.
We combine the Riesz basis technic with the analysis of the boundedness of the resolvent on some ${\mathcal A}$-invariant subspaces.

As a perspective, we consider systems with the difference operator $K$ given by
$$
Kf=\sum_{i=1}^r A_{h_i}f(h_i), \qquad h_i\in [-1,0].
$$
Besides, the dilemma of item~(iii) of Theorem~\ref{thr_stability_intro} on stability may be investigated more precisely.

\vskip2ex

\noindent
{\bf Acknowledgments.}

This work was supported in part by \'Ecole  Centrale  de Nantes.

\end{document}